\documentclass[11pt,oneside,english,final]{amsart}

\usepackage[notcite,notref,color]{showkeys}  

\usepackage{amstext,amsthm,amssymb,bbm,amsmath}
\usepackage[T1]{fontenc}
\usepackage[tmargin=2cm,bmargin=2cm,left=2.2cm,right=2.2cm]{geometry}

\usepackage{graphicx}
\usepackage{mathtools}

\usepackage[colorlinks,bookmarks,linkcolor=black,citecolor=black,urlcolor=black]{hyperref} 
\usepackage{cleveref} 
\usepackage[doi=false,url=false,isbn=false,maxbibnames=100]{biblatex}
\usepackage{enumitem}
\usepackage[english,algoruled,lined,noresetcount,norelsize]{algorithm2e}

\newtheorem{theorem}{Theorem}[section]
\newtheorem*{theorem*}{Theorem}
\newtheorem{lemma}[theorem]{Lemma}
\newtheorem{claim}[theorem]{Claim}

\newtheorem{fact}[theorem]{Fact}
\newtheorem{conjecture}[theorem]{Conjecture}

\newtheorem{setting}[theorem]{Setting}

\usepackage{comment}
\newtheorem{definition}[theorem]{Definition}
\newtheorem*{definition*}{Definition}

\theoremstyle{remark}

\newcommand{\oldqed}{}
\newcommand{\qedClaim}{\hfill\scalebox{.6}{\(\Box\)}}
\newenvironment{claimproof}[1][Proof]{
  \renewcommand{\oldqed}{\qedsymbol}
  \renewcommand{\qedsymbol}{\qedClaim}
  \begin{proof}[#1]
}{
  \end{proof}
  \renewcommand{\qedsymbol}{\oldqed}
} 

\setlist{itemsep=2pt,parsep=1pt,topsep=3pt,partopsep=0pt}  
\setenumerate{leftmargin=*,labelindent=\parindent} 

\def\itm#1{\rm ({#1})} 
\def\itmit#1{\itm{\it #1\,}} 
\def\rom{\itmit{\roman{*}}} 
\def\abc{\itmit{\alph{*}}}

\def\itmarab#1{\mbox{\itm{{\it #1\,}\arabic{*}\hspace{.05em}}}}
\def\itmarabp#1{\mbox{\itm{{\it #1\,}\arabic{*}'\hspace{.05em}}}}

\newcommand{\By}[2]{\overset{\mbox{\tiny{#1}}}{#2}}
\newcommand{\ByRef}[2]{   \By{\eqref{#1}}{#2} }

\newcommand{\leBy}[1]{    \By{#1}{\le} }

\newcommand{\lByRef}[1]{  \ByRef{#1}{<} }

\newcommand{\vass}[1]{\left| #1\right|}

\newcommand{\ind}{\mathbbm{1}}

\newcommand{\eps}{\varepsilon}
\renewcommand{\rho}{\varrho}
\renewcommand{\subset}{\subseteq}
\newcommand{\dcup}{\dot\cup}
\newcommand{\tmu}{\tilde{\mu}}

\newcommand{\EE}{\mathbb{E}}
\newcommand{\Prob}{\mathbb{P}}
\newcommand{\RR}{\mathbb{R}}
\newcommand{\one}{\mathbf{1}}

\newcommand{\nats}{\mathbb{N}}
\newcommand{\reals}{\mathbb{R}}
\newcommand{\bg}{\mathrm{big}}
\newcommand{\smll}{\mathrm{small}}
\renewcommand{\epsilon}{\varepsilon}
\newcommand{\bfi}{\mathbf{i}}

\newcommand{\cG}{\mathcal{G}}
\newcommand{\cE}{\mathcal{E}}
\newcommand{\cB}{\mathcal{B}}
\newcommand{\cS}{\mathcal{S}}

\newcommand{\lin}{\textrm{lin}}

\title{A generalised transference principle}

\author[P. Allen]{Peter Allen}
\address{(PA) London School of Economics, Department of Mathematics, Houghton Street, London WC2A 2AE, UK}
\email{p.d.allen@lse.ac.uk}
\author[J. B\"ottcher]{Julia B\"ottcher}
\address{(JB) London School of Economics, Department of Mathematics, Houghton Street, London WC2A 2AE, UK}
\email{j.boettcher@lse.ac.uk}
\author[J. Lada]{Joanna Lada}
\address{(JL) London School of Economics, Department of Mathematics, Houghton Street, London WC2A 2AE, UK}
\email{j.m.lada@lse.ac.uk}
    \author[D. Mergoni Cecchelli]{Domenico Mergoni Cecchelli}
\address{(DMC) CUNEF Universidad, Department of Quantitative Methods, C. de Almansa, Madrid 28040, ES}
\email{domenico.mergoni@cunef.edu}
\date{}

\begin{document}
\begin{abstract}
    The last two decades have witnessed a growing trend towards proving sparse random analogues of combinatorial theorems. One unified approach to proving such theorems, formalised by Conlon and Gowers [Ann.~of Math.~2016], involves establishing a `transference principle' which allows one to translate between robust properties in the dense setting and the sparse $p$-random setting, provided $p$ is not too small. Our results provide a more general transference theorem, extending the results of Conlon and Gowers and also those of Schacht [Ann.~of Math.~2016]. Among a variety of other applications, we use this to obtain a sparse counting lemma for graphs and hypergraphs which are not necessarily strictly balanced. Our method achieves asymptotically optimal bounds on the probability \(p\), and the probability of success.
\end{abstract}
\maketitle

\section{Introduction}
Many fundamental problems in extremal combinatorics ask how to maximise the size of a discrete structure subject to a given forbidden substructure not being present. Classical results in this area include Tur\'an's theorem~\cite{turan1941extremalaufgabe}, which establishes the maximum number of edges in an $n$-vertex graph containing no $k$-vertex complete graph; Szemer\'edi's theorem~\cite{szemeredi1975sets}, which upper bounds the size of a set of integers in $[n]:=\{1,\dots,n\}$ containing no (non-trivial) $k$-term arithmetic progression; and Ramsey's theorem~\cite{ramsey1930problem}, which upper bounds the number $n$ of vertices of a complete graph that admits an $r$-edge-colouring containing no monochromatic complete graph on $k$ vertices.

A more recent development is the study of sparse versions of these theorems. The question here is whether similar results hold in sparse random, or pseudorandom, discrete structures. We denote by $G(N,p)$ the \emph{binomial random graph}, which is the graph on vertex set $[N]$ obtained by including each pair of vertices independently with probability~$p$ as an edge.
The Tur\'an-type problem in the sparse setting then asks what the maximum relative edge density of a subgraph $G$ of a typical $\Gamma=G(N,p)$ which contains no $k$-vertex complete graph is, and whether this is the same as in Tur\'an's theorem.
Here, by `typical' we mean that the probability that this desired event does not hold is small.
Similarly, one can ask for sparse versions of Ramsey's theorem. This means we are aiming to find conditions to guarantee monochromatic copies of a complete graph on $k$ vertices in any $r$-edge-colouring of $\Gamma$. And finally, sparse versions of Szemer\'edi's theorem ask for conditions that guarantee that a subset of a given density of a random subset of $[n]$ contains $k$-term arithmetic progressions.

The research on these sparse problems was initiated in the 1990s with the study of Ramsey's theorem in the sparse random setting. Over a series of papers, {\L}uczak, Ruci\'nski, and Voigt~\cite{LRV} and R\"odl and Ruci\'nski~\cite{RR93,RR94,RR} gave a complete answer 
to this Ramsey question. Indeed,  R\"odl and Ruci\'nski identified in~\cite{RR} a threshold such that, almost all graphs having edge density over said threshold admit a monochromatic $k$-clique in all their $r$-edge-colourings.
Around the same time the first sparse random Tur\'an-type and Szemer\'edi-type results were proved~\cite{BabSimSpe, FraRod, Furedi_cycle, GerSchSte, HaxKohLuc_evencycles, HaxKohLuc_oddcycles, 
KLR_progressions, KLRConj, KohRodSch, SzaVu}, but a full resolution of these problems remained open.

Arguably, the next major development in the area was due to Green and Tao~\cite{GreenTao}, who studied arithmetic progressions in sparse pseudorandom sets. Their primary focus was proving that the primes contain arbitrarily long arithmetic progressions, but the technical heart of their work was a \emph{transference principle} which, roughly, states that for any subset $Y$ of a sparse pseudorandom set $X\subset [n]$, there is a \emph{dense model}, i.e., a subset $Z$ of $[n]$ whose density in $[n]$ roughly matches the relative density of $Y$ in $X$, and which contains many $k$-term arithmetic progressions only if $Y$ also contains relatively many $k$-term arithmetic progressions. The point here is that Szemer\'edi's theorem, together with an argument of Varnavides~\cite{Varnavides}, says that any subset of $\{1,\dots,n\}$ of positive density contains many $k$-term arithmetic progressions. Combining these tools, it immediately follows that if $Y$ has positive relative density, it also contains $k$-term arithmetic progressions, and in this sense the sparse result is \emph{transferred} from the dense result.

A few years later, Conlon and Gowers~\cite{ConGow} generalised this tool and obtained a transference principle for sparse random combinatorial theorems. This result solves all the problems concerning sparse random structures mentioned so far in a unified way (and some more we did not mention), and gives optimal bounds on the probability $p$ of the sparse random set one is allowed to work with. In particular, Conlon and Gowers fully resolved the random Tur\'an problem for complete graphs. Moreover, they partly answered the analogous question for general graphs~$H$ instead of merely complete graphs: What is the maximum relative density of a subgraph~$G$ of $\Gamma=G(N,p)$ that contains no copy of a fixed graph~$H$?

A limitation of the Conlon--Gowers approach is that it can only handle `strictly balanced' graphs $H$ (see Definition~\ref{def:strbal}). Around the same time, Schacht~\cite{schacht2016extremal} used quite different methods to fully resolve the random Tur\'an question, overcoming this limitation. Similar ideas were used by Friedgut, R\"odl and Schacht~\cite{FRS} for further progress on random Ramsey-type problems.
A few years later, the celebrated hypergraph container method of Balogh, Morris and Samotij~\cite{Balogh2015} and Saxton and Thomason~\cite{Saxton2015} gave a third route to all these results. For newer developments concerning the method of hypergraph containers see also \cite{BalSam_EfficientContainers,BerDelTowTse,CamSam_OptContainers,MorSamSax,Nenadov_ProbContainers}.

\medskip

The goal of this paper is to generalise the Conlon--Gowers transference principle as well as the result of Schacht, but our main result achieves substantially more. Indeed, modern extremal combinatorics additionally studies so-called counting problems: How many copies of some fixed $H$ can a graph $G$ on $n$ vertices with a given density have? How does the structure of $G$ affect this? 

For answering questions of this type, the \emph{regularity method} has been central. It consists of a regularity lemma, which asserts that any discrete structure of a given type is `almost' built from a bounded number of pseudorandom parts, and a counting lemma, which gives a formula that approximates the number of substructures in a pseudorandom setting of given densities. 
In the dense graph case, the Szemer\'edi regularity lemma~\cite{SzeReg} states that we can partition the vertices of any graph into a bounded number of pieces, such that most of the pairs of parts induce $\eps$-regular pairs (pseudorandom bipartite graphs). A corresponding counting lemma then, given parts with some $\eps$-regular pairs between them, provides a formula approximating the number of copies of a graph~$H$ with one vertex (or more vertices) in each part and edges only on pairs that are $\eps$-regular.

In the sparse setting, the required regularity lemmas have been known since the 1990s~\cite{KohSparse,KRSparse}, but the corresponding sparse counting lemmas proved more challenging. The well-known K{\L}R conjecture~\cite{KLRConj} asserts that a \emph{weak sparse counting lemma} for graphs is possible: If the conjectured counting formula predicts a positive relative density of $H$-copies, then a copy of $H$ exists. 
This is weak in the sense that it only requires one copy instead of the correct number of copies.

Greatly generalising earlier work on special cases~\cite{KLR_progressions, GerKohRodSte, GPSST, GerSchSte}, this conjecture was proven by Balogh, Morris and Samotij~\cite{Balogh2015} and independently by Saxton and Thomason~\cite{Saxton2015}.
In fact, Saxton and Thomason~\cite{Saxton2015} not only show that one $H$-copy exists, but a positive (albeit much smaller than predicted) relative density of $H$-copies. This was re-proven by Conlon, Gowers, Samotij and Schacht~\cite{CGSS}, who also show that for strictly balanced $H$
a \emph{full sparse counting lemma} holds, that is, a version where we obtain the predicted relative density of $H$-copies.
The former part of~\cite{CGSS} builds on the work of Schacht~\cite{schacht2016extremal}, while the latter uses the Conlon--Gowers transference principle~\cite{ConGow}.

In this paper, we introduce an abstract counting statement which, following established terminology, we also call \emph{transference principle}, and from which all the results mentioned above (and others) follow. In particular, improving on the work of Conlon, Gowers, Samotij and Schacht~\cite{CGSS}, we prove a full sparse counting lemma for all graphs, and also for all hypergraphs. Roughly speaking, if one is working with a combinatorial structure to which the hypergraph container method applies in the $p$-random setting, then our transference principle applies in the $p$-random setting too, and furthermore the failure probability of our transference principle is asymptotically optimal.
While this counting statement is a theorem in combinatorics, its proof relies substantially on methods of functional analysis together with linear and convex optimisation over polytopes. 

\medskip

Our main theorem (Theorem~\ref{thm:main}) needs a substantial amount of further definitions and notation. This is why in the introduction we restrict ourselves to stating two simplified versions of our transference principle for graphs. This will also be used to build intuition for the more general features our main result allows for. We then provide a list of example applications of our main result, including Tur\'an-type, Ramsey-type, and Szemer\'edi-type results. We will conclude the introduction by outlining the structure of the remainder of our paper.

\subsection{A simple version of our transference principle for graphs}
In the case of graphs, for a fixed target graph $H$, the aim of our transference principle is to provide for any $G\subseteq G(N,p)$ a graph $G'$ which 
\begin{enumerate}[label=\rom]
\item\label{itm:dense_model_i}
has similar density within $K_N$ as the density of $G$ within $G(N,p)$, and
\item satisfies that the number of embeddings of our target graph~$H$ in $G'$ relative to $K_N$ is similar to the number of embeddings of $H$ in $G$ relative to the expected number in $G(N,p)$. 
\end{enumerate}
We call such a graph $G'$ a \emph{dense model} of $G$ (with respect to the counting of $H$), and the density referred to in~\ref{itm:dense_model_i} is the \emph{relative edge density} of~$G'$ and~$G$, respectively.

For a given $p\in (0,1)$ and a positive integer $N$, the expected number 
of edges in the random graph $\Gamma=G(N,p)$ is $p\binom{N}{2}$ and the number
$c(H,\Gamma)$ of embeddings of~$H$ in $\Gamma$ (i.e.\ injective maps from $V(H)$ to $V(\Gamma)$ which map edges to edges) has expectation $\EE\big(c(H,\Gamma)\big)=\frac{N!}{(N-v(H))!} p^{e(H)}$, where we write $v(H)$ for the number of vertices of~$H$, and $e(H)$ for the number of its edges.
Hence, for $p\ll N^{-(v(H)-2)/(e(H)-1)}$
the expected number of edges in $G(N,p)$ is much higher than the expected number of $H$-copies and hence we can remove (in expectation) all $H$-copies from $\Gamma$ by removing only a tiny fraction of its edges, giving an $H$-free subgraph~$G$ of~$\Gamma$ with relative density arbitrarily close to~$1$. Therefore, a transference principle cannot hold for $p\ll N^{-(v(H)-2)/(e(H)-1)}$. 

More generally, the same is true if we can make the analogous argument for some subgraph $H'\subseteq H$ for some range of~$p$. This gives rise to the following parameter, which determines when a transference principle cannot hold for this trivial reason.

\begin{definition}[maximum $2$-density, strictly balanced]
\label{def:strbal}
  For a graph~$H$ with at least $2$ edges, the \emph{maximum $2$-density} of~$H$ is
  \[m_2(H):=\max_{H'\subseteq H, v(H')\geq 3}\,\,\frac{e(H')-1}{v(H')-2}\,.\]
  If this maximum is attained only for $H'=H$, then we say that~$H$ is \emph{strictly balanced}.
\end{definition}

As explained above, for $p\ll N^{-1/m_2(H)}$, typically there are $H$-free subgraphs of $G(N,p)$ of high relative density. However, this argument only covers what happens in expectation. Even for larger~$p$, it could still be the case that $G(N,p)$ contains far more $H$-copies than expected. 
Also in this case a transference principle is not possible: we will not be able to find an appropriate dense model as $G'$ cannot contain more copies of $H$ than $K_N$. This contextualises the following theorem, which states that with high probability we can find dense models for subgraphs~$G$ of $G(N,p)$ when $p\gg N^{-1/m_2(H)}$ and the bad event just mentioned does not hold. For any real numbers \(x,y,z\), we write \(x=y\pm z\) to indicate \(y-z\leq x\leq y+z\).

\begin{theorem}[simplified transference for graphs]\label{CountingThmForGraphs}
  Let \( H \) be a graph with $m_2(H)\ge1$ and let \( \eps > 0 \).  Let \(
  \eta_{N,p} \) be the probability that the number of $H$-copies in \(G(N,p) \)
  exceeds \( (1+\tfrac\eps3) p^{e(H)} N^{v(H)} \).  Then there exists a
  constant \( C > 0 \) such that for \( p \ge C N^{-1/m_2(H)} \) the random graph
  $\Gamma = G(N,p)$ satisfies the following with probability at least \( 1 -
  \eta_{N,p} -\exp(-pN^2/C)\).
  For every subgraph \( G \subseteq \Gamma \) there exists a graph $G'$ on $V(G)$ with
    \[
    e(G) p^{-1} = e(G') \pm \eps N^2
    \qquad \text{and} \qquad
    c(H,G) p^{-e(H)} = c(H,G')\pm\eps N^{v(H)}\,.
    \]
\end{theorem}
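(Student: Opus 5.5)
Since the main result (Theorem~\ref{thm:main}) is only stated later, I plan a direct route in the style of Conlon--Gowers, with the modifications needed for non-strictly-balanced $H$ and for the $\exp(-pN^2/C)$ failure probability. I work with normalised functions: set $\nu=p^{-1}\ind_\Gamma$ on $E(K_N)$, and for $f\colon E(K_N)\to\RR_{\ge0}$ let $t_H(f)=N^{-v(H)}\sum_{\phi}\prod_{uv\in E(H)}f(\phi(u)\phi(v))$. The goal is: for every $f=p^{-1}\ind_G\le\nu$, find $g\colon E(K_N)\to[0,1]$ with $\sum g=\sum f\pm\eps N^2/2$ and $t_H(g)=t_H(f)\pm\eps/2$. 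A $\{0,1\}$-valued $G'$ with the same properties then follows by taking $G'$ to be a $g$-random graph and applying a routine concentration argument to edges and homomorphism counts (alternatively, by a deterministic rounding). The difference between embeddings and homomorphisms contributes $O(N^{v(H)-1})$, which is absorbed into $\eps N^{v(H)}$.

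For the dense model step I use the Hahn--Banach/minimax dense model theorem (Green--Tao, Gowers, Reingold--Trevisan--Tulsiani--Vadhan). It gives the required $g$ for all $f\le\nu$ provided $\nu$ is close to $1$ with respect to the family $\mathcal{D}$ of \emph{basic anti-uniform functions}. These are the derivatives $\partial_{e}\big(t_H(f_1,\dots,f_{e(H)})\big)$, where $0\le f_i\le\max(\nu,1)$, together with bounded products of such derivatives. Concretely, I need $|\langle\nu-1,\psi\rangle|\le\delta$ for every $\psi$ in the (capped) polynomial algebra generated by $\mathcal{D}$. Once this holds, a telescoping argument gives $t_H(f)\approx t_H(g)$, one edge of $H$ at a time, because each telescoping term is the inner product of $f-g$ with a basic anti-uniform function.

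The probabilistic core is therefore to show that with probability at least $1-\eta_{N,p}-\exp(-pN^2/C)$:
\begin{enumerate}[label=\abc]
\item $\Gamma$ has the right edge count and is not over-dense on large sets, which is a Chernoff bound with failure probability $\exp(-\Omega(pN^2))$;
\item the count of $H$ itself, and of every subgraph $H'$, is not too large. For $H$ this is exactly the event whose failure probability is $\eta_{N,p}$;
\item $\langle\nu-1,\psi\rangle$ is small uniformly over all $\psi\in\mathcal{D}$.
\end{enumerate}

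For (c) I follow Conlon--Gowers: fix $f_1,\dots$ as arbitrary functions below $\nu$, expose the edge $e$ last, and use that $\partial_e t_H$ does not depend on $\ind_\Gamma(e)$. Then a union bound over a net of "capped" versions $\min(\partial_e t_H,\text{const})$ yields concentration. The threshold $p\ge CN^{-1/m_2(H)}$ guarantees that for every subgraph $H'$ with $v(H')\ge3$ the expected number of $H'$-extensions of a single edge, namely $N^{v(H')-2}p^{e(H')-1}$, is at least a large constant. This is what makes the capped derivatives bounded in expectation.

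The main obstacle is the non-strictly-balanced case. There some $H'\subsetneq H$ attains $m_2$, and the derivative $\partial_e t_H$ can be dominated by a few edges lying in abnormally many $H'$-copies. Conlon--Gowers' tail bounds then fail at $p\approx CN^{-1/m_2}$. My first attempt would be to handle this by induction on $e(H)$. I would truncate $\nu$ to $\tilde\nu$ by deleting the edges of $\Gamma$ that lie in too many copies of some densest $H'$. A Janson/deletion-type argument (Spencer, Rödl--Ruciński) shows that with failure probability $\exp(-\Omega(pN^2))$ such edges form only an $\eps$-fraction, so $G\cap\tilde\Gamma$ changes $f$ negligibly in both edge count and (using (b) and Hölder) $H$-count. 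For the truncated majorant the capped derivatives are genuinely bounded, and the inductive counting lemma for $H'$ supplies the needed estimates. The exceptional probability $\eta_{N,p}$ enters only through controlling the total $H$-count in this last comparison.
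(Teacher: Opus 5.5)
There is a genuine gap, and it sits exactly where the difficulty of the statement lies. Your step (c), together with the truncation, is asserted rather than proved, and as described it does not work.

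First, the truncation claim is false. If $H$ contains a cycle, then $pN^2\ge CN^{2-1/m_2(H)}\gg N\log N$. On the other hand, the event that some $s=O(1)$ vertices are joined to all of $[N]$ has probability $e^{-O(sN\log N)}$. On this event every edge of $\Gamma$ lies in at least $\binom{s}{v(H')-2}$ copies of $H'$ (complete the edge with hub vertices), so for $s$ large \emph{every} edge is ``bad''. A genuine deletion argument would be the Harris-inequality argument of Sp\"ohel--Steger--Warnke, which the paper uses in Theorem~\ref{thm:delete} and Lemma~\ref{lem:deletionbounds}. It only yields the \emph{existence} of some small deletion $\tilde\Gamma$ in a decreasing good family, and such a $\tilde\Gamma$ depends on all of $\Gamma$. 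Then $\tilde\nu(e)$ is no longer independent of the derivative at $e$, and $\tilde\nu-1$ is not a vector of independent mean-zero entries. Yet (c) must be proved for $\tilde\nu-1$ against test functions built from $\tilde\nu$.

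Second, even for the untruncated $\nu$, exposing $e$ last makes a single summand independent of its own coefficient, not the whole sum $\sum_e(\nu(e)-1)\psi(e)$. Moreover, the family you union-bound over is parametrised by arbitrary $f_i\le\max(\nu,1)$. This already gives about $2^{\binom N2}$ choices for the dense arguments, against concentration of order at best $e^{-\Theta(pN^2)}$.

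Third, what Conlon--Gowers actually need, and what breaks for non-strictly balanced $H$, is that capping changes all relevant inner products negligibly. This must hold uniformly over this family and its products, including mixed arguments and non-edges $e\notin\Gamma$. Bounding $H'$-degrees of edges of $\tilde\Gamma$ does not give this, and an ``inductive counting lemma for $H'$'' (a counting statement) does not supply the required tail control.

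The paper avoids all of this. The statement is the case $r=2$, $G_1=G$, $G_2=\Gamma-G$ of Theorem~\ref{CountingThmForColouredGraphs}, i.e.\ the $k=2$ case of Theorem~\ref{thm:colHGtrans}. That theorem follows from Theorem~\ref{thm:GenSimp} applied to the ordered $e(H)$-uniform hypergraph of $H$-copies, once one checks $\Delta_\kappa(S)\le 2v(H)!\,\frac{e(S)}{n}(p/C)^{\kappa-1}$ via $|W|\ge\frac{\kappa-1}{m_2(H)}+2$; strict balancedness never enters. Inside Theorem~\ref{thm:GenSimp} there is no capping at all:
\begin{enumerate}
\item the norm is the uncapped $\|\cdot\|_{\Phi(\tmu,\one)}$;
\item large entries are controlled by moment bounds $\langle\tmu_i,\phi\rangle\le 2c^\ell$ on product polytopes, proved by Kim--Vu on books (where the codegree bound is used) and then made exponentially likely by deletion;
\item the union bound becomes affordable by randomly splitting $\tmu$ into $L$ parts and $\one$ into $\lceil Lp^{-1}\rceil$ parts;
\item Bernstein is applied to $\mu-\one$, not $\tmu-\one$, off the revealed part $Y(\phi)$, and the discrepancy between $\mu$ and $\tmu$ and the revealed part are absorbed by the moment bounds.
\end{enumerate}
The term $\eta_{N,p}$ enters only at the end, to pass from $\tilde X$ back to $X$. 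These are the ingredients your proposal is missing.
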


Observe that if $H$ has a vertex of degree at least $2$, then it contains three vertices spanning at least two edges and so $m_2(H)\ge1$. Thus the restriction $m_2(H)\ge1$ excludes matchings: we can handle matchings but with a slightly worse $p$, which we prefer to exclude for simplicity here.

By the argument above, the lower bound on $p$ is asymptotically optimal. As far as the failure probability in Theorem~\ref{CountingThmForGraphs} is concerned, for any graph~$H$ which is not a forest, it is easy to see $\eta_{N,p}\gg\exp(-pN^2/C)$. As discussed, the failure probability cannot be smaller than $\eta_{N,p}$. Hence, this is also optimal up to constants.

This result generalises the corresponding result of Conlon and Gowers~\cite{ConGow} in that it works for general graphs~$H$ and not merely strictly balanced graphs, and in that it provides an optimal failure probability. Neither the results of Schacht~\cite{schacht2016extremal} nor the hypergraph container method~\cite{Balogh2015,Saxton2015} provides a transference principle, but as explained above, they allow for Tur\'an-type results in $G(N,p)$. Restricted to this scenario, our result is more general in that it gives a lower bound on $c(H,G)$ that is $1-\eps$ times the expectation of $c(H,G)$ (compared to $\eps$ times this expectation), and in that it also gives a corresponding upper bound.

\medskip

Theorem~\ref{CountingThmForGraphs} is implied by the more general Theorem~\ref{CountingThmForColouredGraphs}, as we argue below.

\subsection{Additional features of our general transference principle}\label{subsec:additionalfeatures}
Let us now explain which more general features, compared to Theorem~\ref{CountingThmForGraphs}, our main theorem allows for.

\subsubsection{Transference of structure and counting in specific places}
While Theorem~\ref{CountingThmForGraphs} is enough to prove sparse Tur\'an-type results for graphs, it is not strong enough to prove the sparse counting lemma we are aiming for (Theorem~\ref{thm:graphcount}). Two things are missing: the possibility to transfer structure and the possibility to count in specific places only. Let us illustrate this with the help of the example of $H=C_4$, the four-vertex cycle.

We first explain what we mean by transference of structure.
Consider the example that~$G$ is a subgraph of $G(N,p)$ which consists of four disjoint vertex sets $V_1,V_2,V_3,V_4$ and sparse pseudorandom bipartite graphs on each $(V_i,V_{i+1})$ cyclically. 
Assume we would like to use our transference principle in conjunction with the dense counting lemma to count the number of $C_4$s in~$G$ which go around $V_1,\dots,V_4$.
For this, we need not just that the dense model~$G'$ has the right total number of edges, as guaranteed by Theorem~\ref{CountingThmForGraphs}, but also that they lie in four pseudorandom bipartite graphs $(V_i,V_{i+1})$ cyclically with the correct densities (which is the setup needed by the dense counting lemma).
The solution here is to use what we call \emph{structure functions}, which are functions $\sigma:E(K_N)\to[0,1]$ that can be thought of as weighting the edges. We then ask that our dense model~$G'$ has the correct number of $\sigma$-weighted edges for all $\sigma$ in a carefully chosen collection of functions, and this turns out to guarantee the necessary transference of structure. In our specific example, the collection would include one structure function $\sigma_{U,V}$ for each choice of disjoint vertex sets $U,V\subset[N]$ with $|U|=|V|\ge\tilde\eps N$ for some suitably chosen constant $\tilde\eps$, where $\sigma_{U,V}(uv)=1$ if $u\in U$ and $v\in V$ and $\sigma_{U,V}(uv)=0$ otherwise. This allows us to transfer that $(V_i,V_{i+1})$ are pseudorandom bipartite graphs in the sense that they form $\eps$-regular pairs; that is, they form pairs in which edge densities between subsets of size at least $\tilde\eps N$ are close to the density of $(V_i,V_{i+1})$. Since the number of possible choices for $U,V$ is exponential, the resulting collection will contain an exponential number of structure functions.

We now explain what we mean by counting in specific places, using the same example of counting copies of $H=C_4$ in~$G$ which go around $V_1,\dots,V_4$. For this it is not enough to count the total number of $4$-vertex cycles in $G'$ and ask for this to match the appropriately scaled number in $G$, because this would also count $C_4$s which (for instance) lie only between $V_1$ and $V_2$ and we did not want to count these. The solution here is, similarly, to use what we call \emph{subcounts}, which are functions $\omega$ from the collection of $H$-copies in $K_N$ to $[0,1]$. In our example, we use a subcount $\omega$ which puts weight $1$ on the $C_4$ copies we do want to count, and $0$ on all others. We then ask that our dense model~$G'$ has the correct number not only of all $C_4$-copies, but also of $\omega$-weighted copies. In fact, we will again be able to specify a collection of subcounts and ask that the dense model respects all of them simultaneously.

For proving our sparse graph counting lemma (Theorem~\ref{thm:graphcount}),
it turns out that we need an exponentially large collection both of structure functions and of subcounts.
This is supported by our general transference principle (Theorem~\ref{thm:GenSimp}), which allows collections of structure functions and subcounts of sizes up to $\exp(pN^2/C)$.

\subsubsection{Colours}\label{subsec:colours}
For typical applications in Ramsey theory we want to be able to also find monochromatic copies of~$H$ in a graph~$G$ whose edges are $r$-coloured.
While Theorem~\ref{CountingThmForGraphs} applies to this situation, in that it produces for each colour $i\in[r]$ a dense model of the edges of colour $i$ in $G(N,p)$, the dense models need not be compatible, in the sense that there may be some edges of $K_N$ which appear in none of the dense models and others which appear in multiple models. 
But for transferring a dense Ramsey-type result to the sparse setting, we need compatible dense models.
The following theorem achieves this.

\begin{theorem}[simplified transference for graphs with colours]\label{CountingThmForColouredGraphs}
  Let \( H \) be a graph with $m_2(H)\ge1$, let $r\ge2$,  and let \( \eps > 0 \).  Let \(
  \eta_{N,p} \) be the probability that the number of $H$-copies in \(G(N,p) \)
  exceeds \( (1+\tfrac\eps3) p^{e(H)} N^{v(H)} \).  Then there exists a
  constant \( C > 0 \) such that for \( p \ge C N^{-1/m_2(H)} \) the random graph
  $\Gamma = G(N,p)$ satisfies the following with probability at least \( 1 -
  \eta_{N,p} -\exp(-pN^2/C)\).
  For every $r$-colouring $E(\Gamma)=G_1\dcup\dots \dcup G_r$ of $\Gamma$, there
  exists an $r$-colouring $E(K_N)=G'_1\dcup\dots\dcup G'_r$ of $K_N$ such that
  for each $i\in[r]$
    \[
    e(G_i) p^{-1} = e(G'_i) \pm \eps N^2
    \qquad \text{and} \qquad
    c(H,G_i) p^{-e(H)} = c(H,G'_i)\pm\eps N^{v(H)}\,.
    \]
\end{theorem}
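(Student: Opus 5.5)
Theorem~\ref{CountingThmForColouredGraphs} will be deduced from the general transference principle, Theorem~\ref{thm:GenSimp} (and ultimately Theorem~\ref{thm:main}), stated later. The task is to phrase the coloured graph setting in that abstract language. The ground set is $E(K_N)$, with $\Gamma=G(N,p)$ a $p$-random subset. The counted structures are the edge sets of the $H$-copies in $K_N$, each an $e(H)$-subset, counted with multiplicity as embeddings. Colours are handled by the colour feature of the general theorem: the input is a partition $E(\Gamma)=G_1\dcup\dots\dcup G_r$, viewed as functions $g_i=\ind_{G_i}$ with $\sum_i g_i=\ind_\Gamma$. The output is a family $g'_i\colon E(K_N)\to[0,1]$ with $\sum_i g'_i\equiv 1$. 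We use the single structure function $\sigma\equiv1$ and the single subcount $\omega\equiv1$. The general theorem then yields fractional dense models with $\sum_e g'_i(e)=p^{-1}e(G_i)\pm\eps' N^2$ and $\sum_{\phi}\prod_{e\in\phi(H)}g'_i(e)=p^{-e(H)}c(H,G_i)\pm\eps' N^{v(H)}$.

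First, verify the hypotheses of the general theorem in this setting. The key probabilistic input is a container-type or boundedness condition: the $p$-random measure must make the counting operator bounded, with all the lower-order "degree" counts controlled. For $p\ge CN^{-1/m_2(H)}$, every $H'\subseteq H$ with $v(H')\ge3$ has $N^{v(H')-2}p^{e(H')-1}\ge C^{\,e(H')-1}$, which is large. This is exactly the condition that the codegree/extension counts at the level of one fixed edge are dominated by their expectations. The only part not holding with probability $1-\exp(-pN^2/C)$ is the global upper tail of $c(H,\Gamma)$, which is why the event with probability $\eta_{N,p}$ is excluded by hand. The hypothesis $m_2(H)\ge1$ ensures $pN^2\ge CN^{2-1/m_2(H)}\ge CN$, so the $\exp(-pN^2/C)$ error term dominates the failure probabilities of the various union bounds.

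Second, convert the fractional colourings into a genuine colouring by randomised rounding. For each edge $e$ independently, choose colour $i$ with probability $g'_i(e)$, and let $G'_i$ be the resulting colour classes. Then $\EE e(G'_i)=\sum_e g'_i(e)$. Since distinct edges of an embedded copy are coloured independently, $\EE c(H,G'_i)=\sum_\phi\prod_{e}g'_i(e)$. Changing the colour of one edge changes $c(H,G'_i)$ by at most $O(N^{v(H)-2})$ and $e(G'_i)$ by at most $1$. McDiarmid's inequality over the $\binom N2$ edges therefore gives concentration within $\tfrac\eps3 N^{v(H)}$ and $\tfrac\eps3N^2$ with positive probability, so a suitable colouring exists. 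Choosing $\eps'=\eps/3$ finishes the argument.

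The main obstacle is the first step: checking that the abstract hypotheses, in particular the boundedness of the dual functions or container condition, hold for all $H$ rather than only strictly balanced ones, with the claimed failure probability. I would handle this by citing the general verification the paper carries out for hypergraphs of copies with maximum $m_2$-density. Taking the threshold from $m_2(H)$ via the maximum over subgraphs is precisely what removes the strict balancedness assumption.
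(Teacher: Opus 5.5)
Your route is the paper's: it proves the hypergraph version, Theorem~\ref{thm:colHGtrans}, by applying Theorem~\ref{thm:GenSimp} to the ordered hypergraph $S$ of $H$-copies on $E(K_N)$ with $\Sigma=\Omega=\{\one\}$. It uses $m_2(H)\ge 1$ to obtain the polylogarithmic lower bound $p\ge(\log^{C'}n)n^{-1}$, and it verifies the codegree bounds through exactly your inequality $N^{v(H')-2}p^{e(H')-1}\ge C^{e(H')-1}$.

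There are two corrections. First, Theorem~\ref{thm:GenSimp} already outputs a genuine partition of $E(K_N)$, so your rounding step is redundant, although it is valid as written. Second, the hypotheses of Theorem~\ref{thm:GenSimp} are deterministic codegree bounds $\Delta_\kappa(S)\le cC^{1-\kappa}p^{\kappa-1}e(S)/n$, not a probabilistic boundedness or container condition; all the randomness is handled inside Theorem~\ref{thm:GenSimp}. Your \emph{main obstacle} is therefore only a short count. Fix $\kappa\ge2$ distinct edges and let $W$ be the vertex set they span. These edges form a subgraph $H'\subseteq H$, so $(\kappa-1)/(|W|-2)\le m_2(H)$. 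It then suffices to compare $\deg_S(\mathbf{x})\le v(H)!\,e(H)!\,N^{v(H)-|W|}$ with $e(S)/n=\Theta(N^{v(H)-2})$. The case $\kappa=1$ follows from edge-transitivity of $K_N$, and repeated fixed entries give degree $0$.
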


This coloured counting is orthogonal to the weighted counting mentioned in our previous subsection that is taken into account via subcounts and structure functions: Our general transference principle (Theorem~\ref{thm:GenSimp}) allows for both simultaneously. The proof of Theorem~\ref{CountingThmForColouredGraphs} is provided in Section~\ref{sec:GTP}.

Theorem~\ref{CountingThmForGraphs} is a corollary of the $r=2$ case of
Theorem~\ref{CountingThmForColouredGraphs}. To see this, given $G$ for
Theorem~\ref{CountingThmForGraphs} let $G_1=G$ and $G_2=\Gamma-G$ in
Theorem~\ref{CountingThmForColouredGraphs}.

\medskip

Additionally, some problems in Ramsey theory discuss not just monochromatic copies of $H$, but copies with a specified colouring. Again, our general transference principle gives counting of $H$-copies with a specified colouring.

\subsubsection{Other discrete structures}\label{subsct:otherdiscretestructures}
So far, we have only discussed special versions of our result for graphs. But, as already indicated earlier in the introduction, it is also interesting to consider transference for other discrete structures, such as, for example, uniform hypergraphs or arithmetic progressions.
Our general transference principle handles all of these in a unified way, using a formalism of Conlon and Gowers~\cite{ConGow} (which is very similar to the formalism used by Schacht~\cite{schacht2016extremal} and for hypergraph containers~\cite{Balogh2015,Saxton2015}, but differs in that it uses ordered hypergraphs).

Before turning to other structures, let us explain this formalism, once more using the example of counting copies of $H=C_4$ in subgraphs of $K_N$. We translate this into a problem of counting $4$-uniform edges in induced subhypergraphs of a specific \emph{ordered hypergraph}~$S$ on $\binom{N}{2}$ nodes. The nodes of $S$ are the edges of $K_N$, and four nodes are an ordered edge if they form a copy of $C_4$ in a cyclic order in $K_N$. Working with ordered copies is, for example, useful when we want to count coloured copies with a given colour pattern.
In this translation, a subgraph of $K_N$ corresponds to a subset of $V(S)$, and we are interested in how many edges of~$S$ the subset induces. In particular, a random graph $G(N,p)$ corresponds to a $p$-random subset $X$ of $V(S)$, a subgraph of a random graph is some $Y\subset X$, and a dense model for~$Y$ is a subset $Z$ of $V(S)$. A coloured version corresponds to partitioning $X$ into colour classes, and in this setting a dense model will be a partition of $V(S)$. Structure functions then are functions from $V(S)$ to $[0,1]$, and subcounts are functions from $E(S)$ to $[0,1]$. 

This formalism extends to many other discrete structures.
For example, arithmetic problems such as the one addressed by Szemer\'edi's theorem can be formulated in this language. Here, the ordered hypergraph~$S$ would have vertices $[n]$ and an ordered edge of size~$k$ for each $k$-term arithmetic progression in~$[n]$. Problems on $k$-uniform hypergraphs, on the other hand, can be translated analogously to the graph case.

In this language, what controls the value of $p$ such that we can prove a transference principle for $p$-random subsets of $V(S)$ is a condition on the $\kappa$-degrees of $S$, where the \emph{$\kappa$-degree} of a set of~$\kappa$ distinct vertices is the number of edges they are contained in jointly (in a suitable ordered fashion; see Definition~\ref{def:degrees} for the details). This turns out to be the same condition that is required in~\cite{Balogh2015,Saxton2015} to prove a container statement, and (as shown in those papers, and repeated in the proofs of our applications) the condition gives the optimal value of $p$ for all the problems we mentioned.

\subsubsection{Counting multiple structures}

One final feature our transference principle allows for is the counting of several structures simultaneously.
For example, one could be interested in a
dense model of a sparse subgraph, in which counts of two graphs~$H_1$ and~$H_2$ are both transferred. This is achieved in Theorem~\ref{thm:mainmulti}.

\subsection{Applications}
We do not attempt to give a list here of all the possible applications of our results. Instead, this subsection aims to give an idea of how our results lead to extensions of important previous work in some main areas of interest where the transference principle or the sparse regularity method proved useful. We expect that many other interesting applications will be discovered by others.

Since our results allow a transference of results from the dense to the sparse setting, but even in the dense setting many challenging extremal combinatorics questions remain open (such as the famous Tur\'an problem for the $4$-vertex clique in $3$-uniform hypergraphs), it is not surprising that we cannot resolve the sparse analogues of such problems. Instead, what our results typically allow us to show is that if the answer to such a problem was found in the dense setting, then it can be transferred with our tools to the sparse setting.

\subsubsection{Tur\'an-type results}
As described, a sparse random Tur\'an theorem was first proved for strictly balanced graphs and hypergraphs by Conlon and Gowers~\cite{ConGow}, and in general by Schacht~\cite{schacht2016extremal}. In both cases, what was actually proved was somewhat stronger, in that it also addressed so-called supersaturation properties. 
For dense $k$-uniform hypergraphs~$H$, \emph{supersaturation} means that there is a density $\pi(H)$ such that for all $\eps>0$ and sufficiently large $N$, there is a $k$-uniform $N$-vertex hypergraph with $\big(\pi(H)-\eps\big)\binom{N}{k}$ edges that does not contain $H$, but any $k$-uniform $N$-vertex hypergraph with $\big(\pi(H)+\eps\big)\binom{N}{k}$ edges contains not just one copy of $H$ but $\Omega(N^{v(H)})$ copies. This was proved in an influential paper of Erd\H{o}s and Simonovits~\cite{ErdosSimonovits}.

In order to formulate the sparse random Tur\'an theorems proved by Conlon and Gowers, and by Schacht, we need some further definitions. The \emph{binomial random $k$-uniform hypergraph} 
$G^{(k)}(N,p)$ is a hypergraph on vertex set $[N]$ obtained by including each set of~$k$ vertices as an edge independently with probability~$p$. 
The \emph{maximum $k$-density} of a $k$-uniform hypergraph $H$ is
\[m_k(H):=\max_{H'\subset H,v(H')\ge k+1}\,\,\frac{e(H')-1}{v(H')-k}\,,\]
and $H$ is \emph{strictly balanced} if the unique maximiser is $H$ itself.

As for graphs, if $p\ll N^{-1/m_k(H)}$ then there are typically $H$-free subhypergraphs of $G^{(k)}(N,p)$ which contain all but a tiny fraction of the edges of $G^{(k)}(N,p)$. Hence no sparse random Tur\'an theorem is possible in this range of~$p$. 
For $p\gg N^{-1/m_k(H)}$, however, a sparse random Tur\'an theorem with supersaturation does hold, as proved for strictly balanced graphs by
Conlon and Gowers~\cite{ConGow} and in general by Schacht~\cite{schacht2016extremal}.

\begin{theorem}[Conlon and Gowers~\cite{ConGow}, Schacht~\cite{schacht2016extremal}]
If $p\gg N^{-1/m_k(H)}$, then for all $\eps>0$ there is $\delta>0$ such that with probability tending to~$1$ as $N\to\infty$ any subgraph of $G^{(k)}(N,p)$ with at least $\big(\pi(H)+\eps\big)p\binom{N}{k}$ edges contains at least $\delta p^{e(H)}N^{v(H)}$ copies of $H$.
\end{theorem}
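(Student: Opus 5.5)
We will derive this sparse Tur\'an theorem with supersaturation from the hypergraph analogue of Theorem~\ref{CountingThmForGraphs} (the $k$-uniform version of our transference principle, which follows from the general Theorem~\ref{thm:GenSimp} via the ordered-hypergraph formalism of Section~\ref{subsct:otherdiscretestructures}), combined with the dense supersaturation theorem of Erd\H{o}s and Simonovits. Fix $\eps>0$. By dense supersaturation there is $\delta_0>0$ such that, for $N$ large, every $k$-uniform hypergraph $G'$ on $[N]$ with $e(G')\ge(\pi(H)+\eps/2)\binom Nk$ satisfies $c(H,G')\ge 2\delta_0 N^{v(H)}$. We will set $\delta=\delta_0$ and apply transference with accuracy $\eps'=\min\{\eps/(4k!),\,\delta_0\}$, so that the error terms are absorbed by these margins.

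First we handle the probability bounds. The transference statement holds with probability at least $1-\eta_{N,p}-\exp(-pN^k/C)$. Since $p\gg N^{-1/m_k(H)}$, we have $p\ge CN^{-1/m_k(H)}$ for large $N$, and $pN^k\to\infty$, so the exponential term tends to $0$. It remains to show $\eta_{N,p}\to0$, that is, that $c(H,G^{(k)}(N,p))\le(1+\eps'/3)p^{e(H)}N^{v(H)}$ with high probability. The expectation is at most $p^{e(H)}N^{v(H)}$, so it suffices to show concentration. We plan to use Chebyshev: the variance is bounded by a sum over subgraphs $H'\subseteq H$ with $e(H')\ge1$ of $N^{2v(H)-v(H')}p^{2e(H)-e(H')}$. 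Dividing by the squared mean, each term is $(N^{v(H')}p^{e(H')})^{-1}$, and this tends to $0$ because $p\gg N^{-1/m_k(H)}$ forces $N^{v(H')}p^{e(H')}\to\infty$ for every $H'$ with at least one edge. For a single edge this reads $N^kp\to\infty$. For $v(H')>k$ we have $N^{v(H')}p^{e(H')}=N^kp\cdot(N^{v(H')-k}p^{e(H')-1})$, and the bracketed factor is at least $1$ by the definition of $m_k$. This step is routine. If $m_k(H)$ is degenerate (for example, $H$ a matching), we would instead rely on whatever version of the general theorem covers that case.

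Second, on the good event, let $G\subseteq\Gamma$ have at least $(\pi(H)+\eps)p\binom Nk$ edges, and take the dense model $G'$. We then have $e(G')\ge e(G)p^{-1}-\eps'N^k\ge(\pi(H)+\eps/2)\binom Nk$, so $c(H,G')\ge2\delta_0N^{v(H)}$. Hence $c(H,G)\ge p^{e(H)}(c(H,G')-\eps'N^{v(H)})\ge\delta_0p^{e(H)}N^{v(H)}$. Embeddings and copies differ only by the constant factor $|\mathrm{Aut}(H)|$, which can be absorbed into $\delta$.

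We expect the main obstacle to be the concentration bound on $\eta_{N,p}$ in the non-strictly-balanced regime, which is exactly where the statement should go beyond Conlon--Gowers. We also need to check that the second-moment computation holds uniformly over all $H'$, including at the threshold scale, where $p\gg N^{-1/m_k(H)}$ is used rather than merely $p\ge CN^{-1/m_k(H)}$.
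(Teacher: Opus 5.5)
Your argument is correct. It follows the same pattern the paper uses for Theorem~\ref{thm:Hdensity}: two-sided transference via Theorem~\ref{thm:colHGtrans}, plus a dense extremal input (here Erd\H{o}s--Simonovits supersaturation). You then add a second-moment bound showing $\eta_{N,p}\to0$.

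The paper does not reprove this cited theorem, but its results offer a shorter route: the lower dense model version, Theorem~\ref{thm:lowercolHGtrans} (derived from Theorem~\ref{thm:lowerGenSimp}). The paper uses it in exactly this way for Theorems~\ref{thm:multiplicity} and~\ref{thm:sparse-szem}. Since you only need a lower bound on the $H$-count, that version has no $\eta_{N,p}$ term, gives failure probability $\exp(-pN^k/C)$, and makes your Chebyshev step unnecessary. Your route costs that extra computation and gives only an $o(1)$ failure probability, but that is all the statement requires.

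The variance computation is also not the obstacle you anticipate. It is insensitive to strict balancedness. It does not even need $p\gg N^{-1/m_k(H)}$: for $e(H')\ge2$, the definition of $m_k$ gives $N^{v(H')}p^{e(H')}\ge N^kp$, and $N^kp\ge CN$ already at $p=CN^{-1/m_k(H)}$ when $H$ is not a matching. The growth of $pN^{1/m_k(H)}$ is used only to reach the $\eps$-dependent transference constant. All the difficulty beyond Conlon--Gowers sits in the transference theorem you are quoting.

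Finally, your hand-wave for matchings has no counterpart in the paper. No result there covers all $p\gg N^{-k}$, since they need $pN^k$ to be at least polylogarithmic. This degenerate case is excluded throughout the paper. If you want it, a direct count works: with high probability only an $o(1)$-fraction of ordered pairs of edges of $\Gamma$ intersect.
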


However, one can ask for a more detailed counting statement. For a given $\rho>0$ and a fixed $k$-uniform hypergraph $H$, let $g_H(\rho, N)$ be the minimal number, and $g'_H(\rho,N)$ be the maximal number, of copies of $H$ which are contained in any $N$-vertex $k$-uniform hypergraph $G$ with $\lfloor\rho\binom{N}{k}\rfloor$ edges. It can easily be shown that
the limits \[g_H(\rho):=\lim_{N\to\infty}g_H(\rho,N)N^{-v(H)}\qquad\text{and}\qquad 
g'_H(\rho):=\lim_{N\to\infty}g'_H(\rho,N)N^{-v(H)}\] exist and are uniformly continuous functions in~$\rho$, though in general these functions are not well understood. For graphs, the Clique Density Theorem of Reiher~\cite{reiher} (which in turn builds on earlier work of Razborov~\cite{Razborov:K3} and Nikiforov~\cite{Nikiforov:K4}) describes $g_{K_k}(\rho)$ for all $k$. For bipartite $H$, a famous conjecture of Sidorenko~\cite{Sidorenko}, which is known for some but not all bipartite $H$, predicts $g_H(\rho)=\rho^{e(H)}$. The function $g'_H(\rho)$ is somewhat better understood, see for instance Gerbner, Nagy and Patk\'os~\cite{GNP}. 

Conlon, Gowers, Samotij and Schacht \cite[Theorem 1.10]{CGSS} proved that results of this type can be transferred to the sparse random setting when~$H$ is strictly balanced (formally, they proved only the $g_H(\rho)$ part, but their methods readily give the upper bound $g'_H(\rho)$ too). We prove the analogue for general graphs~$H$. We note that our result also improves on~\cite[Theorem 1.10]{CGSS} in that we obtain the asymptotically optimal failure probability.

\begin{theorem}[sparse random $H$-density]\label{thm:Hdensity}
  For any $k$-uniform hypergraph $H$ with at least $2$ edges and $m_k(H)>\tfrac{1}{k}$, and any
  $\eps>0$, let \( \eta_{N,p} \) be the probability that the number of
  copies of~$H$ in \(G^{(k)}(N,p) \) exceeds \( (1+\tfrac\eps{9k!}) p^{e(H)}
  N^{v(H)} \).  Then there exists a constant $C$ such that for $p\geq
  CN^{-1/m_k(H)}$ the random hypergraph $\Gamma=G^{(k)}(N,p)$ has the following
  property with probability at least \( 1 - \eta_{N,p} -\exp(-pN^k/C)\). Every
  subgraph \( G \subseteq \Gamma \) with $\lfloor \rho p\binom{N}{k}\rfloor$
  edges contains between $(g_H(\rho)-\eps)p^{e(H)}N^{v(H)}$ and
  $(g'_H(\rho)+\eps)p^{e(H)}N^{v(H)}$ copies of $H$.
\end{theorem}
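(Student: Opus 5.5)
The plan is to derive Theorem~\ref{thm:Hdensity} from the hypergraph analogue of Theorem~\ref{CountingThmForGraphs}: a $k$-uniform, $r=2$ instance of the general transference principle (Theorem~\ref{thm:GenSimp}). I would encode copies of $H$ in the ordered hypergraph $S$ whose vertex set is $\binom{[N]}{k}$ and whose ordered edges are the copies of $H$ in $K^{(k)}_N$. The degree condition needed there is the one from the container method, which holds for $p\ge CN^{-1/m_k(H)}$. Checking it is the usual computation: the $\kappa$-degree of $\kappa$ nodes of $S$ is at most $N^{v(H)-v(H')}$ over subgraphs $H'\subseteq H$ with $\kappa$ edges. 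The hypothesis $m_k(H)>1/k$ plays the role that $m_2(H)\ge1$ plays for graphs, ensuring the exponent is right.

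Applying the transference with error $\eps'=\eps/3$ (or smaller) gives the following. With probability at least $1-\eta_{N,p}-\exp(-pN^k/C)$, every $G\subseteq\Gamma$ has a dense model $G'$ on $[N]$ with
\[
e(G)p^{-1}=e(G')\pm\eps' N^k,\qquad c(H,G)p^{-e(H)}=c(H,G')\pm\eps' N^{v(H)}.
\]
The constant in the definition of $\eta_{N,p}$ has to be matched. The threshold $(1+\eps/(9k!))p^{e(H)}N^{v(H)}$ is what one gets after converting the $\binom{N}{k}$ normalisation to $N^k/k!$ with error $\eps/3$. So I would choose the transference error so that its ``$\eps/3$'' threshold equals $\eps/(9k!)$, which means rescaling $\eps'$ accordingly.

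Now take $G$ with $\lfloor\rho p\binom Nk\rfloor$ edges. Then $e(G')=\rho'\binom Nk$ with $|\rho'-\rho|\le k!\eps'+o(1)$. To compare with $g_H$, I would add or delete $O(\eps'N^k)$ edges of $G'$ so that it has exactly $\lfloor\rho\binom Nk\rfloor$ edges. Each such change alters $c(H,\cdot)$ by at most $e(H)N^{v(H)-k}$ per edge, so the total change is $O(\eps'N^{v(H)})$. By the definition of $g_H(\rho,N)$ and convergence to $g_H(\rho)$, the modified graph has at least $(g_H(\rho)-\eps/3)N^{v(H)}$ copies for $N$ large, and similarly at most $(g'_H(\rho)+\eps/3)N^{v(H)}$. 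Combining these with the transference error bounds $c(H,G)$ between $(g_H(\rho)-\eps)p^{e(H)}N^{v(H)}$ and $(g'_H(\rho)+\eps)p^{e(H)}N^{v(H)}$.

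The bound must hold uniformly in $\rho$. This is fine because only the limit for the relevant $\rho$ is used, and uniform continuity of $g_H,g'_H$ lets us handle a finite net of $\rho$ values with one $N_0$. Small $N$ is absorbed by enlarging $C$, since $p\le1$ forces $N$ bounded otherwise. The main obstacle is the verification of the degree condition of the general theorem for non-strictly-balanced $H$. The other delicate point is the bookkeeping of $\eps$ through the $k!$ normalisation so that the stated $\eta_{N,p}$ is exactly the one the transference theorem produces. I expect the degree condition to reduce to the definition of $m_k(H)$ as in the container papers.
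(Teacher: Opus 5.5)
Your plan is the paper's proof. You apply Theorem~\ref{thm:colHGtrans} with $r=2$ at error $\eps/(3k!)$, then compare the dense model with $g_H,g'_H$ via monotonicity and an edge-by-edge Lipschitz estimate. The degree-condition check is already done inside Theorem~\ref{thm:colHGtrans}, so it is not the obstacle; there $m_k(H)>\tfrac1k$ is only used to verify $p\ge(\log^C n)n^{-1}$. Your remarks on uniformity in $\rho$ and on small $N$ are fine.

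\textbf{The gap.} The genuine gap is in the step you treat as bookkeeping. Matching $\eta_{N,p}$ forces the transference error to be at least $\eps'=\eps/(3k!)$. So the slack in $e(G')$ is a fixed amount of roughly $\tfrac{\eps}{3}\binom{N}{k}$ edges. Moving $G'$ to exactly $\lfloor\rho\binom{N}{k}\rfloor$ edges costs this slack times the Lipschitz constant of $g_H$ or $g'_H$. Your $O(\eps'N^{v(H)})$ hides that constant; a $k$-set lies in up to $e(H)k!N^{v(H)-k}$ embeddings of $H$. Since $\eps'$ cannot be shrunk, nothing keeps the total error below $\eps$.

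It genuinely fails. Take $k=2$ and $H=K_6$. Counting embeddings, as $c(H,\cdot)$ does, Kruskal--Katona gives $g'_{K_6}(\rho)=\rho^3$. At $\rho=1-\eps/3$, the transference output with $\eps'=\eps/6$ is consistent with $G'=K_N$. So all you can conclude is $c(K_6,G)\le(1+\tfrac{\eps}{6})p^{15}N^6$. The theorem demands $\big((1-\tfrac{\eps}{3})^3+\eps\big)p^{15}N^6=\big(1+O(\eps^2)\big)p^{15}N^6$. Even the trivial bound $c(K_6,G)\le c(K_6,\Gamma)\le(1+\tfrac{\eps}{18})p^{15}N^6$ does not close this. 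The lower bound has the same problem, e.g.\ for $K_3$ near $\rho=1$: Goodman's bound, tight on complete multipartite graphs, gives $g_{K_3}$ slope about $3$ there. The paper's write-up follows the same route. It closes this step only through $g'_H(\rho+\tfrac13\eps,N)\le g'_H(\rho,N)+\tfrac13\eps N^{v(H)}$, which the $K_6$ example contradicts, so you cannot simply import it.

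\textbf{The remedy.} Decouple the edge-count error from the $\eta$-threshold.
\begin{itemize}
\item \emph{Lower bound.} Use Theorem~\ref{thm:lowercolHGtrans}, which has no $\eta$ term. Choose its error $\eps_1$ so small that $g_H$ varies by less than $\eps/2$ over intervals of length $2k!\eps_1$.
\item \emph{Upper bound, first part.} Return to Theorem~\ref{thm:main} with the same small $\eps_1$. Take the $\eps_1$-deletion $\tilde X$ and a dense model $G'$ of $G\cap\tilde X$. Then $e(G')\le p^{-1}e(G)+\eps_1\binom{N}{k}$. Monotonicity and continuity of $g'_H$ give $c(H,G\cap\tilde X)p^{-e(H)}\le c(H,G')+O(\eps_1)N^{v(H)}\le\big(g'_H(\rho)+\tfrac{\eps}{2}\big)N^{v(H)}$.
\item \emph{Upper bound, second part.} Every copy of $H$ in $G$ not contained in $\tilde X$ is a copy in $\Gamma$ not contained in $\tilde X$. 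Outside the $\eta$ event, and by the first bullet of Theorem~\ref{thm:main}, there are at most $c(H,\Gamma)-c(H,\tilde X)\le\big(\tfrac{\eps}{9k!}+O(\eps_1)\big)p^{e(H)}N^{v(H)}$ of these.
\end{itemize}
Now the $\eta$-threshold enters only additively and is never multiplied by a Lipschitz constant. This is the derivation of Theorem~\ref{thm:GenSimp} from Theorem~\ref{thm:main}, with the deletion parameter chosen independently of $\eps$. The failure probability remains $\eta_{N,p}+\exp(-pN^k/C)$.
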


The restriction $m_k(H)>\tfrac{1}{k}$ rules out only the case that $H$ is a matching: as we will discus in Section~\ref{sec:GTP}, we can handle this case but the lower bound on $p$ is a polylogarithmic factor larger. This theorem is an immediate consequence of the hypergraph analogue of Theorem~\ref{CountingThmForGraphs}, the hypergraph counting lemma (Theorem~\ref{thm:hypercount}). The details of the proof of Theorem~\ref{thm:Hdensity} can be found in Section~\ref{sec:quick}.
Here, we briefly sketch how the graph case of Theorem~\ref{thm:Hdensity} follows from Theorem~\ref{CountingThmForGraphs} to give the reader a basic idea of how this result can be applied.

\begin{proof}[Proof sketch of the graph case of Theorem~\ref{thm:Hdensity}]
  Given $G\subset\Gamma$, we let $G'$ be its dense model as provided by
  Theorem~\ref{CountingThmForGraphs} (with input a sufficiently small $\eps'$
  depending on $H$ and $\eps$), which has close to $\rho\binom{N}{2}$ edges by
  the definition of a dense model. Therefore, by definition and uniform continuity of
  $g_H$ and $g'_H$, the graph $G'$ has between
  $(g_H(\rho)-\eps/2)\binom{N}{v(H)}$ and
  $(g'_H(\rho)+\eps/2)\binom{N}{v(H)}$ copies of $H$, and again by the definition of
  a dense model we obtain the required bounds on the number of copies of $H$ in
  $G$.
\end{proof}

In addition, various well-known questions in extremal combinatorics concern the
counting of multiple subgraphs simultaneously. For example, the Erd\H{o}s
pentagon conjecture, now a theorem of Grzesik~\cite{Grzesik} and Hatami,
Hladk\'y, Kr\'a\v{l}, Norin, and Razborov~\cite{HHKNR}, gives the maximum number
of copies of $C_5$ in a triangle-free graph on $N$ vertices.

\begin{theorem}\label{thm:dense-pentagon}
  For any triangle free graph~$G$ on~$N$ vertices we have $c(C_5,G)\le\frac{2}{625}N^5$.
\end{theorem}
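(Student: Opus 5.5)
Theorem~\ref{thm:dense-pentagon} is the Erd\H{o}s pentagon conjecture, proved by Grzesik~\cite{Grzesik} and independently by Hatami, Hladk\'y, Kr\'a\v{l}, Norin and Razborov~\cite{HHKNR}. In the paper we would simply cite it. If I had to prove it myself, I would follow the flag-algebra route of~\cite{HHKNR}, organised as follows.

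\emph{Normalisation and extremal example.} Since $c(C_5,G)$ counts embeddings, the target constant is fixed by the balanced blow-up of $C_5$ with parts of size $N/5$. Here every choice of one vertex per part, together with each of the $10$ automorphisms of $C_5$, gives an embedding, so the count is $10(N/5)^5=\tfrac{2}{625}N^5$. The task is therefore to show that the induced density of $C_5$ in a triangle-free graph is at most $5!/5^5=24/625$, which is the value attained by the blow-up. We work with the limit object: a sequence of triangle-free graphs whose pentagon density converges. The claim becomes an inequality in the flag algebra $\mathcal A$ of triangle-free graphs. Finite-$N$ error terms are $O(N^4)$ and can be absorbed, because the inequality is then applied to $G$ itself via the standard fact that flag-algebra inequalities give $c(C_5,G)\le \tfrac{2}{625}N^5$ exactly after a blow-up/averaging argument. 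Alternatively, one proves the asymptotic version and uses that $c(C_5,\cdot)N^{-5}$ is monotone under balanced blow-ups.

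\emph{Certificate.} Let $\mathcal F_5$ be the set of triangle-free graphs on $5$ vertices. Expand $\tfrac{24}{625}-p(C_5)$ as $\sum_{F\in\mathcal F_5}\bigl(\tfrac{24}{625}-\ind_{F=C_5}\bigr)p(F)$. We then seek types $\sigma$ on $3$ vertices (triangle-free), with flags on $4$ vertices, and positive semidefinite matrices $Q_\sigma$ such that
\[
\tfrac{24}{625}-p(C_5)\;=\;\sum_\sigma \bigl[\![\,x_\sigma^{\top}Q_\sigma x_\sigma\,]\!\bigr]_\sigma+\sum_{F\in\mathcal F_5}a_F\,p(F)
\]
with all $a_F\ge0$. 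Here $[\![\cdot]\!]_\sigma$ is the averaging operator and $x_\sigma$ is the vector of $\sigma$-flags. Positivity of each term in every limit then gives the bound. I would find $Q_\sigma$ numerically by solving the SDP. I would then round to exact rationals using the known extremal structure: the blow-up of $C_5$ forces certain $a_F=0$ and certain vectors into the kernel of $Q_\sigma$, which pins down the rounding.

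\emph{Main obstacle.} The main obstacle is producing an exact, verifiable certificate. The SDP is tight, because the bound is attained, so naive rounding fails. One must first identify all the complementary-slackness constraints coming from the extremal blow-up, namely which $5$-vertex graphs have positive density in it and which flag combinations vanish. One then solves the SDP restricted to that face. If $5$-vertex flags turned out to be insufficient, the fallback would be Grzesik's more combinatorial approach. That approach bounds, for each vertex $v$, the number of pentagons through $v$ in terms of edges between $N(v)$ and the second neighbourhood, which is an independent-set structure because $G$ is triangle-free. These local bounds are then summed with a carefully chosen weighting and combined with a flag-type inequality on degree sequences.
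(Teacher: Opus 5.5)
Your proposal matches the paper, which gives no proof of Theorem~\ref{thm:dense-pentagon} and simply cites Grzesik~\cite{Grzesik} and Hatami, Hladk\'y, Kr\'a\v{l}, Norin and Razborov~\cite{HHKNR}; your flag-algebra outline (an asymptotic bound of $24/625$ on the induced $C_5$ density, then a blow-up argument to get the exact bound for every $N$) is the method of those papers. Two small corrections: Grzesik's proof is also a flag-algebra argument, as the paper remarks in Section~\ref{sec:pentagon}, not a more combinatorial alternative. Also, the bound $24/625$ on induced density holds only in the limit, since for finite $N$ the balanced blow-up (or $C_5$ itself) exceeds it, which is exactly why your blow-up step is needed.
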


The constant $\tfrac{2}{625}$ in this result is best possible since it is
attained for the balanced blow-up of $C_5$, i.e., five vertex sets of size
$\frac{N}{5}$ each, with each completely adjacent to the next in cyclic order
and no other edges.
It is natural to ask whether a sparse random analogue of this theorem exists,
and the obvious route to proving it would be to ask for a dense model which
preserves simultaneously the number of triangles and of $C_5$s.

\begin{theorem}[sparse pentagon theorem]\label{thm:pentagon}
 For all $\eps>0$ there exists $C>0$ such that, if $p\ge CN^{-1/2}$, with probability tending to $1$ as $N\to\infty$, the random graph $\Gamma=G(N,p)$ has the following property. Any subgraph $G$ of $\Gamma$ which contains no copy of $K_3$ has $c(C_5,G)\le\big(\tfrac{2}{625}+\eps\big)p^5N^5$.
\end{theorem}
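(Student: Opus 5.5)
We obtain Theorem~\ref{thm:pentagon} from a multi-structure transference: a dense model that simultaneously preserves the counts of $K_3$ and of $C_5$. This is the version of the transference principle for several target graphs mentioned above, Theorem~\ref{thm:mainmulti}. In its graph form it says the following. Fix $\eps'>0$ small in terms of $\eps$. Then for $p\ge CN^{-1/\max(m_2(K_3),m_2(C_5))}$, with high probability $\Gamma=G(N,p)$ has the property that every $G\subseteq\Gamma$ has a graph $G'$ on $[N]$ with
\[
c(K_3,G)p^{-3}=c(K_3,G')\pm\eps' N^3
\qquad\text{and}\qquad
c(C_5,G)p^{-5}=c(C_5,G')\pm\eps' N^5\,.
\]
Here $m_2(K_3)=2$ and $m_2(C_5)=4/3$, so the threshold is $p\ge CN^{-1/2}$, as required. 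The failure probability involves $\eta_{N,p}$ for $K_3$ and for $C_5$. These are the probabilities that $G(N,p)$ has noticeably more copies than expected, and both tend to $0$ when $p\gg N^{-1/2}$ by standard concentration (Janson or second moment), since the expected counts then go to infinity. Hence the property holds with probability tending to $1$.

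Given a triangle-free $G\subseteq\Gamma$, the dense model $G'$ has $c(K_3,G')\le\eps'N^3$, so it is nearly triangle-free. The next step is to remove triangles. By the triangle removal lemma, choosing $\eps'$ small we can delete at most $\delta N^2$ edges of $G'$ to obtain a triangle-free graph $G''$. Each deleted edge lies in at most $N^3$ copies of $C_5$, so $c(C_5,G'')\ge c(C_5,G')-O(\delta)N^5$. Applying Theorem~\ref{thm:dense-pentagon} to $G''$ gives $c(C_5,G')\le(\tfrac{2}{625}+O(\delta))N^5$. Therefore
\[
c(C_5,G)\le p^5\bigl(c(C_5,G')+\eps'N^5\bigr)\le\bigl(\tfrac{2}{625}+\eps\bigr)p^5N^5\,,
\]
once $\delta,\eps'$ are chosen small.

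The main obstacle is securing the joint dense model. Applying Theorem~\ref{CountingThmForGraphs} separately to $K_3$ and to $C_5$ gives two possibly different graphs $G'$, which is useless here. So we must rely on the multi-structure statement Theorem~\ref{thm:mainmulti}, and check that its degree conditions hold for the union of the two hypergraphs $S_{K_3}$ and $S_{C_5}$ at $p\ge CN^{-1/2}$. I expect these conditions to be exactly the separate $\kappa$-degree conditions for each $H$, which hold at $p\gg N^{-1/m_2(H)}$, so the binding one is the triangle at $N^{-1/2}$. If $c(\cdot,\cdot)$ counts embeddings rather than copies, the constants $\tfrac{2}{625}$ must be matched to the same normalisation on both sides. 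This is routine bookkeeping.
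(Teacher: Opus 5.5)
Your proposal is correct and follows essentially the same route as the paper: apply Theorem~\ref{thm:mainmulti} with $S_1$ the ordered triangle hypergraph and $S_2$ the ordered $C_5$ hypergraph on $\binom{[N]}{2}$ to get a single dense model $G'$, then apply triangle removal and Theorem~\ref{thm:dense-pentagon}, and transfer back. Your expectation about that theorem is right, since its codegree conditions are imposed separately for each $S_i$ and are checked exactly as in the proof of Theorem~\ref{thm:colHGtrans}. The bookkeeping you left implicit is routine: the conditions $p\ge n^{-1+\rho}$ (take $\rho=\tfrac12$) and $e(S_i)\ge n(\log n)^{O(1)}$ hold, the upper-tail terms already vanish for $p\ge CN^{-1/2}$ (not just $p\gg N^{-1/2}$) by the second moment method, and $\tfrac{2}{625}N^5$ is indeed the embedding count.
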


The proof of
Theorem~\ref{thm:pentagon} can be found in Section~\ref{sec:pentagon}; in addition to applying our general transference principle it also requires the so-called triangle removal lemma.

More generally, replacing~$K_3$ and~$C_5$ by other graphs in
Theorem~\ref{thm:pentagon} leads to questions involving the so-called generalised
Tur\'an numbers. Results in the dense regime for this type of questions can be
transferred analogously to the sparse setting using our transference
principle. Much about generalised
Tur\'an numbers remains open, but see for example~\cite{GerbnerPalmer} for a survey on what is known.

\subsubsection{Ramsey-type results}
Ramsey's theorem~\cite{ramsey1930problem} states that for any given $k,r\ge2$ and any $k$-uniform hypergraph $H$, if $N$ is sufficiently large then any $r$-colouring of $E(K^{(k)}_N)$ contains a monochromatic copy of $H$. 
Starting with the influential work of Goodman~\cite{Goodman} and a conjecture of Erd\H{o}s~\cite{Erdos:multiplicity} famously refuted by Thomason~\cite{Thomason:disproof}, a counting-version of Ramsey's theorem, called the \emph{Ramsey multiplicity} problem, received much attention. The goal here is to determine, given a hypergraph~$H$ and a number~$r$, the largest constant $c_{H,r}$ such that for all $\eps>0$, if $N$ is sufficiently large then every $r$-colouring of $E(K^{(k)}_N)$ contains at least $(c_{H,r}-\eps)N^{v(H)}$ monochromatic copies of $H$.

Determining $c_{H,r}$ is a difficult problem in general (see for example~\cite{FoxWig} for some history and more recent advances).
We prove that, regardless, sparse analogues of Ramsey multiplicity statements exist.

\begin{theorem}[sparse Ramsey multiplicity]
\label{thm:multiplicity}
Given $r,k\ge 2$, $\eps>0$ and a $k$-uniform hypergraph $H$ with at least two edges and $m_k(H)>\tfrac{1}{k}$, there exists a positive constant $C$ such that if $p\ge CN^{-1/m_k(H)}$ then with probability 
at least $1-\exp(-pN^k/C)$, the random hypergraph $\Gamma=G^{(k)}(N,p)$ has the following property. Every $r$-edge-colouring of $E(\Gamma)$ contains at least $(c_{H,r}-\eps)p^{e(H)}N^{v(H)}$ monochromatic copies of $H$.
\end{theorem}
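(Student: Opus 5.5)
We derive the sparse Ramsey multiplicity statement from the coloured transference principle, in its hypergraph form: the $k$-uniform analogue of Theorem~\ref{CountingThmForColouredGraphs}, which follows from the general transference principle, Theorem~\ref{thm:GenSimp}, applied with the ordered hypergraph $S$ whose nodes are the $k$-sets of $[N]$ and whose edges are the ordered copies of $H$. We then apply the dense Ramsey multiplicity bound to the dense model. The only real difference from the statement is the failure probability. Theorem~\ref{CountingThmForColouredGraphs} fails with probability about $\eta_{N,p}+\exp(-pN^k/C)$, where $\eta_{N,p}$ is the upper-tail probability for the number of $H$-copies. That upper tail is the obstruction we must remove, and it can be removed because we only need a \emph{lower} bound on monochromatic copies.

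\textbf{Step 1.} Remove the upper-tail requirement. In the general transference principle, the upper-tail event only enters to guarantee that the dense model cannot be forced to have too many copies. For a lower bound we can either invoke the one-sided version of the main theorem, or truncate. If the main theorem only provides the two-sided version, I would use a deletion trick. Let $\Gamma'\subseteq\Gamma$ be obtained by deleting a few edges so that no $k$-set lies in too many $H$-copies relative to the expectation. Alternatively, count only copies weighted by a subcount $\omega$ that vanishes on copies through ``heavy'' substructures. With optimal constants, Janson/Kim--Vu-type bounds (or the container-style degree conditions $\kappa$-degrees used in the main theorem) show that this deletion removes only an $\eps$-fraction of the expected copies, except with probability $\exp(-pN^k/C)$.

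This step is the main obstacle. I expect the paper's general theorem to already state a one-sided version, failing only with probability $\exp(-pN^k/C)$, which gives a dense model whose counts satisfy $c(H,G_i)p^{-e(H)}\ge c(H,G_i')-\eps N^{v(H)}$. I would look for and use that version first.

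\textbf{Step 2.} Transfer and apply the dense bound. Given an $r$-colouring $E(\Gamma)=G_1\dcup\dots\dcup G_r$, take the compatible dense models $E(K^{(k)}_N)=G'_1\dcup\dots\dcup G'_r$, with $\eps'=\eps/(2r)$. By the definition of $c_{H,r}$, for $N$ large the colouring of $K_N^{(k)}$ has $\sum_i c(H,G'_i)\ge (c_{H,r}-\eps/2)N^{v(H)}$. Compatibility is crucial here: the $G'_i$ form a genuine partition, so the dense theorem applies. Summing the per-colour lower transference bounds gives
\[
\sum_i c(H,G_i)\ge p^{e(H)}\Big(\sum_i c(H,G'_i)-r\eps' N^{v(H)}\Big)\ge (c_{H,r}-\eps)p^{e(H)}N^{v(H)}.
\]
Since every copy of $H$ inside some $G_i$ is monochromatic, this is the required count. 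The condition $m_k(H)>1/k$ and $p\ge CN^{-1/m_k(H)}$ are exactly the hypotheses of the hypergraph transference theorem, and small $N$ is absorbed into $C$.
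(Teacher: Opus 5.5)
Your proposal is correct and matches the paper's proof. The paper applies the one-sided coloured transference, Theorem~\ref{thm:lowercolHGtrans} (obtained from Theorem~\ref{thm:lowerGenSimp} in the same way that Theorem~\ref{thm:colHGtrans} is obtained from Theorem~\ref{thm:GenSimp}), with error $\eps/(2r)$. It then applies the dense Ramsey multiplicity bound to the model colouring and sums the per-colour lower bounds, exactly as in your Step~2. The one-sided version you anticipated does exist, so the speculative deletion and subcount alternatives in your Step~1 are unnecessary.
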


The proof of this theorem is provided in Section~\ref{sec:quick}.
As before, a version of this result for strictly balanced hypergraphs follows from Conlon and Gowers~\cite{ConGow} (though they did not state this explicitly). We remark that the probability bound stated in Theorem~\ref{thm:multiplicity} does not readily follow from Theorem~\ref{CountingThmForColouredGraphs} (we need to use
Theorem~\ref{thm:GenSimp} instead).

\subsubsection{Szemer\'edi-type results}
Analogously to the Ramsey multiplicity problem, one can formulate a variant of Szemer\'edi's theorem~\cite{szemeredi1975sets} with multiplicity. Let $f(k,\delta)$ 
be the largest constant such that for all $\eps>0$, if~$n$ is sufficiently large, then any subset of $[n]$ of size at least $\delta n$ contains $\big(f(k,\delta)-\eps\big)n^2$ arithmetic progressions of length~$k$. As far as we are aware, not much is known about the behaviour of $f(k,\delta)$. But, again, our methods allow for a transference to the sparse setting.
The \emph{binomial random set} $[n]_p$ is the subset~$X$ of $[n]$ obtained by including each element independently with probability~$p$.

\begin{theorem}[sparse Szemer\'edi multiplicity]
\label{thm:sparse-szem}
 Given $k\ge 3$ and $\eps>0$, there exists a positive constant $C$ such that if $p\ge Cn^{-1/(k-1)}$ then with probability at least
 $1-\exp(-pn/C)$, the binomial random set $X=[n]_p$ has the following property. For all $\delta>0$, all subsets of $X$ of size at least $\delta p n$ contain at least $(f(k,\delta)-\eps)p^{k}n^2$ arithmetic progressions of length $k$.
\end{theorem}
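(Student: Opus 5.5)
We derive Theorem~\ref{thm:sparse-szem} from the general transference principle (Theorem~\ref{thm:GenSimp}), applied in the ordered-hypergraph formalism of Section~\ref{subsct:otherdiscretestructures}. Let $S$ be the $k$-uniform ordered hypergraph on $V(S)=[n]$ whose edges are the $k$-term arithmetic progressions $(a,a+d,\dots,a+(k-1)d)$ with $d\ge1$; there are $\Theta(n^2)$ of them. Our only structure function is the constant $\sigma\equiv1$, so we transfer relative size. Our only subcount is the trivial one, so we transfer the total progression count.

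\textbf{Step 1: degree condition.} Any two distinct elements of $[n]$, placed in specified positions, determine at most one progression. Hence the $\kappa$-degrees satisfy $\Delta_1=O(n)$ and $\Delta_\kappa=O(1)$ for $2\le\kappa\le k$. With $|E(S)|=\Theta(n^2)$ and $v(S)=n$, the container-type condition of Definition~\ref{def:degrees} asks that
\[
\Delta_\kappa\le C'\,p^{\kappa-1}\,|E(S)|/v(S)\qquad\text{for }2\le\kappa\le k.
\]
The binding case is $\kappa=k$, which requires $p^{k-1}n\ge C$, that is, $p\ge Cn^{-1/(k-1)}$. This matches the hypothesis. (This step is routine, as in~\cite{ConGow}.)

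\textbf{Step 2: handling the upper tail.} Theorem~\ref{thm:GenSimp} in its two-colour form gives dense models $Z$ for $Y\subseteq X$ (taking $Y$ and $X\setminus Y$ as the two colours). As in Theorem~\ref{CountingThmForColouredGraphs}, the failure probability contains an upper-tail term. We need this term to be at most $\exp(-pn/C)$, which is not automatic. I expect this to be the main obstacle. We do not actually need the upper bound on progression counts in $Y$, only the lower bound. So, exactly as the paper indicates for Theorem~\ref{thm:multiplicity}, I would use the version of Theorem~\ref{thm:GenSimp} that yields a one-sided conclusion, $c(Y)p^{-k}\ge c(Z)-\eps n^2$, with failure probability $\exp(-pn/C)$ and no $\eta$ term. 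If that version is unavailable, the fallback is to restrict to progressions avoiding a small deleted set that carries the excess counts; I would not expect to need this.

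\textbf{Step 3: transfer.} Fix $\eps$ and apply the transference with $\eps'\ll\eps$. On the good event, take $Y\subseteq X$ with $|Y|\ge\delta pn$. Its dense model $Z\subseteq[n]$ (or fractional model $Z:[n]\to[0,1]$) has $|Z|\ge(\delta-\eps')n$. If the model is fractional, round it randomly; this changes counts by only $o(n^2)$.

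By the definition of $f$, $Z$ contains at least $(f(k,\delta-\eps')-\eps/3)n^2$ progressions. This requires a continuity or monotonicity statement for $f$ in $\delta$. Monotonicity is immediate, since a set of size at least $\delta n$ is also of size at least $(\delta-\eps')n$. Left-continuity in $\delta$ needs a short argument: removing or adding $\eps'n$ elements changes the count by at most $k\eps' n^2$. Alternatively, we can sidestep this by proving the conclusion with $f(k,\delta-\eps')$ and choosing $\eps'$ via uniform continuity.

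Transferring back, $c(Y)\ge(f-\eps)p^kn^2$.

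The statement is uniform over all $\delta$ because the good event does not depend on $\delta$. The only $\delta$-dependence is through the uniform continuity of $f$. Lipschitz-type continuity, $|f(k,\delta)-f(k,\delta')|\le k|\delta-\delta'|$, follows from the add/remove argument above. So one choice of $\eps'$ works for all $\delta$ simultaneously.
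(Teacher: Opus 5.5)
Your proposal is correct and is essentially the paper's proof: the same hypergraph $S$ of increasing $k$-term progressions with $\Sigma=\Omega=\{\one\}$, the lower transference Theorem~\ref{thm:lowerGenSimp} to remove the upper-tail term, and the Lipschitz bound $f(k,\delta-\eps')\ge f(k,\delta)-k\eps'$. That theorem is in the paper and returns a genuine partition, so neither your fallback nor any rounding is needed. One small point, also glossed over in the paper: uniformity in $\delta$ additionally requires the threshold on $n$ in the definition of $f$ to be uniform in $\delta$; you get this by invoking that definition only at finitely many grid values of $\delta$ (spacing about $\eps/k$) and interpolating with the same Lipschitz bound.
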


We remark that this result (apart from the failure probability) does also already follow from the work of Conlon and Gowers~\cite{ConGow}, but it is somewhat harder to deduce it in their setting: A technical restriction in their approach makes it necessary to consider arithmetic progressions in $\mathbb{Z}_q$ for a prime $q$, and it then has to be proven that this translates to a counting statement for $[n]$. By contrast, our machinery applies directly to $[n]$. In addition, we obtain the optimal failure probability $1-\exp(-pn/C)$. We provide a proof in Section~\ref{sec:quick}.

\medskip

As a second application concerning arithmetic progressions, we provide a new short proof of a recent theorem of Alvarado, Kohayakawa, Morris, Mota and Ortega~\cite{AKMMO}. The purpose of this application is to demonstrate how we can use our results to handle multicoloured structures.

A classical result of van der Waerden~\cite{vanderWaerden} states that for every choice of numbers~$r$ and~$k$, if~$n$ is sufficiently large, then any $r$-colouring of $[n]$ contains a monochromatic arithmetic progression of length~$k$. A well-known result of Erd\H{o}s and Graham~\cite{ErdGra} establishes a so-called \emph{canonical} version of this theorem. They proved that for every choice of~$k$, if~$n$ is sufficiently large then any colouring (without any restriction on the number of colours) of $[n]$ contains an arithmetic progression that is either monochromatic or \emph{rainbow} (that is, where each colour appears no more than once). The following theorem transfers this canonical van der Waerden theorem to the sparse setting.

\begin{theorem}[sparse canonical van der Waerden~\cite{AKMMO}]\label{thm:VdW}
 Let $k\ge3$ be an integer. There exists $C^\ast >0$ such that for any $p\ge C^\ast n^{-1/(k-1)}$ and $X=[n]_p$ the following holds
 with probability tending to~$1$ as $n\to\infty$. For any colouring of $X$, there exists a $k$-term arithmetic progression in $X$ that is either monochromatic or rainbow.
\end{theorem}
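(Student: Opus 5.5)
The plan is to transfer the dense canonical van der Waerden theorem of Erd\H{o}s and Graham to $X=[n]_p$ via a coloured dense model. The catch is that the colouring of $X$ may use an unbounded number of colours, while the transference principle handles only a bounded number of colour classes. So the first step is a reduction to boundedly many classes. Fix a large constant $r$, depending on $k$, and split the colouring of $X$ according to colour-class size:
\begin{itemize}
\item a colour is \emph{big} if its class has at least $\beta pn$ elements, for a small constant $\beta$;
\item the union of all other classes is the \emph{small part}.
\end{itemize}
There are at most $1/\beta$ big colours.

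If the small part has size at most $\gamma pn$, for some $\gamma \ll \beta$, the big colours cover almost all of $X$. In that case we apply the coloured version of the general transference principle (Theorem~\ref{thm:GenSimp}) to the ordered $k$-uniform hypergraph $S$ of $k$-term APs on $[n]$, with one colour per big class plus one for the small part. The $\kappa$-degree condition holds for $p\ge Cn^{-1/(k-1)}$, as discussed in Section~\ref{subsct:otherdiscretestructures}. This yields a partition of $[n]$ with matching densities and matching counts of monochromatic APs. By a supersaturated (Varnavides-type) form of van der Waerden's theorem with $1/\beta+1$ colours, the dense model contains $\Omega(n^2)$ monochromatic APs. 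Almost all of these must avoid the small-part colour, since that class has density at most $\gamma$ and so contains only few APs, by supersaturation in reverse or by subcounts. Transferring back gives $\Omega(p^k n^2)$ monochromatic APs in $X$, and in particular at least one.

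If the small part is large, the aim is a rainbow AP, and here I expect the main obstacle. The small part can itself carry many colours, and "rainbow" is not a property of a bounded colouring. My plan is to randomly group the colours into $r$ super-classes, each small colour going into a uniformly random super-class. Each small colour has size below $\beta pn$, so with positive probability the grouping is balanced. We then apply the transference principle with subcounts to the resulting $r$-colouring, counting APs that are rainbow with respect to the super-classes.

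Rainbow APs in the super-colouring need not be rainbow in the original colouring: two terms may share an original colour. So we must show that APs with a repeated original colour are rare. For a fixed pair of positions $i<j$, such APs are counted by $\sum_c \#\{\text{APs with terms } i,j \text{ in } c\}$. Each colour class is small, so this sum should be bounded by $O(\beta)\,p^k n^2$. To prove this, I would use the upper-count direction of transference applied to pairs, or a direct union bound using the $2$-degree of $S$, i.e.\ that two positions determine the AP. I expect this bound to need the structure functions $\sigma$ carefully chosen.

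On the dense side, a dense colouring with $r$ classes of roughly equal density contains $\Omega(n^2)$ APs that are rainbow for the classes, via Erd\H{o}s--Graham with supersaturation: no class is large, so monochromatic APs cannot be forced. Choosing $\beta \ll$ this $\Omega$ constant makes the rainbow count beat the repeated-colour count. The failure probability is handled by the transference principle plus a union bound over the bounded number of configurations.
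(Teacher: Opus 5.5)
Your overall architecture (coarsen to boundedly many colour classes, apply coloured transference, then transfer a frequent dense colour pattern back) is the right one, but the case split leaves a hole. Case~A covers ``small part at most $\gamma pn$'' and Case~B covers everything else. Your Case~B argument, however, relies on balanced super-classes and ``no class is large'', so it only works when \emph{every} colour is small. Suppose some colour has at least $\beta pn$ elements while the small part still exceeds $\gamma pn$. Then neither argument applies. Keeping the big colour as its own class breaks the smallness your rainbow count needs, and throwing it into the random grouping breaks balance. The repair, which is what the paper does, is to split instead on whether some \emph{single} colour class $Y$ has at least $\rho pn$ elements. If it does, \ref{main:itm:sim} with $\sigma=\one$ shows that its dense model $Z$ has $|Z|\ge\frac12\rho n$. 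Lemma~\ref{lem:super} then gives $\delta n^2$ progressions in $Z$, and \ref{main:itm:sub} transfers these to $\frac12\delta p^kn^2>0$ monochromatic progressions in $Y$. With this split, the van der Waerden argument and the parameter $\gamma$ are no longer needed.

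The step you flag as the main obstacle rests on a false claim. Coarsening a colouring can destroy rainbow progressions but never create them: two terms sharing an original colour also share that colour's super-class. So a progression that is rainbow for the super-classes is rainbow for the original colouring. The repeated-colour bound you leave unproved is therefore not needed at all; the paper, which merges small classes in pairs, records exactly this observation. The real danger of coarsening runs the other way: a monochromatic progression in a merged class says nothing about the original colouring. That is why, once all colours are small, you should look only for rainbow progressions.

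Your justification ``no class is large, so monochromatic APs cannot be forced'' is also wrong as stated. A class of density about $1/r$ contains $\Omega_r(n^2)$ progressions by Szemer\'edi. What is true is that such progressions are few, and once you use that, Erd\H{o}s--Graham is not needed. Suppose every dense class has at most $4\rho n$ elements, with $\rho=\frac1{16}k^{-4}$. Take $a,d$ uniform in $[\lfloor n/k\rfloor]$. Then each pair of terms of $(a,a+d,\dots,a+(k-1)d)$ is monochromatic with probability at most $8k^2\rho$. So at least half of these progressions are rainbow, and the most frequent rainbow pattern transfers back via \ref{main:itm:sub} once $\eps\le\frac18k^{-2}r^{-k}$.
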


The proof of Theorem~\ref{thm:VdW} is provided in Section~\ref{sec:vdw}.

\subsubsection{Sparse counting lemmas}

As indicated earlier, an important application of our transference principle is
to prove sparse counting lemmas.  Results of this type have many interesting
applications; we refer to~\cite{CGSS} for a list of examples.

In Section~\ref{sec:hypercount} we provide a powerful general counting lemma for
hypergraphs (Theorem~\ref{thm:hypercount}). Due to the intricacies involved in
hypergraph regularity, the additional definitions and notation required to state
this hypergraph result are substantial. This is why we restrict ourselves to the
graph case in this introduction.
The following sparse counting lemma for graphs
generalises the main results of Conlon, Gowers, Samotij and Schacht~\cite{CGSS}
in that it removes the requirement for the graph~$H$ we want to count to be
strictly balanced. 

Given $p\in(0,1]$ and a graph $G$, the \emph{$p$-density} of $(U,V)$, where $U$ and $V$
  are disjoint vertex sets in $G$, is defined to be
  $d_p(U,V):=p^{-1}e(U,V)|U|^{-1}|V|^{-1}$. We say $(U,V)$ is an
  \emph{$(\eps,p)$-regular pair} in $G$ if for all $U'\subset U$ and $V'\subset
  V$ with $|U'|\ge\eps|U|$ and $|V'|\ge|V|$, we have
  $d_p(U',V')=d_p(U,V)\pm\eps$.

\begin{theorem}[sparse graph counting lemma]\label{thm:graphcount}
  Given any graph $H$ on vertex set $[v(H)]$ and $\delta,d_2>0$, there exists
  $\eps>0$ such that for any $d_1>0$ there exist $\eps^*>0$ and $C^\ast$ with the following
  property. Let \( \eta_{N,p} \) be the probability that the number of copies
  of~$H$ in \(G(N,p) \) exceeds \( (1+\tfrac13\eps^*) p^{e(H)} N^{v(H)} \).  Then for $p\ge\max(C^\ast N^{-1},C^\ast N^{-1/m_2(H)})$
  the random graph $\Gamma=G(N,p)$ has the following property with probability
  at least \( 1 - \eta_{N,p} -\exp\big(-pN^2/(2C^\ast)\big)\).

  For every
  $G\subset\Gamma$, if $V_1,\dots,V_{v(H)}\subset V(G)$ are pairwise disjoint
  sets each of size $d_1N$, and if $(V_i,V_j)$ is $(\eps,p)$-regular with
  $d_p(V_i,V_j)\ge d_2$ for each $ij\in E(H)$, then the number of copies of $H$
  in $G$ with $i\in V_i$ for each $i\in V(H)$ is
 \[(1\pm\delta)(d_1N)^{v(H)}\prod_{ij\in E(H)}p\cdot d_p(V_i,V_j)\,.\]    
\end{theorem}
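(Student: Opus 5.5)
We deduce Theorem~\ref{thm:graphcount} from the general transference principle (Theorem~\ref{thm:GenSimp}), applied to the ordered hypergraph $S$ whose nodes are $E(K_N)$ and whose ordered edges are the labelled copies of $H$. We then finish with the dense graph counting lemma.

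\textbf{Choice of structure functions and subcounts.} For the structure functions we take, for every pair of disjoint sets $U,V\subset[N]$ with $|U|,|V|\ge \tilde\eps N$, the indicator $\sigma_{U,V}$ of the edges between $U$ and $V$; here $\tilde\eps$ is chosen of order $\eps d_1$. There are at most $4^N$ such functions, which is at most $\exp(pN^2/C)$ because $p\ge C^*N^{-1}$. This is exactly where the hypothesis $p\ge C^*N^{-1}$ is used. For the subcounts we take, for every ordered tuple $(V_1,\dots,V_{v(H)})$ of disjoint sets of size $d_1N$, the indicator $\omega_{V_1,\dots,V_{v(H)}}$ of those labelled copies of $H$ with vertex $i$ in $V_i$. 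There are at most $2^{v(H)N}$ of these, again within the allowed range. We use two colours, $G$ and $\Gamma-G$, exactly as in the derivation of Theorem~\ref{CountingThmForGraphs} from Theorem~\ref{CountingThmForColouredGraphs}. With accuracy parameter $\eps^*$, Theorem~\ref{thm:GenSimp} then gives the following, with the stated failure probability. For every $G\subset\Gamma$ there is a dense model $G'\subset K_N$ (possibly weighted, $G':E(K_N)\to[0,1]$) such that:
\begin{itemize}
\item $p^{-1}\sum\sigma G=\sum\sigma G'\pm\eps^*N^2$ for every structure function $\sigma$;
\item $p^{-e(H)}\sum\omega\, c(H,G)=\sum\omega\, c(H,G')\pm\eps^*N^{v(H)}$ for every subcount $\omega$.
\end{itemize}

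\textbf{Transferring regularity.} Fix $ij\in E(H)$. Since $(V_i,V_j)$ is $(\eps,p)$-regular in $G$, every pair $U\subset V_i$, $V\subset V_j$ with $|U|,|V|\ge\eps d_1N$ satisfies $d_p^G(U,V)=d_p^G(V_i,V_j)\pm\eps$. Transference gives $d^{G'}(U,V)=d_p^G(U,V)\pm \eps^*/(\eps d_1)^2$. Taking $\eps^*\ll(\eps d_1)^2\eps$, we conclude that $(V_i,V_j)$ is $2\eps$-regular in $G'$ with density $d^{G'}(V_i,V_j)=d_p(V_i,V_j)\pm\eps$, which is at least $d_2/2$.

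\textbf{Dense counting.} We apply the dense (weighted) counting lemma to $G'$. The parameter $\eps$ must depend only on $H,\delta,d_2$, and this is consistent with the order of quantifiers: it is the regularity parameter needed for relative error $\delta/2$ when all densities are at least $d_2/2$. The dense lemma gives
\[
\sum\omega\,c(H,G')=(1\pm\delta/2)(d_1N)^{v(H)}\prod_{ij}d^{G'}(V_i,V_j).
\]
Replacing $d^{G'}$ by $d_p$ costs a factor $1\pm O(e(H)\eps/d_2)$, which we absorb into $\delta$ by choosing $\eps$ small. Pulling back through the subcount $\omega$ leaves an additive error of $\eps^*N^{v(H)}p^{e(H)}$. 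This is at most $(\delta/4)(d_1N)^{v(H)}p^{e(H)}d_2^{e(H)}$ once we choose $\eps^*\ll \delta d_1^{v(H)}d_2^{e(H)}$. That gives the claimed count.

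\textbf{Main obstacle.} We expect the delicate step to be checking the hypotheses of Theorem~\ref{thm:GenSimp}: the $\kappa$-degree condition for $S$ at $p\ge C^*N^{-1/m_2(H)}$, and the relation between $\eta_{N,p}$ and the upper tail of $H$-counts (with $\eps^*/3$). For a non-strictly-balanced $H$ the degree bounds must be taken over all subgraphs $H'$, and this is exactly what $m_2$ encodes. We also need to confirm that the size bound on the families of structure functions and subcounts is compatible with failure probability $\exp(-pN^2/(2C^*))$; if necessary we enlarge $C^*$ to absorb the union over families.
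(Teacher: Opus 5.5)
Your proposal is correct and is essentially the paper's argument. The paper deduces this theorem from its hypergraph counting lemma (Theorem~\ref{thm:hypercount}, here with $k=2$, $s=1$), whose proof applies Theorem~\ref{thm:GenSimp} with exactly your structure functions (edges between vertex sets, to transfer regularity) and partite-copy subcounts, then runs the dense counting lemma on the model and pulls the count back; the steps you flag as delicate are routine there, since the $\kappa$-degree bound is the same computation as in the proof of Theorem~\ref{thm:colHGtrans} (the $\kappa$ fixed edges span $|W|\ge(\kappa-1)/m_2(H)+2$ vertices), and no extra union over families is needed because Theorem~\ref{thm:GenSimp} handles all of $\Sigma$ and $\Omega$ simultaneously.
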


As explained earlier, the power of Theorem~\ref{thm:graphcount} is unlocked in conjunction
with the sparse regularity lemma~\cite{KohSparse,KRSparse}, which allows to obtain the setup required by the sparse counting lemma.
The lower bound on~$p$ in Theorem~\ref{thm:graphcount} is optimal up to the value of $C^\ast$. The first term in this lower bound merely requires that the average degree of $G(N,p)$ is at least a large constant, which is already required for regular pairs to exist.

An example application for Theorem~\ref{thm:graphcount} can already be found
in~\cite{PartUniv}.  We remark that it is mentioned by Conlon, Gowers, Samotij and Schacht~\cite{CGSS} that a
version of their result can be
obtained for graphs that are not strictly balanced from their methods, but only at the cost of a polylogarithmic loss in the lower
bound on~$p$. The details of this were not provided.

\subsection{Structure of the remainder of the paper}
In Section~\ref{sec:GTP} we formally state our transference principle results;
our main technical theorem, from which the other results provided in that
section follow, is Theorem~\ref{thm:main}. In Section~\ref{sec:functional} we
set up the functional language which we use to state a functional version of our
transference principle (Theorem~\ref{thm:maintechint}), implying the results
from Section~\ref{sec:GTP}.  In Section~\ref{sec:overview} we provide a
high-level outline of our proofs and compare our approach to that of Conlon and
Gowers, which is the starting point for this paper.  In Section~\ref{sec:tools} a more detailed proof overview is provided,
along with additional notation and tools, a sequence of reductions of our
functional transference principle to statements concerning certain norm bounds,
as well as the main lemmas required for proving these. In
Sections~\ref{sec:intmodels} to~\ref{sec:delete} we then prove these lemmas and
a theorem required for one of these reductions. In
Section~\ref{sec:multitransfer} we provide an extension of
Theorem~\ref{thm:main} that allows the counting of multiple substructures. In
Section~\ref{sec:applproofs} we prove the applications stated in the
introduction, and in Section~\ref{sec:hypercount} we formulate and prove our
sparse counting lemma for hypergraphs. We finish with some concluding remarks in
Section~\ref{sec:concl}.

\section{The transference principle for ordered hypergraphs}\label{sec:GTP}

The aim of this section is to state our general transference principle results
in the general language of ordered hypergraphs:
Theorem~\ref{thm:GenSimp}, Theorem~\ref{thm:lowerGenSimp}, and
Theorem~\ref{thm:main}.  These results support colours, but they do not
support the counting of multiple structures. We shall formulate and prove a
transference principle for such multiple counts in
Section~\ref{sec:multitransfer}. In addition, we shall prove that we can
concentrate on proving Theorem~\ref{thm:main} in the remainder of the paper,
because it implies Theorem~\ref{thm:GenSimp} and
Theorem~\ref{thm:lowerGenSimp}. We will also show that Theorem~\ref{thm:GenSimp}
implies our simplified transference principle for graphs with colours,
Theorem~\ref{CountingThmForColouredGraphs}.

\medskip

As explained in the introduction, our results are phrased in the language of
uniform ordered hypergraphs.  Given positive integers \(n, k\geq 2\), a
\emph{\(k\)-uniform ordered hypergraph} \(S\) of size \(n\) is a \(k\)-uniform
hypergraph on \([n]\) with an order associated to each of its edges. That is,
each edge of $S$ is an ordered sequence of length \(k\) of distinct elements of
\([n]\). As explained before, for the example of counting $C_4$-copies in~$K_N$,
one would set $n=\binom{N}{2}$ and include an ordered hyperedge in~$S$ for each
ordered quadruple of edges which forms a $C_4$ in $K_N$.

Our transference result then concerns random subsets $[n]_p$ of $[n]$. For which
probabilities~$p$ we obtain our  transference depends on the so-called
codegrees present in~$S$.

\begin{definition}[codegree, maximum $\kappa$-degree]
\label{def:degrees}
Given a $k$-uniform ordered hypergraph \(S\) on $[n]$, and a sequence \(\mathbf{x}\) of length \(k\) of elements of \([n]\cup\{\ast\}\), the \emph{codegree} \(\deg_S(\mathbf{x})\) of $\mathbf{x}$ is the number of edges of \(S\) which agree with \(\mathbf{x}\) at all positions which do not equal \(\ast\).
For a positive integer \(\kappa\le k\), the \emph{maximum $\kappa$-degree} \(\Delta_\kappa(S)\) of~$S$ is the maximum value of \(\deg_S(\mathbf{x})\) over all sequences \(\mathbf{x}\) with exactly \(\kappa\) entries not equal to \(\ast\).
\end{definition}

That is, in the definition of $\deg_S(\mathbf{x})$ the entries equalling
\(\ast\) are allowed to vary, while the others are fixed to the value they have
in \(\mathbf{x}\). Our transference principle will require that the maximum
$\kappa$-degrees of~$S$ are sufficiently small (depending on $p$ and $n$).

Next, we translate the concepts of structure functions and subcounts
mentioned in the introduction to the setting of ordered hypergraphs.

\begin{definition}[structure function, subcount]
Let $S$ be a $k$-uniform ordered hypergraph on vertex set $[n]$. We call a function \(\sigma:[n]\to[0,1]\) over the set of vertices a \emph{structure function}, and a function \(\omega:S\to[0,1]\) over the set of edges a \emph{subcount}.
\end{definition}

To state our transference principle we need the following special functions.  We
denote with \(\mathbf{1}\) any function that takes value \(1\) on its domain
(whatever that domain might be). In particular, when $\one$ is a structure
function, then $\one:[n]\to \RR$ and when $\one$ is a subcount, then
$\one:S\to\RR$.  In contrast, we use \(\ind\) to denote the \emph{indicator
function} of an event, such as \(\ind(y\in Y)\).

\medskip

We would next like to translate the concept of dense models to the setting of
ordered hypergraphs. In order to support colours, we work with partitions: Given
an $[r]$-colouring of $X=[n]_p$, which we write as $Y_1\dcup\dots\dcup Y_r=X$, our
result provides a dense model $Y'_1\dcup\dots\dcup Y'_r=[n]$. 

\begin{definition}[$\eps$-good dense model]\label{def:densemodpart}
  Let $k,r\ge2$, $\eps>0$, $n\in\mathbb{N}$ and $p\in(0,1]$. Let~$S$ be a $k$-uniform ordered hypergraph on $[n]$, let $X\subset [n]$, and let $\Sigma$
  be a set of structure functions and $\Omega$ a set of subcounts.
  We say that a partition $Y'_1\dcup\dots\dcup Y'_r=[n]$ is an \emph{$\eps$-good dense model} of a partition $Y_1\dcup\dots \dcup Y_r=X$ if the following hold.
  \begin{enumerate}[label=\itmarab{DM}]
  \item\label{main:itm:sim} For every $i\in[r]$ and \(\sigma\in\Sigma\),
     \[\sum_{y\in [n]}p^{-1}\ind(y\in Y_i)\sigma(y)=\sum_{y\in [n]}\ind(y\in Y'_i)\sigma(y)\pm\eps n\,.\]
   \item\label{main:itm:sub} For every $i_1,\dots,i_k\in[r]$ and \(\omega\in\Omega\) 
     \[\sum_{s\in S}p^{-k}\ind\big(s\in Y_{i_1}\times\dots\times Y_{i_k}\big)\omega(s)=\sum_{s\in  S}\ind\big(s\in Y'_{i_1}\times\dots\times Y'_{i_k}\big)\omega(s)\pm\eps e(S)\,.\]
  \end{enumerate}
\end{definition}

The setup considering $Y\subset X$ (without colours) can be handled by this
formalism via setting $r=2$, $Y_1=Y$ and $Y_2=X\setminus Y$, and letting
$Y'=Y'_1$ be the $\eps$-good dense model for $Y$. In our example of counting
$C_4$, the set $X=[n]_p$ corresponds to a sample of $G(N,p)$, the set $Y$ is the
set of edges of our subgraph~$G$ of $G(N,p)$, and $Y'$ forms the edges of a
subgraph of $K_N$ which is the dense model of $Y$. Then~\ref{main:itm:sim} with
$\sigma=\one$ states that the number of edges in~$G$ scaled by $p^{-1}$
approximates the number of edges in the dense model; and~\ref{main:itm:sub} with
$i_1=i_2=i_3=i_4=1$ and $\omega=\one$ states that the number of $C_4$ in~$G$ scaled
by $p^{-4}$ is approximated by the number of $C_4$ in the dense model.  In the
coloured setting, \ref{main:itm:sub} allows us to count copies of graphs with a
particular colour pattern. Here, it also becomes crucial that~$S$ is an ordered
hypergraph, as this allows us to specify which edge of a copy receives which
colour.

More generally, as we shall prove formally for completeness at the end of this
section, Theorem~\ref{CountingThmForGraphs} and
Theorem~\ref{CountingThmForColouredGraphs} are both implied by the
following transference principle for ordered hypergraphs.

\begin{theorem}[transference for ordered hypergraphs]\label{thm:GenSimp}
Given $k,r\ge2$, $c\ge1$ and $\eps>0$, there exists $C>0$ such that the following holds for all sufficiently large $n$ and any $0<p\le 1$.
Let $S$ be a $k$-uniform ordered hypergraph on $[n]$, and let $\Sigma$ be a set of structure functions on $[n]$ and $\Omega$ a set of subcounts on $S$ such that
\begin{enumerate}[label=\rom]
 \item\label{itm:GS:i} $p\ge (\log^C n)n^{-1}$, 
 \item\label{itm:GS:iv} $\Delta_\kappa(S)\le cC^{1-\kappa}p^{\kappa-1}e(S)n^{-1}$ for each $\kappa\in[k]$,
 \item\label{itm:GS:ii} $|\Sigma|,|\Omega|\le \exp(C^{-1}pn)$.
\end{enumerate}
Then with probability at least
\[1-\Prob\Big(\big|S\cap [n]_p^k\big|\ge\big(1+\tfrac13\eps\big)p^ke(S)\Big)-\exp\Big(-\frac{pn}{C}\Big)\]
the random set \(X=[n]_p\) has the following property.
Every partition $Y_1\dcup\dots\dcup Y_r=X$ has an $\eps$-good dense model $Y'_1\dcup\dots\dcup Y'_r=[n]$.
\end{theorem}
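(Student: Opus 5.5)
The plan is to deduce Theorem~\ref{thm:GenSimp} from a functional statement proved by the Conlon--Gowers dense-model machinery (Hahn--Banach/minimax over a convex set of bounded functions), strengthened to handle non-strictly-balanced codegree conditions and exponentially many test functions.

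\emph{Step 1: the functional reformulation.} Let $\mu=p^{-1}\ind_X$ be the normalised random measure, and $\mu_i=p^{-1}\ind_{Y_i}$ the colour-class measures, so $\sum_i\mu_i=\mu$. A dense model should be functions $g_i:[n]\to[0,1]$ with $\sum_i g_i=\one$. These must match $\mu_i$ in the following sense. Tested against each $\sigma\in\Sigma$, they agree up to $\eps n/2$. For each multilinear count
\[T_\omega(f_1,\dots,f_k)=\sum_{s\in S}\omega(s)\prod_j f_j(s_j)\]
they agree up to $\eps e(S)/2$. The fractional model is then rounded to a genuine partition $Y'_1\dcup\dots\dcup Y'_r$ by independent random colouring of each $y$ with probabilities $g_i(y)$. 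Chernoff bounds control the \ref{main:itm:sim} sums, and a martingale/Azuma argument controls each $T_\omega$ via the codegree bound \ref{itm:GS:iv} with $\kappa=1$: changing one vertex moves $T_\omega$ by at most $\Delta_1(S)\approx e(S)/n$. Each failure probability is $\exp(-\Omega(n))$, which beats $|\Sigma|,|\Omega|\le\exp(pn/C)$ in a union bound.

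\emph{Step 2: existence of fractional dense models via duality.} By the minimax/Hahn--Banach argument, it suffices to show the following. For every function $\phi$ in the class of basic anti-uniform functions, namely products of $\sigma$'s and dual functions
\[D_\omega(f_2,\dots,f_k)(x)=\sum_{s\ni x}\omega(s)\prod_{j\ge2}f_j(s_j)\]
with $0\le f_j\le\mu$ or $0\le f_j\le 1$, and for bounded-degree polynomial combinations of these, we need $\langle \mu-\one,\phi\rangle$ small. This is the Conlon--Gowers reduction. The counting error is then telescoped: $T(\mu_{i_1},\dots)-T(g_{i_1},\dots)$ is written as a sum of terms each containing one difference $\mu_i-g_i$ paired with a dual function.

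\emph{Step 3: the key probabilistic norm bound.} This is where the real work lies. We must show that with the stated probability, for all dual functions built from $f_j\le\mu$ simultaneously, $\langle\mu-\one,D\rangle$ is small. Conlon--Gowers use a second-moment bound requiring strict balancedness. Instead I will split each dual function into a ``capped'' part, where every partial codegree with respect to $X$ is at most a constant times its expectation (controlled via \ref{itm:GS:iv} for all $\kappa$), and a remainder supported on overloaded configurations. For the capped part I would use a container-style/Lipschitz concentration with sparse-approximation of $f_j$ by $O(pn/C)$-sized pieces, giving a union bound of size $\exp(O(pn/C))$ against tails $\exp(-\Omega(pn))$.

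\emph{Step 4: the remainder and the main obstacle.} The remainder is where the $\Prob(|S\cap X^k|\ge(1+\eps/3)p^ke(S))$ term enters. On the event that $X$ has at most $(1+\eps/3)p^ke(S)$ edges, and since the capped parts already account for nearly $p^ke(S)(1-\eps/6)$ edges with high probability by a lower-tail (Janson) estimate, the overloaded edges contribute at most $\eps e(S)/2$ after normalising. This is an upper-tail-free deletion argument. I expect the main obstacle to be making the capping compatible with the duality argument: the truncated dual functions must still form a class closed enough for minimax, uniformly over all partitions $Y_i$. Condition \ref{itm:GS:i} handles the $\kappa=1$ logarithmic losses in this step. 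I would first try performing the deletion on $S$ itself, removing edges of $S\cap X^k$ through overloaded subsets before applying Steps 2--3 to the pruned hypergraph.
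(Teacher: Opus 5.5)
Your Step~1 is sound, and lighter than the paper's rounding. Once fractional $g_i$ have the right $\sigma$-sums and the right $\omega$-counts for all $r^k$ colour patterns, McDiarmid with Lipschitz constant $k\Delta_1(S)\le kc\,e(S)/n$ gives failure probability $\exp(-\Omega(\eps^2n/(kc)^2))$ per count, which survives the union bound over $\Omega$. (The paper proves the stronger $\|g_i-g_i^\ast\|_{\Phi(\one)}\le\eps$ because its functional theorem needs it; this theorem does not.)

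The genuine gap is Step~3, which is where all the difficulty lies. You need $\langle\mu-\one,\phi\rangle$ to be small with failure probability $\exp(-pn/C)$, simultaneously for all test functions built from arbitrary $f_j\le\mu$ or $f_j\le\one$ and for bounded products of them. Three obstacles are left open:
\begin{itemize}
\item Such $\phi$ depend on $X$, so you cannot fix $\phi$ and then concentrate $\mu-\one$.
\item There are $2^{|X|}$ resp.\ $2^n$ extreme choices of each $f_j$, far too many for a union bound against tails $\exp(-\Omega(\delta^2pn))$.
\item The entries of $\phi$ are unbounded, so Stone--Weierstrass in Step~2 needs control of large values.
\end{itemize}
Capping each dual function is exactly the Conlon--Gowers route, and relating capped to uncapped convolutions is precisely where they need strict balancedness. ``Sparse approximation'' and ``container-style/Lipschitz concentration'' do not supply a replacement, and you flag this yourself as unresolved.

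The paper never caps. It randomly splits $\tmu$ into $L$ pieces and $\one$ into $\lceil Lp^{-1}\rceil$ pieces. By multilinearity, $\Phi(\tmu,\one)^d$ lies inside the resulting split polytope. That polytope has only $\exp(O(dkpn/L))$ vertices, and each vertex depends on at most $d(k-1)$ pieces of $X$, so Bernstein applies to $\mu-\one$ off this small revealed part. Large entries and the revealed part are absorbed by moment bounds $\langle\tmu_i,\phi\rangle\le 2c^\ell$ over product polytopes. By non-negativity these reduce to polynomially many products of largest test functions, so Kim--Vu plus book counting from the codegree hypothesis suffices. The Harris inequality, via deletion of \emph{vertices} of $X$, then upgrades these bounds to exponential probability. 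The same moment bounds are what make Stone--Weierstrass on $[-2c,2c]^{r-1}$ legitimate.

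Step~4 also fails as written. The number of edges of $S\cap X^k$ avoiding overloaded configurations is a sum of terms $\ind(s\subseteq X)\cdot\ind(\text{no overload through }s)$. Each term is an increasing event times a decreasing one, so Janson gives no lower tail for this count. Since $|S\cap X^k|$ itself has an exponential lower tail, a lower bound on the capped count is equivalent to an upper bound on the overloaded count, which is exactly the upper-tail problem you meant to avoid.

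Pruning $S$ according to $X$ also makes every test function depend on $X$ through the pruned hypergraph. Moreover, edges only partly inside $X$, which enter the mixed convolutions, are not capped by removing edges inside $X^k$.

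In the paper, the corresponding lower bound $\sum_s p^{-k}\ind(s\in\tilde X^k)\ge(1-\eps')e(S)$ for the deleted set is an \emph{output} of the anti-correlation machinery, namely $\|\tmu-\one\|_{\Phi(\tmu,\one)}\le\eps'/k$. Only after that does excluding the event $|S\cap X^k|\ge(1+\eps/3)p^ke(S)$ bound the scaled contribution of edges meeting $X\setminus\tilde X$ by $2\eps'e(S)$.
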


In this result, conditions~\ref{itm:GS:i} and~\ref{itm:GS:iv} provide a
lower bound on the probabilities~$p$ for which we obtain transference,
where~\ref{itm:GS:i} is a technical condition we need in the proof. In the
applications we consider here, \ref{itm:GS:iv} always implies~\ref{itm:GS:i}.
It is easy to check that in our example of counting $H=C_4$ we need $p=\Omega(n^{-1/3})=\Omega(N^{-2/3})$ for~\ref{itm:GS:iv}, which readily
implies~\ref{itm:GS:i}. In the graph setting, this would only fail when~$H$ forms a matching, which for this reason we exclude for example in Theorem~\ref{CountingThmForGraphs} by requiring $m_2(H)\ge 1$. As we explained in the introduction, \ref{itm:GS:ii}
allows us to transfer more complex properties, such as counting transversal
$C_4$-copies in $\eps$-regular partitions.

The failure probability in Theorem~\ref{thm:GenSimp}
is close to optimal for the following reason. If the number of edges in the random set $S\cap[n]_p^k$ exceeds $(1+\eps)p^k e(S)$, then for $Y_1=[n]_p$ even $Y'_1=[n]$ does not contain enough edges of~$S$ to satisfy~\ref{main:itm:sub} with $\omega=\one$. But when~$S$ for example represents $H$-copies in the complete graph (or any other of the applications we present in this paper), then
$\Prob\big(|S\cap [n]_p^k|\ge(1+\eps)p^ke(S)\big)$ and $\Prob\big(|S\cap [n]_p^k|\ge(1+\tfrac13\eps)p^ke(S)\big)$ will both be exponentially small, with the factor $\frac13$ only affecting the constant in the exponent but not its order of magnitude. In addition, these probabilities will be larger than $\exp(-\frac{pn}{C})$. In other words, the term $\Prob\big(|S\cap [n]_p^k|\ge(1+\tfrac13\eps)p^ke(S)\big)$ only appears because $X=[n]_p$ might contain too many edges of $S$, and consequently its subsets might also. If we are only interested in lower bounds on the number of edges, however, we can eliminate this term, as our next result shows.

\begin{definition}[$\eps$-good lower dense model]\label{def:lowerdensemodpart}
  Let $k,r\ge2$, $\eps>0$, $n\in\mathbb{N}$ and $p\in(0,1]$. Let~$S$ be a $k$-uniform ordered hypergraph on $[n]$, let $X\subset [n]$, and let $\Sigma$
  be a set of structure functions and $\Omega$ a set of subcounts.
  We say that a partition $Y'_1\dcup\dots\dcup Y'_r=[n]$ is an \emph{$\eps$-good lower dense model} of a partition $Y_1\dcup\dots \dcup Y_r=X$ if~\ref{main:itm:sim} and the following hold.
  \begin{enumerate}[label=\itmarabp{DM},start=2]
  \item\label{main:itm:sub'}
    For every $i_1,\dots,i_k\in[r]$ and \(\omega\in\Omega\) 
    \[\sum_{s\in S}p^{-k}\ind\big(s\in Y_{i_1}\times\dots\times Y_{i_k}\big)\omega(s)\ge\sum_{s\in  S}\ind\big(s\in Y'_{i_1}\times\dots\times Y'_{i_k}\big)\omega(s)-\eps e(S)\,.\]
  \end{enumerate}
  \end{definition}

The following theorem differs from Theorem~\ref{thm:GenSimp} only in that it
provides an improved failure probability but only guarantees lower dense models.

\begin{theorem}[lower transference for ordered hypergraphs]\label{thm:lowerGenSimp}
Given $k,r\ge2$, $c\ge1$ and $\eps>0$, there exists $C>0$ such that the following holds for all sufficiently large $n$ and any $0<p\le 1$.
Let $S$ be a $k$-uniform ordered hypergraph on $[n]$, and let $\Sigma$ be a set of structure functions on $[n]$ and $\Omega$ a set of subcounts on $S$ such that
\begin{enumerate}[label=\rom]
 \item\label{itm:lGS:i} $p\ge (\log^C n)n^{-1}$, 
 \item\label{itm:lGS:iv} $\Delta_\kappa(S)\le cC^{1-\kappa}p^{\kappa-1}e(S)n^{-1}$ for each $\kappa\in[k]$,
 \item\label{itm:lGS:ii} $|\Sigma|,|\Omega|\le \exp(C^{-1}pn)$.
\end{enumerate}
Then with probability at least
\[1-\exp\Big(-\frac{pn}{C}\Big)\]
the random set \(X=[n]_p\) has the following property.
Every partition $Y_1\dcup\dots\dcup Y_r=X$ has an $\eps$-good lower dense model $Y'_1\dcup\dots\dcup Y'_r=[n]$.
\end{theorem}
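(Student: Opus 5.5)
The plan is to deduce Theorem~\ref{thm:lowerGenSimp} from the same machinery as Theorem~\ref{thm:GenSimp}, namely from Theorem~\ref{thm:main}, by removing the upper-tail term with a deletion argument. The term $\Prob\big(|S\cap[n]_p^k|\ge(1+\tfrac13\eps)p^ke(S)\big)$ appears in Theorem~\ref{thm:GenSimp} only because $X$ may span too many edges of $S$. For a \emph{lower} dense model we may throw away a few elements of $X$: deleting elements from the colour classes $Y_i$ can only decrease the left-hand side of~\ref{main:itm:sub'}, since subcounts are non-negative.

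The key steps, in order, are as follows.

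\textbf{Step 1 (deletion lemma).} Fix a small $\delta=\delta(\eps,k,r)>0$. I will show that with probability at least $1-\exp(-pn/C)$ there is a set $D\subset X$ with $|D|\le\delta pn$ such that $X^\ast:=X\setminus D$ spans at most $(1+\tfrac13\eps)p^ke(S)$ edges of $S$. This is a Rödl--Ruciński-type deletion statement. I would prove it by bounding the probability that $X$ contains many disjoint, or nearly disjoint, sets of edges whose total count is large, using the codegree condition~\ref{itm:lGS:iv} and $p\ge(\log^Cn)n^{-1}$ to control the moments. Presumably this is the content of the paper's final section on deletion.

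\textbf{Step 2 (apply the full transference).} I would like to apply Theorem~\ref{thm:main} to $X^\ast$ in place of $X$. The difficulty is that $X^\ast$ is not a random set. So I will use Theorem~\ref{thm:main} in the form where the randomness is only used to guarantee a list of pseudorandom properties of $X$, each holding with probability $1-\exp(-pn/C)$ and each robust under deleting $\delta pn$ elements. Together with the upper count bound on $S\cap(X^\ast)^k$ from Step~1, these properties are what the dense-model construction actually needs. Given a partition $Y_1\dcup\dots\dcup Y_r=X$, I set $Y_i^\ast=Y_i\setminus D$, which partitions $X^\ast$, and obtain an $\eps/2$-good dense model $Y'_1\dcup\dots\dcup Y'_r=[n]$ of it.

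\textbf{Step 3 (verify the lower model).} For~\ref{main:itm:sim}, replacing $Y_i$ by $Y_i^\ast$ changes the left side by at most $p^{-1}|D|\le\delta n$, which is at most $\tfrac{\eps}2n$ for small $\delta$. For~\ref{main:itm:sub'}, monotonicity gives
\[
\sum_{s}p^{-k}\ind\big(s\in Y_{i_1}\times\dots\times Y_{i_k}\big)\omega(s)\ \ge\ \sum_s p^{-k}\ind\big(s\in Y^\ast_{i_1}\times\dots\times Y^\ast_{i_k}\big)\omega(s),
\]
and the right-hand side is within $\tfrac{\eps}2 e(S)$ of the dense-model count. A union bound over $\Sigma$ and $\Omega$, whose sizes are bounded by~\ref{itm:lGS:ii}, is already built into Theorem~\ref{thm:main}.

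The main obstacle is Step~1 combined with the robustness in Step~2. The deletion has to achieve failure probability $\exp(-pn/C)$ rather than merely $o(1)$. We also need every other property of $X$ used by Theorem~\ref{thm:main} to survive removal of $\delta pn$ elements; for counting-type norms this should follow from the codegree bounds, since each deleted element lies in at most $\Delta_1(S)$ edges. For Step~1, I would first try a Janson/Kim--Vu style bound on the probability that some vertex set of size $\delta pn$ accounts for an excess of $\tfrac13\eps p^ke(S)$ edges, peeling off high-degree elements of $X$ greedily and bounding the codegree sums level by level via $\Delta_\kappa(S)$.
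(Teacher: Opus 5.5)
Your Step~3 is exactly the paper's final argument. Steps~1 and~2, however, rest on a misreading of Theorem~\ref{thm:main}, and as written Step~2 is a genuine gap.

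Theorem~\ref{thm:main} is not a statement about $X$ into which you can feed a deleted set. Its hypothesis is that the set is $[n]_p$, and its conclusion already \emph{produces} an $\eps$-deletion $\tilde X\subset X$ such that every partition of $\tilde X$ has an $\eps$-good dense model, with failure probability $\exp(-pn/C)$. Your substitute is a version of Theorem~\ref{thm:main} in which randomness only certifies a list of pseudorandom properties that survive adversarial deletion of $\delta pn$ further elements. No such version is stated anywhere. Establishing it would mean reopening the whole proof (the split-polytope anti-correlation, the moment bounds, the nonlinear anti-correlation step) and checking the robustness of each property; you only assert it.

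Likewise, the belief that an external upper bound on $|S\cap (X^\ast)^k|$ is ``what the construction needs'' is misplaced. Upper-count control is generated internally by the deletion in Theorem~\ref{thm:main}: its first bullet is a conclusion, not a hypothesis. For a \emph{lower} model no upper count is needed at all, since only~\ref{main:itm:sub'} is required and it is monotone under enlarging the $Y_i$. Step~1 as sketched (Janson/Kim--Vu plus greedy peeling, with failure probability $\exp(-pn/C)$) is also only gestured at. That exponential bound is precisely what the Harris-inequality deletion inside Theorem~\ref{thm:main} already supplies.

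The missing idea is simply to take $X^\ast:=\tilde X$, the deletion supplied by Theorem~\ref{thm:main} applied with $\eps'=\eps/3$. Choose $C$ so that $C/2$ suffices for Theorem~\ref{thm:main}; then hypotheses (i)--(iii) with $C$ imply them with $C/2$. With this choice, Step~1 is unnecessary and Step~2 is literally the second bullet of Theorem~\ref{thm:main}.

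The one extra point you must add concerns the size of $X\setminus\tilde X$. An $\eps'$-deletion only guarantees $|\tilde X|\ge(1-\eps')pn$, so you also need the Chernoff bound $|X|\le(1+\eps/3)pn$, which fails with probability at most $\exp(-\eps^2pn/27)$. Together these give $|X\setminus\tilde X|\le\tfrac23\eps pn$.

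Your Step~3 then finishes the proof. Set $\tilde Y_i=Y_i\cap\tilde X$. Property~\ref{main:itm:sub'} passes from the $\tilde Y_i$ to the supersets $Y_i$ by monotonicity, and~\ref{main:itm:sim} loses at most $\tfrac23\eps n$ on top of the model's $\tfrac13\eps n$ error. The total failure probability is at most $\exp(-pn/C)$.
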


As we shall show, the next result implies both Theorem~\ref{thm:GenSimp} and Theorem~\ref{thm:lowerGenSimp}. It shows that we can obtain both the improved failure probability and an $\eps$-good dense model at the cost of possibly having to delete a few vertices from $[n]_p$ before asking for a dense model. We use the following definition.

\begin{definition*}[\(\eps\)-deletion]
  Let $n\in\mathbb{N}$ and $p\in(0,1]$. For $X\subset[n]$ and \(\eps>0\), we
  say that $\tilde{X}\subset X$ is an \emph{\(\eps\)-deletion} of \(X\) if
  $|\tilde{X}|\ge(1-\eps)pn$.
\end{definition*}

With this, we can state the main technical theorem of this section.

\begin{theorem}[transference for ordered hypergraphs with $\eps$-deletion]\label{thm:main}
Given $k,r\ge2$, $c\ge1$ and $\eps>0$, there exists $C>0$ such that the following holds for all sufficiently large $n$ and any $0<p\le 1$.
Let $S$ be a $k$-uniform ordered hypergraph on $[n]$, and let $\Sigma$ be a set of structure functions on $[n]$ and $\Omega$ a set of subcounts on $S$ such that
\begin{enumerate}[label=\rom]
 \item $p\ge (\log^C n)n^{-1}$, 
 \item $\Delta_\kappa(S)\le cC^{1-\kappa}p^{\kappa-1}e(S)n^{-1}$ for each $\kappa\in[k]$,
 \item $|\Sigma|,|\Omega|\le \exp(C^{-1}pn)$.
\end{enumerate}
Then with probability at least
\[1-\exp\Big(-\frac{pn}{C}\Big)\]
the random set \(X=[n]_p\) has an $\eps$-deletion $\tilde{X}$ with the following properties.
\begin{itemize}
\item The $r$-partition $\tilde{X}\dcup\emptyset\dcup\dots\dcup\emptyset$ has the $\eps$-good dense model $[n]\dcup\emptyset\dcup\dots\dcup\emptyset$.
\item Every partition $Y_1\dcup\dots\dcup Y_r=\tilde{X}$ has an $\eps$-good dense model $Y'_1\dcup\dots\dcup Y'_r=[n]$.
\end{itemize}
\end{theorem}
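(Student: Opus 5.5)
We follow the Conlon--Gowers dense-model approach, carried out in functional language. Identify a partition $Y_1\dcup\dots\dcup Y_r=\tilde X$ with the functions $f_i=p^{-1}\ind_{Y_i}$, so that $\sum_i f_i=p^{-1}\ind_{\tilde X}=:\mu$ is the normalised measure of the deletion. A dense model is then a tuple $(g_1,\dots,g_r)$ of $[0,1]$-valued functions with $\sum_i g_i=\one$. Such a tuple can be rounded to a genuine partition of $[n]$ by independent randomised rounding: choose colour $i$ for $y$ with probability $g_i(y)$. Concentration together with the codegree bounds, which give bounded differences for the counts weighted by $\omega$, shows that all $|\Sigma|+|\Omega|r^k$ constraints survive the rounding simultaneously, because the failure probability per constraint is $\exp(-\Omega(n))$ while $|\Sigma|,|\Omega|\le\exp(pn/C)$.

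It therefore suffices to find fractional models. By a Hahn--Banach / minimax (linear programming duality over the polytope of fractional partitions) argument, the existence of a fractional model satisfying \ref{main:itm:sim} and \ref{main:itm:sub} up to $\eps$ reduces to a statement about the norms induced by the test functions. Take the basic anti-uniform functions: the structure functions $\sigma$, and the ``dual functions'' $D_\omega(f_{i_2},\dots,f_{i_k})(y)=e(S)^{-1}n\sum_{s\ni y}\omega(s)\prod_{j\ge2}f_{i_j}(s_j)$, together with their variants at other positions. One must show that $\mu-\one$ correlates by at most $\eps'$ with every bounded product of polynomially many basic anti-uniform functions. This is the main obstacle, since the dual functions built from $\mu$ are not bounded.

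The deletion is what handles this. We choose $\tilde X$ by removing from $X$ the elements $y$ whose degree (or whose higher codegree with respect to $\tilde X$) is too large. The codegree hypothesis \ref{itm:GS:iv}, combined with a Kim--Vu/Janson type upper-tail bound for the $\kappa$-degrees in the regime $p\ge C\Delta$-scaled, will show that with probability $1-\exp(-pn/C)$ only $\eps pn$ elements need removing. After the removal, every dual function of $\mu_{\tilde X}$ and its codegree analogues are bounded by $1+\eps$ in sup norm. This is exactly why $\tilde X$, rather than $X$, admits full (two-sided) dense models with the optimal failure probability.

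With these bounds in place, we would prove the following:
\begin{itemize}
\item The products of dual functions are approximable by polynomials. Approximate by Weierstrass polynomials in the dual functions, and then expand $\langle \mu-\one, \prod D\rangle$.
\item The resulting quantities concentrate. Each is a polynomial in the indicators of $X$, and it concentrates around its mean with error $\exp(-pn/C)$. We would use martingale/Freedman inequalities with the bounded codegrees, after conditioning on the deletion, via a ``deletion-robust'' counting argument.
\item The estimates hold uniformly. A union bound over the at most $\exp(pn/C)$ choices of $\sigma,\omega$, and over a net of the $f_i$ (handled, as in Conlon--Gowers, by noting that dual functions of arbitrary $f_i\le\mu$ lie in a small-entropy family), gives the required bounds for all partitions at once.
\end{itemize}

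The first bullet of the theorem follows from the same estimate applied to $f_1=\mu$ alone. Thus the hardest step is proving concentration for bounded-degree-truncated counts at the optimal $p$; for it we would try a Schacht-style induction on $k$ over codegrees.
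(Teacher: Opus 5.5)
Your rounding step is fine, and simpler than the paper's. Recolouring one point changes an $\omega$-weighted count by at most $kc\,e(S)/n$, so McDiarmid plus a union bound over the $r^k|\Omega|+r|\Sigma|$ constraints preserves \ref{main:itm:sim} and \ref{main:itm:sub}; the paper instead rounds in the full $\Phi(\one)$-norm. Your Hahn--Banach/Weierstrass reduction to anti-correlation of $\tmu-\one$ with bounded products of test functions matches Lemmas~\ref{lem:HBreduce} and~\ref{lem:lincor}. Note that for the telescoping to give \ref{main:itm:sub}, the test functions must be convolutions with \emph{mixed} $\tmu$-bounded and $\one$-bounded arguments.

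The genuine gap is the step you defer to your last two bullets, which is the whole difficulty. First, the test functions are determined by $X$, so $\langle\mu-\one,\phi\rangle$ is not a sum of independent terms with $\phi$ fixed. Second, even at vertices of the test polytope there are about $2^{|X|}$ choices of $\tmu$-extreme arguments and $2^n$ of $\one$-extreme ones. A single linear form $\langle\mu-\one,\phi\rangle$ with bounded $\phi$ has variance of order $1/(pn)$, so it concentrates only at rate $\exp(-\Theta(\eps^2pn))$, and no union bound can cover that many functions. The ``small-entropy family as in Conlon--Gowers'' is exactly where they needed capped convolutions and strict balancedness; it is not available under the codegree hypothesis alone. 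Third, degree-$\ge2$ polynomials in the indicators of $X$ do not concentrate with failure probability $\exp(-pn/C)$ near the threshold, since upper tails of $S$-counts are much heavier (that is why a deletion is needed at all). ``Conditioning on the deletion'' also destroys the independence a martingale inequality would use.

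The missing idea is random splitting. Split $\tmu$ and $\one$ into $\tmu_1,\dots,\tmu_L$ and $\nu_1,\dots,\nu_{\lceil Lp^{-1}\rceil}$ with random disjoint supports of size about $pn/L$. Averaging gives $\Phi(\tmu,\one)^d\subseteq\Phi(\tmu_1,\dots,\tmu_L,\nu_1,\dots,\nu_{\lceil Lp^{-1}\rceil})^d$ (Lemma~\ref{lem:splitreduce}). The larger polytope has only $\exp(O(dkpn/L))$ vertices to union-bound over, given $|\Sigma|,|\Omega|\le\exp(pn/C)$ with $C\ge L$. Each vertex sees $\mu$ only on a revealed set $Y(\phi)$ made of at most $d(k-1)$ pieces. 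So Bernstein applies to the \emph{linear} form $\langle\mu-\one,\phi^{\mathrm{small}}\ind([n]\setminus Y(\phi))\rangle$ and survives the union bound once $L\gg c^{2d}dk\eps^{-2}$ (Lemma~\ref{lem:finbound}).

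The remaining terms are handled by \emph{moment} bounds $\langle\tmu_i,\phi\rangle\le 2c^\ell$ for products of $\ell$ test functions, not by sup-norm bounds. These cover the revealed part, the entries of $\phi$ above $2c^d$, and the difference between $\mu$ and $\tmu$. They come from Kim--Vu with only polynomially small failure probability, and are upgraded to $\exp(-\Omega(\eps^2pn))$ by an \emph{existential} deletion via Harris' inequality (Theorem~\ref{thm:delete}, Lemma~\ref{lem:deletionbounds}).

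Your sup-norm claim is neither needed nor delivered by your deletion rule. The dual function at $y$ depends on the other coordinates of edges through $y$, and most $y$ are not in $X$, so deleting high-degree elements of $X$ does not touch them. At threshold $p$ the values fluctuate at constant order: for triangles with $p=CN^{-1/2}$, the value at a pair $y$ is a $\mathrm{Bin}(N,p^2)$ variable divided by its mean $C^2$, so a constant fraction of all $n$ points exceed $1+\eps$. Moreover, already $n|S_i(y)|/e(S)$ can be as large as $c$ for irregular $S$.
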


The first property asserted for the $\eps$-deletion $\tilde{X}$ in this theorem merely states that when passing from~$X$ to $\tilde{X}$ we did not disproportionally delete edges from~$S$ with respect to the weightings given by our subcount functions.
Theorem~\ref{thm:main}, in turn, will follow from the main technical theorem of the next section, Theorem~\ref{thm:maintechint}, which is a reformulation of Theorem~\ref{thm:main} (and more) in a functional language. We now show how Theorem~\ref{thm:GenSimp} and Theorem~\ref{thm:lowerGenSimp} are implied by Theorem~\ref{thm:main}. We need the following Chernoff bound (see, e.g.,~\cite[Section 2.3]{Vershynin2018}).

\begin{theorem}[Chernoff bound]\label{thm:chernoff}
 Let \(X_1,\dots{},X_n\) be independent Bernoulli random variables, let \(Y=\sum_{i=1}^nX_i\), and let \(\delta\in(0,1)\). Then we have
 \[\mathbb{P}\big[Y\ge(1+\delta)\mathbb{E}[Y]\big],\mathbb{P}[Y\le(1-\delta)\mathbb{E}[Y]\big]\le\exp\big(-\tfrac{\delta^2}{3}\mathbb{E}[Y]\big)\,.\]
\end{theorem}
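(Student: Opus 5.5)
The plan is the standard exponential-moment (Bernstein--Chernoff) argument, treating the upper and lower tails separately. Write $p_i=\EE[X_i]$ and $\mu=\EE[Y]=\sum_i p_i$. The key identity is that independence makes the moment generating function factorise:
\[\EE[e^{\lambda Y}]=\prod_{i=1}^n\big(1+p_i(e^\lambda-1)\big)\le\exp\big(\mu(e^\lambda-1)\big),\]
valid for every real $\lambda$ by $1+x\le e^x$.

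For the upper tail, take $\lambda>0$ and apply Markov's inequality to $e^{\lambda Y}$:
\[\Prob\big[Y\ge(1+\delta)\mu\big]\le e^{-\lambda(1+\delta)\mu}\,\EE[e^{\lambda Y}]\le\exp\big(\mu(e^\lambda-1)-\lambda(1+\delta)\mu\big).\]
Choose $\lambda=\ln(1+\delta)$. This gives the bound $\exp\big(-\mu\,[(1+\delta)\ln(1+\delta)-\delta]\big)$. It then suffices to show that $f(\delta):=(1+\delta)\ln(1+\delta)-\delta-\delta^2/3\ge0$ on $(0,1)$. We have $f(0)=0$, and $f'(\delta)=\ln(1+\delta)-\tfrac23\delta$, so also $f'(0)=0$. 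Next, $f''(\delta)=\tfrac1{1+\delta}-\tfrac23$ is nonnegative on $[0,\tfrac12]$ and negative afterwards. Hence $f'$ increases on $[0,\tfrac12]$ and decreases on $[\tfrac12,1]$, so on $[0,1]$ its minimum is attained at an endpoint. Since $f'(0)=0$ and $f'(1)=\ln2-\tfrac23>0$, we get $f'\ge0$ on $[0,1]$, and therefore $f\ge0$.

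For the lower tail, take $\lambda<0$ and apply Markov to $e^{\lambda Y}$ on the event $Y\le(1-\delta)\mu$:
\[\Prob\big[Y\le(1-\delta)\mu\big]\le\exp\big(\mu(e^\lambda-1)-\lambda(1-\delta)\mu\big).\]
Choose $\lambda=\ln(1-\delta)$, which is legitimate since $\delta<1$. This gives $\exp\big(-\mu\,[(1-\delta)\ln(1-\delta)+\delta]\big)$. Expanding the logarithm as a power series,
\[(1-\delta)\ln(1-\delta)=-\delta+\sum_{j\ge2}\frac{\delta^j}{j(j-1)}\ge-\delta+\frac{\delta^2}{2}.\]
So the exponent is at least $\mu\delta^2/2\ge\mu\delta^2/3$, which is what is needed.

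No step is a genuine obstacle. The only point needing care is the elementary inequality $(1+\delta)\ln(1+\delta)-\delta\ge\delta^2/3$ on $(0,1)$, and it is precisely the choice of constant $\tfrac13$, rather than $\tfrac12$, that makes it hold up to $\delta=1$.
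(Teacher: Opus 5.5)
Your proof is correct. The factorised moment generating function bound, the choices $\lambda=\ln(1+\delta)$ and $\lambda=\ln(1-\delta)$, and the elementary inequalities $(1+\delta)\ln(1+\delta)-\delta\ge\delta^2/3$ and $(1-\delta)\ln(1-\delta)+\delta\ge\delta^2/2$ on $(0,1)$ all check out. The paper does not prove this bound itself but cites Vershynin's book, where it comes from exactly this exponential-moment argument (Markov's inequality applied to $e^{\lambda Y}$, $1+x\le e^x$, and $\lambda=\ln(t/\mathbb{E}[Y])$), so your route is the standard one.
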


\begin{proof}[Proof of Theorem~\ref{thm:GenSimp} and Theorem~\ref{thm:lowerGenSimp}]
Given $k,r\ge2$, $c\ge1$ and $\eps>0$, set $\eps'=\tfrac13\eps$. Let $C\ge100\eps^{-2}$ be such that $\tfrac12C$ is sufficiently large for Theorem~\ref{thm:main} for input $k,r,c,\eps'$.

Suppose $n$ is sufficiently large and $p\in(0,1]$. Let $S, \Sigma, \Omega$ be as in Theorem~\ref{thm:GenSimp} and Theorem~\ref{thm:lowerGenSimp}, and suppose conditions~\ref{itm:GS:i}--\ref{itm:GS:ii} of these theorems hold. Then with probability at least $1-\exp(-2pn/C)$, the likely event of Theorem~\ref{thm:main} occurs for input $k,r,c,\eps'$. Additionally, by the Chernoff bound of Theorem~\ref{thm:chernoff}, with probability at most $\exp(-\eps^2 pn/27)$ the set $X=[n]_p$ has more than $(1+\eps/3)pn$ elements. By the choice of $C$, both events hold with probability at least $1-\exp(-pn/C)$.

Assuming these two likely events occur, we now show that we obtain lower dense models, establishing Theorem~\ref{thm:lowerGenSimp}. Indeed, given a partition $X=Y_1\dcup\dots\dcup Y_r$, let $\tilde{X}$ be the $\eps$-deletion guaranteed by Theorem~\ref{thm:main}. We have $|X\setminus\tilde{X}|\le\tfrac23\eps pn$. Let $\tilde{Y}_i:=Y_i\cap\tilde{X}$ for each $i\in[r]$, and let $Y'_1\dcup\dots\dcup Y'_r$ be the $\eps'$-good dense model of $\tilde{Y}_1\dcup\dots\dcup\tilde{Y}_r$. We claim $Y'_1\dcup\dots\dcup Y'_r$ is the required $\eps$-good lower dense model of $Y_1\dcup\dots\dcup Y_r$. To see this, observe that~\ref{main:itm:sub'} holds for $\tilde{Y}_1,\dots,\tilde{Y}_r$, and hence also for the supersets $Y_1,\dots,Y_r$. For proving that~\ref{main:itm:sim} holds, choose $i\in[r]$ and $\sigma\in\Sigma$. We have
\[\sum_{y\in Y_i}p^{-1}\sigma(y)=\sum_{y\in \tilde{Y}_i}p^{-1}\sigma(y)\pm\tfrac23\eps n=\sum_{y\in Y'_i}\sigma(y)\pm\eps n\,,\]
where the first equality is since $|Y_i\setminus\tilde{Y}_i|\le|X\setminus\tilde{X}|\le\tfrac23\eps pn$, and the second since $Y'_i$ is an $\eps'$-good dense model of $\tilde{Y}_i$.

For Theorem~\ref{thm:GenSimp} we need a little more work. Consider the event that~$X^k$ contains at most $(1+\eps')p^ke(S)$ edges of $S$. The failure probability of this event is by definition $\Prob\big(\big|S\cap [n]_p^k\big|\ge(1+\tfrac13\eps)p^ke(S)\big)$, hence with probability at least $1-\Prob\big(\big|S\cap [n]_p^k\big|\ge(1+\tfrac13\eps)p^ke(S)\big)-\exp(-pn/C)$ this event as well as the two good events from above hold, which we assume from now on. 
Let the partition $X=Y_1\dcup\dots \dcup Y_r$ and its $\eps$-deletion $\tilde{X}$ with parts $\tilde{Y}_i:=Y_i\cap\tilde{X}$ for $i\in[r]$ and
$\eps'$-good dense model $Y'_1\dcup\dots\dcup Y'_r$ be as before. We have already argued above that~\ref{main:itm:sim} and the lower bound of~\ref{main:itm:sub} hold. It remains to show that the upper bound in~\ref{main:itm:sub} is also true.

Indeed, since $[n]\dcup\emptyset\dcup\dots\dcup\emptyset$ is the dense model of
$\tilde{X}\dcup\emptyset\dcup\dots\dcup\emptyset$ by
Theorem~\ref{thm:main}, we have
\[\sum_{s\in S}p^{-k}\ind(s\in\tilde{X}^k)=(1\pm\eps')e(S)\,.\] It follows that
\[\sum_{s\in S}p^{-k}\ind(s\in X^k\setminus\tilde{X}^k)\le 2\eps'e(S)\,.\]
Now observe that for any given $i_1,\dots,i_k\in[r]$, we have
\begin{align*}
\sum_{s\in S}p^{-k}\omega(s)&\ind(s\in Y_{i_1}\times\dots\times Y_{i_k})\\
&=\sum_{s\in S}p^{-k}\omega(s)\big(\ind(s\in \tilde{Y}_{i_1}\times\dots\times \tilde{Y}_{i_k})+\ind(s\in Y_{i_1}\times\dots\times Y_{i_k}\setminus \tilde{Y}_{i_1}\times\dots\times \tilde{Y}_{i_k})\big)\\
&\le\sum_{s\in S}\omega(s)\ind(s\in Y'_{i_1}\times\dots\times Y'_{i_k})+\eps'e(S)+\sum_{s\in S}p^{-k}\ind(s\in X^k\setminus\tilde{X}^k)\\
&\le \sum_{s\in S}\omega(s)\ind(s\in Y'_{i_1}\times\dots\times Y'_{i_k})+\eps e(S)\,,
\end{align*}
where the first inequality uses that $Y'_i$ is the required dense model of $\tilde{Y}_i$ for each $i$, that $\omega(s)\in[0,1]$ and that if $s\in Y_{i_1}\times\dots\times Y_{i_k}\setminus \tilde{Y}_{i_1}\times\dots\times \tilde{Y}_{i_k}$, then it contains an element which is in $X$ but not $\tilde{X}$, so it is in $X^k\setminus\tilde{X}^k$. This gives the required upper bound.
\end{proof}

We complete this section by showing that Theorem~\ref{thm:GenSimp} implies
Theorem~\ref{CountingThmForColouredGraphs}. In fact, we prove the following
$k$-uniform hypergraph generalisation of Theorem~\ref{CountingThmForColouredGraphs} for
all $k\ge2$, again excluding matchings by insisting that $m_k(H)>\tfrac{1}{k}$.

\begin{theorem}[simplified transference for hypergraphs with colours]\label{thm:colHGtrans}
  Let $k\ge 2$ and let \( H \) be a $k$-uniform hypergraph with
  $m_k(H)>1/k$. Let $r\ge2$, and let \( \eps > 0 \).  Let \( \eta_{N,p} \) be
  the probability that the number of $H$-copies in \(G^{(k)}(N,p) \) exceeds \(
  (1+\tfrac\eps3) p^{e(H)} N^{v(H)} \).  There exists a constant \( C > 0 \)
  such that for \( p\ge C N^{-1/m_k(H)} \) the random hypergraph $\Gamma =
  G^{(k)}(N,p)$ satisfies the following with probability at least \( 1 -
  \eta_{N,p} -\exp(-pN^k/C)\).
  For every $r$-colouring $E(\Gamma)=G_1\dcup\dots \dcup G_r$ of $\Gamma$, there
  exists an $r$-colouring $E(K^{(k)}_N)=G'_1\dcup\dots\dcup G'_r$ of $K^{(k)}_N$ such that
  for each $i\in[r]$
    \[
    e(G_i) p^{-1} = e(G'_i) \pm \eps N^k
    \qquad \text{and} \qquad
    c(H,G_i) p^{-e(H)} = c(H,G'_i)\pm\eps N^{v(H)}\,.
    \]
\end{theorem}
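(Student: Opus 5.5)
We will deduce Theorem~\ref{thm:colHGtrans} from Theorem~\ref{thm:GenSimp} by encoding $H$-copies as an ordered hypergraph. Set $n=\binom{N}{k}$ and identify $[n]$ with $E(K^{(k)}_N)$. Fix an ordering $e_1,\dots,e_{e(H)}$ of $E(H)$. Let $S$ be the $e(H)$-uniform ordered hypergraph whose edges are the sequences $(\phi(e_1),\dots,\phi(e_{e(H)}))$ for each embedding $\phi\colon V(H)\to[N]$. Then $e(S)=\frac{N!}{(N-v(H))!}=(1\pm o(1))N^{v(H)}$, a $p$-random subset $X=[n]_p$ is exactly $E(G^{(k)}(N,p))$, and $|S\cap X^{e(H)}|=c(H,\Gamma)$. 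The parameters are $r$ as given, uniformity $e(H)$ in place of $k$ (if $e(H)=1$ then $m_k(H)$ is not defined, so $e(H)\ge2$), $\Sigma=\{\one\}$ and $\Omega=\{\one\}$. A colouring $G_1\dcup\dots\dcup G_r$ of $E(\Gamma)$ is a partition of $X$, and an $\eps'$-good dense model gives a colouring of $K^{(k)}_N$. \ref{main:itm:sim} with $\sigma=\one$ yields $e(G_i)p^{-1}=e(G'_i)\pm\eps' n$. \ref{main:itm:sub} with $i_1=\dots=i_{e(H)}=i$ and $\omega=\one$ yields $c(H,G_i)p^{-e(H)}=c(H,G'_i)\pm\eps' e(S)$. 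Choosing $\eps'$ slightly below $\eps$ (say $\eps'=\eps/2$) and using $n\le N^k$ and $e(S)\le N^{v(H)}$ gives the required bounds. For the failure probability, the term $\Prob(|S\cap[n]_p^{e(H)}|\ge(1+\eps'/3)p^{e(H)}e(S))$ is at most $\eta_{N,p}$, since $e(S)\le N^{v(H)}$. Also $\exp(-pn/C')\le\exp(-pN^k/C)$ once $C\ge k!\,C'$ and $N$ is large.

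The substantive step is verifying hypotheses \ref{itm:GS:i}--\ref{itm:GS:iv} for $p\ge CN^{-1/m_k(H)}$, with $C'$ from Theorem~\ref{thm:GenSimp} (for input $e(H),r,c,\eps'$, with $c$ a constant depending only on $H$ to be fixed now) and then $C$ chosen large.

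For the codegrees, fix a sequence $\mathbf{x}$ with $\kappa\ge1$ non-$\ast$ entries. If it has a nonzero count, the fixed positions correspond to a set $F\subseteq E(H)$ of $\kappa$ edges, spanning a subgraph $H_F$ with $v_F$ vertices. Their images are fixed, so the vertices of $H_F$ are determined up to $O_H(1)$ choices, and $\deg_S(\mathbf{x})=O_H(N^{v(H)-v_F})$. Since $e(S)n^{-1}=\Theta(N^{v(H)-k})$, condition \ref{itm:GS:iv} reduces to
\[
N^{k-v_F}\le c\,(C')^{1-\kappa}p^{\kappa-1}.
\]
For $\kappa=1$ we have $v_F=k$, so this holds with $c$ large. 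For $\kappa\ge2$ we have $v_F\ge k+1$ and $\frac{\kappa-1}{v_F-k}\le m_k(H)$ by definition of $m_k$. Hence $p^{\kappa-1}\ge C^{\kappa-1}N^{-(\kappa-1)/m_k(H)}\ge C^{\kappa-1}N^{k-v_F}$. Taking $C\ge C'$ absorbs the factor $(C')^{\kappa-1}$, and $c$ absorbs the $O_H(1)$ constants.

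Condition \ref{itm:GS:i} asks $p\ge(\log^{C'}n)n^{-1}$. Here $m_k(H)>1/k$ gives $N^{-1/m_k(H)}=N^{-k+\gamma}$ for some $\gamma>0$, which beats any polylogarithmic factor for large $N$; this is exactly why matchings are excluded. Condition \ref{itm:GS:ii} is trivial since $|\Sigma|=|\Omega|=1$.

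I expect the main obstacle to be the codegree computation: correctly counting the extensions of a partially specified ordered copy, and matching the exponent with the $m_k$ definition (the ordered structure pins down the edges but not the vertex labels within them, so only a bounded multiplicity is lost). One should also check that $N$ large makes $n$ large enough for Theorem~\ref{thm:GenSimp}, and if $p>1$ the statement is vacuous.
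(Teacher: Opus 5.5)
Your reduction is the same as the paper's. You identify $[n]$ with $E(K^{(k)}_N)$ and encode copies of $H$ as an ordered $e(H)$-uniform hypergraph $S$; you fix one edge order and index by embeddings, while the paper takes all orderings of each copy, which is immaterial. You take $\Sigma=\Omega=\{\one\}$, verify the codegree bounds through the definition of $m_k(H)$, and verify condition~(i) through $m_k(H)>1/k$. These steps are fine.

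The step that fails is the comparison of failure probabilities. You apply Theorem~\ref{thm:GenSimp} with $\eps'=\eps/2$ and assert that $\Prob\big(|S\cap[n]_p^{e(H)}|\ge(1+\tfrac13\eps')p^{e(H)}e(S)\big)\le\eta_{N,p}$ because $e(S)\le N^{v(H)}$. This is backwards. Write $(N)_{v(H)}=N!/(N-v(H))!$. In your encoding this term is $\Prob\big(c(H,\Gamma)\ge(1+\tfrac16\eps)p^{e(H)}(N)_{v(H)}\big)$. Both the smaller $\eps'$ and the inequality $e(S)\le N^{v(H)}$ push the threshold \emph{below} $(1+\tfrac13\eps)p^{e(H)}N^{v(H)}$. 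Hence this event contains the event defining $\eta_{N,p}$, and its probability is at least $\eta_{N,p}$, not at most.

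Nor is the excess absorbed by $\exp(-pN^k/C)$. Upper tails of $H$-counts are in general much larger than $\exp(-\Theta(pN^k))$ (this is why $\eta_{N,p}$ appears at all; see the discussion after Theorem~\ref{CountingThmForGraphs}), and they depend on the size of the deviation. So as written you prove only the weaker statement with $1+\tfrac16\eps$ in place of $1+\tfrac13\eps$ in the definition of $\eta_{N,p}$. The sharp failure probability is a key part of the statement, so this matters.

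The repair is not to shrink $\eps$ at all. Since $n\le N^k$ and $e(S)\le N^{v(H)}$, the errors $\eps n$ and $\eps e(S)$ from \ref{main:itm:sim} and \ref{main:itm:sub} are already at most $\eps N^k$ and $\eps N^{v(H)}$. So apply Theorem~\ref{thm:GenSimp} with $\eps$ itself, as the paper does.

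The two thresholds then still differ by the factor $(N)_{v(H)}/N^{v(H)}=1-O(1/N)$, in the unfavourable direction; the paper's one-line comparison passes over this too. To close it, note that the deduction of Theorem~\ref{thm:GenSimp} from Theorem~\ref{thm:main} works equally with the threshold $(1+\tfrac12\eps)p^ke(S)$. Feed $\tfrac14\eps$ into Theorem~\ref{thm:main} and use the Chernoff event $|X|\le(1+\tfrac14\eps)pn$. The errors become $\tfrac34\eps$ in \ref{main:itm:sim}, $\tfrac14\eps$ in the lower bound of \ref{main:itm:sub}, and $\tfrac14\eps+\tfrac34\eps$ in its upper bound. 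As $(1+\tfrac12\eps)(N)_{v(H)}>(1+\tfrac13\eps)N^{v(H)}$ for large $N$, the failure term is then at most $\eta_{N,p}$.

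Two smaller points:
\begin{itemize}
\item Your map from embeddings to edge sequences is not injective when $H$ has isolated vertices or an automorphism fixing every edge setwise (e.g.\ swapping two vertices that lie only in one common edge). So $e(S)=(N)_{v(H)}$ and $|S\cap X^{e(H)}|=c(H,\Gamma)$ hold only up to a factor $M$. Since every sequence has exactly $M$ preimages, this cancels throughout, including in the codegree ratio.
\item The inequality $\exp(-pn/C')\le\exp(-pN^k/C)$ requires $C>k!\,C'$ strictly (say $C=2k!\,C'$), since $\binom{N}{k}<N^k/k!$.
\end{itemize}
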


\begin{proof}
 Given $r,k\ge2$, a $k$-uniform hypergraph $H$ with $m_k(H)>\tfrac{1}{k}$, and $\eps>0$, we let $c=2v(H)!$. Let $C'$ be returned by Theorem~\ref{thm:GenSimp} for input $e(H),r,c,\eps$, and let $C=10\cdot 2^kk!C'$.

 Given $N$, let $n=\binom{N}{k}$, and identify $[n]$ with the edges of the $N$-vertex complete $k$-uniform hypergraph $K^{(k)}_N$. Let $S$ be the ordered $e(H)$-uniform hypergraph on $[n]$ whose edges consist of any ordered tuple of elements of $[n]$ corresponding to a copy of $H$ in $K^{(k)}_N$. Let $\Sigma=\{\one\}$ and $\Omega=\{\one\}$. 

 Suppose that $p\ge CN^{-1/m_k(H)}$. Since $m_k(H)>\tfrac1k$, it follows that $H$ is not a matching, and there is a vertex in at least two edges of $H$. This gives immediately $2k-1$ vertices of $H$ which span at least two edges, so $1/m_k(H)\le k-1$. Thus $pn=p\binom{N}{k}\ge \tfrac1{2k!}CN>C'(\log^{C'} n)$ for sufficiently large $N$, verifying condition~\ref{itm:GS:i} of Theorem~\ref{thm:GenSimp}. Condition~\ref{itm:GS:ii} also immediately follows.

 It remains to check condition~\ref{itm:GS:iv}. Indeed, by construction, we have $e(S)=(1+o(1))N^{v(H)}e(H)!$, where the error term accounts for the fact that there are slightly fewer than $N^{v(H)}$ injective maps from $V(H)$ to $[N]$.
 Given \(1\le \kappa\le e(H)\), let \(\mathbf{x}\) be a sequence of length \(e(H)\) from \([n]\cup\{\ast\}\) with exactly \(\kappa\) entries not equal to \(\ast\). For \(\kappa=1\), by symmetry we have \(\deg_S(\mathbf{x})=\tfrac{e(S)}{n}\), which is as required. We now assume \(\kappa\ge2\). 
 Let \(W\subset[N]\) be the vertices of \(K^{(k)}_N\) which are contained in some edge in \(\mathbf{x}\). By definition, if \(\mathbf{x}\) has two identical non-\(\ast\) entries, then \(\deg_S(\mathbf{x})=0\), so we can assume that \(\mathbf{x}\) has at least two distinct non-\(\ast\) entries, and hence \(|W|\ge k+1\). By definition of \(m_k(H)\), we have
 \[\frac{\kappa-1}{|W|-k}\le m_k(H)\qquad\text{and hence}\qquad |W|\ge\frac{\kappa-1}{m_k(H)}+k\,.\]
 To obtain a member of \(S\) which agrees with \(\mathbf{x}\) at the non-\(\ast\) coordinates, we can at most pick a further \(v(H)-|W|\) vertices in $[N]$, one of the at most \(v(H)!\) maps from the vertices of \(H\) to the picked vertices together with \(W\), and one of the at most $e(H)!$ orderings of the remaining edges. Thus, we have
 \begin{align*}
  \deg_S(\mathbf{x})&\le  N^{v(H)-|W|}v(H)!e(H)!
  \le 2v(H)!e(S)N^{-|W|}
  \le 2v(H)!e(S)N^{-\tfrac{\kappa-1}{m_k(H)}-k} \\
  &= 2v(H)!\tfrac{e(S)}{N^k}\big(N^{-1/m_k(H)}\big)^{\kappa-1}
  \le 2v(H)!\tfrac{e(S)}{n}(p/C)^{\kappa-1}\,,
 \end{align*}
which is the bound required for~\ref{itm:GS:iv}.

 The failure probability for Theorem~\ref{thm:GenSimp} is by choice of $C$ smaller than that claimed in Theorem~\ref{thm:colHGtrans}, as required. Suppose that $\Gamma$, which by our identification is both $[n]_p$ and $G^{(k)}(N,p)$, satisfies the likely event of Theorem~\ref{thm:GenSimp}. Given an $r$-colouring $G_1\dcup\dots\dcup G_r$ of $\Gamma$, viewing $\Gamma$ as a subset of $[n]$, Theorem~\ref{thm:GenSimp} returns an $r$-colouring $G'_1\dcup\dots\dcup G'_r$ of $[n]$, which we view as an $r$-colouring of $E(K^{(k)}_N)$.
 Using~\ref{main:itm:sim} with $\sigma=\one\in\Sigma$, we conclude for each $i\in[r]$ that
 \[p^{-1}e(G_i)=e(G'_i)\pm\eps n=e(G'_i)\pm\eps N^k\,,\] which is one of the two required conclusions of Theorem~\ref{thm:colHGtrans}.
 Using~\ref{main:itm:sub} with $\omega=\one\in\Omega$, and $i_1=\dots=i_{e(H)}=i\in[r]$, we obtain
 \[p^{-e(H)}e(H)! c(H,G_i)=e(H)! c(H,G'_i)\pm \eps e(S)=e(H)! c(H,G'_i)\pm\eps e(H)! N^{v(H)}\,,\]
 where the factor $e(H)!$ appears since each copy of $H$ corresponds to $e(H)!$ edges in $S$ (all the possible orderings), and we have $e(S)\le N^{v(H)}e(H)!$ since a copy of $H$ in $K^{(k)}_N$ corresponds to an injective map from $H$ into $V(K^{(k)}_N)$, of which there are at most $N^{v(H)}$, and any ordering of the edges of $H$ then gives us an edge of $S$. This is the second required conclusion of Theorem~\ref{thm:colHGtrans}.
\end{proof}

In this proof we used $m_k(H)>\tfrac{1}{k}$ to verify that $pn>C'(\log^{C'}n)$. This implies that we can also handle the case that $m_k(H)=\tfrac{1}{k}$ (when $H$ is a matching) at the cost of requiring an extra polylog factor in the lower bound on $p$. This corresponds to random hypergraphs with
a polylogarithmic number of edges in expectation. We feel that this regime is uninterestingly sparse.

\section{The transference principle in functional notation}\label{sec:functional}

Our aim in this section is to reformulate Theorem~\ref{thm:main} in functional notation. This approach is similar (but not identical) to the approach of Conlon and Gowers~\cite{ConGow}. 

We start with a translation of the dense set $[n]$, the sparse set $X=[n]_p$ and its $\eps$-deletion $\tilde X\subset X$ from Theorem~\ref{thm:main} to the functional language.
We will always write $\mu$ for the function $\mu:[n]\to\RR$ defined by \[\mu(x)=p^{-1}\ind(x\in X)\,.\] Similarly, we write $\tmu$ for the function \[\tmu(x)=p^{-1}\ind(x\in\tilde{X})\,.\] We will think of $\mu$ and $\tmu$ as scaled indicator functions of the sparse sets $X$ and $\tilde{X}$ respectively, while the indicator function of the dense set $[n]$ is $\one$ (and we do not scale it).

Next, we translate the concept of taking subsets to the functional setting. 

\begin{definition}[bounded, extreme]
\label{def:extreme}
    Given \(h:[n]\to\mathbb{R}_{\ge0}\), we say \(f:[n]\to\mathbb{R}_{\ge0}\) is \emph{\(h\)-bounded} if \(0\le f(x)\le h(x)\) for all \(x\in [n]\). More generally, given a collection \(P\) of functions from \([n]\) to \(\mathbb{R}_{\ge0}\), we say that \(f:[n]\to\mathbb{R}_{\ge0}\) is \emph{\(P\)-bounded} if there exists \(h\in P\) such that \(f\) is \(h\)-bounded. Suppose that \(f\) is \(P\)-bounded. We say that \(f\) is \emph{$h$-extreme} if for every \(x\in [n]\) we have either \(f(x)=0\) or \(f(x)=h(x)\), and $f$ is $P$-extreme if there exists $h\in P$ such that $f$ is $h$-extreme.
\end{definition}

In this language, saying a function is 
$\one$-extreme (respectively, $\mu$- or $\tmu$-extreme) corresponds exactly to saying that the set represented by the function is a subset of $[n]$ (respectively, $X$ or $\tilde{X}$), while
$\one$-boundedness (respectively, $\mu$- or $\tmu$-boundedness) is a generalisation of this. In particular, the fact that $Y\subset\tilde{X}$ corresponds to the function $y\to p^{-1}\ind(y\in Y)$ being $\tmu$-extreme. 

Next, we want to formulate what it means for $Y'_1\dcup\dots\dcup Y'_r=[n]$ to be a dense model for $Y_1\dcup\dots\dcup Y_r=\tilde{X}$ in functional language. We start with~\ref{main:itm:sim}.
For a structure function $\sigma\in\Sigma$, the quantity we want to control is
\[\sum_{y\in[n]}p^{-1}\ind(y\in Y_i)\sigma(y)\,.\]
Observe that this looks like an inner product. Specifically, for any $f,g:[n]\to\RR$, we define the inner product
\begin{equation}\label{definitioninnerproduct}
  \langle f,g\rangle:=\frac{1}{n}\sum_{x\in[n]}f(x)g(x)\,.
\end{equation}
With this definition, if $f_i$ is the function $x\to p^{-1}\ind(x\in Y_i)$ and $g_i$ is the $\one$-extreme function $x\to\ind(x\in Y'_i)$, then~\ref{main:itm:sim} becomes $\langle f_i ,\sigma\rangle=\langle g_i,\sigma\rangle\pm\eps$, or equivalently,
\begin{equation*}\tag{{\it DM}\,1*}\label{eq:DM1}
  \big|\langle f_i-g_i,\sigma\rangle\big|\le\eps \qquad \text{for each $i\in[r]$ and $\sigma\in\Sigma$}\,,
\end{equation*}  
that is, all $f_i-g_i$ have small inner product with all structure functions in $\Sigma$.
We will stick to using the letter $f$ for `sparse' functions, usually $\tmu$-bounded, and $g$ for `dense' functions, usually $\one$-bounded, in what follows.

Our aim now is to show that a similar statement about inner products with $f_i-g_i$ gives~\ref{main:itm:sub}. This requires a few more definitions.

\begin{definition}[$S_i(x)$, convolution]
  Let~$S$ be a \(k\)-uniform ordered hypergraph on \([n]\), let \(x\in[n]\) and
  \(i\in [k]\). Let \(f_1,\dots{},f_{i-1},f_{i+1},\dots,f_k\colon[n]\to \RR\), and let \(\omega\colon S\to [0,1]\) be a subcount.
  We write \(S_i(x)\) for the subset of \(S\) consisting of all edges whose \(i\)-th entry is \(x\).

  The \emph{convolution}
  \(\ast_{i,S,\omega}(f_1,\dots{},f_{i-1},f_{i+1},\dots{},f_k)\) is the function
  from $[n]$ to $\RR$ defined as follows. For \(x\in [n]\),
  \[\ast_{i,S,\omega}(f_1,\dots{},f_{i-1},f_{i+1},\dots{},f_k)(x):=\frac{n}{e(S)}\sum_{s\in S_i(x)}\omega(s)\prod_{j\neq i}f_j(s_j)\,.\]
  When $S$ is clear from the context (which is almost always the case) we omit
  it. Furthermore, we will simply write $\ast_{i,\omega}(f_1,\dots,f_k)$ and let
  it be implicit that $f_i$ is not one of the arguments.
\end{definition}

To illustrate the relevance of this definition, consider as before the case that each $f_j$ is $\ind(x\in Y'_j)$. Then $\ast_{i,\one}(f_1,\dots,f_k)(x)$ counts (with a scaling factor $\tfrac{n}{e(S)}$) the number of edges of $S$ whose $i$-th member is $x$, and whose $j$-th member, for each $j\neq i$, is in $Y'_j$. For a general subcount $\omega$ replacing $\one$, we count these edges with weight according to $\omega$.

Note that the convolution operator is multilinear, i.e., it is linear in each of the $f_j$.
We now connect this definition to~\ref{main:itm:sub}. Filling in the definition of the inner product, we get
\begin{equation}\label{eq:dotProductForMainThm}
  \big\langle f_{i},\ast_{i,\omega}(f_1,\dots{},f_k)\big\rangle
  = \frac{1}{n}\sum_{x\in[n]} f_i(x) \cdot \frac{n}{e(S)}\sum_{s\in S_i(x)}\omega(s)\prod_{j\neq i}f_j(s_j)
  = \frac{1}{e(S)}\sum_{s\in S} \omega(s)\prod_{j=1}^k f_j(s_j)\,.    
\end{equation}
Observe that the right-hand side of~\eqref{eq:dotProductForMainThm} does not contain $i$, which means that
\[\big\langle f_{i},\ast_{i,\omega}(f_1,\dots{},f_k)\big\rangle=\big\langle f_{i'},\ast_{i',\omega}(f_1,\dots{},f_k)\big\rangle\qquad\text{for each $i,i'\in[k]$}\,.\]
Observe further that
when $f_j$ is $\ind(x\in Y'_j)$, then~\eqref{eq:dotProductForMainThm} implies that
~\ref{main:itm:sub} asserts
\[\big\langle f_i,\ast_{i,\omega}(f_{j_1},\dots,f_{j_k})\big\rangle=\big\langle g_i,\ast_{i,\omega}(g_{j_1},\dots,g_{j_k})\big\rangle\pm\eps \qquad\text{for each $j_1,\dots,j_k\in[r]$ and $\omega\in\Omega$}\,,\]
where the parameter~$i$ is irrelevant by the previous observation.
As we will show at the end of this section (in the proof of Theorem~\ref{thm:main}), we obtain this assertion if we can show that 
\begin{equation*}\label{eq:dotProductToBound}\tag{{\it DM}\,2*}
   \big|\big\langle f_i-g_i,\ast_{i,\omega}(f_{j_1},\dots{},f_{j_{i-1}},g_{j_{i+1}},\dots{},g_{j_k})\big\rangle\big|\le\eps k^{-1} \qquad\text{for each $j_1,\dots,j_k\in[r]$ and $\omega\in\Omega$}\,.
\end{equation*} 
Observe that again, this requires that $f_i-g_i$ has a small inner product with a collection of functions.

\medskip

In our translation of Theorem~\ref{thm:main} to the functional setting, it turns out that for the proof it will actually be handy to control the inner product of more than just these functions.  Our next aim is to specify precisely which functions we want to have small inner product with $f_i-g_i$. These will form a much larger polytope of functions. Here by \emph{polytope} we mean the convex hull of a finite set of points in a finite-dimensional vector space. Given a polytope \(\Phi\), the \emph{vertex set} of \(\Phi\) is the unique minimal set \(V\) of points whose convex hull equals \(\Phi\).
The functions in this larger polytope will be called test functions for the following reason. We want that $f_i-g_i$ is anti-correlated with each of these functions, that is, $f_i-g_i$ has small components in the direction of each of these functions. In other words, the functions in this polytope will be used to test \emph{anti-correlation}. The polytope of functions we will use is defined as follows. For our translation of Theorem~\ref{thm:main} to the functional setting we shall use this definition with $P=\{\tmu,\one\}$.
 
\begin{definition}[test functions, test polytope]\label{def:polytope}
  Let $S$ be a \(k\)-uniform ordered hypergraph on \([n]\), let $\Sigma$ be a set of structure functions and let $\Omega$ be a set of subcounts. Let \(P\) be a set of functions from $[n]$ to $\RR_{\ge0}$.
A function \(\phi\) is a \emph{\(P\)-test function} if
\begin{enumerate}[label=\rom]
\item $\phi\in\Sigma$, or
\item $\phi=\ast_{i,\omega}(f_1,\dots,f_k)$ for some \(P\)-bounded \(f_1,\dots{},f_k\), some \(i\in [k]\), and some \(\omega\in \Omega\).
\end{enumerate}
The \emph{test polytope} \(\Phi(P, S,\Sigma, \Omega)\subset\mathbb{R}^{[n]}\) is the convex hull of all \(P\)-test functions \(\phi\) and their negatives \(-\phi\). When $S$, $\Sigma$ and $\Omega$ are clear from the context, we simply write $\Phi(P)$. When we specify the elements of $P$, we drop the set brackets.
\end{definition}

For example, recalling that the set of $\one$-bounded functions is $F=\{f\colon 0\le f(x)\le 1\}$, we see that 
$\Phi(\one)$ is the convex hull of $\Sigma\cup\{\ast_{i,\omega}(f_1,\dots,f_k):i\in[k],\omega\in \Omega,f\in F\}$. Apart from $\Phi(\one)$, we shall be particularly interested in $\Phi(\tmu,\one)$, since it contains all functions we took inner products with in~\eqref{eq:DM1} and~\eqref{eq:dotProductToBound}.
This is why we can now say that our goal is to show that
\begin{equation*}\tag{{\it DM}\,*}\label{eq:DM:product}
  \langle f_i-g_i,\phi\rangle\le\eps k^{-1} \qquad \text{for all $\phi\in\Phi(\tmu,\one)$}\,,
\end{equation*}
  since this implies both~\eqref{eq:DM1} and~\eqref{eq:dotProductToBound}. Note that the absolute value around the inner product that we had earlier is no longer required since the polytope is \emph{centrally symmetric} by definition, which means $\phi\in\Phi$ if and only if $-\phi\in\Phi$.

It is a standard fact that for any centrally symmetric bounded polytope $\Phi\subset\RR^{[n]}$ which is $n$-dimensional
\[\|f\|_\Phi:=\max_{\phi\in\Phi}\langle f,\phi\rangle\]
defines a norm for $f\in\RR^{[n]}$. In order to automatically get a polytope $\Phi(P)$ which is $n$-dimensional, we shall impose the condition that $\Sigma\subset\Phi(P)$ contains all the functions $x\to\ind(x=i)$; since we always allowed~$\Sigma$ to be exponentially large, including these few additional functions is not problematic. Since $\Phi(P)$ is the convex hull of a finite, centrally symmetric collection of vertices, it is also bounded and centrally symmetric. Thus $\|\cdot\|_{\Phi(P)}$ is a norm for any~$P$. We remark that, since $\Phi(P)$ is centrally symmetric, the statement that  $\|f\|_{\Phi(P)}\le b$ for some function~$f$ and bound~$b$ is equivalent to the statement that $|\langle f,\phi\rangle|\le b$ for all $\phi\in\Phi(P)$, and we shall use these two statements interchangeably throughout the paper.

In this formalism, \eqref{eq:DM:product} becomes

\begin{equation*}\tag{{\it DM}}
 \|f_i-g_i\|_{\Phi(\tmu,\one)}\le\eps k^{-1}\,.
\end{equation*}

When this is the case for all~$i$, then we also say that $g_1,\dots,g_r$ is an
\emph{$\eps$-good dense model} of $f_1,\dots,f_r$. The main technical result of
this paper then is the following functional version of Theorem~\ref{thm:main}.

\begin{theorem}[main technical theorem]\label{thm:maintechint}
Given $k,r\ge2$, $c\ge1$ and $\eps>0$, there exists $C>0$ such that the following holds for all sufficiently large $n$ and any $0<p\le 1$.
Let $S$ be a $k$-uniform ordered hypergraph on $[n]$, and let $\Sigma$ be a set of structure functions on $[n]$ and $\Omega$ a set of subcounts on $S$ such that
\begin{enumerate}[label=\rom]
\item $p\ge (\log^C n)n^{-1}$, 
\item $\Delta_\kappa(S)\le cC^{1-\kappa}p^{\kappa-1}e(S)n^{-1}$ for each $\kappa\in[k]$,
\item $|\Sigma|,|\Omega|\le \exp(C^{-1}pn)$,
\item\label{maintechint:iii} $\one\in\Sigma$, $x\to\ind(x=i)\in\Sigma$ for each $i\in[n]$, and $\one\in\Omega$.
\end{enumerate}
Then with probability at least \(1-\exp\big(-\tfrac{pn}{C}\big)\) the random set \(X=[n]_p\) has an $\eps$-deletion $\tilde{X}$
with the following properties.
\begin{itemize}
  \item $\|\tmu-\one\|_{\Phi(\tmu,\one)}\le\eps k^{-1}$.
  \item For any $f_1,\dots,f_r\colon[n]\to\RR_{\ge0}$ with
    $f_1+\dots+f_r=\tmu$, there exist $g_1,\dots,g_r\colon[n]\to\{0,1\}$ with
    $g_1+\dots+g_r=\one$, such that
    \(\|f_i-g_i\|_{\Phi(\tilde{\mu},\mathbf{1})}\le\eps k^{-1}\) for each
    $i\in[r]$.
\end{itemize}
\end{theorem}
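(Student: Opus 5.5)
The natural route is the Conlon--Gowers dense model strategy, run with the polytope norm $\|\cdot\|_{\Phi(\tmu,\one)}$ and combined with convex duality. The goal is a deterministic \emph{dense model theorem}: if $\tmu$ is close to $\one$ in a suitable dual norm, then every decomposition $f_1+\dots+f_r=\tmu$ admits a dense model. The randomness then enters only through a probabilistic estimate that holds after an $\eps$-deletion.

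\emph{Step 1 (deterministic transfer via Hahn--Banach).} Suppose no $\one$-bounded, summing-to-$\one$ tuple $(g_i)$ satisfies $\|f_i-g_i\|_{\Phi}\le\eps/k$. The set of admissible $(g_i)$ (fractional first) is convex and compact, and the sets $\{\|f_i-\cdot\|\le\eps/k\}$ are convex. A separating hyperplane therefore yields test functions $\phi_1,\dots,\phi_r$ in the cone of $\Phi(\tmu,\one)$ with
\[\sum_i\langle f_i,\phi_i\rangle>\sum_i\langle g_i,\phi_i\rangle+\eps/k\]
for every admissible $g$. Taking $g$ pointwise to put all its mass on the colour maximising $\phi_i(x)$ gives
\[\sum_i\langle f_i,\phi_i\rangle\le\langle\tmu,\max_i\phi_i\rangle\qquad\text{and}\qquad \langle\one,\max_i\phi_i\rangle\le \sum_i\langle g_i,\phi_i\rangle .\]
So it suffices to show $\langle\tmu-\one,\psi\rangle$ is small for all functions $\psi$ built from boundedly many test functions by positive combinations, maxima and truncations. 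Following Conlon--Gowers, approximate such $\psi$ by polynomials in test functions, using Weierstrass on a bounded range plus a tail bound. This reduces the problem to showing that $\tmu-\one$ has small inner product with products $\phi^{(1)}\cdots\phi^{(m)}$ of boundedly many test functions, where the $\phi^{(j)}$ are uniformly bounded: bounded in $L^\infty$ for $\Phi(\one)$, and for $\Phi(\tmu,\one)$ bounded after the deletion. Integrality of the $g_i$ follows afterwards by randomised rounding. Indeed, a rounding error changes each structure and subcount inner product only by $o(1)$ with probability $1-\exp(-\Omega(n))$, which beats $|\Sigma|,|\Omega|$ once we use the union bound over a net of test functions with bounded entries. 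Because $(\tmu,\one)$-test functions involve $\tmu$ themselves, this step is carried out iteratively: the first bullet is proved first and then fed in.

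\emph{Step 2 (probabilistic core).} We must show that with probability $1-\exp(-pn/C)$ there is an $\eps$-deletion $\tilde X$ such that
\[\bigl|\langle\tmu-\one,\textstyle\prod_j\ast_{i_j,\omega_j}(h^{(j)})\rangle\bigr|\le\eps'\]
for all choices with $h^{(j)}$ $\{\tmu,\one\}$-bounded. For fixed dense arguments this is a Chernoff/Janson-type concentration, and a union bound over $\exp(pn/C)$ choices of $\Sigma,\Omega$ together with a net suffices. The difficulty is the sparse arguments bounded by $\tmu$, which are exponentially many with unbounded entries. The plan is first to delete from $X$ the vertices lying in too many edges of $S\cap X^k$ relative to $p^{k-1}e(S)/n$, with the heaviness measured for every partial codegree level $\kappa$. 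The hypothesis $\Delta_\kappa(S)\le cC^{1-\kappa}p^{\kappa-1}e(S)n^{-1}$ and an upper-tail estimate, via the deletion method of Rödl--Ruciński/Janson, show that with probability $1-\exp(-pn/C)$ only $\eps pn$ vertices need to be removed. On $\tilde X$ every convolution $\ast_{i,\omega}$ with $\tmu$-bounded inputs is then pointwise $O(1)$. The first bullet, on $\langle\tmu-\one,\phi\rangle$, is handled by an induction on the number of sparse arguments, replacing sparse arguments one at a time by their dense models. This mirrors the Conlon--Gowers induction, but it works for non-strictly-balanced $S$ because the deletion controls all codegree levels simultaneously.

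\emph{Main obstacle.} The crux is Step 2: proving boundedness of sparse convolutions after an $\eps$-deletion with failure probability only $\exp(-pn/C)$, and doing so uniformly over all $\tmu$-bounded inputs rather than a fixed family. I would attack it by a container-style counting argument. Bound the number of \emph{witness} configurations (sets of $\Theta(pn)$ vertices whose removal is forced) using the codegree hypotheses, and show that each witness has probability small enough that their total is $\exp(-pn/C)$. The $\log^C n$ lower bound on $pn$ absorbs the polylogarithmic losses in the union bound.
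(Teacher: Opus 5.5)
Your Step~1 is essentially the paper's reduction (Hahn--Banach with the pointwise-maximum choice of $g$, Weierstrass on a bounded range, randomised rounding). The target you set in Step~2 is also the right one. The argument you sketch for Step~2, however, does not work.

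\emph{The union bound fails.} The extreme test functions with $\one$-bounded arguments alone number up to $k|\Omega|2^{(k-1)n}$. For a fixed bounded $\phi$, the event $|\langle\mu-\one,\phi\rangle|\ge\eps$ has probability only $\exp(-\Theta(\eps^2pn))$ (already for $\phi=\one$), because each coordinate of $\mu-\one$ has variance $p^{-1}-1$. You do not say how a net would reduce this count, and $L^1$-closeness does not control $\langle\mu,\cdot\rangle$, whose weights are $p^{-1}$. The same count ($2^{(k-1)n}$ against $\exp(-\Theta(\eps^2n))$) also invalidates your justification of the rounding step.

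\emph{Sparse arguments depend on $X$.} Test functions with $\tmu$-extreme arguments are determined by $X$, so no concentration inequality applies to them directly. Making convolutions pointwise $O(1)$ by deletion addresses neither this nor the counting problem. It is also unproved, and your deletion rule cannot give it, because convolutions are evaluated at every $x\in[n]$. For example, for triangles at $p=CN^{-1/2}$ one has $\ast(\mu,\mu)(uv)\approx\mathrm{codeg}_X(u,v)/(p^2N)$. Its maximum over pairs $uv\notin X$ grows like $\log N/\log\log N$, and deleting elements of $X$ that lie in many edges of $S\cap X^k$ does not address such pairs. The uniformity over $\tmu$-bounded inputs, which you call the crux, is in fact automatic by monotonicity.

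\emph{The induction is circular.} In $\langle\tmu-\one,\ast_i(f_1,\dots)\rangle=\langle f_1,\ast_1(\dots,\tmu-\one,\dots)\rangle$, a dense model for $f_1$ must be good against a test function that again has the sparse argument $\tmu$. So the number of sparse arguments never decreases. Likewise, the first bullet is not an easier base case.

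The idea you are missing is the random splitting of Definition~\ref{notationForSplitting}. Write $\tmu=\frac1L\sum_{i\le L}\tmu_i$ and $\one=\frac{1}{\lceil Lp^{-1}\rceil}\sum_j\nu_j$, with random disjoint supports of size about $pn/L$.
\begin{itemize}
\item By multilinearity, $\Phi(\tmu,\one)^d\subseteq\Phi'$ (Lemma~\ref{lem:splitreduce}), yet each vertex of $\Phi'$ involves at most $d(k-1)$ parts of the $\mu$-split.
\item After conditioning on $X$ inside those parts, there are only $\exp(O(dk\,pn/L))$ candidate vertices. Each is independent of $X$ on the remaining $1-O(dk/L)$ fraction of $[n]$, so Bernstein's inequality there beats the union bound once $L$ is large (Lemma~\ref{lem:finbound}).
\item The leftover contributions (the revealed part, entries of $\phi$ above $2c^d$, and $\mu-\tmu$) are controlled by the truncation and by moment bounds $\langle\tmu_i,\phi\rangle,\langle\nu_j,\phi\rangle\le 2c^\ell$ on products of split test functions.
\item These moment bounds are proved via Kim--Vu over the polynomially many largest test functions plus a coupling (Lemma~\ref{cor:momentboundcor}). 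They are then boosted to failure probability $\exp(-\Omega(pn))$ by the Harris-inequality deletion argument (Lemma~\ref{lem:deletionbounds}).
\end{itemize}
The moment bounds replace your pointwise bound: large entries need only contribute little to inner products, not be absent. The same splitting (with $p=1$) together with moment bounds is also what makes the rounding step work (Lemma~\ref{lem:randround}).
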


After the explanations above, the proof that Theorem~\ref{thm:main} follows from Theorem~\ref{thm:maintechint} is now straightforward. For completeness, we provide the details here.

\begin{proof}[Proof of Theorem~\ref{thm:main}]
  Given the setting of Theorem~\ref{thm:main}, we let $C'$ be given by
  Theorem~\ref{thm:maintechint} for the same parameter values and set the $C$
  promised by Theorem~\ref{thm:main} to $2C'$. We also add $\one$ and
  $x\to\ind(x=i)$ for each $i\in[n]$ to $\Sigma$; we add $\one$ to
  $\Omega$. Since $p\ge (\log^C n)n^{-1}$ by assumption, after this addition we
  still have $|\Sigma|,|\Omega|\le 2\exp(C^{-1}pn)\le\exp\big((C')^{-1}pn\big)$,
  as required for Theorem~\ref{thm:maintechint}.
  
 Suppose $X=[n]_p$ admits an $\eps$-deletion $\tilde{X}$ satisfying the likely event of Theorem~\ref{thm:maintechint}. Let $Y_1\dcup\dots\dcup Y_r$ be any partition of $\tilde{X}$. For each $1\le i\le r$, let the function $f_i$ be defined by $f_i(y)=p^{-1}\ind(y\in Y_i)$. Then $f_1,\dots,f_r$ satisfy the conditions of Theorem~\ref{thm:maintechint}, so by that theorem there exist $g_1,\dots,g_r$ taking values in $\{0,1\}$ with $g_1+\dots+g_r=\one$, such that $\|f_i-g_i\|_{\Phi(\tmu,\one)}\le\eps k^{-1}$ for each $i$.
 
 For each $i\in[r]$, define $Y'_i=\{z\in[n]\,:\,g_i(z)=1\}$. Then $Y'_1,\dots,Y'_r$ form a partition of $[n]$ as required. We claim that these are the required $\eps$-good dense models.
 For the structure statement~\ref{main:itm:sim}, given $\sigma\in\Sigma$ and any $i\in[r]$, recall that $\sigma\in\Phi(\tmu,\one)$, so that we have
 \[\sum_{y\in Y_i}p^{-1}\sigma(y)=n\langle f_i,\sigma\rangle=n\langle g_i,\sigma\rangle\pm\eps k^{-1} n=\sum_{y\in Y'_i}\sigma(y)\pm\eps n\]
 as required.

 For the counting statement~\ref{main:itm:sub}, we will prove more generally
 that for any collection of~$k$ (not necessarily disjoint) subsets of~$\tilde
 X$, whose functional versions have good dense models (which could have been
 obtained from different applications of Theorem~\ref{thm:maintechint}) we
 obtain a good dense model for the subsets. Indeed, let $\tilde Y_1,\dots,\tilde Y_k$ be any subsets of $\tilde{X}$ with corresponding $\tmu$-extreme functions $\tilde f_1,\dots,\tilde f_k$, that is, $\tilde f_i(y)=p^{-1}\ind(y\in \tilde Y_i)$  for each $i\in[k]$. Suppose that we have functions $\tilde g_1,\dots,\tilde g_k\colon[n]\to\{0,1\}$ such that for each $i\in[k]$ we have $\|\tilde f_i-\tilde g_i\|_{\Phi(\tmu,\one)}\le\eps k^{-1}$.  By~\eqref{eq:dotProductForMainThm}, for any $\omega\in\Omega$ we have
 \begin{equation}\label{eq:main:conv}
   \sum_{s\in S}\omega(s)\prod_{i=1}^k\tilde f_i(s_i)=e(S)\big\langle \tilde f_1,\ast_{1,\omega}(\tilde f_2,\dots,\tilde f_k)\big\rangle=e(S)\big\langle \tilde f_j,\ast_{j,\omega}(\tilde f_1,\dots,\tilde f_{j-1},\tilde f_{j+1},\dots,\tilde f_k)\big\rangle
 \end{equation}
 for each $2\le j\le k$, and a similar statement holds replacing any $\tilde f$ functions with $\tilde g$ functions. In particular, for each $j$ the function
 \[\ast_{j,\omega}(\tilde g_1,\dots,\tilde g_{j-1},\tilde f_{j+1},\dots,\tilde f_k)\]
 is in $\Phi(\tmu,\one)$, and hence we have
 \[\big\langle \tilde f_j,\ast_{j,\omega}(\tilde g_1,\dots,\tilde g_{j-1},\tilde f_{j+1},\dots,\tilde f_k)\big\rangle
 =\big\langle \tilde g_j,\ast_{j,\omega}(\tilde g_1,\dots,\tilde g_{j-1},\tilde f_{j+1},\dots,\tilde f_k)\big\rangle\pm\eps k^{-1}\,.\]
 We can then sequentially replace each $\tilde f_i$ with $\tilde g_i$ in the inner product, each time incurring an error bounded by $\eps k^{-1}$. Putting this into~\eqref{eq:main:conv}, we obtain
\begin{equation}\label{eq:techtomain:omega}
 \sum_{s\in S}\omega(s)\prod_{i=1}^k\tilde f_i(s_i)=\sum_{s\in S}\omega(s)\prod_{i=1}^k\tilde g_i(s_i)\pm\eps e(S)\,.
\end{equation}
 For rewriting this in the setting of Theorem~\ref{thm:main},
let ${\tilde Y}'_1,\dots,{\tilde Y}'_k$ be the subsets of $[n]$ corresponding to $\tilde g_1,\dots,\tilde g_k$, that is, $\tilde g_i(y)=\ind(y\in\tilde{Y}'_i)$ for each $i\in[k]$. Then~\eqref{eq:techtomain:omega} says
\[\sum_{s\in S}\omega(s)p^{-k}\ind\big(s\in \tilde Y_1\times\dots\times \tilde Y_k\big)
=\sum_{s\in S}\omega(s)\ind\big(s\in {\tilde Y}'_1\times\dots\times {\tilde Y}'_k\big)\pm\eps e(S)\]
which is the required counting statement.

The additional conclusion $\|\tmu-\one\|_{\Phi(\tmu,\one)}\le\eps k^{-1}$ directly translates, as above, to the required additional conclusion of Theorem~\ref{thm:main} that $[n]\dcup\emptyset\dcup\dots\dcup\emptyset$ is an $\eps$-good dense model of $\tilde{X}\dcup\emptyset\dcup\dots\dcup\emptyset$.
\end{proof}

Much of the formalism introduced above already appears in the work of Conlon and Gowers~\cite{ConGow}. Let us close this section by explaining what is different and what is not. The last proof is an easy generalisation of Conlon and Gowers' Lemma~3.3, and the norm $\|\cdot\|_{\Phi(\tmu,\one)}$ is essentially the norm that they are `tempted' to define, but which they proceed to explain leads to difficulties; in the end, they use a significantly more complicated (and less helpful) norm, which requires them to introduce several more ideas before they can define it. One main contribution in this paper is to resolve these difficulties mentioned by Conlon and Gowers.

Our structure functions are essentially already used in~\cite{ConGow}: Conlon and Gowers explain how to modify their proof to obtain a structural conclusion which corresponds to the $\{0,1\}$-valued case of our structure functions, but in fact the general $[0,1]$-valued case would cause no difficulties for their proof. However, the collection $\Omega$ of subcounts is not found in their work; and they are only able to deal with the rather special case of `strictly balanced' $S$.

\section{Overview of the proof of the main technical theorem}
\label{sec:overview}

Sections~\ref{sec:tools} to~\ref{sec:delete} are dedicated to the proof of our
main technical theorem, Theorem~\ref{thm:maintechint}. In this section we
provide a brief high-level overview of how we proceed in this proof. More
detailed explanations will be provided in the individual sections capturing
different parts of this proof, and all the main lemmas and intermediate results
are collected in the following Subection~\ref{sec:main-lemmas}.

The important part of Theorem~\ref{thm:maintechint} is the second bullet point,
guaranteeing the existence of good dense models. The other bullet point
(which
in Theorem~\ref{thm:main} corresponded to the fact that in the $\eps$-deletion
we did not disproportionally delete edges in places where the subcount functions
tell us we should not)
is a statement which we will anyway prove along the way.
We ignore it in this overview. In addition, we assume for simplicity in this
sketch that the number of colours is $r=2$. This means that we can concentrate
on the task of providing a good dense model~$g$ for one function~$f$ with $0\le
f\le\tmu$, representing the first colour; this will, using the triangle
inequality, automatically give a good dense model for the second colour. For
now, we will also assume that $\tmu=\mu$, that is, that we do not perform any
deletions. We will explain at the end of this overview what needs to be changed in
order to deal with deletions.

Observe that the norm requirement $\|f-g\|_{\Phi(\tmu,\one)}\le\eps k^{-1}$ of
the second bullet point of Theorem~\ref{thm:maintechint} is equivalent (by
definition of the norm) to a finite collection of linear inequalities all being
satisfied, so that the existence of a $\{0,1\}$-valued good dense model~$g$
for~$f$ amounts to~$f$ being close to an integer point in a certain polytope.
Our first step is to use randomised rounding to relax this to the existence of a
$[0,1]$-valued good dense model~$g$ for~$f$, at the cost of needing $\eps$
somewhat smaller (see Theorem~\ref{thm:intmodel}). This, in turn, corresponds
to~$f$ being contained in a related polytope.

The next step is to proceed by contradiction. If~$f$ is not contained in the
related polytope, then there is a hyperplane which witnesses this by the
separating hyperplane theorem. We show that the normal vector of this hyperplane can
be taken to be of the form $\phi$ for some $\phi\in\Phi(\tmu,\one)$, and we
argue that a contradiction is obtained if
$\langle\tmu-\one,\max(0,\phi)\rangle<\eps$, where the maximum is taken
pointwise. This reduces the entire problem to proving the anti-correlation
statement that $\langle\tmu-\one,\max(0,\phi)\rangle<\eps$ holds for all
$\phi\in\Phi(\tmu,\one)$ (see Lemma~\ref{lem:HBreduce}).

Our basic idea then is to use a concentration inequality to argue that this
anti-correlation statement is very likely to hold for each vertex $\phi$ of
$\Phi(\tmu,\one)$ individually, take the union bound over vertices, and conclude
that because the anti-correlation statement holds for all vertices, it will also hold for all other points of 
$\Phi(\tmu,\one)$. This is unfortunately not possible as such for several
reasons, but circumventing these will lead to a proof.

The first reason the argument is invalid is that the function $\max(0,\phi)$ is
not linear, so the extreme inner products may not occur at vertices.  To deal
with this, we observe that $x\to\max(0,x)$, though not linear, is continuous,
so by the Stone-Weierstrass approximation theorem, for any given bounded
interval, there is a polynomial $\pi$ of some constant degree $d$ which closely
approximates $\max(0,x)$ on the given interval. The intuition is that it is then
enough to show $\langle\tmu-\one,\pi(\phi)\rangle$ is small for all
$\phi\in\Phi(\tmu,\one)$, and in turn this follows from a linear optimisation
over the \emph{product polytope} $\Phi(\tmu,\one)^d$, that is, the convex hull
of vectors which are the pointwise product of any $d$ members of
$\Phi(\tmu,\one)$. This intuition is not quite valid, because entries of
$\Phi(\tmu,\one)$ can be unboundedly large, hence outside the interval on which
we are guaranteed a close approximation. However, for a suitable choice of
interval ($[-2c,2c]$, in fact) it is close enough to true: exceptionally large
entries are very rare and we show they make only a tiny contribution to the
inner products we are interested in. (For this step, see Lemma~\ref{lem:lincor}.)

Having dealt with nonlinearity, it is now enough to show that
$\langle\tmu-\one,\phi\rangle$ is small for all $\phi$ in the product polytope
$\Phi(\tmu,\one)^d$. This is a linear problem, so the extrema occur at
vertices. However, two more problems with the `concentration and union bound'
approach arise here. The product polytope turns out to have far too many
vertices for the union bound. Furthermore, we would really like to reveal a
vertex $\phi$ and then use Bernstein's inequality to argue that $\tmu-\one$ is
very unlikely to correlate with it. But these two vectors are not independent:
revealing $\phi$ generally reveals $\tmu$ too.
The fix to both these problems is to perform random splittings of the
functions~$\tmu$ and~$\one$ into constantly many functions of randomly chosen
disjoint support. This will give us a new product polytope $\Phi'$ which turns
out to contain $\Phi(\tmu,\one)^d$, but has far fewer vertices. Furthermore,
revealing one of its vertices only reveals a tiny part of $\tmu$. Hence, we then
proceed to bounding $\langle\tmu-\one,\phi\rangle$ over the new product polytope
(see Lemma~\ref{lem:anticor}) using Bernstein's inequality and the union
bound. In doing this, we still need to be careful to bound the contribution from
the tiny part of $\tmu$ which is dependent on $\phi$, and from any exceptionally
large values in $\phi$ (to which we cannot apply Bernstein's inequality as it
would spoil the required concentration).

In this sketch, when we used the Stone--Weierstrass theorem to arrive at a linear
problem and when we used Bernstein's inequality, we indicated we need to deal
with the contribution of exceptionally large values in $\phi$ to an inner
product such as $\langle\tmu,\phi\rangle$. Let us briefly explain now how we
approach this.  We show that unless the contribution of the large values is
tiny, for large enough even $d'$ the contribution of the large values to
$\langle\tmu,\phi^{d'}\rangle$ will indeed be enormous, and in particular (because we
can ensure both vectors in the inner product are non-negative) the latter inner
product will be enormous. We think of $\langle\tmu,\phi^{d'}\rangle$ as a high
moment of $\langle\tmu,\phi\rangle$. Consequently, we refer to a good upper
bound on $\langle\tmu,\phi^{d'}\rangle$ as a \emph{moment bound}.  Conversely,
having a moment bound implies that the contribution of large entries to
$\langle\tmu,\phi\rangle$ is tiny.  We use the Kim--Vu polynomial concentration
inequality to show that the moment bounds we need hold (see
Lemma~\ref{cor:momentboundcor}).

\medskip

The steps described above give a version of Theorem~\ref{thm:maintechint}
without deletion, and with weak bounds on the failure probability coming from
the Kim--Vu inequality. To obtain optimal bounds and introduce deletion, we use
a version of the R\"odl-Ruci\'nski deletion method~\cite{RR} due to Sp\"ohel,
Steger and Warnke~\cite{SSW} to argue that at the cost of deleting a few
elements of the random set to go from $\mu$ to $\tmu$, we have exponential
bounds on the probability of the required moment bounds holding (see
Lemma~\ref{lem:deletionbounds}). The rest of the proof already works at the
optimal failure probability and is sufficiently robust to survive this deletion.

\subsection{Comparison to the work of Conlon and Gowers}

As explained before, our starting point was the approach of Conlon and
Gowers~\cite{ConGow}. The obvious difference is that their proof uses a more
complicated norm than ours, involving `capped convolutions' whose values are
bounded. There are several points of similarity. Both we and they use randomised
rounding to obtain integer models from fractional ones, though at this step we
do more work and obtain significantly stronger properties. Our use of the
separating hyperplane theorem (which they call Hahn--Banach theorem) to reduce
the problem to an anti-correlation statement for $\tmu-\one$ is broadly
similar to theirs. The step of linearising the problem using Stone--Weierstrass
is again quite similar in both proofs. In fact, for Conlon and Gowers it is
easier since their norm avoids the issue of large values.

However, at this stage the proofs diverge: we have reduced to a genuinely linear
problem, but Conlon and Gowers, due to the `capped convolution', have a problem
which is only approximately linear. While their end goal, as ours, is to get to
a union bound over vertices, the route they take to get there is a good deal
harder work than ours. Our next step of randomised splitting resembles a point
in the construction of the norm for Conlon and Gowers, but they are only able to
use it to obtain enough independence between the polytope vertices and the
random vector, so that two further ideas (which they call `sufficient
randomness' and `normalised restrictions') are needed to reduce the number of
vertices over which a union bound is to be taken. Where we use moment bounds to
deal with exceptionally large entries in the polytope vertices, which we can do in
full generality, Conlon and Gowers need to understand the effect of capping
convolutions. This turns out to be difficult, and is the part of their proof
where they can no longer work in a general setting, but need to restrict to the
special cases (`arithmetic problems with two degrees of freedom' and strictly
balanced hypergraphs) that they proved.

\section{Setup, notation, main lemmas, and some reductions}\label{sec:tools}

Our goal in this section is to set up the notation necessary for our proof of Theorem~\ref{thm:maintechint}, provide a series of reductions of this theorem to simpler goals, and formulate the main lemmas that we need to obtain these reductions as well as for establishing the result of these reductions (Theorem~\ref{thm:correlate}). We will also provide a more detailed outline of the main ideas of our proofs alongside these technical details.

\subsection{Preliminaries}

In this and the following sections, we will always use the following setup
(which is identical to the setup used in Theorem~\ref{thm:maintechint}).

\begin{setting}
\label{mainSetting}
We are given integers \(k,n\geq 2\) and $p\in(0,1]$. Let \(S\) be a \(k\)-uniform ordered hypergraph on \([n]\), and let \(\Sigma\) and \(\Omega\) be sets of structure functions on \([n]\) and subcounts of \(S\), respectively. Let \(\Sigma\) and \(\Omega\) contain the  function $\one$ that takes value \(1\) at all elements of $[n]$ and $S$ respectively, and let \(\Sigma\) contain each of the \(n\) functions \(x\to \ind(x=i)\).

Any convolution we take is with respect to $S$. We denote a sample from $[n]_p$ by~$X$ and an $\eps$-deletion of $X$ by~$\tilde{X}$. We write $\mu$ and $\tmu$ for the functions
\[\mu(y)=p^{-1}\ind(y\in X)\,, \quad\text{and}\quad \tmu(y)=p^{-1}\ind(y\in\tilde{X})\,,\]
and for any collection of functions $f_1,\dots,f_k\colon[n]\to\reals$ we write
$\ast_{i}(f_1,\dots,f_k)$ for the convolution $\ast_{i,\one}(f_1,\dots,f_k)$, where $\one\in\Omega$.
For any given set $P$ of functions from $[n]$ to $\RR_{\ge0}$, the polytope $\Phi(P)$ is as in Definition~\ref{def:polytope}, with respect to $S,\Sigma,\Omega$.
\end{setting}

The assumption that $\Sigma$ contains the functions $x\to\ind(x=i)$ (the standard basis functions) is only needed to ensure that $\|\cdot\|_{\Phi(P)}$ is a norm (as explained in Section~\ref{sec:functional}), and we will not use it otherwise. The assumption that $\Sigma$ and $\Omega$ contain their respective $\one$ functions will be used more seriously in some optimisation arguments in our proof.

In addition to the assumptions in this setup, we shall in our proof work with the following boundedness conditions. These are kept separate from the setup, since we want to be able to specify the parameters of these conditions explicitly, as they change over time.

\begin{definition}[boundedness conditions]
  Suppose we are in Setting~\ref{mainSetting}. We say the \emph{$(c,C,p,n)$-boundedness conditions} hold if
  \begin{enumerate}[label=\itmarab{BD}]
  \item $p\ge (\log^C n)n^{-1}$,
  \item $|\Sigma|,|\Omega|\le\exp(C^{-1}pn)$, and
  \item $\Delta_\kappa(S)\le cC^{1-\kappa}p^{\kappa-1}e(S)n^{-1}$ for each $\kappa\in[k]$.
  \end{enumerate}
\end{definition}

Before we can outline our proof of Theorem~\ref{thm:maintechint} in more detail,
we need some preliminary remarks on notation, some further definitions and some
easy facts concerning convolutions and polytopes.

We will often apply real operations and operators to functions, by which we
always intend to apply them pointwise. For example, the product of two functions
$f,g:[n]\to\RR$, written \(fg\), is the function $x\to f(x)g(x)$.  Also, for a
positive integer \(d\), we use \(f^d\) to denote the pointwise product $x\to
f(x)^d$. When we want to take an inner product, we will always write $\langle
f,g\rangle$ explicitly and mean the inner product as defined
in~\eqref{definitioninnerproduct}.  We also consider the space of functions
$\RR^{[n]}$ as a vector space. The $x\in[n]$ coordinate of the vector $f$ in
this vector space is $f(x)$. Subscripts (which we will use heavily), on the
other hand, will always be indices identifying different vectors in a
collection.

In the proof of Theorem~\ref{thm:maintechint} we shall be interested in bounding
$\|f\|_{\Phi(P)}=\max_{\phi\in\Phi(P)}\langle f,\phi\rangle$ for certain linear functions~$f$.
It is a standard fact of linear optimisation that such a maximum is attained at a vertex of $\Phi(P)$.
We say that $\phi$ is a \emph{non-negative vertex} of $\Phi(P)$ if $\phi\ge 0$ (pointwise), and a \emph{non-positive vertex} of $\Phi(P)$ if $\phi\le 0$.

Observe further that, if \(f_1,\dots{},f_k\) are non-negative functions, bounded by \(h_1,\dots{},h_k\in P\), respectively, then for all \(x\in [n]\) we have
    \[
    0\le \ast_{i,\omega}(f_1,\dots{},f_k)(x)\le \ast_{i,\mathbf{1}}(h_1,\dots{},h_k)(x)\, .
    \]
This justifies the following definition.

\begin{definition}[largest and extreme test functions]\label{def:largest}
  Let $\phi\in\Phi(P)$. We say that $\phi$ is a \emph{\(P\)-largest test function} if
  \[\phi=\ast_{i,\mathbf{1}}(h_1,\dots{},h_k) \qquad \text{for some } h_1,\dots,h_k\in P\,.\]
  We say that $\phi$ is a \emph{$P$-extreme test function} if
  \[\phi\in\Sigma \quad\text{or}\quad \phi=\ast_{i,\omega}(f_1,\dots{},f_k)\qquad \text{for some $\omega\in\Omega$ and some \(P\)-extreme \(f_1,\dots{},f_k\)}\,.\]
\end{definition}

With these definitions in place, we can now collect a couple of straightforward but useful facts about the polytope \(\Phi(P)\) and the corresponding $P$-test functions for convenient reference later.

\begin{fact}\label{Rmk:anti_uniform_functionals}
  Let \(P\) be a set of functions from $[n]$ to $\RR_{\ge0}$.
  \begin{enumerate}[label=\rom]
    \item\label{itm:Phi:1} \(\Phi(P)\) is an $n$-dimensional centrally symmetric polytope in \(\mathbb{R}^{[n]}\).
    \item\label{itm:Phi:2} All \(P\)-largest test functions are
      \(P\)-extreme test functions, but not vice-versa.
    \item\label{itm:Phi:vertices} Every vertex of \(\Phi(P)\) is a \(P\)-extreme test function.
    \item\label{itm:Phi:3} Every vertex of $\Phi(P)$ is either non-negative or non-positive.
    \item\label{itm:Phi:4} For any non-negative vertex~$\phi$ of~$\Phi(P)$, either $0\le \phi\le\one$, or there
      exists a \(P\)-largest test function
      \(\ast_{i,\mathbf{1}}(h_1,\dots{},h_k)\) such that
      \(0\le \phi\le\ast_{i,\mathbf{1}}(h_1,\dots{},h_k)\).
\end{enumerate}
\end{fact}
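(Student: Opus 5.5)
The five items are elementary facts about a convex hull of finitely many explicit points, and I will verify them one at a time directly from Definitions~\ref{def:polytope}, \ref{def:extreme} and~\ref{def:largest}.

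For~\ref{itm:Phi:1}: by Setting~\ref{mainSetting}, $\Sigma$ contains the $n$ standard basis vectors $x\mapsto\ind(x=i)$, and $\Phi(P)$ contains them and their negatives. Hence $\Phi(P)$ contains the cross-polytope, so it is $n$-dimensional. Central symmetry is built into the definition, since we take the convex hull of all test functions together with their negatives.

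To see that $\Phi(P)$ is a polytope, i.e.\ the hull of finitely many points, I use multilinearity of the convolution. For fixed $i$, $\omega$ and $h_1,\dots,h_k\in P$, the set of $h_j$-bounded $f_j$ is a box whose vertices are the $h_j$-extreme functions. The map $(f_j)_{j\ne i}\mapsto \ast_{i,\omega}(f_1,\dots,f_k)$ is multilinear. So each test function $\ast_{i,\omega}(f_1,\dots,f_k)$ lies in the convex hull of the values at extreme tuples: expand each $f_j$ as a convex combination of box vertices, one coordinate at a time, and use linearity in that coordinate. Therefore $\Phi(P)$ is the convex hull of the finite set of $P$-extreme test functions and their negatives.

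This observation also gives~\ref{itm:Phi:vertices}. Every vertex belongs to any generating set, so it is $\pm$ a $P$-extreme test function. Since the paper's phrasing treats $-\phi$ as a vertex associated with $\phi$, the statement should be read as ``up to sign''. This sign issue is the only point needing care.

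For~\ref{itm:Phi:2}: each $h\in P$ is itself $h$-extreme and $\one\in\Omega$, so $\ast_{i,\one}(h_1,\dots,h_k)$ is $P$-extreme. For the converse failing, an element of $\Sigma$, or a convolution of extreme functions that are not all equal to the $h_j$, is extreme but in general not largest. One explicit example, such as a basis vector $\ind(x=1)\in\Sigma$, suffices.

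For~\ref{itm:Phi:3}: $\Sigma$-elements take values in $[0,1]$. A convolution of non-negative functions with a non-negative $\omega$ is non-negative. So every $P$-extreme test function is $\ge0$, and its negative is $\le0$.

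For~\ref{itm:Phi:4}: a non-negative vertex is a $P$-extreme test function. If it lies in $\Sigma$, then $0\le\phi\le\one$. Otherwise $\phi=\ast_{i,\omega}(f_1,\dots,f_k)$ with each $f_j$ being $h_j$-bounded for some $h_j\in P$. The pointwise inequality displayed before Definition~\ref{def:largest}, which uses $\omega\le1$ and monotonicity of the convolution in non-negative arguments, gives $\phi\le\ast_{i,\one}(h_1,\dots,h_k)$.

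I expect no real obstacle; the only step requiring an argument is the multilinearity reduction to finitely many extreme generators.
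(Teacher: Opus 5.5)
Your proposal is correct and follows the same route as the paper. The paper justifies each item in a line or two: (i) and (ii) from the definitions, using the standard basis functions in $\Sigma$ for full dimension; (iii) from multilinearity of the convolution; and (iv)--(v) from non-negativity of the generators together with $0\le\sigma\le\one$ and the pointwise bound stated before Definition~\ref{def:largest}. Your explicit reduction of $\Phi(P)$ to the convex hull of finitely many $P$-extreme test functions and their negatives, and your remark that (iii) must be read up to sign, simply make precise what the paper leaves implicit; the paper itself says the remaining vertices are the negatives of test functions, and uses (iii) only for non-negative vertices.
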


Here, \ref{itm:Phi:1} follows directly from the definition of $\Phi(P)$.
Definitions~\ref{def:extreme} and~\ref{def:largest} immediately give~\ref{itm:Phi:2}.
Moreover, \ref{itm:Phi:vertices} follows from the fact that
 the convolution operator is multilinear and from Definition~\ref{def:polytope}.
The definition of $\Phi(P)$ also implies that $\Sigma$ and all the convolutions are non-negative functions and contain the non-negative vertices of $\Phi(P)$, while their negatives contain the remaining vertices of $\Phi(P)$. Hence we obtain~\ref{itm:Phi:3} and~\ref{itm:Phi:4}, since the structure functions $\sigma\in\Sigma$ satisfy $0\le\sigma\le\one$.

\medskip

In our optimisation arguments, some functions we will be interested in are not linear. Since, as explained in more detail later, we will approximate these by polynomials and then consider each monomial in such a polynomial separately in the optimisation task, we shall also be interested in what we call a product polytope. (We remark though that this is not the usual definition of a product polytope; in particular the convex hull in the following definition is not redundant.)

\begin{definition}[product polytope]
Given a polytope $\Phi\subset\RR^{[n]}$ and some $\ell\ge2$, the \emph{product polytope} $\Phi^\ell\subset\RR^{[n]}$ is the convex hull of
\[\big\{\phi_1\dots\phi_{\ell}\,:\,\phi_i\in\Phi\text{ for each }i\in[\ell]\big\}\,,\]
where $\phi_1\dots\phi_{\ell}$ denotes a pointwise product.
\end{definition}

The following is a simple consequence of this definition, the definition of the test polytope $\Phi(P)$, and the fact that in our setting
\(\Sigma\) contains the function \(\one\).

\begin{fact}\label{fac:d-product}
  If we are in Setting~\ref{mainSetting}, then for any integer $\ell\ge 1$ and for any set of functions~$P$, we have
  \[\Phi(P) \subset \Phi(P)^\ell\,.\]
\end{fact}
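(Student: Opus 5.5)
The plan is to write every $\phi\in\Phi(P)$ as the pointwise product $\phi\cdot\one\cdots\one$ with $\ell-1$ factors equal to $\one$, and to check that $\one$ lies in $\Phi(P)$. In Setting~\ref{mainSetting} the constant function $\one$ belongs to $\Sigma$. Hence $\one$ is a $P$-test function and so a point of $\Phi(P)$ by Definition~\ref{def:polytope}; note this holds for any $P$, since membership of $\Sigma$ does not depend on $P$.

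Concretely, fix $\ell\ge1$ and $\phi\in\Phi(P)$. For $\ell=1$ we interpret $\Phi(P)^1=\Phi(P)$, so there is nothing to prove. For $\ell\ge2$ set $\phi_1=\phi$ and $\phi_2=\dots=\phi_\ell=\one$, all of which lie in $\Phi(P)$. Their pointwise product is $x\mapsto\phi(x)\cdot1\cdots1=\phi(x)$. So $\phi$ lies in the generating set $\{\phi_1\cdots\phi_\ell:\phi_i\in\Phi(P)\}$ of the product polytope, and hence in its convex hull $\Phi(P)^\ell$.

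There is no real obstacle. The only point to get right is that $\one$ must belong to $\Phi(P)$ itself, not merely to some auxiliary set, and the assumption $\one\in\Sigma$ guarantees exactly this. The convex hull in the definition of $\Phi(P)^\ell$ is not even needed, because $\phi$ is already a generator.
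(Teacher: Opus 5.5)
Your proof is correct and is exactly the argument the paper has in mind. The paper gives no separate proof; it just notes that the fact follows from the definitions and from $\one\in\Sigma$. Writing $\phi=\phi\cdot\one\cdots\one$ with $\one\in\Sigma\subset\Phi(P)$ is that argument spelled out.
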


The next lemma identifies the vertices of a product polytope.

\begin{lemma}\label{lem:vtxprod} Let $\Phi$ be a polytope and $V(\Phi)$ its vertices. For any $\ell\ge 2$, the vertices of $\Phi^\ell$ are all of the form $\phi_1\dots\phi_\ell$ where each $\phi_i$ is in $V(\Phi)$.
\end{lemma}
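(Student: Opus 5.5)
The plan is to show that the finite set $W:=\{\phi_1\cdots\phi_\ell : \phi_i\in V(\Phi)\}$ has the same convex hull as $\Phi^\ell$. Once this holds, every vertex of $\Phi^\ell$ must lie in $W$. This is the standard fact that the vertices of the convex hull of a finite set form a subset of that set: a vertex cannot be a nontrivial convex combination of other points of the polytope, and every point of $\operatorname{conv}(W)$ is a convex combination of points of $W$. The inclusion $\operatorname{conv}(W)\subseteq\Phi^\ell$ is immediate, since $V(\Phi)\subseteq\Phi$ and $\Phi^\ell$ is by definition the convex hull of all such pointwise products.

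For the reverse inclusion, it suffices to show that every generator $\phi_1\cdots\phi_\ell$ with $\phi_i\in\Phi$ lies in $\operatorname{conv}(W)$; taking convex hulls then gives $\Phi^\ell\subseteq\operatorname{conv}(W)$. The key observation is that the pointwise product map $(\phi_1,\dots,\phi_\ell)\mapsto\phi_1\cdots\phi_\ell$ is multilinear, meaning it is linear in each argument when the others are held fixed. Since $\Phi$ is a polytope, it is the convex hull of its vertex set. So we may write each $\phi_i=\sum_{v\in V(\Phi)}\lambda^{(i)}_v v$ with $\lambda^{(i)}_v\ge0$ and $\sum_v\lambda^{(i)}_v=1$.

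Expanding the product by multilinearity gives
\[
\phi_1\cdots\phi_\ell=\sum_{v_1,\dots,v_\ell\in V(\Phi)}\Big(\prod_{i=1}^\ell\lambda^{(i)}_{v_i}\Big)\,v_1\cdots v_\ell.
\]
The coefficients here are non-negative, and they sum to $\prod_i\sum_v\lambda^{(i)}_v=1$. This exhibits the generator as a convex combination of elements of $W$, as required. An equivalent route is induction on $\ell$, replacing one factor at a time by a convex combination of vertices.

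There is no real obstacle in this argument. The only point needing care is the final step, namely that vertices of the convex hull of a finite set belong to that set. This follows because a vertex $x$ of $\operatorname{conv}(W)$ is an extreme point. Writing $x=\sum_w\alpha_w w$ as a convex combination, extremality forces $x=w$ for some $w$ with $\alpha_w>0$.
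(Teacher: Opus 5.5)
Your proposal is correct and follows essentially the same route as the paper: both show that every generator $\phi_1\cdots\phi_\ell$ is a convex combination of products of vertices, by expanding each $\phi_i$ as a convex combination of vertices and using that the product coefficients are non-negative and sum to $1$. You also spell out why vertices of the convex hull of a finite set lie in that set, which the paper leaves implicit.
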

\begin{proof}
  Since the set
  $T:=\{\phi_1\dots\phi_\ell\colon \phi_i\in V(\Phi)\}$
  of vectors is a subset of $\Phi^\ell$ by definition, it is enough to show that any vector in $\Phi^\ell$ can be written as a convex combination of vectors in $T$. This is true if all vectors $\phi_1\dots\phi_\ell$ with $\phi_i\in\Phi$ for all $i\in[\ell]$ can be written in this form.
    Let $\phi_i=\sum_{v\in V(\Phi)}\alpha_{iv}v$ be written as a convex combination for each $i\in[\ell]$. Then expanding out the product, we see that the coefficient of the vector $v_1\dots v_\ell\in T$ in $\phi_1\dots\phi_\ell$ is $\alpha_{1v_1}\dots\alpha_{\ell v_\ell}$. Because $\sum_{v\in V(\Phi)}\alpha_{iv}=1$ and the $\alpha_{iv}$ are non-negative, these coefficients $\alpha_{1v_1}\dots\alpha_{\ell v_\ell}$ are non-negative and sum to $1$ as required.
\end{proof}

We can use this to deduce the following lemma concerning optimisation problems over the product polytope $\Phi(P)^\ell$, where $\Phi(P)$ is our polytope of test functions.

\begin{lemma}\label{lem:max-product}
  Let \(P\) be a set of functions from $[n]$ to $\RR_{\ge0}$ and~$f$ be a non-negative vector. 
  Then
  \[\max_{\phi\in\Phi(P)^\ell}\langle f,\phi\rangle
  = \max\Big\{\langle f,\psi_1\dots\psi_{\ell'}\rangle\colon \ell'\in[\ell], \psi_i\text{ is a $P$-largest test function for all $i\in[\ell']$} \Big\}\,. \]
\end{lemma}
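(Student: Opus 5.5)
The plan is to prove the two inequalities separately, using linearity to reduce to vertices and then Lemma~\ref{lem:vtxprod} together with Fact~\ref{Rmk:anti_uniform_functionals} to control what those vertices look like.

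For the easy direction ($\ge$), take $\ell'\in[\ell]$ and $P$-largest test functions $\psi_1,\dots,\psi_{\ell'}$. Each $\psi_i$ lies in $\Phi(P)$, and $\one\in\Sigma\subset\Phi(P)$. Hence
\[\psi_1\cdots\psi_{\ell'}=\psi_1\cdots\psi_{\ell'}\cdot\one^{\ell-\ell'}\]
is a pointwise product of $\ell$ members of $\Phi(P)$, so it lies in $\Phi(P)^\ell$. This is the same observation as in Fact~\ref{fac:d-product}. So every quantity on the right-hand side is attained on the left.

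For the main direction ($\le$), I would argue as follows.
\begin{enumerate}
\item Since $\phi\mapsto\langle f,\phi\rangle$ is linear, its maximum over the polytope $\Phi(P)^\ell$ is attained at a vertex. By Lemma~\ref{lem:vtxprod}, that vertex has the form $\phi_1\cdots\phi_\ell$ with each $\phi_i$ a vertex of $\Phi(P)$.
\item By Fact~\ref{Rmk:anti_uniform_functionals}\ref{itm:Phi:3}, each $\phi_i$ is either non-negative or non-positive. So the product equals $\pm|\phi_1|\cdots|\phi_\ell|$, and each $|\phi_i|$ is a non-negative vertex of $\Phi(P)$ by central symmetry.
\item If the sign is negative, then $f\ge0$ gives $\langle f,\phi_1\cdots\phi_\ell\rangle\le 0$. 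The right-hand side is non-negative, because largest test functions are non-negative and so is $f$. So this case is harmless.
\item If the sign is positive, apply Fact~\ref{Rmk:anti_uniform_functionals}\ref{itm:Phi:4} to each factor. Either $|\phi_i|\le\one$, or $|\phi_i|\le\psi_i$ for some $P$-largest test function $\psi_i$. Let $I$ be the set of indices of the second kind. Since all factors are non-negative, replacing the factors outside $I$ by $\one$ and those in $I$ by $\psi_i$ can only increase each coordinate. Together with $f\ge0$ this gives
\[\langle f,\phi_1\cdots\phi_\ell\rangle\le\Big\langle f,\prod_{i\in I}\psi_i\Big\rangle,\]
which is one of the terms on the right-hand side with $\ell'=|I|$.
\end{enumerate}

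The step I expect to be the main obstacle is the degenerate case $I=\emptyset$, where every factor is bounded by $\one$. This happens, for instance, when all the $\phi_i$ are structure functions, and then the bound above is only $\langle f,\one\rangle$. That is not obviously dominated by a product of at least one largest test function, since $\ast_{i,\one}(h_1,\dots,h_k)$ need not be pointwise at least $1$. What I would try first is to use $\one\in\Omega$: the $P$-largest test functions $\ast_{i,\one}(h_1,\dots,h_k)$ with $h_j\in P$ average to order $1$ under $\langle\cdot,\cdot\rangle$, which might make $\langle f,\one\rangle$ comparable to $\langle f,\psi\rangle$ for a suitable $\psi$. If that fails, I would read the statement as implicitly including the empty product $\one$ (the case $\ell'=0$), which is exactly the form in which Fact~\ref{Rmk:anti_uniform_functionals}\ref{itm:Phi:4} presents the alternatives. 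All the other steps are routine once the reduction to vertices is made.
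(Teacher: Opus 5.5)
Your argument is the paper's proof. You pass to a vertex $\phi_1\cdots\phi_\ell$ via Lemma~\ref{lem:vtxprod}, bound each factor with Fact~\ref{Rmk:anti_uniform_functionals}\ref{itm:Phi:3}--\ref{itm:Phi:4}, and get the reverse inequality from feasibility using $\one\in\Sigma$. Your step~2 is more careful than the paper, which asserts that every $\phi_i$ is non-negative and so overlooks products with an even number of non-positive factors. Passing to $|\phi_i|$ by central symmetry is the right fix.

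On the case $I=\emptyset$, your first idea cannot succeed, because with $\ell'\in[\ell]$ the statement is literally false. In Setting~\ref{mainSetting}, take $x_0\in[n]$ lying in no edge of $S$ (the codegree conditions are only upper bounds, so nothing excludes this), and let $f=\ind(x=x_0)$.
\begin{itemize}
\item Since $S_i(x_0)=\emptyset$ for every $i$, every $P$-largest test function vanishes at $x_0$, so the right-hand side is $0$.
\item Since $\one\in\Sigma\subset\Phi(P)\subset\Phi(P)^\ell$, the left-hand side is at least $\langle f,\one\rangle=1/n>0$.
\end{itemize}
With $P=\{\one\}$, any $x_0$ with $\tfrac{n}{e(S)}|S_i(x_0)|<1$ for all $i$ works as well. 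An averaging argument over $x$ says nothing about a point mass $f$, so your route via $\one\in\Omega$ is closed.

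Your fallback is therefore the correct reading: the empty product $\one$, i.e.\ $\ell'=0$, must be allowed on the right-hand side. The paper's proof silently skips the case where every factor is bounded by $\one$. When it applies the lemma, in the proof of Lemma~\ref{cor:momentboundcor}, it splits into \emph{either $\psi=\one$} (Case~1, handled by a Chernoff bound) or a genuine product of largest test functions. That is exactly the amended statement.

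With this amendment your proof is complete. The $\ge$ direction for $\ell'=0$ again only needs $\one\in\Phi(P)^\ell$.
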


\begin{proof}
  By Lemma~\ref{lem:vtxprod}, $\langle f,\phi\rangle$ is maximised over $\phi\in\Phi(P)^\ell$ at a vector of the form $\phi_1\dots\phi_\ell$ where each $\phi_i$ is a vertex of $\Phi(P)$. Since $f$ is non-negative, we know from the definition of our inner product and Fact~\ref{Rmk:anti_uniform_functionals}\ref{itm:Phi:3} that each $\phi_i$ is a non-negative vertex of $\Phi(P)$.  Moreover, Fact~\ref{Rmk:anti_uniform_functionals}\ref{itm:Phi:4} tells us that either $0\le\phi_i\le\one$ or $0\le\phi_i\le \psi_i$ pointwise, where $\psi_i$ is a $P$-largest test function. It follows that
  \[\max_{\phi\in\Phi(P)^\ell}\langle f,\phi\rangle
  \le \max\Big\{\langle f,\psi_1\dots\psi_{\ell'}\rangle\colon \ell'\in[\ell], \psi_i\text{ is a $P$-largest test function for all $i\in[\ell']$} \Big\}\,. \]
  But since all the $\psi_i$ in the set on the right-hand side are contained in $\Phi(P)$, and hence $\psi_1\dots\psi_{\ell'}$ is feasible for $\Phi(P)^\ell$, we also get
  \[\max_{\phi\in\Phi(P)^\ell}\langle f,\phi\rangle
  \ge \max\Big\{\langle f,\psi_1\dots\psi_{\ell'}\rangle\colon \ell'\in[\ell], \psi_i\text{ is a $P$-largest test function for all $i\in[\ell']$} \Big\}\,. \qedhere\]
\end{proof}

We will see that the conclusion of this lemma is critical for our proof, where we will only have a \emph{polynomial} number of $P$-largest test functions. This will allow us to apply standard concentration inequalities.

\medskip

We now turn to the proof of Theorem~\ref{thm:maintechint}. We start with two reductions.

\subsection{Reduction from \texorpdfstring{$\{0,1\}$}{{0,1}}-valued models to \texorpdfstring{$[0,1]$}{[0,1]}-valued models}\label{sec:reduction-one}

We first reduce Theorem~\ref{thm:maintechint} to the following theorem, which is
identical except that the $\eps$-good dense models it produces take values in
$[0,1]$ not $\{0,1\}$.

\begin{theorem}[main technical theorem: {$[0,1]$-valued variant}]\label{thm:maintech}
Given $k,r\ge2$, $c\ge1$ and $\eps>0$, there exists $C>0$ such that the following holds for all sufficiently large $n$ and any $0<p\le 1$. Let us be in Setting~\ref{mainSetting}, and suppose the $(c,C,p,n)$-boundedness conditions hold.
Then with probability at least \(1-\exp\big(-\tfrac{pn}{C}\big)\) the random set \(X=[n]_p\) has an $\eps$-deletion $\tilde{X}$
with the following properties.
\begin{itemize}
  \item $\|\tmu-\one\|_{\Phi(\tmu,\one)}\le\eps k^{-1}$.
  \item For any $f_1,\dots,f_r\colon[n]\to\RR_{\ge0}$ with
    $f_1+\dots+f_r=\tmu$, there exist $g_1,\dots,g_r\colon[n]\to[0,1]$ with
    $g_1+\dots+g_r=\one$, such that
    \(\|f_i-g_i\|_{\Phi(\tilde{\mu},\mathbf{1})}\le\eps k^{-1}\) for each
    $i\in[r]$.
\end{itemize}
\end{theorem}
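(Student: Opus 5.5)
The plan is to reduce the existence of fractional dense models to a single anti-correlation statement for $\tmu-\one$, using convex duality. Fix $\tilde X$ (to be chosen below) and $f_1,\dots,f_r\ge0$ with $\sum_i f_i=\tmu$. The set $G=\{(g_1,\dots,g_r): g_i\colon[n]\to[0,1],\ \sum_i g_i=\one\}$ is a compact convex polytope. The target set $\{(h_i): \|f_i-h_i\|_{\Phi(\tmu,\one)}\le\eps k^{-1}\ \forall i\}$ is convex as well. Suppose the two are disjoint. Then the separating hyperplane theorem in $(\RR^{[n]})^r$, together with the fact that the dual of $\|\cdot\|_{\Phi}$ has unit ball $\Phi$, yields $\phi_1,\dots,\phi_r\in\Phi(\tmu,\one)$ with
\[\sum_i\langle f_i,\phi_i\rangle>\max_{g\in G}\sum_i\langle g_i,\phi_i\rangle+\eps k^{-1}\,.\]
The right-hand maximum equals $\langle\one,\max_i\phi_i\rangle$, attained by putting $g$ on an argmax coordinate. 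Since $f_i\ge0$ and $\sum_i f_i=\tmu$, the left-hand side is at most $\langle\tmu,\max_i\phi_i\rangle$. It therefore suffices to prove, with the required probability,
\[\big\langle\tmu-\one,\max(\phi_1,\dots,\phi_r)\big\rangle<\eps k^{-1}\qquad\text{for all }\phi_1,\dots,\phi_r\in\Phi(\tmu,\one)\,.\]
By choosing $\phi_i=\pm$ the same function, this also contains the first bullet.

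To handle the nonlinearity, I would approximate $(x_1,\dots,x_r)\mapsto\max_i x_i$ uniformly on $[-2c,2c]^r$ by a polynomial $\pi$ of constant degree $d$, using Stone--Weierstrass. Expanding $\pi$, each monomial $\phi_{i_1}\cdots\phi_{i_\ell}$ lies in $\Phi(\tmu,\one)^\ell$, and by Fact~\ref{fac:d-product} all lower-degree terms (including constants, since $\one\in\Sigma$) fit in $\Phi(\tmu,\one)^d$. The main term is then bounded by $\|\tmu-\one\|$ measured against the product polytope $\Phi(\tmu,\one)^d$. The error comes from coordinates where some $|\phi_i(x)|>2c$. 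I would control it by moment bounds of the form
\[\langle\tmu+\one,\psi_1\cdots\psi_{d'}\rangle=O(1)\]
for $(\tmu,\one)$-largest test functions $\psi_j$ and a large even $d'$. By Fact~\ref{Rmk:anti_uniform_functionals}, each nonnegative vertex is dominated by such a $\psi$, so these bounds force the mass on large entries to be tiny.

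The product-polytope bound itself is obtained by a linear optimisation: its extremes sit at vertices. For the nonnegative parts, Lemma~\ref{lem:max-product} reduces matters to products of largest test functions. To break the dependence between $\tmu$ and the test functions, and to cut the number of vertices, I would randomly split $[n]$ into constantly many parts and replace $\tmu,\one$ by their restrictions. This gives a larger product polytope containing $\Phi(\tmu,\one)^d$ whose vertices depend on only few coordinates of $\tmu$ relative to any given coordinate. Bernstein's inequality applied to the independent part, a union bound over the at most $\exp(O(pn/C))$ relevant vertices (using $|\Sigma|,|\Omega|\le\exp(pn/C)$), and the moment bounds for the dependent part and the truncated large entries then give the anti-correlation.

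The main obstacle is the moment bounds with failure probability $\exp(-pn/C)$: Kim--Vu alone gives only polynomial tails. Here I would use the deletion method of Sp\"ohel--Steger--Warnke together with the codegree condition $\Delta_\kappa(S)\le cC^{1-\kappa}p^{\kappa-1}e(S)/n$. This lets me delete at most $\eps pn$ elements of $X$ so that $\tilde X$ satisfies all moment bounds simultaneously. Finally, I would check that the splitting and Bernstein steps remain valid after deletion, since deletion only decreases $\tmu$.
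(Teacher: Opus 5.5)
Your reduction step is correct, and it is a cleaner variant of the paper's Lemma~\ref{lem:HBreduce}. The paper separates $\frac{1}{1+\gamma}(f_1,\dots,f_{r-1})$ from the set of tuples $(g_i+h_i)_{i<r}$ with $\sum_{i<r}g_i\le\one$, using the normalised form $\langle f,\psi\rangle>1\ge\langle g,\psi\rangle$ of the separation theorem. It bounds the $g$-part and the $h$-part of the support function separately by $1$, so the additive gap is lost. That is what forces the rescaling by $1+\gamma$, the hypothesis $\|\tmu\|_\Phi,\|\one\|_\Phi\le 2c$ to absorb it, and a separate treatment of $f_r$ via $\|\tmu-\one\|_\Phi\le\delta$.

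You instead separate $f$ from the compact convex set $G+B^r$. Its support function in direction $\psi$ is $\langle\one,\max_i\psi_i\rangle+\eps k^{-1}\sum_i\|\psi_i\|^\ast_\Phi$. Normalising so that $\sum_i\|\psi_i\|^\ast_\Phi=1$ gives the additive gap directly. Say this normalisation explicitly, since it is what puts each $\phi_i$ in $\Phi$ via Fact~\ref{fact:dual}. So the single inequality $\langle\tmu-\one,\max_i\phi_i\rangle\le\eps k^{-1}$ suffices, needs no moment hypothesis at this stage, and contains the first bullet. After that your outline follows the paper: Lemma~\ref{lem:lincor} (approximating $\max(x_1,\dots,x_r)$ rather than $\max(0,x_1,\dots,x_{r-1})$), then Lemmas~\ref{lem:splitreduce}, \ref{lem:anticor}, \ref{cor:momentboundcor} and~\ref{lem:deletionbounds}.

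There is, however, a real gap in the splitting step. Suppose you split $[n]$ into constantly many, say $L$, parts and use the rescaled restrictions of both $\tmu$ and $\one$. Each part function of $\one$ then has about $n/L$ non-zero entries, so you must union-bound over about $2^{(k-1)n/L}$ candidate vertices. For a fixed vertex, Bernstein only gives failure probability $\exp\big(-\Omega(\delta^2pn/c^{2d})\big)$, since each entry of $\mu-\one$ has variance about $p^{-1}$. The union bound therefore needs $L\gtrsim\delta^{-2}p^{-1}$, which is not a constant when $p=o(1)$. Your count $\exp(O(pn/C))$ is false for this construction.

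The paper uses two independent splits:
\begin{itemize}
\item $\tmu$ is split into $L$ parts, so that a vertex reveals at most $d(k-1)$ of the $L$ parts of $X$;
\item $\one$ is split into $\lceil Lp^{-1}\rceil$ parts.
\end{itemize}
Then every part function $\tmu_j,\nu_{j'}$ is a rescaled set of density about $p/L$ with $O(pn/L)$ non-zero entries. The vertex count becomes $\exp(O(dk\,pn/L))$, and Bernstein wins once $L\gg dkc^{2d}\delta^{-2}$.

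Matching the densities is equally essential for the moment bounds. These are needed for products of $\{\tmu_j,\nu_{j'}\}$-largest test functions tested against every $\tmu_j$ and $\nu_{j'}$, to control $\phi^{\mathrm{big}}$ and the revealed part. Bounds for $(\tmu,\one)$-largest functions, which is all you state, are not enough. They follow from Kim--Vu because the codegree hypothesis at density $p$ transfers to density $q\approx p/L$ with $C/(2L)$ in place of $C$. Since the parts of one split are not independent, this also requires a coupling with independent copies of $[n]_q$.

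A single split into about $L/p$ parts would not work either. The $\tmu$-parts would then have density about $p^2/L$, far below the codegree scale, and their higher moments blow up.

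Finally, your moment bounds must be geometric, of the form $2c^\ell$, not merely $O(1)$. The truncation level $2c$ has to be fixed before the Stone--Weierstrass polynomial and its degree are chosen, and you need $(2c)^{1-\ell}\cdot 2c^\ell\to0$.
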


We now explain (and then prove) how this theorem implies Theorem~\ref{thm:maintechint}.
We shall proceed in two steps, corresponding to the following two statements. The first converts the $[0,1]$-valued models $g_1,\dots,g_r$ from Theorem~\ref{thm:maintech} into $\{0,1\}$-valued dense models, but using the $\|\cdot\|_{\Phi(\one)}$ norm instead of the $\|\cdot\|_{\Phi(\tmu,\one)}$ norm that
Theorem~\ref{thm:maintech} requires.

\begin{theorem}[{[0,1]-valued models imply \{0,1\}-valued models}]\label{thm:intmodel}
  Given $k,r\ge2$, $c\ge1$ and $\eps>0$ there exists $C>0$ such that the
  following holds for all sufficiently large $n$ and any $0<p\le 1$. Let us be
  in Setting~\ref{mainSetting}, and suppose that the $(c,C,p,n)$-boundedness
  conditions hold.
  
  For any \(g_1,\dots,g_r:[n]\to [0,1]\) with $g_1+\dots+g_r=\one$, there exist \(g^\ast_1,\dots,g^\ast_r:[n]\to \{0,1\}\)
  with $g^\ast_1+\dots+g^\ast_r=\one$ such that {for each $i\in[r]$} we have
  \[
  \|g_i-g^\ast_i\|_{\Phi(\mathbf{1})}\le\eps\,.
  \]
\end{theorem}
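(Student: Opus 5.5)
The plan is \emph{independent randomised rounding}. For each $x\in[n]$ independently, choose a colour $i\in[r]$ with probability $g_i(x)$, and set $g^\ast_i(x)=1$ for the chosen colour and $0$ otherwise. Then $g^\ast_1+\dots+g^\ast_r=\one$ and $\EE g^\ast_i=g_i$. Write $h_i:=g_i-g^\ast_i$. The coordinates $h_i(x)$ are independent, have mean zero and lie in $[-1,1]$. Since $\Phi(\one)$ is centrally symmetric and the functional is linear, it suffices to show that with positive probability $|\langle h_i,\phi\rangle|\le\eps$ for every $i$ and every vertex $\phi$ of $\Phi(\one)$. By Fact~\ref{Rmk:anti_uniform_functionals}\ref{itm:Phi:vertices}, these vertices are the $\one$-extreme test functions.

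First, for any fixed $\phi$ with $0\le\phi\le c'$ pointwise, Bernstein (or Hoeffding) gives $\Prob\big(|\langle h_i,\phi\rangle|>\eps\big)\le 2\exp(-\eps^2 n/(2c'^2))$. For $\phi=\sigma\in\Sigma$ we have $c'=1$. For $\phi=\ast_{j,\omega}(f_1,\dots,f_k)$ with $\one$-bounded $f_l$ we have $\phi\le\ast_j(\one,\dots,\one)\le \Delta_1(S)n/e(S)\le c$ by the $\kappa=1$ boundedness condition, so $c'=c$. Since $|\Sigma|\le\exp(pn/C)\le\exp(n/C)$, a union bound handles all structure functions once $C\gg c^2\eps^{-2}$.

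The difficulty is the convolution vertices $\ast_{j,\omega}(f_1,\dots,f_k)$ with $\{0,1\}$-valued $f_l$. There are $k|\Omega|2^{(k-1)n}$ of them, and a naive union bound against $\exp(-\eps^2n/2c^2)$ fails. I would reduce to a net using the codegree condition. The $\kappa=2$ condition says $\Delta_2(S)\le c\,C^{-1}p\,e(S)n^{-1}\le cC^{-1}e(S)/n$, so no pair of vertices of $[n]$ lies in more than a $c/C$ fraction of the typical degree. Fix a random set $T\subset[n]$ of density $q$, with $q$ a small constant depending on $\eps,c$, chosen independently of the rounding. For $\{0,1\}$-valued $f_l$, define the sparsified convolution $\phi_T:=\ast_{j,\omega}(q^{-1}f_1\ind_T,\dots,q^{-1}f_k\ind_T)$, using only $l\ne j$.

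I would then show the following. With positive probability over $T$, for every choice of $f_1,\dots,f_k$ and $\omega$, and for \emph{every} $h$ with $\|h\|_\infty\le1$, we have $|\langle h,\phi-\phi_T\rangle|\le\eps/2$. Because this is a statement about all bounded $h$, it is an $L^1$ bound on $\phi-\phi_T$. I would prove it by peeling the $f_l$ one at a time using multilinearity, exactly as in the proof of Theorem~\ref{thm:main}. Each peeling step becomes a bound on a quadratic form whose off-diagonal mass is controlled by the $\Delta_\kappa$ conditions for $\kappa\ge2$, with $C$ large compared to $q^{-k}$. The resulting functions $\phi_T$ are determined by $\omega$, $j$ and the restrictions $f_l|_T$, which gives at most $k|\Omega|2^{k|T|}\le\exp\big(n/C+2kqn\big)$ choices. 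Each $\phi_T$ is bounded pointwise by $q^{-(k-1)}c$ outside a negligible set, again by the degree conditions. So for $q$ small compared to $\eps^2/c^2$, and after truncating the rare large values, Bernstein plus a union bound over this net gives $|\langle h_i,\phi_T\rangle|\le\eps/2$ for all net elements simultaneously. Combining the two estimates gives $\|h_i\|_{\Phi(\one)}\le\eps$.

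I expect the main obstacle to be this sparsification step. It needs the approximation $\phi\approx\phi_T$ to hold uniformly over all $2^{(k-1)n}$ choices of $f$ for a single $T$, and in the strong sense of pairing against arbitrary bounded $h$. The $\phi_T$ must also have small enough range that the union bound over the net survives. My first attempt would be a second-moment or Kim--Vu-type estimate on the multilinear polynomial in the indicators of $T$, using the codegree bounds, combined with the telescoping replacement of one coordinate at a time. If the uniformity over $f$ proves too costly, the fallback is a Frieze--Kannan-type weak regularity decomposition of each $\phi$ into a bounded number of "cut" pieces. That would give a net of size $\exp(O(\eps^{-O(1)})\cdot\text{small}\cdot n)$ in place of the random sparsification.
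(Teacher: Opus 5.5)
Your rounding scheme, the reduction to vertices of $\Phi(\one)$, and the treatment of $\Sigma$ agree with the paper. The gap is the step that shrinks the $2^{(k-1)n}$ convolution vertices to a small net: the claim it rests on is false.

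\textbf{The sparsification claim fails for every $T$.} Since $\phi_T$ depends only on the restrictions $f_l|_T$, it cannot approximate $\phi$ uniformly in $f$. Fix any $T$ with $|T|\le 2qn$ and take $\omega=\one$ and $h=\one$. Let all $f_l=\one$ except one index $l_0\neq j$, and compare $f_{l_0}=\one$ with $f_{l_0}=\ind_T$. Both choices give the same $\phi_T$, yet
\[\langle\one,\phi-\phi'\rangle=\big\langle\ind_{[n]\setminus T},\ast_{l_0}(\one,\dots,\one)\big\rangle\ge 1-\frac{c|T|}{n}\ge 1-2cq\,.\]
This uses $\ast_{l_0}(\one,\dots,\one)\le\Delta_1(S)n/e(S)\le c$ and $\langle\one,\ast_{l_0}(\one,\dots,\one)\rangle=1$. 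So one of the two is at distance at least $\tfrac12-cq$ from $\phi_T$. The $f_l$ are chosen after $T$ and may put their mass off $T$, so no second-moment or Kim--Vu estimate over $T$ can rescue this.

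The paper avoids approximation altogether. Take a random partition $\chi_\one\colon[n]\to[L]$ and set $\nu_j=L\ind(\chi_\one=j)$. Then $f=\frac1L\sum_j f\nu_j$ holds exactly, so by multilinearity every $\one$-extreme convolution is an \emph{average} of $\{\nu_1,\dots,\nu_L\}$-test functions. Hence $\Phi(\one)\subseteq\Phi(\nu_1,\dots,\nu_L)$ (Lemma~\ref{lem:splitreduce}). Once each part has at most $2n/L$ elements, this polytope has at most $|\Sigma|+k|\Omega|L^{k-1}2^{2(k-1)n/L}$ vertices.

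\textbf{The union bound over a net does not close as you describe.} The bound $\phi_T\le q^{-(k-1)}c$ (for the split polytope, $L^{k-1}c$) holds trivially everywhere. Bernstein with range $q^{-(k-1)}c/n$ has exponent at most about $\tfrac32\eps q^{k-1}n/c$. That is smaller than $2kqn$, the logarithm of your net size, for every $q\in(0,1]$, and shrinking $q$ only makes it worse.

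If ``truncating the rare large values'' means truncating at a level $O(c)$ independent of $q$ or $L$, then the truncated part is exactly what must be controlled, uniformly over the whole net, and you give no way to do it. In the paper this is the moment bound $\langle\nu_j,\phi\rangle\le 2c^\ell$ for all $\phi\in\Phi(\nu_1,\dots,\nu_L)^\ell$ and $\ell\le d$ (Lemma~\ref{cor:momentboundcor} with $p=1$). It gives, deterministically,
\[|\langle g-g^\ast,\phi^{\mathrm{big}}\rangle|\le 2(2c)^{2-d}\langle\one,\phi^d\rangle\le 2(2c)^{2-d}\cdot 2c^d\,.\]
Bernstein is then applied only to $\phi^{\mathrm{small}}$, whose entries lie in $[-2c,2c]$. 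This gives failure probability $\exp(-\eps^2n/(32c^2))$, which beats the vertex count once $L\gg kc^2\eps^{-2}$.

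The moment bound itself is proved in three steps:
\begin{enumerate}
\item reduce to the polynomially many products of largest test functions (Lemma~\ref{lem:max-product});
\item apply Kim--Vu with the book counts of Lemma~\ref{countingBooks}, which is where the codegree bounds for $\kappa\ge2$ are really used;
\item use a coupling to handle the dependence among the $\nu_j$.
\end{enumerate}

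The Frieze--Kannan fallback is too unspecified to assess. Note that for sparse $S$ the standard weak regularity lemma needs a number of pieces growing with $n^k/e(S)$, so you would need a relative version.
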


The proof of this theorem is provided in Section~\ref{sec:intmodels}. The construction of $g^\ast_1,\dots,g^\ast_r$ in this proof is given by the following simple randomised rounding: for each $x\in[n]$, we choose $i\in[r]$ with probability $g_i(x)$, and set $g^\ast_i(x)=1$, and $g_{i'}(x)=0$ for $i'\neq i$. The analysis of this randomised rounding is rather more complicated.
But we remark that it uses several of the key ideas that we also use for the proof of Lemma~\ref{lem:anticor} which we will explain later in this section.

The second step then is to convert the closeness in $\|\cdot\|_{\Phi(\one)}$ to
closeness in $\|\cdot\|_{\Phi(\tmu,\one)}$. The next lemma implies that this conversion can be performed.

\begin{lemma}[{comparing $\|\cdot\|_{\Phi(\one)}$ and $\|\cdot\|_{\Phi(\tmu,\one)}$}]\label{lem:upgrade}
    Given $k\ge2$, $p\in(0,1]$ and $n$, let us be in Setting~\ref{mainSetting}. Given $\tilde{X}\subset[n]$, suppose that for every $\tmu$-bounded $f'$ there exists $\one$-bounded $g'$ such that $\|f'-g'\|_{\Phi(\tmu,\one)}\le\tfrac{1}{2k}\eps$. Then for every pair of $\one$-bounded functions $g,g^\ast$ with $\|g-g^\ast\|_{\Phi(\one)}\le\tfrac{1}{2k}\eps$, we have $\|g-g^\ast\|_{\Phi(\tmu,\one)}\le\eps$.
\end{lemma}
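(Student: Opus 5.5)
The plan is to check the bound on the vertices of $\Phi(\tmu,\one)$ and then pass to the whole polytope. By Fact~\ref{Rmk:anti_uniform_functionals}\ref{itm:Phi:vertices} and central symmetry, it suffices to show $|\langle g-g^\ast,\phi\rangle|\le\eps$ for every $\{\tmu,\one\}$-test function $\phi$. Linearity of the inner product then extends the bound to convex combinations of such $\phi$ and their negatives. If $\phi\in\Sigma$, then $\phi\in\Phi(\one)$, so the hypothesis gives $|\langle g-g^\ast,\phi\rangle|\le\tfrac{1}{2k}\eps$ directly. The remaining case is $\phi=\ast_{i,\omega}(f_1,\dots,f_k)$ with $\omega\in\Omega$ and each $f_j$ ($j\ne i$) either $\tmu$-bounded or $\one$-bounded.

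In that case I would use the symmetry \eqref{eq:dotProductForMainThm} to write
\[\langle g-g^\ast,\phi\rangle=\frac{1}{e(S)}\sum_{s\in S}\omega(s)(g-g^\ast)(s_i)\prod_{j\ne i}f_j(s_j).\]
I then replace, one at a time, each $\tmu$-bounded $f_j$ by a $\one$-bounded $g'_j$ with $\|f_j-g'_j\|_{\Phi(\tmu,\one)}\le\tfrac1{2k}\eps$, which exists by the hypothesis. $\one$-bounded $f_j$ are kept, i.e.\ we set $g'_j=f_j$. By multilinearity, the error from replacing $f_j$ is
\[\big\langle f_j-g'_j,\ \ast_{j,\omega}(\dots,g-g^\ast,\dots)\big\rangle,\]
where $g-g^\ast$ sits in position $i$, positions already treated carry $g'$'s, and untreated positions carry $f$'s.

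The one subtlety is that $g-g^\ast$ is neither nonnegative nor bounded in the required sense, so this convolution is not itself a test function. To handle it, I split by linearity into the two convolutions with $g$ and with $g^\ast$ in position $i$. Each is a genuine $\{\tmu,\one\}$-test function, since $g$ and $g^\ast$ are $\one$-bounded, and hence lies in $\Phi(\tmu,\one)$. Each therefore contributes at most $\tfrac1{2k}\eps$, so one replacement costs at most $\tfrac1k\eps$.

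There are at most $k-1$ replacements, for a total error of at most $\tfrac{k-1}{k}\eps$. After all replacements the remaining term is $\langle g-g^\ast,\ast_{i,\omega}(g'_1,\dots,g'_k)\rangle$ with all arguments $\one$-bounded. This convolution lies in $\Phi(\one)$, so the hypothesis bounds the term by $\tfrac1{2k}\eps$. Altogether $|\langle g-g^\ast,\phi\rangle|\le\tfrac{k-1}{k}\eps+\tfrac1{2k}\eps\le\eps$.

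The main point needing care is the splitting of $g-g^\ast$ into $g$ and $g^\ast$, so that every intermediate convolution is a legitimate element of $\Phi(\tmu,\one)$. The rest is bookkeeping of the telescoping sum.
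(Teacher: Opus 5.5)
Your proposal is correct and takes essentially the same approach as the paper. It reduces to a convolution test function, swaps each $\tmu$-bounded argument for its $\one$-bounded model one at a time via the symmetry of the convolution inner product, and finishes with the $\Phi(\one)$ hypothesis. The only difference is bookkeeping: the paper telescopes $\langle g,\phi\rangle$ and $\langle g^\ast,\phi\rangle$ separately rather than splitting $g-g^\ast$ at each step, and both give the same total error $\tfrac{2k-1}{2k}\eps$.
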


The proof of this lemma is a fairly easy application of telescoping sums and the properties of the convolution. Hence, we provide it straight away.

\begin{proof}[Proof of Lemma~\ref{lem:upgrade}]
  Let $\phi\in\Phi(\tmu,\one)$ maximise $|\langle g-g^\ast,\phi\rangle|$.
  This maximum is attained either in the maximum or the minimum of $\langle g-g^\ast,\phi\rangle$; in either case, since $\langle g-g^\ast,\phi\rangle$ is linear, $\phi$ is a vertex of $\Phi(\tmu,\one)$. By Fact~\ref{Rmk:anti_uniform_functionals}\ref{itm:Phi:3}, $\phi$ is either non-negative or non-positive. Let us assume $\phi$ is a non-negative vertex; the case when $\phi$ is a non-positive vertex can be proved analogously.

  By Fact~\ref{Rmk:anti_uniform_functionals}\ref{itm:Phi:vertices}, either
  $\phi\in\Sigma\subset\Phi(\one)$ and there is nothing to prove, or $\phi$ is of the
  form \[\ast_{j,\omega}(f_1,\dots,f_{j-1},f_{j+1},\dots,f_{k})\,,\] with each $f_i$ being
  either $\tmu$- or $\one$-bounded. Suppose for simplicity that
  $f_1,\dots,f_\ell$ are exactly the $\tmu$-bounded functions (all other cases
  follow by symmetry). Since $\phi\not\in\Phi(\one)$, we
  have $\ell\ge 1$. For each $i\in[\ell]$, let~$g_i$ be the $\one$-bounded function
  with $\|f_i-g_i\|_{\Phi(\tmu,\one)}\le\tfrac{1}{2k}\eps$ that exists by assumption.
  For $i>\ell$ with $i\neq j$, let $g_i=f_i$. 
    We can write
    \begin{equation*}
      \langle g,\phi\rangle=\big\langle g,\ast_{j,\omega}(f_1,\dots,f_{k})\big\rangle
      =\big\langle f_1,\ast_{1,\omega}(f_2,\dots,f_{j-1},g,f_{j+1},\dots,f_{k})\big\rangle\,.
    \end{equation*}
    By the definition of our norm,
    $\|f_i-g_i\|_{\Phi(\tmu,\one)}\le\tfrac{1}{2k}\eps$ implies for all
    $\phi'\in\Phi(\tmu,\one)$ that $\langle f_i-g_i,\phi'\rangle=\langle
    f_i,\phi'\rangle-\langle g_i,\phi'\rangle\le\tfrac{1}{2k}\eps$. Since
    $\Phi(\tmu,\one)$ is centrally symmetric, we then also have $\langle
    f_i,\phi'\rangle-\langle g_i,\phi'\rangle\ge -\tfrac{1}{2k}\eps$ for all
    $\phi'\in\Phi(\tmu,\one)$. Since
    $\ast_{1,\omega}(f_2,\dots,f_{j-1},g,f_{j+1},\dots,f_{k})\in\Phi(\tmu,\one)$,
    we conclude that
    \begin{equation*}
      \langle g,\phi\rangle=
      \big\langle g_1,\ast_{1,\omega}(f_2,\dots,f_{j-1},g,f_{j+1},\dots,f_{k})\big\rangle\pm \frac{1}{2k}\eps
      =\big\langle g,\ast_{j,\omega}(g_1,f_2,\dots,f_{k})\big\rangle\pm \frac{1}{2k}\eps\,.
    \end{equation*}
    Repeating this argument a further $\ell-1$ times for $i=2,\dots,\ell$, we obtain
    \begin{equation}\label{eq:g-product}
      \langle g,\phi\rangle=\big\langle g,\ast_{j,\omega}(g_1,\dots,g_{k})\big\rangle\pm\frac{\ell}{2k}\eps\,,
    \end{equation}
    and by the same logic also
    \begin{equation}\label{eq:gs-product}
     \langle g^\ast,\phi\rangle=\big\langle g^\ast,\ast_{j,\omega}(g_1,\dots,g_{k})\big\rangle\pm\frac{\ell}{2k}\eps\,.
    \end{equation}
    Now $\ast_{j,\omega}(g_1,\dots,g_{k})\in\Phi(\one)$, so $|\langle
    g-g^\ast,\ast_{j,\omega}(g_1,\dots,g_{k})\rangle|\le\tfrac1{2k}\eps$ by
    assumption. Combining this with~\eqref{eq:g-product} and~\eqref{eq:gs-product}, we get
    \[\langle g^\ast,\phi\rangle=\langle g,\phi\rangle\pm\frac{2\ell}{2k}\eps\pm\frac{1}{2k}\eps\,.\]
    Since $\ell\le k-1$, this proves $|\langle g-g^\ast,\phi\rangle|\le\eps$,
    and hence $\|g-g^\ast\|_{\Phi(\tmu,\one)}\le\eps$ as required.
\end{proof}

The proof of Theorem~\ref{thm:maintechint} from Theorem~\ref{thm:maintech},
Theorem~\ref{thm:intmodel}, and Lemma~\ref{lem:upgrade} is now a quick calculation.

\begin{proof}[Proof of  Theorem~\ref{thm:maintechint}]
Given $k,r\ge2$ and $c\ge1$, $\eps>0$, let $C>0$ be sufficiently large for both Theorem~\ref{thm:maintech} and for Theorem~\ref{thm:intmodel}, both with input $k,r,c,\tfrac{1}{4k^2}\eps$.
Given $p\in(0,1]$, let us be in Setting~\ref{mainSetting}, and suppose that the $(c,C,p,n)$-boundedness conditions are satisfied.

Suppose now that the likely event of Theorem~\ref{thm:maintech} occurs for \(X=[n]_p\). Let \(\tilde{X}\) be the $\eps$-deletion that this guarantees. For any $\tmu$-bounded $f'$, by the likely event of Theorem~\ref{thm:maintech} (with $f_1=f'$, $f_2=\tmu-f'$ and $f_3,\dots,f_r=0$) there exists $\one$-bounded $g'$ such that $\|f'-g'\|_{\Phi(\tmu,\one)}\le\tfrac{1}{4k^2}\eps$. Hence, the assumption of Lemma~\ref{lem:upgrade} with input $\tfrac{1}{2k}\eps$ is satisfied.

The first conclusion of Theorem~\ref{thm:maintechint} that $\|\tmu-\one\|_{\Phi(\tmu,\one)}\le\eps k^{-1}$ is immediate from Theorem~\ref{thm:maintech}.
For the second conclusion, given $0\le f_1,\dots,f_r\le\tmu$ with $f_1+\dots+f_r=\tmu$, observe that by Theorem~\ref{thm:maintech} there are non-negative \(g_1,\dots,g_r\) such that $g_1+\dots+g_r=\one$ and for each $i\in[r]$ we have
\begin{equation*}
 \|f_i-g_i\|_{\Phi(\tilde{\mu},\mathbf{1})}\le\tfrac{1}{4k^2}\eps\le\tfrac{1}{2k}\eps\,.
\end{equation*}
By  Theorem~\ref{thm:intmodel}, there are \(g^\ast_1,\dots,g^\ast_r:[n]\to\{0,1\}\) such that $g^\ast_1+\dots+g^\ast_r=\one$ and for each $i\in[r]$ we have $\|g_i-g_i^\ast\|_{\Phi(\mathbf{1})}\le\tfrac{1}{4k^2}\eps$. Applying Lemma~\ref{lem:upgrade}, with input $\tfrac{1}{2k}\eps$, we obtain $\|g_i-g^\ast_i\|_{\Phi(\tmu,\one)}\le\tfrac{1}{2k}\eps$. Finally, by the triangle inequality we get $\|f_i-g^\ast_i\|_{\Phi(\tmu,\one)}\le\tfrac{1}{k}\eps$ as required.
\end{proof}

\subsection{Reduction from \texorpdfstring{$[0,1]$}{[0,1]}-valued models to a single anti-correlation statement}\label{sec:reduction-two}

We next reduce Theorem~\ref{thm:maintech} to yet a simpler statement, which we shall call the anti-correlation theorem.
To motivate this theorem, observe that Theorem~\ref{thm:maintech} asks us to find, for any given $f_1,\dots,f_r$, functions $g_1,\dots,g_r$ which satisfy an \emph{anti-correlation} statement over $\Phi(\tmu,\one)$: the quantity $\langle f_i-g_i,\phi\rangle$ should be very small for all $\phi\in\Phi(\tmu,\one)$.
We now reduce this to the following related anti-correlation theorem, which is more complicated than the above in that we ask for anti-correlation with a more complicated collection of functions, but simpler in that we only ask that the single function $\tmu-\one$ is anti-correlated with the collection.

\begin{theorem}[anti-correlation theorem]\label{thm:correlate}
Given $k,r\ge2$, $c\ge1$ and $\eps>0$, there exists $C>0$ such that the following holds for all sufficiently large $n$ and any $0<p\le 1$.
Let us be in Setting~\ref{mainSetting}, and suppose the $(c,C,p,n)$-boundedness conditions hold. Then with probability at least \(1-\exp\big(-\tfrac{pn}{C}\big)\) the random set \(X=[n]_p\) admits an \(\eps\)-deletion \(\tilde{X}\) with
the following properties.
\begin{itemize}
  \item $\|\tmu-\one\|_{\Phi(\tmu,\one)}\le\eps$.
  \item $\|\tilde{\mu}\|_{\Phi(\tilde{\mu},\mathbf{1})},\|\one\|_{\Phi(\tilde{\mu},\mathbf{1})}\le 2c$.
  \item
    $\big\langle\tilde{\mu}-\mathbf{1},\max(0,\phi_1,\dots,\phi_{r-1})\big\rangle<\eps$
    for every \(\phi_1,\dots,\phi_{r-1}\in\Phi(\tilde{\mu},\mathbf{1})\).
\end{itemize}
\end{theorem}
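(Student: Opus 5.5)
The plan follows the overview in Section~\ref{sec:overview}, applied to the single function $\tmu-\one$. Three ingredients are needed: moment bounds obtained via deletion, linearisation by polynomial approximation, and an anti-correlation estimate over a random-splitting product polytope proved by Bernstein's inequality and a union bound.

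\textbf{Step 1: deletion and moment bounds.} Using the Sp\"ohel--Steger--Warnke version of the R\"odl--Ruci\'nski deletion method, I would show that with probability at least $1-\exp(-pn/C)$ there is an $\eps$-deletion $\tilde X$ with two properties. First, for every fixed even $d'$ (bounded in terms of $\eps,c,k$) and every $\tmu$-largest test function $\psi=\ast_{i,\one}(h_1,\dots,h_k)$ with $h_j\in\{\tmu,\one\}$, we have $\langle\tmu,\psi^{d'}\rangle\le K$ and $\langle\one,\psi^{d'}\rangle\le K$ for a constant $K=K(c,d')$. Second, the same holds for products of boundedly many such functions. The codegree condition $\Delta_\kappa(S)\le cC^{1-\kappa}p^{\kappa-1}e(S)n^{-1}$ is what makes the expectations of these polynomials in the $\ind(x\in X)$ bounded. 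Deletion then removes the few elements responsible for large upper tails, which gives exponential rather than Kim--Vu-type failure probabilities. The case $d'=1$ yields the second bullet: the number of largest test functions is only $O(k2^k)$, and $\langle\tmu,\ast_{i,\one}(h)\rangle$ is a $k$-fold count that is at most about $c$ after deletion, so it is bounded by $2c$. By Lemma~\ref{lem:max-product} the norm bound for $\Phi(\tmu,\one)$ reduces to these finitely many vertices.

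\textbf{Step 2: linearisation.} I would fix a polynomial $\pi$ of degree $d=d(\eps,c,r)$ such that $|\pi(x_1,\dots,x_{r-1})-\max(0,x_1,\dots,x_{r-1})|\le\eps/10$ on $[-2c,2c]^{r-1}$, which exists by Stone--Weierstrass. For $\phi_j\in\Phi(\tmu,\one)$, the moment bounds show that the points where some $|\phi_j(x)|>2c$ contribute at most $\eps/10$ to both $\langle\tmu,\cdot\rangle$ and $\langle\one,\cdot\rangle$, for $\max(0,\phi)$ as well as for $\pi(\phi)$. This uses H\"older/Markov with the $d'$-th moment, where $d'$ is much larger than $d$; the bounds of Step 1 hold for products, and $|\phi_j|\le\psi$ pointwise with $\psi$ largest or $\one$. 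It therefore suffices to show that $|\langle\tmu-\one,\rho\rangle|\le\eps/(10M)$ for every monomial $\rho$ of degree at most $d$ in $\phi_1,\dots,\phi_{r-1}$. Here $M$ bounds the number of monomials times the coefficient sizes of $\pi$. Every such monomial lies in the product polytope $\Phi(\tmu,\one)^{\ell}$ with $\ell\le d$, so the task becomes a linear optimisation. The first bullet is the special case $\ell=1$, by Fact~\ref{fac:d-product}.

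\textbf{Step 3: anti-correlation over the product polytope (the main obstacle).} The difficulty is that $\Phi(\tmu,\one)^\ell$ has too many vertices, and its vertices depend on $\tmu$. I would randomly split $[n]$ into $m$ parts, with $m$ a large constant, and write $\tmu=\sum_a\tmu_a$ and $\one=\sum_a\one_a$. The multilinearity of convolution then places $\Phi(\tmu,\one)^\ell$ inside the convex hull of products of test functions built from the restricted pieces, with bounded blow-up. A vertex of this new polytope uses only a few pieces, and there are at most $\exp(O(pn/C))$ vertices because $|\Sigma|,|\Omega|\le\exp(pn/C)$ and the extreme functions restricted to pieces can be chosen canonically. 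For each vertex $\phi$, I would condition on the pieces that $\phi$ uses and apply Bernstein's inequality to $\langle\tmu-\one,\phi\rangle$ restricted to the unrevealed part. The entries are truncated at $2c$, with the large entries handled by the moment bounds from Step 1, and the revealed part contributes $O(1/m)$. This gives failure probability $\exp(-\Omega(\eps^2pn))$ per vertex, which survives the union bound when $C$ is large. The interplay between this conditioning and the deletion, namely that deletion must not destroy the independence, is the delicate point. I would handle it by proving the statement first for $\mu$ and then showing that $\tmu-\mu$ has small norm, using the first property of the deletion.

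Finally, combining Steps 1--3 and choosing $C$ large in terms of all the constants gives all three bullets simultaneously with probability at least $1-\exp(-pn/C)$.
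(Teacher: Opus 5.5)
Your architecture matches the paper's: Harris-type deletion for moment bounds, Stone--Weierstrass linearisation onto a product polytope, then random splitting with Bernstein and a union bound. Step 2 is essentially Lemma~\ref{lem:lincor}. However, Step 3 has a genuine gap: splitting $\one$ into a \emph{constant} number $m$ of pieces does not make the union bound work. Because $\tmu-\one$ is signed, you cannot restrict to largest test functions and must range over all extreme ones. A $\one_a$-extreme function is an arbitrary subset of a piece of size about $n/m$, and nothing ``canonical'' cuts this down, so there are about $2^{(k-1)n/m}$ vertices. Bernstein gives only $\exp(-\Omega(\eps^2pn))$ per vertex, since each $\mu(x)-1$ has variance about $p^{-1}$. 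So for $p=o(1)$ the union bound fails. The paper splits $\tmu$ into $L$ pieces but $\one$ into $\lceil Lp^{-1}\rceil$ pieces $\nu_j$, each supported on about $pn/L$ points and scaled by $\lceil Lp^{-1}\rceil$. Every piece is then as sparse as a $\mu$-piece, and for each revealed set $Q$ the vertex count is at most $|\Sigma|+|\Omega|(2Lp^{-1})^{k-1}2^{2(k-1)pn/L}$.

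That change in turn makes your Step 1 the wrong input for Step 3. Vertices of the split polytope $\Phi(\tmu_1,\dots,\tmu_L,\nu_1,\dots,\nu_{\lceil Lp^{-1}\rceil})^d$ need not lie in $\Phi(\tmu,\one)^d$; Lemma~\ref{lem:splitreduce} gives the reverse containment. Pointwise they are only bounded by $(L/p)^{O(dk)}$ times products of $\{\tmu,\one\}$-largest functions. So moment bounds for the $O(k2^k)$ largest functions built from $\tmu,\one$ control neither the big entries nor the revealed part in your Bernstein step.

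What is needed is $\langle\tmu_i,\phi\rangle,\langle\nu_j,\phi\rangle\le 2c^\ell$ for all $\phi$ in the split polytope itself. This bound must be close to the expectation $c^\ell$, because it is used through high moments above the threshold $2c^d$. There are now about $(3Lp^{-1})^{kd+1}$ largest test functions, so Markov's inequality cannot even supply the constant probability that the Harris/Sp\"ohel--Steger--Warnke upgrade requires. The paper gets polynomially small failure probability for each such polynomial via Kim--Vu, with the codegree condition entering through book counting. It removes the dependence between pieces by coupling them to independent copies of $[n]_q$. Only then does it apply Harris, to the decreasing family of sets for which the bounds hold with probability $\ge0.9$ over the random splits.

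Finally, you do not need, and it is unclear you can get, ``$\tmu-\mu$ has small norm''. The paper proves the Bernstein bound for $\mu$ simultaneously over all products of $\{\mu_j:j\in Q\}\cup\{\nu_j\}$-extreme test functions with $|Q|\le d(k-1)$; this covers the $\tmu$-vertices, since $\tmu_j$-extreme implies $\mu_j$-extreme. It pays for $\mu-\tmu$ only on the \emph{small} entries, and handles the big entries directly with the $\tmu$- and $\one$-moment bounds.
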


Observe that this statement gives us anti-correlation for $\tmu-\one$ with a collection of functions, and it also promises that $\tmu$ and $\one$ individually do not have too large an inner product with members of $\Phi(\tmu,\one)$. 

The following deterministic lemma will allow us to justify the reduction from Theorem~\ref{thm:maintech} to Theorem~\ref{thm:correlate}. For this, we will apply the lemma with $\xi=\tmu$ the scaled indicator of $\tilde{X}$ and $\Phi=\Phi(\tmu,\one)$.

\begin{lemma}[anti-correlation reduction lemma]\label{lem:HBreduce}
  Given $r\ge2$, $c\ge1$ and $\eps>0$, there exists $\delta>0$ such that the following holds. Let $n\in\nats$, let $\xi:[n]\to\RR_{\ge0}$, and let $\Phi\subset\RR^{[n]}$ be a centrally symmetric, $n$-dimensional polytope.
  If
  \begin{enumerate}[label=\itmarab{AC}]
  \item\label{itm:HB:small} $\|\xi-\one\|_{\Phi}\le\delta$,
  \item\label{itm:HB:moment} $\|\xi\|_{\Phi},\|\one\|_{\Phi}\le 2c$, and
  \item\label{itm:HB:max} $\big\langle\xi-\one,\max(0,\phi_1,\dots,\phi_{r-1})\big\rangle\le \delta$ for every $\phi_1,\dots,\phi_{r-1}\in\Phi$,
  \end{enumerate}
  then for every non-negative $f_1,\dots,f_r$ with $f_1+\dots+f_r=\xi$, there
  exist $g_1,\dots,g_r$ taking values in $[0,1]$ with $g_1+\dots+g_r=\one$, such
  that $\|f_i-g_i\|_{\Phi}\le\eps$ for each $i\in[r]$.
\end{lemma}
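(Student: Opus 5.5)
The plan is to avoid constructing the $g_i$ directly and instead use a minimax argument. Let $K$ be the set of tuples $(g_1,\dots,g_r)$ of functions $[n]\to[0,1]$ with $g_1+\dots+g_r=\one$, a compact convex subset of $(\RR^{[n]})^r$. Since $\|f_i-g_i\|_\Phi\le\sum_j\|f_j-g_j\|_\Phi$, it suffices to find $(g_i)\in K$ with
\[\sum_{i=1}^r\|f_i-g_i\|_\Phi=\max_{\phi_1,\dots,\phi_r\in\Phi}\sum_{i=1}^r\langle f_i-g_i,\phi_i\rangle\le\eps\,.\]
The map $\big((g_i),(\phi_i)\big)\mapsto\sum_i\langle f_i-g_i,\phi_i\rangle$ is bilinear (affine in each argument). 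Both $K$ and $\Phi^{\times r}$ are compact and convex, the latter because $\Phi$ is a polytope. So von Neumann's (or Sion's) minimax theorem lets us swap $\min_{(g_i)\in K}$ and $\max_{(\phi_i)}$. It is then enough to show that for every fixed $\phi_1,\dots,\phi_r\in\Phi$ there is some $(g_i)\in K$ with $\sum_i\langle f_i-g_i,\phi_i\rangle\le 3\delta$, and then take $\delta=\eps/3$.

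For fixed $\phi_1,\dots,\phi_r$, the inner minimisation over $K$ decouples pointwise. At each $x$ we put all of the mass $g_{i}(x)=1$ on an index $i$ maximising $\phi_i(x)$, which gives $\max_{K}\sum_i\langle g_i,\phi_i\rangle=\langle\one,\max_i\phi_i\rangle$. On the other side, $f_i\ge0$ and $\sum_if_i=\xi$ give $\sum_i\langle f_i,\phi_i\rangle\le\langle\xi,\max_i\phi_i\rangle$. Hence the min--max value is at most $\langle\xi-\one,\max(\phi_1,\dots,\phi_r)\rangle$.

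It remains to bring this into the form of the hypotheses. Write
\[\max(\phi_1,\dots,\phi_r)=\phi_r+\max\big(0,\phi_1-\phi_r,\dots,\phi_{r-1}-\phi_r\big).\]
The first term contributes $\langle\xi-\one,\phi_r\rangle\le\|\xi-\one\|_\Phi\le\delta$ by~\ref{itm:HB:small}. For the second term, set $\psi_i:=\tfrac12(\phi_i-\phi_r)$. Because $\Phi$ is centrally symmetric and convex, $\psi_i\in\Phi$, and the second term equals $2\max(0,\psi_1,\dots,\psi_{r-1})$. By~\ref{itm:HB:max} its contribution is at most $2\delta$. Altogether the value is at most $3\delta\le\eps$.

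The step needing care is the minimax exchange. I will check that the objective is genuinely bilinear, which it is since $f_i$ is fixed, and that both feasible sets are compact and convex, so the minimum is attained at an actual $(g_i)\in K$. That attained tuple takes values in $[0,1]$ and sums to $\one$, as required. Hypothesis~\ref{itm:HB:moment} appears not to be needed in this argument, beyond the general fact that $\Phi$ is bounded. I would double-check whether the paper's intended proof uses it, for instance through a separating-hyperplane formulation in which the normal vector must be normalised, but the minimax route seems to bypass it.
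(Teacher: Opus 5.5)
Your proof is correct, and it takes a genuinely different route from the paper's. The paper argues by contradiction using the separating hyperplane theorem. It separates the rescaled point $\frac{1}{1+\gamma}(f_1,\dots,f_{r-1})$ from the closed convex set of tuples $(g_1+h_1,\dots,g_{r-1}+h_{r-1})$ with $g_i\ge 0$, $g_1+\dots+g_{r-1}\le\one$ and $\|h_i\|_\Phi\le\eps/(10r)$. The slack from the $h_i$ bounds the dual norms of the normal vectors $\psi_i$, so that $\frac{\eps}{10r^2}\psi_i\in\Phi$ by Fact~\ref{fact:dual}, and this is played against~\ref{itm:HB:max}. The last colour is handled separately, via $g_r=\one-g_1-\dots-g_{r-1}$ and~\ref{itm:HB:small}.

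The rescaling by $1+\gamma$ is needed because separation alone yields only $1<\dots\le 1+20r\delta/\eps$, which is no contradiction. Undoing the rescaling costs $\gamma\|g_i\|_\Phi\le 2c\gamma$. This is exactly where~\ref{itm:HB:moment} enters, as you suspected, and it leads to $\delta=\eps^2/(100cr^2)$.

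Your minimax formulation works with the optimal value directly, so there is no normalisation, no rescaling and no dual-norm bookkeeping. The identity $\max_i\phi_i=\phi_r+2\max(0,\psi_1,\dots,\psi_{r-1})$ with $\psi_i=\frac12(\phi_i-\phi_r)\in\Phi$ treats all $r$ colours symmetrically. You get $\delta=\eps/3$, independent of $c$ and $r$, without~\ref{itm:HB:moment}.

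Your route is also more robust. The paper's step $\|g_i\|_\Phi\le\|\one\|_\Phi$ for $0\le g_i\le\one$ uses monotonicity of $\|\cdot\|_\Phi$ on non-negative functions. This holds for the test polytopes, whose vertices are non-negative or non-positive (Fact~\ref{Rmk:anti_uniform_functionals}\ref{itm:Phi:3}). It fails for an arbitrary centrally symmetric polytope as allowed in the lemma's statement: for $n=2$ and $\Phi=\mathrm{conv}\{\pm(1,-1),\pm(\frac14,\frac14)\}$ one has $\|\one\|_\Phi=\frac14<\frac12=\|(1,0)\|_\Phi$.

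What the paper's approach buys is mainly that it rests only on the separating hyperplane theorem, which it uses anyway, in the spirit of Conlon and Gowers' Hahn--Banach argument. In the overall proof, dropping~\ref{itm:HB:moment} would save little, since the same moment bounds (the case $\ell=1$) are needed for Lemma~\ref{lem:lincor} in any case.
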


We prove Lemma~\ref{lem:HBreduce} in Section~\ref{sec:correlate}. The idea of its proof is as follows. We can write the set of functions $f_1,\dots,f_{r-1}$ for which we can obtain dense models $g_1,\dots,g_{r-1}$ as a convex set in $\RR^{[(r-1)n]}$, and then the separating hyperplane theorem tells us that if there is some $f_1,\dots,f_{r-1}$ which does \emph{not} have dense models, then there is a hyperplane separating $f_1,\dots,f_{r-1}$ from the convex set. We then use the anti-correlation assumption to obtain from this a contradiction. Actually, we need to introduce a (small) rescaling of $f_1,\dots,f_{r-1}$ to make this argument work. Our new idea for this is that we use the bounds from~\ref{itm:HB:moment} to argue that the effect of rescaling is tiny and can be absorbed in the error terms. Finally,  to obtain the dense model $g_r$ for $f_r$, we use~\ref{itm:HB:small} and the triangle inequality. In this argument, we treat $f_r$ and its dense model $g_r$ separately from the other $f_i$ and $g_i$ so that we can get $g_1+\dots+g_r=\one$.

\medskip

Theorem~\ref{thm:maintech} now follows directly from Theorem~\ref{thm:correlate} and Lemma~\ref{lem:HBreduce}.

\begin{proof}[Proof of Theorem~\ref{thm:maintech}]
Given $k,r\ge2$ and $c\ge1,\eps>0$, let $\delta>0$ be returned by Lemma~\ref{lem:HBreduce} for input $r,c,\eps k^{-1}$. Without loss of generality, we may assume $\delta\le\eps k^{-1}$. Let $C$ be returned by Theorem~\ref{thm:correlate} for input $k,r,c,\delta$.

Let $n$ be sufficiently large, and suppose $0<p\le 1$. Let us be in Setting~\ref{mainSetting}, and suppose that the $(c,C,p,n)$-boundedness conditions hold.
With probability $1-\exp(-pn/C)$ the likely event of Theorem~\ref{thm:correlate} occurs for the given inputs. Fix $X=[n]_p$ such that this is the case for a $\delta$-deletion $\tilde{X}$ of $X$, which is also an $\eps$-deletion since $\delta\le\eps$.
From the first conclusion of Theorem~\ref{thm:correlate} (with $\eps$ replaced by $\delta$) we obtain
\[\|\tmu-\one\|_{\Phi(\tmu,\one)}\le\delta\le\eps k^{-1}\,,\]
which is
the first conclusion of Theorem~\ref{thm:maintech}.
In addition, from the conclusions of Theorem~\ref{thm:correlate} we also infer that $\xi$ defined by $\xi(x)=\tmu(x)=p^{-1}\ind(x\in\tilde{X})$ and the polytope $\Phi(\tmu,\one)$ satisfy the conditions of Lemma~\ref{lem:HBreduce}. Hence the conclusion of Lemma~\ref{lem:HBreduce} holds with $\eps$ replaced by $\eps k^{-1}$.
This is precisely the second conclusion of Theorem~\ref{thm:maintech}.
\end{proof}

\subsection{Main lemmas and the proof of the anti-correlation theorem}\label{sec:main-lemmas}

We now continue by providing the lemmas which make up the proof of Theorem~\ref{thm:correlate}. Their aim is to make more tractable the anti-correlation statement we need. The first difficulty we would like to address is that the problem of maximising $\langle\tmu-\one,\max(0,\phi_1,\dots,\phi_{r-1})\rangle$ over $\phi_1,\dots,\phi_{r-1}$ is not a linear optimisation problem. However, it turns out that we can bound this maximum by performing a linear optimisation over a product polytope with the following lemma, which we shall apply with $\xi=\tmu$ the scaled indicator function of $\tilde{X}$ and $\Phi=\Phi(\tmu,\one)$.

\begin{lemma}[linear anti-correlation lemma]\label{lem:lincor}
Given $r\ge2$ and $c\ge1$, $\eps>0$ there exist $\eps_\lin>0$ and $d,d'\ge 2$ such that the following holds. Let $n\in\nats$, let $\xi\colon[n]\to\RR_{\ge0}$, and let $\Phi\subset\RR^{[n]}$ be a centrally symmetric polytope containing $\one$ such that
 \begin{enumerate}[label=\itmarab{LC}]
     \item\label{lincor:i} for all \(\phi \in \Phi^d\) we have \(\big|\langle\xi-\mathbf{1},\phi\rangle\big|\le\eps_\lin\), and
     \item\label{lincor:ii} for all $\ell\in[d']$ and all \(\phi\in\Phi^{\ell}\) we have \(\big|\langle \xi, \phi \rangle\big|,\big|\langle \mathbf{1}, \phi \rangle\big|\le 2c^{\ell}\).
 \end{enumerate}
 Then for all \(\phi_1,\dots,\phi_{r-1}\in\Phi\) we have \(\big|\langle\xi-\mathbf{1},\max(0,\phi_1,\dots,\phi_{r-1})\rangle\big|\le\eps\).
\end{lemma}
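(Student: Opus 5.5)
Write $M:=\max(0,\phi_1,\dots,\phi_{r-1})$ and fix the interval $I=[-2c,2c]$. By the Stone--Weierstrass theorem I choose a polynomial $\pi$ in $r-1$ variables, of degree at most some $d$ depending only on $r,c,\eps$, with $|\pi(t_1,\dots,t_{r-1})-\max(0,t_1,\dots,t_{r-1})|\le\eps/10$ for all $t\in I^{r-1}$. I then split
\[\langle\xi-\one,M\rangle=\langle\xi-\one,\pi(\phi_1,\dots,\phi_{r-1})\rangle+\langle\xi-\one,M-\pi(\phi)\rangle\]
and bound the two terms separately. The first term is handled by~\ref{lincor:i}: each monomial $\phi_1^{a_1}\cdots\phi_{r-1}^{a_{r-1}}$ with $\sum a_j\le d$ lies in $\Phi^d$. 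For total degree exactly $d$ this is the definition of the product polytope. For lower degree I pad with copies of $\one\in\Phi$, and the constant monomial is $\one^d$. By~\ref{lincor:i} each monomial then contributes at most $|\text{coeff}|\cdot\eps_\lin$. Since the coefficients of $\pi$ are bounded by a constant $A$ depending only on $r,c,\eps$, choosing $\eps_\lin\le\eps/(10A\binom{d+r}{r})$ makes the first term at most $\eps/10$.

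For the second term, split $[n]$ into the good set $G$, where all $|\phi_j(x)|\le 2c$, and the bad set $B$. On $G$ the integrand is at most $\eps/10$ times $\xi+\one$. Now $\langle\xi,\one\rangle=\langle\one,\one\rangle\pm\eps_\lin\le 2$, using~\ref{lincor:i} with $\one=\one^d\in\Phi^d$. Hence the contribution from $G$ is at most $3\eps/10$.

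On $B$ I bound
\[|M-\pi(\phi)|\le A'\Big(1+\sum_j|\phi_j|\Big)^d\]
for a constant $A'$, and then dominate this by high even moments. Fix an even $d'\ge 2d$, to be chosen large. On $B$ some $|\phi_j(x)|/(2c)>1$, so
\[\Big(1+\sum_j|\phi_j|\Big)^d\le K\sum_j\big(\phi_j/(2c)\big)^{d'}\cdot(2c)^{-(d'-d)}\cdot(\text{const})\]
with $K$ independent of $d'$, roughly because $|t|^d\le |t|^{d'}(2c)^{d-d'}$ whenever $|t|\ge 2c$. Since $d'$ is even, $\phi_j^{d'}\ge 0$ and $\phi_j^{d'}\in\Phi^{d'}$. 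Hypothesis~\ref{lincor:ii} then gives $\langle\xi,\phi_j^{d'}\rangle,\langle\one,\phi_j^{d'}\rangle\le 2c^{d'}$. Thus the contribution of $B$ to $\langle\xi+\one,|M-\pi(\phi)|\rangle$ is at most about
\[(r-1)K\cdot 4c^{d'}(2c)^{-d'}\cdot(2c)^{d}=O_{r,c,d}\big(2^{-d'}\big),\]
which is below $\eps/10$ once $d'$ is large enough relative to $d$.

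The step needing care is precisely this domination on $B$. Nonnegativity of $\xi$ and $\one$ is used so that absolute values can pass inside the inner products. The factor-$2$ gap between the moment bound $2c^{\ell}$ and the threshold $2c$ is what yields geometric decay in $d'$. The polynomial growth bound for $\pi$ must hold with constants that depend only on $r,c,\eps$, which they do since $\pi$ is fixed before $d'$.

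Parameters are chosen in the order $\pi,d,A,A'$, then $d'$, then $\eps_\lin$. Summing the three contributions gives the bound $\le\eps$. The absolute value in the conclusion is automatic, since both the upper and lower estimates of $\langle\xi-\one,M\rangle$ follow from the same bounds.
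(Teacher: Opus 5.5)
Your proof is correct and follows essentially the same strategy as the paper. You use Stone--Weierstrass on $[-2c,2c]^{r-1}$, pad monomials with $\one$ so that \ref{lincor:i} applies in $\Phi^d$, and neutralise entries of absolute value above $2c$ with high even moments from \ref{lincor:ii}, where the gap between $2c$ and $2c^{\ell}$ gives the $2^{-d'}$ decay. The only difference is bookkeeping, and yours is slightly lighter: you split the ground set into good and bad points and dominate $|M-\pi(\phi)|$ on the bad points by $A'r^d(2c)^{d-d'}\sum_j\phi_j^{d'}$, whereas the paper splits each $\phi_i$ into $\phi_i^{\mathrm{small}}+\phi_i^{\mathrm{big}}$, expands $\pi$, and controls the cross monomials through squared products of big parts and their $d''$-th powers. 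Note that your displayed domination inequality carries a stray extra factor $(2c)^{-d'}$, though your final estimate uses the correct form.
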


The proof of Lemma~\ref{lem:lincor}, which is provided in Section~\ref{sec:linear}, uses the Stone--Weierstrass approximation theorem to find a polynomial $\pi=\pi(x_1,\dots,x_{r-1})$ which uniformly approximates $\max(0,x_1,\dots,x_{r-1})$. The point is that to show $\big\langle\xi-\one,\max(0,\phi_1,\dots,\phi_{r-1})\big\rangle$ is guaranteed to be small, it is enough to show that $\langle\xi-\one,\pi(\phi_1,\dots,\phi_{r-1})\rangle$ is guaranteed to be small, and in turn it is enough to consider each of the monomials of $\pi$ separately. Choosing $d$ to be the degree of $\pi$, these monomials are in $\Phi^d$ and so the anti-correlation assumption~\ref{lincor:i} lets us infer the conclusion of the lemma.

There is, however, a complication: the Stone--Weierstrass approximation theorem requires us to specify a compact domain before we are given $\pi$, but the functions $\phi_i$ can have unboundedly large entries. We overcome this difficulty by arguing that exceptionally large values cause only tiny errors, using the moment bound assumption~\ref{lincor:ii}.
We will use this technique repeatedly, so let us briefly explain it now. Suppose that from some $\phi\in\Phi$, the contribution to (for example) $\langle\xi,\phi\rangle$ of entries of $\phi$ which have absolute value bigger than $2c$ is at least $\delta>0$. If $\ell$ is even, all entries of $\phi^\ell$ are non-negative, and hence these entries contribute even more to the $\ell$-th moment $\langle\xi,\phi^\ell\rangle$. More precisely, $\langle\xi,\phi^\ell\rangle\ge\delta(2c)^{\ell-1}$. But $\phi^\ell$ is in the product polytope $\Phi^\ell$, so if $\ell$ is large enough, this gives a contradiction to~\ref{lincor:ii}.

\medskip

In order to apply Lemma~\ref{lem:lincor} we need to establish the anti-correlation assumption~\ref{lincor:i} and the moment bound assumption~\ref{lincor:ii}. The purpose of the final two main lemmas presented in this section is to allow us to do this. We start with the anti-correlation demanded by~\ref{lincor:i}. This concerns a linear optimisation problem, though over the product polytope $\Phi(\tmu,\one)^d$. We know that the maximum value will occur at a vertex of $\Phi(\tmu,\one)^d$, and we can use
Lemma~\ref{lem:vtxprod} to describe the vertices of this polytope. Our approach now is to use the fact that $\tmu-\one$ is close to being a vector of mean-zero random variables, and hope that we can use this to argue that it is very unlikely that $\tmu-\one$ correlates with any given fixed vertex. To make this precise, we would like to use the following inequality of Bernstein (which can be found, for example, in~\cite[Section 2.8]{Vershynin2018}) together with a union bound over vertices.

\begin{lemma}[Bernstein's inequality]\label{lem:bernstein} Let \(Y_1,\dots{},Y_n\) be independent random variables taking values in \([-M,M]\), and let \(Y=Y_1+\dots{}+Y_n\). For \(\lambda\geq 0\) we have
\[\mathbb{P}\big[\big|Y-\mathbb{E} [Y]\big|\ge\lambda\big]\le 2\exp\Bigg(\frac{-\lambda^2/2}{\tfrac{M\lambda}{3}+\sum_i\mathrm{Var}(Y_i)}\Bigg)\,.\]
\end{lemma}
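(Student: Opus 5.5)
The plan is the standard exponential-moment (Cramér--Chernoff) argument, the same one behind the Chernoff bound of Theorem~\ref{thm:chernoff}. First, centre the variables: set $Z_i:=Y_i-\EE[Y_i]$, so that $\EE[Z_i]=0$, $\mathrm{Var}(Z_i)=\mathrm{Var}(Y_i)$, and $Y-\EE[Y]=\sum_i Z_i$. Write $\sigma^2:=\sum_i\mathrm{Var}(Y_i)$. The argument needs $|Z_i|\le M$. I would verify this in the form in which the lemma is used later in the paper: there the summands are non-negative, taking values in $[0,M]$, so the centred variables lie in $[-M,M]$. For genuinely two-sided $Y_i\in[-M,M]$ the centred range could be as large as $2M$. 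In that case I would either read $M$ as a bound on $|Y_i-\EE Y_i|$, or accept a change of the constant $\tfrac13$ to $\tfrac23$, which is harmless for every application since only the order of magnitude of the exponent matters. By symmetry (replacing $Z_i$ by $-Z_i$) it suffices to bound $\Prob[\sum_i Z_i\ge\lambda]$ by the exponential and then double it.

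The key step is a bound on the moment generating function of each $Z_i$. For $0<t<3/M$, I would expand $\EE[e^{tZ_i}]=1+\sum_{q\ge2}t^q\EE[Z_i^q]/q!$, using $\EE Z_i=0$. I would then bound $|\EE[Z_i^q]|\le M^{q-2}\mathrm{Var}(Z_i)$ and use the elementary inequality $q!\ge 2\cdot 3^{q-2}$. Summing the geometric series gives
\[\EE[e^{tZ_i}]\le 1+\frac{t^2\mathrm{Var}(Z_i)}{2}\sum_{q\ge2}\Big(\frac{tM}{3}\Big)^{q-2}=1+\frac{t^2\mathrm{Var}(Z_i)}{2(1-tM/3)}\le\exp\Big(\frac{t^2\mathrm{Var}(Z_i)}{2(1-tM/3)}\Big).\]
By independence these bounds multiply, so $\EE[e^{t\sum_i Z_i}]\le\exp\big(t^2\sigma^2/(2(1-tM/3))\big)$.

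Next, Markov's inequality applied to $e^{t\sum_i Z_i}$ gives
\[\Prob\Big[\sum_i Z_i\ge\lambda\Big]\le\exp\Big(-t\lambda+\frac{t^2\sigma^2}{2(1-tM/3)}\Big).\]
I would choose $t=\lambda/(\sigma^2+M\lambda/3)$, which satisfies $tM/3<1$. With this choice $1-tM/3=\sigma^2/(\sigma^2+M\lambda/3)$, and the exponent simplifies to exactly $-\tfrac{\lambda^2/2}{\sigma^2+M\lambda/3}$. Adding the symmetric lower-tail bound supplies the factor $2$. The degenerate cases $\lambda=0$ and $\sigma=0$ are trivial.

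The only real obstacle is bookkeeping, not mathematics: one must make sure that the range bound used in the moment estimate applies to the centred variables, as discussed above. Everything else is routine, and the paper may simply cite~\cite[Section 2.8]{Vershynin2018} for this argument.
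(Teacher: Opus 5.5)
Your proof is correct: it is the standard moment-generating-function (Cramér--Chernoff) proof of Bernstein's inequality for bounded variables in \cite[Section 2.8]{Vershynin2018}, and the paper simply cites that source without giving a proof of its own. Your caveat is also right. The bound genuinely needs $|Y_i-\EE[Y_i]|\le M$; Vershynin assumes mean-zero variables with $|Y_i|\le M$, and the literal non-centred version with constant $\tfrac13$ can fail (for instance, $\pm M$-valued variables with small success probability in a Poisson-type regime). In both places the paper applies the lemma, the summands are already mean zero with $|Y_i|\le M$. They are not non-negative, as you suggested, so it is your first reading of $M$ that applies.
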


The idea of using Bernstein's inequality as outlined above has several problems. Firstly, $\tmu-\one$ is not a genuinely random vector, but $\mu-\one$ is. Secondly, the vertices of $\Phi(\tmu,\one)$ actually depend on $\tmu$ and hence $\mu$, so once we reveal the vertices there is no randomness left to apply Bernstein's inequality. Thirdly, the vertices of $\Phi(\tmu,\one)$ can have exceptionally large entries, which spoil the concentration guarantee of Bernstein's inequality (even if we could apply it). Finally, $\Phi(\tmu,\one)^d$ has far too many vertices to take the union bound with the probability bound Bernstein's inequality can deliver (even if the concentration were not spoiled, even if we could apply it).

The first and third of these problems are easily handled: $\mu$ is very close to $\tmu$, and we can deal with exceptionally large entries using moment bounds, and use Bernstein's inequality only for the small entries.

The other two problems require more work. For these, we use a `random splitting'. More precisely,
for a large integer $L$, we will pick two uniform random functions $\chi_\mu:[n]\to[L]$ and $\chi_\one:[n]\to\big[\lceil Lp^{-1}\rceil\big]$, and use them to split the functions $\mu$, $\tmu$, and $\one$ as follows.
Here, we split $\tmu$ and $\one$ separately because we are interested in the polytope $\Phi(\tmu,\one)$, in which vertices are $\{\tmu,\one\}$-extreme test functions by Fact~\ref{Rmk:anti_uniform_functionals}; this in turn means these vertices are in~$\Sigma$ or they are convolutions of functions which are either $\one$-extreme or $\tmu$-extreme, and we would like to consider these two cases independently. Further, we split $\tmu$ into~$L$ parts but $\one$ into $Lp^{-1}$ parts, so that in both cases each part has roughly $pn/L$ non-zero entries in expectation.

\begin{definition}[split, split polytope, revealed parts]\label{notationForSplitting}
Given $k,d\ge2$, $L,n\in\nats$, and $p\in(0,1]$, assume we are in Setting~\ref{mainSetting}. 
Assume we are given a function \(\chi_\mathbf{1}:[n]\to \big[\lceil Lp^{-1}\rceil\big]\), which we call the \emph{\(\mathbf{1}\)-split}
and a function \(\chi_\mu:[n]\to[L]\), which we call the \emph{\(\mu\)-split}.
We call each pre-image \(\chi_\mu^{-1}(i)\) with $i\in[L]$ a \emph{part} of the \(\mu\)-split, or also part~$i$ of the split, and
each pre-image \(\chi_\one^{-1}(j)\) with \(j\in[\lceil Lp^{-1}\rceil]\) a \emph{part} of the \(\one\)-split, or also part~$j$ of the split.

We let ${X}_i=\{x\in {X}\colon\chi_\mu(x)=i\}$ and $\tilde{X}_i=\{x\in \tilde{X}\colon\chi_\mu(x)=i\}$, and
now define the following functions. For \(i\in[L]\) and \(j\in[\lceil Lp^{-1}\rceil]\), let
\begin{align*}
\mu_i(x)&=\begin{cases}Lp^{-1}\qquad&\text{if \(x\in X_i\)}\,,\\0&\text{otherwise}\,,\end{cases} \\
\tilde{\mu}_i(x)&=\begin{cases}Lp^{-1}\qquad&\text{if \(x\in \tilde{X}_i\)}\,,\\0&\text{otherwise}\,,\end{cases} \\
\nu_j(x)&=\begin{cases}\lceil Lp^{-1}\rceil\hspace{.45cm}&\text{if \(\chi_\mathbf{1}(x)=j\)}\,,\\0&\text{otherwise}\,.\end{cases}
\end{align*}
We denote by \(\Phi'\) the polytope \(\Phi':=\Phi\big(\tilde{\mu}_1,\dots{},\tilde{\mu}_L,\nu_1,\dots{},\nu_{\lceil Lp^{-1}\rceil}\big)^d\), and call this the \emph{split polytope}.

For any vertex \(\phi\) of \(\Phi'\), let \(Q_{\phi}\subseteq[L]\) be the minimum set such that we can write \(\phi\) as a product of~\(d\) functions which are \(\{\tilde{\mu}_j:j\in Q_\phi\}\cup\{\nu_1,\dots{},\nu_{\lceil Lp^{-1}\rceil}\}\)-test functions. We let
\[Y(\phi) = \{x\in[n]\,:\,\chi_\mu(x)\in Q_{\phi}\}\,,\]
and call this the \emph{revealed part} of \(\phi\).
\end{definition}

The final part of this definition merits some explanation. Recall that Fact~\ref{Rmk:anti_uniform_functionals} and Lemma~\ref{lem:vtxprod} tell us that any vertex of $\Phi'$ can be written as a product of $d$ functions which are $\{\tmu_1,\dots,\tmu_L,\nu_1,\dots{},\nu_{\lceil Lp^{-1}\rceil}\}$-test functions. Since a convolution takes $k-1$ functions as arguments, this implies that at most $d(k-1)$ of these test functions can be used in this product. Hence, we obtain the following fact.

\begin{fact}\label{fact:split}
  Using the notation of Definition~\ref{notationForSplitting},
  $|Q_\phi|\le d(k-1)$
  for any vertex \(\phi\) of \(\Phi'\). In addition, $Q_\phi$ does not depend on $\tmu$, but only on~$\phi$.
\end{fact}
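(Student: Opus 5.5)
Both claims of Fact~\ref{fact:split} follow from the structure of vertices of $\Phi'$; the plan is simply to unwind the definitions. Let $\phi$ be a vertex of $\Phi'=\Phi(P')^d$ with $P'=\{\tmu_1,\dots,\tmu_L,\nu_1,\dots,\nu_{\lceil Lp^{-1}\rceil}\}$. By Lemma~\ref{lem:vtxprod}, we can write $\phi=\phi_1\cdots\phi_d$ with each $\phi_t$ a vertex of $\Phi(P')$. By Fact~\ref{Rmk:anti_uniform_functionals}\ref{itm:Phi:vertices}, each $\phi_t$ is a $P'$-extreme test function, up to sign. So either $\pm\phi_t\in\Sigma$, or $\pm\phi_t=\ast_{i,\omega}(f_1,\dots,f_k)$ with the $k-1$ arguments $f_j$ ($j\ne i$) each $h_j$-extreme for some $h_j\in P'$.

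For the bound, I would set $Q$ to be the set of indices $m\in[L]$ for which some $h_j$ in this representation equals $\tmu_m$. A factor from $\Sigma$ contributes no index, and a convolution factor contributes at most $k-1$ indices, one per argument. Hence $|Q|\le d(k-1)$. This representation writes $\phi$ as a product of $d$ functions which are $\{\tmu_j:j\in Q\}\cup\{\nu_1,\dots,\nu_{\lceil Lp^{-1}\rceil}\}$-test functions. Since $Q_\phi$ is the minimum such set, $|Q_\phi|\le|Q|\le d(k-1)$.

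For independence from $\tmu$, the point is that the definition of $Q_\phi$ refers only to the function $\phi$ and to which of the functions in $P'$ (indexed by $j\in[L]$) are used in representing it. It is a property of $\phi$ as an element of $\RR^{[n]}$ together with the labelling of the split, not a quantity computed from the values of $\tmu$ outside the revealed parts.

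The one point needing care is to make ``minimum set'' well defined, since several index sets could a priori be minimal. To avoid this, I would take the minimum under a fixed tie-breaking rule (for instance, among sets of smallest size, the lexicographically first). Any such choice still gives $|Q_\phi|\le d(k-1)$ and remains determined by $\phi$ alone, which is all the fact asserts.
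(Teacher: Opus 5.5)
Your argument is correct and is the paper's own: by Lemma~\ref{lem:vtxprod} and Fact~\ref{Rmk:anti_uniform_functionals}, a vertex of $\Phi'$ is a product of $d$ factors, each either in $\Sigma$ or a convolution with $k-1$ arguments, so at most $d(k-1)$ of the $\tmu_j$ can be involved. The paper states the second claim without further argument. Your tie-breaking rule is a harmless way to make $Q_\phi$ a definite choice, since the later union bound over all $Q$ in Lemma~\ref{lem:finbound} only needs some fixed valid index set of size at most $d(k-1)$ for each vertex.
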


The revealed part~$Y(\phi)$ of a vertex \(\phi\) then contains all the elements of~$[n]$ that would be determined by knowing $\tmu$ on all parts of the $\mu$-split needed to determine \(\phi\). If $\tmu$ was replaced by $\mu$ in all these definitions, this would resolve our problem with lack of randomness as follows. Since each vertex $\phi$ of the polytope $\Phi(\mu_1,\dots{},\mu_L,\nu_1,\dots{},\nu_{\lceil Lp^{-1}\rceil})^d$ is independent of the value of $\mu$ on $[n]\setminus Y(\phi)$ and $Y(\phi)$ is determined by $Q_\phi$, which contains at most $d(k-1)$ entries of $[L]$, if we choose $L$ large enough, then $Y(\phi)$ will be a tiny subset of $[n]$. Unfortunately, the same logic becomes false when $\tmu$ is used (as in the definitions) instead of $\mu$ for the simple reason that we cannot know $\tmu$ on a subset of parts of the $\mu$-split without revealing \emph{all} of $\mu$ (since $\tmu$ is an $\eps$-deletion of $\mu$). We will, nevertheless, show, using suitable moment bounds, that knowing this for $\mu$ is enough to obtain the desired anti-correlation for $\tmu$ on~$\Phi'$. 

Hence, we will pass from $\Phi(\tmu,\one)^d$ to $\Phi'$ for our optimisation. As we will show, this new polytope $\Phi'$ has far fewer vertices than $\Phi(\tmu,\one)^d$, thus also solving the problem concerning union bounds mentioned above. The idea is as follows. A vertex of either polytope, by Lemma~\ref{lem:vtxprod} and Fact~\ref{Rmk:anti_uniform_functionals}, consists of a product of $d$ functions which are either in $\Sigma$ or convolutions \(\ast_{i,\omega}(f_1,\dots{},f_k)\) such that each \(f_i\) is \(P\)-extreme, where $P=\{\tmu,\one\}$ for $\Phi(\tmu,\one)^d$ and $P=\{\tmu_1,\dots{},\tmu_L,\nu_1,\dots{},\nu_{\lceil Lp^{-1}\rceil}\}$ for~$\Phi'$. Hence, we can upper bound the number of vertices by estimating how many functions are $P$-extreme. For $P=\{\tmu,\one\}$, unfortunately there are already $2^n$ functions which are $\one$-extreme, and this is (far) too many for a union bound. But we will show that there are fewer than $2^{2pn/L}$ functions which are $\{\tmu_1,\dots{},\tmu_L,\nu_1,\dots{},\nu_{\lceil Lp^{-1}\rceil}\}$-extreme, and this will be good enough.

Passing from $\Phi(\tmu,\one)^d$ to $\Phi'$ for our optimisation is viable since \(\Phi(\tilde{\mu},\mathbf{1})^d\subseteq \Phi'\) as the following easy lemma shows.

\begin{lemma}[split polytope reduction lemma]\label{lem:splitreduce}
Given $k\ge2$, $d\ge1$, $c\ge1$, integers $n$ and $L$, and $p\in(0,1]$, let us be in Setting~\ref{mainSetting}. Let \(\chi_\mathbf{1}:[n]\to \{1,\dots{},\lceil Lp^{-1}\rceil\}\) and \(\chi_\mu:[n]\to \{1,\dots{},L\}\) be an arbitrary \(\mathbf{1}\)- and \(\mu\)-split respectively, and let \(\tilde{X}\) be an arbitrary subset of \([n]\). Using the notation of Definition~\ref{notationForSplitting}, we have \(\Phi(\tilde{\mu},\mathbf{1})^d\subseteq \Phi'\).
\end{lemma}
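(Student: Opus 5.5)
By Lemma~\ref{lem:vtxprod} and convexity, it suffices to show that every vertex of $\Phi(\tmu,\one)^d$ lies in $\Phi'$. Indeed, $\Phi'$ is convex by definition, so it then contains the convex hull $\Phi(\tmu,\one)^d$. By Lemma~\ref{lem:vtxprod}, every such vertex is a pointwise product $\phi_1\cdots\phi_d$ of vertices of $\Phi(\tmu,\one)$. Since $\Phi'$ is the convex hull of all $d$-fold products of elements of $\Phi'':=\Phi(\tmu_1,\dots,\tmu_L,\nu_1,\dots,\nu_{\lceil Lp^{-1}\rceil})$, it is enough to show the single-polytope inclusion $\Phi(\tmu,\one)\subseteq\Phi''$. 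In fact it suffices to show that every $\{\tmu,\one\}$-test function, and its negative, lies in $\Phi''$, since then the convex hull $\Phi(\tmu,\one)$ does too. Structure functions $\sigma\in\Sigma$ and their negatives belong to both polytopes by definition, so only convolutions need checking.

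Take $\phi=\ast_{i,\omega}(f_1,\dots,f_k)$ with each $f_j$ either $\tmu$-bounded or $\one$-bounded. The key observation is that $\tmu=\frac1L\sum_{a\in[L]}\tmu_a$, because the parts $\tilde X_a$ partition $\tilde X$ and $\tmu_a$ takes the value $Lp^{-1}$ on $\tilde X_a$. Similarly, $\one=\frac{1}{\lceil Lp^{-1}\rceil}\sum_{b}\nu_b$. Accordingly, for $\tmu$-bounded $f_j$ write $f_j=\frac1L\sum_a f_j^{(a)}$ with $f_j^{(a)}:=L f_j\ind(\chi_\mu=a)$. On its support this satisfies $0\le f_j^{(a)}\le L\tmu=\tmu_a$, since $\tmu$ vanishes outside $\tilde X$; so each $f_j^{(a)}$ is $\tmu_a$-bounded. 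For $\one$-bounded $f_j$, write $f_j=\frac1{M}\sum_b M f_j\ind(\chi_\one=b)$ with $M=\lceil Lp^{-1}\rceil$; each summand is $\nu_b$-bounded.

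Multilinearity of the convolution then expresses $\phi$ as the average, with uniform weights summing to $1$, of the convolutions $\ast_{i,\omega}(f_1^{(a_1)},\dots,f_k^{(a_k)})$ over all index choices. Each of these is a $\{\tmu_1,\dots,\tmu_L,\nu_1,\dots\}$-test function with the same $\omega\in\Omega$, hence lies in $\Phi''$. So $\phi\in\Phi''$ as a convex combination, and likewise $-\phi$, by central symmetry of $\Phi''$.

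There is no real obstacle here. The only points needing care are the normalisations: the scaling $Lp^{-1}$ in $\tmu_a$ against $p^{-1}$ in $\tmu$, and the ceiling in $\nu_b$, which is handled by averaging over exactly $\lceil Lp^{-1}\rceil$ parts. One should also remember that the product polytope is defined as a convex hull, so reducing to vertices is legitimate.
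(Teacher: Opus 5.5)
Your proposal is correct and follows the paper's proof. You reduce to $\Phi(\tmu,\one)\subseteq\Phi''$, write each $\tmu$-bounded argument as $\frac1L\sum_a L f_j\ind(\chi_\mu=a)$ (this is the paper's $f_j\tmu_a/\tmu$) and each $\one$-bounded argument via the $\nu_b$, and then use multilinearity to express the convolution as a uniform convex combination of test functions in $\Phi''$. The differences are cosmetic: you handle all bounded arguments rather than only the extreme ones appearing at vertices, and the reduction to a single factor follows directly from the definition of the product polytope, so Lemma~\ref{lem:vtxprod} is not needed there.
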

\begin{proof}
  By definition of product polytope, showing $\Phi(\tmu,\one)\subset\Phi'':=\Phi(\tmu_1,\dots,\tmu_L,\nu_1,\dots,\nu_{\lceil Lp^{-1}\rceil})$ is enough, and in turn it suffices to show that any vertex \(\phi\) of \(\Phi(\tmu,\one)\) is contained in \(\Phi''\). By Fact~\ref{Rmk:anti_uniform_functionals}\ref{itm:Phi:vertices} $\phi$ is either in \(\Sigma\), or it is a convolution \(\ast_{i,\omega}(f_1,\dots{},f_k)\) for $\omega\in\Omega$ such that each \(f_i\) is \(\{\tmu,\one\}\)-extreme. If $\phi\in\Sigma$, then by definition $\phi\in\Phi''$ and we are done. So suppose
 $\phi=\ast_{i,\omega}(f_1,\dots{},f_k)$.
  We will now rewrite the functions $f_j$ appearing in this convolution.
  Let \(q\) be the number of these functions $f_j$ that are \(\tilde{\mu}\)-extreme; hence \(k-q\) functions $f_j$  are \(\mathbf{1}\)-extreme. Also set
 \[L_j:=\begin{cases}
 L \qquad & \text{if $f_j$ is \(\tmu\)-extreme} \\
 \lceil Lp^{-1}\rceil \qquad & \text{if $f_j$ is \(\one\)-extreme}
   \end{cases}
 \]
 and
 \[
 f_{j,\ell}:=\begin{cases}
 f_{j} \tilde{\mu}_{\ell}/\tmu \qquad & \text{if $f_j$ is \(\tmu\)-extreme} \\
 f_{j}\nu_{\ell}  \qquad & \text{if $f_j$ is \(\one\)-extreme}\,,
 \end{cases}
 \]
 where the fraction \(\tilde{\mu}_{\ell}/\tilde{\mu}\) is to be interpreted pointwise
 and if \(\tilde{\mu}(x)=0\) and hence \(\tilde{\mu}_{\ell}(x)=0\), then we
 define the result to be \(0\).
 Then we write
 \begin{equation}\label{eq:split:mu}
   f_{j}=\frac{1}{L_j}\sum_{\ell\in[L_j]}f_{j,\ell}\,.
 \end{equation}
 Observe that, if $f_{j}$ is $\tmu$-extreme, then each $f_{j,\ell}$ is
 $\tilde{\mu}_{\ell}$-bounded; similarly, if $f_{j}$ is $\one$-extreme, then
 each $f_{j,\ell}$ is $\nu_{\ell}$-bounded.
 
 Recall that
 \[\phi=\ast_{i,\omega}(f_1,\dots{},f_k)(x)=\frac{n}{e(S)}\sum_{s\in S_i(x)}\omega(s)\prod_{j\neq i}f_j(s_j)\]
 is linear in each of the~$f_j$. Therefore,
 substituting~\eqref{eq:split:mu} gives
 \begin{align*}
   \phi
   &=\frac{n}{e(S)} \sum_{s\in S_i(x)}\omega(s)\prod_{j\neq i}\frac{1}{L_j}\sum_{\ell\in[L_j]}f_{j,\ell}(s_j)
   = \frac{n}{e(S)}\cdot \frac{1}{L^{q}\lceil Lp^{-1}\rceil^{k-q}} \sum_{s\in S_i(x)}\omega(s) \prod_{j\neq i}\sum_{\ell\in[L_j]}f_{j,\ell}(s_j) \\
   &=\frac{1}{L^{q}\lceil Lp^{-1}\rceil^{k-q}} \sum_{\ell_1\in[L_1],\dots,\ell_k\in[L_k]}
     \bigg(\frac{n}{e(S)}\sum_{s\in S_i(x)}\omega(s) \prod_{j\neq i} f_{j,\ell_j}(s_j)\bigg) \\
   &=\frac{1}{L^{q}\lceil Lp^{-1}\rceil^{k-q}} \sum_{\ell_1\in[L_1],\dots,\ell_k\in[L_k]}
     \ast_{i,\omega}(f_{1,\ell_1},\dots{},f_{k,\ell_k})\,. 
 \end{align*}
 Since each $f_{j,\ell}$ is $\tilde{\mu}_{\ell}$-bounded or
 $\nu_{\ell}$-bounded, we know that each of the convolutions
 $\ast_{i,\omega}(f_{1,\ell_1},\dots{},f_{k,\ell_k})$ in this sum is in
 $\Phi''$. Since the sum
 has $L^{q}\lceil Lp^{-1}\rceil^{k-q}$ terms, this means that we have
 written~$\phi$ as a convex combination of elements of
 $\Phi''$. This proves
 $\phi\in\Phi''$, as desired.
\end{proof}

Since Lemma~\ref{lem:splitreduce} shows that $\Phi(\tmu,\one)^d\subset\Phi'$,
in our proof of Theorem~\ref{thm:correlate} it will be enough to prove the anti-correlation statement that $\langle\tmu-\one,\phi\rangle$ is small for all $\phi\in\Phi'$. This is captured in the following lemma, which also provides the necessary moment bound statements.

\begin{lemma}[split polytope anti-correlation lemma]\label{lem:anticor}
  Given $k\ge2$, $c\ge 1$, $\eps>0$, and $d,d'\in\nats_{>0}$, there exists \(L_0\) such that, if \(L\geq L_0\), there exists \(C\) such that the following holds for all sufficiently large $n$ and any $0<p\le 1$.
  Let us be in Setting~\ref{mainSetting}, and suppose the $(c,C,p,n)$-boundedness conditions hold. Then with probability at least \(1-\exp\big(-\tfrac{pn}{C}\big)\) the random set \(X=[n]_p\) admits an \(\eps\)-deletion \(\tilde{X}\) with the following properties.

  There exist functions \(\chi_\mu:[n]\to[L]\) and
  \(\chi_\mathbf{1}:[n]\to\big[\lceil Lp^{-1}\rceil\big]\) such that, using the notation
  of Definition~\ref{notationForSplitting},
  \begin{itemize}
  \item \(\|\tmu-\one\|_{\Phi'}\le\eps\), and
  \item \(\big|\langle\tilde{\mu},\phi\rangle\big|,\big|\langle\mathbf{1},\phi\rangle\big|\le 2c^\ell\)
    for each $\ell\in[d']$ and each \(\phi\in\Phi(\tilde{\mu},\mathbf{1})^\ell\).
  \end{itemize}
\end{lemma}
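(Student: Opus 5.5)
The plan is to choose $\chi_\mu$ and $\chi_\one$ uniformly at random, independently of $X$. We then show that, with the required probability over all the randomness, a good $\tilde X$ exists for a good choice of splits, and fix such a pair of splits. The argument has three layers: moment bounds, a deletion step, and Bernstein's inequality combined with a union bound over vertices of $\Phi'$.

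\textbf{Moment bounds and deletion.} For a single $P$-largest test function $\psi=\ast_{i,\one}(h_1,\dots,h_k)$ with $h_j\in\{\mu,\one\}$, the quantity $\langle\mu,\psi_1\cdots\psi_\ell\rangle$ is a polynomial of degree at most $\ell(k-1)+1$ in the independent indicators $\ind(x\in X)$. By Lemma~\ref{lem:max-product}, it suffices to control these products. The expectation of such a polynomial is governed by the codegree condition $\Delta_\kappa(S)\le cC^{1-\kappa}p^{\kappa-1}e(S)n^{-1}$. Expanding the product over tuples of edges sharing entries, each coincidence of $\kappa$ entries costs $\Delta_\kappa$ but saves $p^{-(\kappa-1)}$, so the terms with overlaps are small once $C$ is large. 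This gives expectation $\le(1+o(1))c^\ell$.

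Kim--Vu gives concentration of these polynomials, but only with weak failure probability. To get $\exp(-pn/C)$, I would apply the deletion method of Sp\"ohel--Steger--Warnke (as indicated for Lemma~\ref{lem:deletionbounds}). The deletion removes from $X$ the at most $\eps pn$ elements lying in too many edges, i.e.\ in heavy configurations. After this deletion, the moment bounds $\langle\tmu,\phi\rangle,\langle\one,\phi\rangle\le2c^\ell$ hold for all $\phi\in\Phi(\tmu,\one)^\ell$ and $\ell\le d'$, with exponentially small failure probability. Since $\tmu\le\mu$ and everything is nonnegative, bounds for the largest test functions built from $\mu$ transfer to those built from $\tmu$. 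Applied with the split functions, the same argument also gives moment bounds for products of $\{\mu_i,\nu_j\}$-largest test functions.

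\textbf{Anticorrelation on $\Phi'$.} Fix a vertex $\phi$ of $\Phi'$. By Lemma~\ref{lem:vtxprod} and Fact~\ref{Rmk:anti_uniform_functionals}, $\phi$ is a product of $d$ extreme test functions built from $\tmu_{Q_\phi}$ and the $\nu_j$.

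\emph{Counting vertices.} A $\nu_j$-extreme function is a subset of $\chi_\one^{-1}(j)$. This part has size about $pn/L$ with high probability by Chernoff over $\chi_\one$. Similarly, $\tmu_i$-extreme functions are subsets of $\tilde X_i$. Together with $|\Sigma|,|\Omega|\le\exp(pn/C)$, the number of vertices is at most $\exp(O(dk\,pn/L)+O(d\,pn/C))$.

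\emph{Splitting the inner product.} Write
\[\langle\tmu-\one,\phi\rangle=\langle\mu-\one,\phi\ind_{[n]\setminus Y(\phi)}\rangle+\langle\mu-\one,\phi\ind_{Y(\phi)}\rangle-\langle\mu-\tmu,\phi\rangle.\]
\begin{enumerate}
\item Outside $Y(\phi)$, condition on $\chi_\mu$ and on $X\cap Y(\phi)$. The vertex, as a function of $\mu$ restricted to its parts, is then fixed. I truncate $\phi$ at level $2c\cdot$(constant): the large entries contribute $\le\eps/10$ by the even-moment trick (Lemma~\ref{lem:lincor}-style, using $\langle\mu,\phi^{d'}\rangle$ bounds). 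Bernstein's inequality on the bounded part, with $M=O(p^{-1})$ and variance $O(n/p)$, then gives deviation $\eps n/10$ with probability $\exp(-\Omega(\eps^2pn))$. This beats the vertex count once $L\ge L_0(\eps,d,k,c)$ and $C$ is large.
\item The revealed part $Y(\phi)$ has measure $\approx|Q_\phi|/L\le d(k-1)/L$. Its contribution is small by Cauchy--Schwarz/H\"older against the moment bounds, since $\langle\mu,\ind_{Y}\rangle\approx|Y|/n$.
\item The deleted mass $\mu-\tmu$ has total weight $\le\eps$. H\"older with moment bounds again bounds its correlation with $\phi$ by $O(\eps^{1-1/d'})$, so I run the argument with a smaller deletion parameter.
\end{enumerate}
Since a linear functional is extremised at vertices, the bound $\|\tmu-\one\|_{\Phi'}\le\eps$ follows.

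\textbf{Main obstacle.} The hardest part is the moment bound with exponential failure probability. It requires adapting the deletion method so that the same $\tilde X$ works simultaneously for all (exponentially many) largest test functions built from parts, while keeping $|X\setminus\tilde X|\le\eps pn$. I would first try defining deletion via a maximal family of disjoint heavy configurations, as in Sp\"ohel--Steger--Warnke. A second, subtler point is that $\tmu$ depends on all of $X$. This is handled by doing the Bernstein step entirely with $\mu$ and paying only the small-deletion error in item 3 above.
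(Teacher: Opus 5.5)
Your overall architecture matches the paper's: random splits, a deletion for the moment bounds, Bernstein plus a union bound off the revealed part, and moment bounds on $Y(\phi)$ and on large entries. The gap is in how you account for the deletion.

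\textbf{Where the argument breaks.} The only moment bounds that hold with failure probability $\exp(-pn/C)$ are the post-deletion ones. These control $\langle\tmu_j,\cdot\rangle$ and $\langle\nu_{j'},\cdot\rangle$, hence $\langle\tmu,\cdot\rangle$ and $\langle\one,\cdot\rangle$. They say nothing about the values of $\phi$ on the deleted set $X\setminus\tilde X$. Moment bounds with $\mu$ in place of $\tmu$ are upper-tail events (e.g.\ for subgraph counts) whose failure probability is typically far larger than $\exp(-pn/C)$; that is exactly why the deletion is needed. Yet all three of your items rely on such bounds:
\begin{itemize}
\item item 1 controls the large entries through $\langle\mu,\phi^{d'}\rangle$;
\item item 2 applies H\"older against $\mu$ on $Y(\phi)$;
\item item 3 needs $\langle\mu-\tmu,|\phi|^m\rangle=O(1)$.
\end{itemize}
If you use only what is available, namely $\mu-\tmu\le p^{-1}\ind(X\setminus\tilde X)$, $|X\setminus\tilde X|\le\delta pn$ and $\langle\one,\phi^m\rangle\le 2c^{dm}$ for even $m$, H\"older gives only
\[\langle\mu-\tmu,|\phi|\rangle\le 2^{1/m}c^d\,\delta^{1-1/m}p^{-1/m}.\]
This is useless as $p\to0$ with $m$ fixed, and $p$ may be as small as $(\log^C n)/n$. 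So item 3, and the large-entry parts of items 1 and 2, do not go through as written.

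\textbf{The repair.} Truncate \emph{before} comparing $\mu$ with $\tmu$, so the deleted mass only ever meets bounded entries. Set $\phi^{\mathrm{small}}=\phi\,\ind(|\phi|\le 2c^d)$, $\phi^{\mathrm{big}}=\phi-\phi^{\mathrm{small}}$ and $Y=Y(\phi)$, and write
\[\langle\tmu-\one,\phi\,\ind([n]\setminus Y)\rangle=\langle\mu-\one,\phi^{\mathrm{small}}\ind([n]\setminus Y)\rangle-\langle\mu-\tmu,\phi^{\mathrm{small}}\ind([n]\setminus Y)\rangle+\langle\tmu-\one,\phi^{\mathrm{big}}\ind([n]\setminus Y)\rangle.\]
The first term is your Bernstein step. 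The second is trivially at most $2c^d\delta$. The third is bounded by the even-moment trick using only $\tmu$ and $\one$.

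On $Y$, never pass to $\mu$. For a vertex $\phi$ we have $|\phi|\in\Phi'$, so the split moment bounds give
\[\langle\tmu\,\ind(Y),|\phi|\rangle=\tfrac1L\sum_{j\in Q_\phi}\langle\tmu_j,|\phi|\rangle\le 2c^d d(k-1)/L.\]
The term $\langle\one\,\ind(Y),|\phi|\rangle$ is handled by splitting into small and big parts. This is the paper's decomposition.

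\textbf{Two smaller points.}
\begin{itemize}
\item Since $\tilde X$ depends on all of $X$, the Bernstein union bound must range over products of $\{\mu_j:j\in Q\}\cup\{\nu_{j'}\}$-extreme test functions, for every small $Q\subset[L]$. This family contains the vertices of $\Phi'$ for every $\tilde X\subseteq X$, because $\tmu_j$-extreme functions are $\mu_j$-extreme. Counting vertices via $\tilde X_i$ is not valid.
\item The deletion step you leave open is supplied by Lemma~\ref{lem:deletionbounds}. Its proof applies the Harris inequality to the decreasing family of sets for which the split moment bounds hold with probability at least $0.9$ over the splits. Deleting heavy configurations is not what is used there.
\end{itemize}
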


This lemma is proved in Section~\ref{sec:finest}. We explain in the next subsection how we obtain the moment bounds claimed in this lemma.
The idea for obtaining the claimed anti-correlation is the simple approach mentioned earlier: for a fixed vertex $\phi$ of $\Phi'$, we use Bernstein's inequality to argue that the `typical' entries of $\phi$ are unlikely to correlate with $\mu-\one$ on $[n]\setminus Y(\phi)$, and we take a union bound over choices of $\phi$. We need to handle the atypical (large) entries of $\phi$, the (tiny) difference between $\tmu$ and $\mu$, and the correlation on (the tiny set) $Y(\phi)$, all of which we do by using moment bounds.

With this, we are ready to prove Theorem~\ref{thm:correlate}, which now follows from the linear anti-correlation lemma (Lem\-ma~\ref{lem:lincor}),
the split polytope reduction lemma (Lemma~\ref{lem:splitreduce}),
and the split polytope anti-correlation lemma (Lem\-ma~\ref{lem:anticor}).

\begin{proof}[Proof of Theorem~\ref{thm:correlate}]
 Given $k,r\ge2$, $c\ge1$ and \(\eps>0\), let \(\eps_\lin>0\) and \(d,d'\) be returned by Lemma~\ref{lem:lincor} for input \(r,c,\eps\). Without loss of generality, we may assume \(\eps_\lin\le\eps\). We now input $k,c,\eps_\lin,d,d'$ to Lemma~\ref{lem:anticor}, which returns \(L_0\). We choose $L=L_0$ and let $C$ be returned by Lemma~\ref{lem:anticor}.

 Given a sufficiently large $n$ and $p\in(0,1]$, let us be in Setting~\ref{mainSetting} and suppose that the $(c,C,p,n)$-boundedness conditions hold. In particular, the conditions of Lemma~\ref{lem:anticor} are satisfied, so with probability at least \(1-\exp\big(-\tfrac{pn}{C}\big)\), the set \(X=[n]_p\) has an \(\eps_\lin\)-deletion  \(\tilde{X}\), which is also an $\eps$-deletion, such that there exist splits \(\chi_\mu,\chi_\mathbf{1}\) for which the following hold, with the notation of Definition~\ref{notationForSplitting}. For \(\Phi'=\Phi(\tilde{\mu}_1,\dots{},\tilde{\mu}_L,\nu_1,\dots{},\nu_{\lceil Lp^{-1}\rceil})^d\) we have
\begin{equation}\label{eq:correlate:eps}
  \|\tmu-\one\|_{\Phi'}\le\eps_\lin\le\eps\,,
\end{equation}
and in addition for all $\ell\in[d']$ and \(\phi\in\Phi(\tilde{\mu},\mathbf{1})^\ell\) we have
\begin{equation}\label{eq:correlate:twoc}
  |\langle\tilde{\mu},\phi\rangle|,|\langle\mathbf{1},\phi\rangle|\le 2c^\ell\,.
\end{equation}
Suppose \(X\) satisfies the likely event, and fix \(\tilde{X}\) and \(\chi_\mu,\chi_\mathbf{1}\) witnessing this.

We would now like to apply Lemma~\ref{lem:lincor} to $\xi(x)=\tmu(x)=p^{-1}\ind(x\in\tilde{X})$ and the polytope $\Phi=\Phi(\tmu,\one)$.
The assumption~\ref{lincor:ii} of this lemma is satisfied by~\eqref{eq:correlate:twoc}. For the assumption~\ref{lincor:i}, observe that we have
\begin{equation}\label{eq:correlate:Phi}
  \Phi(\tilde{\mu},\mathbf{1})^d\subseteq\Phi'=\Phi(\tilde{\mu}_1,\dots{},\tilde{\mu}_L,\nu_1,\dots{},\nu_{\lceil Lp^{-1}\rceil})^d
\end{equation}
  by
 Lemma~\ref{lem:splitreduce}.
 Therefore, we obtain \(\langle\tilde{\mu}-\mathbf{1},\phi\rangle<\eps_\lin\) for all \(\phi\in\Phi(\tilde{\mu},\mathbf{1})^d\) from~\eqref{eq:correlate:eps}, verifying~\ref{lincor:i}.

 Hence, Lemma~\ref{lem:lincor} tells us that
 \begin{equation*}
   \big|\langle\tmu-\mathbf{1},\max(0,\phi_1,\dots,\phi_{r-1})\rangle\big|\le\eps\,.
 \end{equation*}
 for all \(\phi_1,\dots,\phi_{r-1}\in\Phi(\tmu,\one)\), giving the third
 conclusion of Theorem~\ref{thm:correlate}.  The second conclusion of
 Theorem~\ref{thm:correlate} follows from~\eqref{eq:correlate:twoc} with
 $\ell=1$.  Finally, recall that
 \(\Phi(\tilde{\mu},\mathbf{1})\subseteq\Phi(\tilde{\mu},\mathbf{1})^d\) by
 Fact~\ref{fac:d-product}. Therefore, we have
 $\Phi(\tilde{\mu},\mathbf{1})\subset\Phi'$ by~\eqref{eq:correlate:Phi}, and
 thus the first conclusion of Theorem~\ref{thm:correlate} follows
 from~\eqref{eq:correlate:eps}.
\end{proof}

\subsection{Moment bounds}\label{sec:moment-overview}

In the last part of this section, we state two lemmas providing different types of moment bounds.
The first of these is needed to establish the moment bounds in the conclusion of the
split polytope anti-correlation lemma, Lemma~\ref{lem:anticor}.

\begin{lemma}[moment bounds lemma with deletion]\label{lem:deletionbounds}
Given $k\ge2$, \(\delta>0\), $c\ge1$ and \(d''\in\nats_{>0}\), there exists \(L_0\) such that, if \(L\geq L_0\), then there exists \(C\) such that for sufficiently large $n$ and $p\in(0,1]$ the following holds. Assume we are in Setting~\ref{mainSetting} and that the $(c,C,p,n)$-boundedness conditions hold. Let \(X=[n]_p\) be the binomial random set. Then, with probability at least \(1-3\exp(-\tfrac1{12}\delta^2 pn)\) over the choice of this random set, the following holds.

Using the notation of Definition~\ref{notationForSplitting}, choose a $\one$-split \(\chi_\mathbf{1}\colon [n]\to[\lceil Lp^{-1}\rceil]\) and a $\mu$-split \(\chi_\mu\colon[n]\to[L]\) independently and uniformly at random. Then, with probability at least \(0.9\) over the choice of these two splits, there is a \(\delta\)-deletion \(\tilde{X}\) of \(X\) with the following property.
  
For any \(\ell\in[d'']\), any $\phi\in\Phi(\tilde{\mu}_1,\dots{},\tilde{\mu}_L,\nu_1,\dots{},\nu_{\lceil Lp^{-1}\rceil})^\ell$, and any $i\in[L]$ and $j\in[\lceil Lp^{-1}\rceil]$, we have
\[ \langle\tmu_i,\phi\rangle\le 2 c^\ell\qquad\text{and}\qquad\langle\nu_j,\phi\rangle\le 2 c^\ell\,.\]
\end{lemma}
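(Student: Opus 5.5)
We reduce the statement to bounding a bounded number of polynomial-type quantities in the indicator variables of $X$, and then apply the Spöhel--Steger--Warnke version of the Rödl--Ruciński deletion method.

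\textbf{Step 1: reduce to largest test functions.} By Lemma~\ref{lem:max-product}, applied with $P=\{\tmu_1,\dots,\tmu_L,\nu_1,\dots,\nu_{\lceil Lp^{-1}\rceil}\}$ and the non-negative vectors $\tmu_i$ and $\nu_j$, it suffices to bound $\langle \tmu_i,\psi_1\cdots\psi_{\ell'}\rangle$ and $\langle\nu_j,\psi_1\cdots\psi_{\ell'}\rangle$ for $\ell'\le d''$. Here each $\psi_t$ is a $P$-largest test function, that is, a convolution $\ast_{a_t,\one}(h_{t,1},\dots,h_{t,k})$ with every $h_{t,b}$ equal to some $\tmu_{i'}$ or $\nu_{j'}$. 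Expanding the definition of convolution, $\langle h_0,\psi_1\cdots\psi_{\ell'}\rangle$ becomes
\[
\frac1n\Big(\frac{n}{e(S)}\Big)^{\ell'}\sum_{x}\;\sum_{s^1\in S_{a_1}(x),\dots,s^{\ell'}\in S_{a_{\ell'}}(x)}\;h_0(x)\prod_{t,b\ne a_t}h_{t,b}(s^t_b).
\]
This is a sum over ``sunflower'' configurations of $\ell'$ edges of $S$ sharing a common vertex $x$ at specified positions. Each $\tmu_{i'}$ factor contributes $Lp^{-1}$ times an indicator of membership in $\tilde X_{i'}$, and each $\nu_{j'}$ factor contributes about $Lp^{-1}$ times an indicator of $\chi_\one=j'$.

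\textbf{Step 2: the expectation.} Group configurations by their overlap pattern, that is, by which coordinates coincide. For the pattern in which all vertices other than $x$ are distinct, the expectation over $X$ and the splits is exactly the value with every factor replaced by $\one$. That value is $\frac1n(\frac n{e(S)})^{\ell'}\sum_x\prod_t\deg_S(x\text{ at }a_t)\le\Delta_1(S)^{\ell'}(n/e(S))^{\ell'}\le c^{\ell'}$.

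For a pattern with overlaps, each coincidence saves a factor $p/L$ in probability but restricts the count through a $\kappa$-degree. Condition~(BD3) gives $\Delta_\kappa\le cC^{1-\kappa}p^{\kappa-1}e(S)/n$, so each such pattern contributes $O_{d'',k}(c^{\ell'}L^{O(1)}C^{-1})$. Choosing $C$ large relative to $L$ makes the total expectation at most $(1+\delta)c^{\ell'}$. The number of choices $(\ell',i,j,\text{pattern})$ is polynomial in $n$ and $L/p$, so a union bound over them will be affordable.

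\textbf{Step 3: concentration via deletion (the main obstacle).} The upper tail of these polynomials is heavy, so it cannot be bounded with probability $1-e^{-\Omega(pn)}$ directly. Instead we use the deletion lemma of Spöhel--Steger--Warnke. With probability $1-\exp(-\Omega(\delta^2pn))$ over $X$, there is a set $Z\subset X$ of at most $\delta pn$ elements such that every count, restricted to configurations avoiding $Z$, is at most its expectation plus $\delta c^{\ell'}$. Equivalently, every element lies in few ``bad'' configurations. The codegree bounds (BD3) are exactly what control the maximum influence of any single element, as in the proofs of container or KŁR-type statements.

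The difficulties are twofold. The deletion must be chosen independently of the target $i,j$ and the choice of $h$'s. The conditioning on the splits must also be handled. To deal with both, first fix the splits. Then use Markov's inequality over the splits to argue that with probability at least $0.9$ the split-expectations are at most $(1+\delta)$ times their averages, with the $L_0$ dependence controlling error terms. Finally apply the deletion lemma to the finitely many families of pattern-polynomials simultaneously.

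Setting $\tilde X=X\setminus Z$ gives a $\delta$-deletion, using Chernoff (Theorem~\ref{thm:chernoff}) for $|X|\ge(1-\delta/2)pn$. Since all $h$'s are dominated by the undeleted functions, the restricted counts bound the quantities from Step~1. This yields $\langle\tmu_i,\phi\rangle,\langle\nu_j,\phi\rangle\le(1+2\delta)c^\ell\le2c^\ell$. Collecting the failure probabilities (the Chernoff bound, the deletion lemma and a lower-order term) gives $3\exp(-\tfrac1{12}\delta^2pn)$.
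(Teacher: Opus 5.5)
Your Step~3 has a genuine gap. The Sp\"ohel--Steger--Warnke theorem (Theorem~\ref{thm:delete}) does not produce a small deletion after which counts drop to their expectations; it only \emph{upgrades} a property that already holds with probability bounded away from zero. Its input is a decreasing family $\mathcal{D}$ with, say, $\Prob([n]_p\in\mathcal{D})\ge\tfrac12$, and its output is a subset of $X$ of size at least $(1-\alpha)pn$ lying in $\mathcal{D}$. The conclusion you ascribe to it is essentially the lemma you are trying to prove.

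To use it, you must take $\mathcal{D}$ to be the family of sets $Y$ for which all the moment bounds hold simultaneously, and then show $\Prob([n]_p\in\mathcal{D})\ge\tfrac12$. Steps~1--2 give only expectations, and you concede you have no tail bound. The number of products of largest test functions is polynomial in $n$: there are $\lceil Lp^{-1}\rceil$ functions $\nu_j$, and $p$ may be as small as $n^{-1}\log^Cn$. So applying Markov's inequality to each polynomial cannot be union-bounded; each one needs failure probability $n^{-\alpha}$ for a large $\alpha$. This is exactly Lemma~\ref{cor:momentboundcor}, which needs three ingredients:
\begin{itemize}
\item the Kim--Vu inequality, with truncated expectations controlled by book counting under the codegree bounds;
\item a coupling with independent samples, to handle the dependence among the parts $\mu_i$ (respectively $\nu_j$) of a split;
\item a union bound.
\end{itemize}
A per-polynomial deletion lemma of Janson--Ruci\'nski type cannot replace this. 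It would give a different deletion set for each of polynomially many polynomials, and their union is far larger than $\delta pn$. The point of the Harris-inequality route is that one $\tilde X\in\mathcal{D}$ serves all of them at once.

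With Lemma~\ref{cor:momentboundcor} available, the paper's proof is short. Let $\mathcal{D}$ consist of the sets $Y$ such that, with $\mu$ the scaled indicator of $Y$, uniformly random splits satisfy all the required bounds with probability at least $0.9$. This family is decreasing, since by Lemma~\ref{lem:max-product} the relevant maxima are attained at products of largest test functions, which are monotone in $Y$. Averaging Lemma~\ref{cor:momentboundcor} over the splits gives $\Prob([n]_p\in\mathcal{D})\ge\tfrac12$. Theorem~\ref{thm:delete} with $\alpha=\delta/2$, together with a Chernoff bound on $|X|$, then yields the claim with failure probability $3\exp(-\tfrac1{12}\delta^2pn)$.

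Building the ``probability $0.9$ over the splits'' into $\mathcal{D}$ is also what lets $\tilde X$ be chosen before the splits. Your alternative does not work: you fix the splits and use Markov to make the conditional expectations at most $(1+\delta)$ times their averages with probability $0.9$. Markov gives nothing that strong, let alone simultaneously for polynomially many quantities.
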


The proof of this lemma, in Section~\ref{sec:delete}, uses a trick, which is in this form due to Sp\"ohel, Steger and Warnke~\cite{SSW} but dates more generally back to the `deletion method' of R\"odl and Ruci\'nski~\cite{RR}. More precisely, we use the Harris inequality to `upgrade' a high probability statement to the above exponentially small failure probability, at the cost of making some deletions.

Our second moment bound lemma is stronger in that it provides a high probability statement over the choice of the random set~$X$ and the random splits, but weaker in that it does not provide an exponentially small failure probability. It also does not require deletions.

\begin{lemma}[moment bounds lemma without deletion]\label{cor:momentboundcor}
  Given $k\ge2$, $c\ge1$ and \(d\in\nats_{>0}\), there exists \(L_0\) such that, if \(L\geq L_0\), then there exists \(C\) such that for sufficiently large $n$ and $p\in(0,1]$ the following holds. Assume we are in Setting~\ref{mainSetting}, that the $(c,C,p,n)$-boundedness conditions hold, and use the notation of Definition~\ref{notationForSplitting}. Let \(X=[n]_p\) be the binomial random set, choose a $\one$-split \(\chi_\mathbf{1}:[n]\to \{1,\dots{},\lceil Lp^{-1}\rceil\}\) independently and uniformly at random, and a $\mu$-split \(\chi_\mu:[n]\to \{1,\dots{},L\}\) independently and uniformly at random. Then, with high probability over the choice of this random set and these two splits, the following holds.

For any \(\ell \in [d]\), any \(\phi\in \Phi(\mu_1,\dots{},\mu_L,\nu_1,\dots{},\nu_{\lceil Lp^{-1}\rceil})^\ell\), and any $i\in[L]$ and $j\in\big[\lceil Lp^{-1}\rceil\big]$, we have
\[ \langle\mu_i,\phi\rangle\le 2 c^\ell \qquad\text{and}\qquad \langle\nu_{j},\phi\rangle\le 2 c^\ell\,.\]
\end{lemma}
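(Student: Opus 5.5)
We reduce Lemma~\ref{cor:momentboundcor} to a polynomial number of concentration statements and then apply the Kim--Vu polynomial concentration inequality. By Lemma~\ref{lem:max-product}, applied with $P=\{\mu_1,\dots,\mu_L,\nu_1,\dots,\nu_{\lceil Lp^{-1}\rceil}\}$ and the non-negative vectors $f=\mu_i$ or $f=\nu_j$, it suffices to show $\langle f,\psi_1\cdots\psi_{\ell'}\rangle\le 2c^{\ell}$ for all $\ell'\le\ell$ and all $P$-largest test functions $\psi_1,\dots,\psi_{\ell'}$. Since $c\ge1$, we may take $\ell'=\ell$. Each $\psi_t=\ast_{i_t,\one}(h_{t,1},\dots,h_{t,k})$ is determined by a coordinate $i_t\in[k]$ and a choice of $k-1$ elements of $P$. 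Hence, for fixed $\ell\le d$, the number of tuples $(f,\psi_1,\dots,\psi_\ell)$ is at most $\big(k(L+\lceil Lp^{-1}\rceil)^{k}\big)^{d+1}$. This is polynomial in $n$, because $p\ge n^{-1}$. So a union bound over all tuples is affordable as soon as each tuple fails with probability $n^{-\omega(1)}$, or even $o$ of the reciprocal count.

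For a fixed tuple, expand
\[
\langle f,\psi_1\cdots\psi_\ell\rangle=\frac1n\sum_x f(x)\prod_{t=1}^\ell\frac{n}{e(S)}\sum_{s^{(t)}\in S_{i_t}(x)}\prod_{j\ne i_t}h_{t,j}(s^{(t)}_j).
\]
This is a polynomial of degree at most $\ell(k-1)+1$ in the independent indicator variables $\ind(y\in X)$ and $\ind(\chi_\mu(y)=a)$, $\ind(\chi_\one(y)=b)$. More cleanly, each factor $\mu_a(y)$ or $\nu_b(y)$ is a rescaled indicator of an independent event of probability $p/L$ (roughly, resp.\ $\approx p/L$) with scaling $\approx L/p$, so the expectation of each monomial with distinct variables equals its deterministic weight. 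Its expectation is then the same sum with all $h$ replaced by $\one$, namely $\langle\one,(\ast_{i}(\one,\dots,\one))^\ell\rangle$-type terms. By the $\kappa=1$ degree condition, $\ast_i(\one,\dots,\one)(x)=n\deg/e(S)\le c$, so the main term is at most $c^\ell$. Terms where the same vertex $y$ occurs several times (overlapping edges $s^{(t)}$, or $y=x$) get boosted by powers of $L/p$. These we control via the codegree bounds $\Delta_\kappa(S)\le cC^{1-\kappa}p^{\kappa-1}e(S)/n$. Each coincidence of a fresh vertex with an already chosen one costs a factor $\Delta_{\kappa}/\Delta_{\kappa-1}\approx p/C$ in edge count and gains at most $L/p$. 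So the total contribution of overlapping configurations is $O_{d,k}\big((L/C)\,c^\ell\big)$, which is at most $c^\ell/4$ for $C$ large relative to $L$. This is exactly why $C$ is chosen after $L$.

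The main step, and the expected obstacle, is concentration via Kim--Vu. We need to bound the expected partial derivatives $\EE_{\ge1}$ of the polynomial. After normalisation, fixing a set of $m\ge1$ variables in the sum corresponds to fixing vertices, and the degree conditions bound the remaining sum by roughly $(L/p)^m\cdot(p/C)^{m}\cdot(\text{polylog})\le (L/C)^m$ relative to the scale $1/n$... . More precisely, the maximal partial-derivative expectation should be $E'\lesssim (L/C)\cdot\frac{L}{pn}$. Kim--Vu then gives deviation $c^\ell/2$ with failure probability $\exp\big(-\Omega((pn/L)^{1/(\ell k)})\big)+\text{poly}(n)\exp(\dots)$. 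This is $n^{-\omega(1)}$ by condition (BD1) $p\ge(\log^C n)/n$ with $C$ large compared with $dk$. This is the step where the bookkeeping of partial derivatives across the $\ell$ overlapping convolution layers is delicate. I would organise it by the hypergraph of fixed vertices and charge each fixed vertex to the first edge $s^{(t)}$ containing it, using $\Delta_\kappa$ with $\kappa$ the number of fixed positions in that edge. The union bound over the polynomially many tuples, and over $\ell\le d$, then gives the lemma with high probability.
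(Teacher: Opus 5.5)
Your outline follows the paper's route: reduce via Lemma~\ref{lem:max-product} to polynomially many products of largest test functions, bound the expectation by book counting with the codegree conditions, apply Kim--Vu, then take a union bound. However, the concentration step has a genuine gap: the variables you apply Kim--Vu to are not independent.

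For fixed $y$ the events $\{\chi_\mu(y)=a\}$, $a\in[L]$, are mutually exclusive. So $\mu_a$ and $\mu_b$ (and likewise $\nu_a,\nu_b$) have disjoint supports and are dependent. A single tuple $(f,\psi_1,\dots,\psi_\ell)$ typically involves several different $\mu_a$ and $\nu_b$. One vertex $y$ can sit in a $\mu_a$-position of one page and a $\mu_b$-position of another. Hence $\langle f,\psi_1\cdots\psi_\ell\rangle$ is not, as you claim, a polynomial in independent Bernoulli variables, which Theorem~\ref{thm:Kim-Vu} requires. The genuinely independent data $\chi_\mu(y),\chi_\one(y)$ are many-valued. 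Conditioning on the splits only moves the problem: a product built only from $\nu$'s then becomes deterministic, and controlling it needs concentration over $\chi_\one$, whose indicators are again dependent. A symptom is that your argument never uses $L\ge L_0$.

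The missing idea is to dominate the dependent split functions by slightly larger independent ones. For a fixed tuple, only sets $T\subset[L]$ and $T'\subset[\lceil Lp^{-1}\rceil]$ of at most $D:=d(k-1)+1$ indices matter.
\begin{itemize}
\item Take independent $Z_t=[n]_q$ for $t\in T$, with $(1-q)^{D}=1-Dp/L$.
\item Put each $x\in\bigcup_t Z_t$ into exactly one $Z'_t$, chosen uniformly among the $t$ with $x\in Z_t$. Then $(Z'_t)_{t\in T}$ has exactly the law of $(X_t)_{t\in T}$.
\item Pointwise, $\mu_t\le\lceil Lp^{-1}\rceil q\,\hat\mu_t$ where $\hat\mu_t=q^{-1}\ind(\cdot\in Z_t)$. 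Do the same for the $\nu_b$, thinning $[n]_q$ to the appropriate rate $q'\le q$ before assigning.
\end{itemize}
Let $\hat\psi$ be $\psi$ with every $\mu_t,\nu_b$ replaced by its hatted version. Since all coefficients are nonnegative, $\langle\mu_i,\psi\rangle\le(\lceil Lp^{-1}\rceil q)^{D}\langle\hat\mu_i,\hat\psi\rangle$. Moreover $(\lceil Lp^{-1}\rceil q)^{D}\le\tfrac43$ once $L\ge L_0(d,k)$. Your book-counting and Kim--Vu computation then runs on genuinely independent samples. Aim for $\tfrac32c^\ell$ there (your current allowance of $\tfrac54c^\ell$ plus deviation $\tfrac12c^\ell$ is too generous), so that the factor $\tfrac43$ still gives $2c^\ell$.

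Two smaller points:
\begin{itemize}
\item Lemma~\ref{lem:max-product} with $\ell'\ge1$ misses the case where every factor of the maximiser is bounded by $\one$. So you also need $\langle\mu_i,\one\rangle,\langle\nu_j,\one\rangle\le2$, which follows from a Chernoff bound on the part sizes.
\item Your bound on $\EE''$ omits the term of order $c^\ell L/(pn)$ that comes from fixing the spine variable. This is harmless, since $pn\ge\log^C n$.
\end{itemize}
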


This lemma will be used to prove Lemma~\ref{lem:deletionbounds}. We will, however, also apply it in the proof of Theorem~\ref{thm:intmodel}. In this application we will set $p=1$ and ignore $\chi_\mu$ and $\mu_1,\dots,\mu_L$, obtaining that with high probability, for each $j\in[L]$ and $1\le\ell\le d$ we have $\langle\nu_j,\phi\rangle\le 2c^\ell$ for each $\phi\in\Phi(\nu_1,\dots,\nu_L)^\ell$.

The proof of Lemma~\ref{cor:momentboundcor} proceeds in three steps, provided in Section~\ref{sec:momentbounds}.
First, we argue that we do not need to consider every $\phi\in\Phi(\mu_1,\dots{},\mu_L,\nu_1,\dots{},\nu_{\lceil Lp^{-1}\rceil})^\ell$. Since this is a linear optimisation problem, as usual we only need to consider vertices. But much more is true this time. Because $\mu_i$ and $\nu_{j}$ are non-negative, we can assume that the vertices we need to consider are pointwise maximised. By Lemma~\ref{lem:vtxprod} and Fact~\ref{Rmk:anti_uniform_functionals}, this means the only vertices we need to consider are products of $\{\mu_1,\dots{},\mu_L,\nu_1,\dots{},\nu_{\lceil Lp^{-1}\rceil}\}$-largest functions. We shall show that there are only polynomially many such functions, and hence we will get away with quite weak concentration bounds (which is good, because we do not know how to prove strong ones in this scenario).
The second step is (roughly) to prove the required concentration bound would be true if $\mu_1,\dots{},\mu_L,\nu_1,\dots{},\nu_{\lceil Lp^{-1}\rceil}$ were mutually independent functions (which they are not). This is a fairly straightforward application of the Kim--Vu polynomial concentration inequality.
The final step deals with the dependency of $\mu_1,\dots{},\mu_L,\nu_1,\dots{},\nu_{\lceil Lp^{-1}\rceil}$,
by using a coupling argument
and applies the union bound over the $\{\mu_1,\dots{},\mu_L,\nu_1,\dots{},\nu_{\lceil Lp^{-1}\rceil}\}$-largest vertices.

\medskip

This finishes our more detailed sketch of the ideas needed to establish our main results.

\section{Proof that \texorpdfstring{$[0,1]$}{[0,1]}-valued models imply \texorpdfstring{$\{0,1\}$}{{0,1}}-valued models}\label{sec:intmodels}
    
In this section we prove Theorem~\ref{thm:intmodel}. As discussed, the basic approach is to define \(g^\ast_i\) via \emph{randomised rounding}. That is, independently for each \(x\), we generate \(g^\ast_i(x)\) by choosing \(1\) with probability \(g_i(x)\) and \(0\) otherwise, motivating the following definition.

\begin{definition}[randomised rounding]
  If $g\colon[n]\to[0,1]$ is any function, its \emph{randomised rounding} is the (random) function $g^\ast\colon[n]\to\{0,1\}$ with
  \[g^\ast(x)=\begin{cases}
  1 \qquad & \text{with probability $g(x)$}\,, \\
  0 \qquad & \text{with probability $1-g(x)$}\,, \\
  \end{cases}
  \]
  independently for each $x$.
\end{definition}

We then argue that rounding in this way is likely to lead to the required
closeness in norm. The following lemma states that this holds for a single function.

\begin{lemma}\label{lem:randround}
Given $k\ge2$, $c\ge1$ and $\eps>0$ there exists $C>0$ such that the following holds for all sufficiently large $n$. Let us be in Setting~\ref{mainSetting}, and suppose that the $(c,C,1,n)$-boundedness conditions hold. 

For any \(g\colon[n]\to [0,1]\), with high probability the randomised rounding \(g^\ast\colon[n]\to \{0,1\}\) satisfies \(\|g-g^\ast\|_{\Phi(\mathbf{1})}\le\eps\).
\end{lemma}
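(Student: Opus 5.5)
The plan is to follow the split-and-union-bound strategy of Section~\ref{sec:main-lemmas}, specialised to $p=1$. In this case the situation is much simpler: the random object is the rounding $g^\ast$, while the polytope $\Phi(\one)$ is deterministic. So the dependence problem between vertices and the random vector disappears, and only two problems remain, namely the number of vertices and exceptionally large entries.

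First I fix a large $L$ and a $\one$-split $\chi_\one\colon[n]\to[L]$ (here $\lceil Lp^{-1}\rceil=L$). I choose it uniformly at random and independently of the rounding, and let $\nu_1,\dots,\nu_L$ be as in Definition~\ref{notationForSplitting}. The argument of Lemma~\ref{lem:splitreduce} (with $d=1$, and with $\tmu$ playing no role) gives $\Phi(\one)\subseteq\Phi(\nu_1,\dots,\nu_L)=:\Phi''$. Hence it suffices to show $\|g-g^\ast\|_{\Phi''}\le\eps$.

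By Lemma~\ref{cor:momentboundcor} applied with $p=1$ (ignoring $\chi_\mu$), with high probability over the split we have $\langle\nu_j,\phi\rangle\le 2c^\ell$ for all $j\in[L]$, $\ell\le 2$ and $\phi\in(\Phi'')^\ell$. Averaging over $j$, and using $\sum_j\nu_j=L\one$, gives $\langle\one,\phi^2\rangle\le 2c^2$ for every $\phi\in\Phi''$. I fix such a good split; from now on it is deterministic, and only $g^\ast$ is random.

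Since $\langle g-g^\ast,\phi\rangle$ is linear in $\phi$, its maximum over $\Phi''$ is attained at a vertex. By Fact~\ref{Rmk:anti_uniform_functionals}, each vertex is $\pm\sigma$ with $\sigma\in\Sigma$, or $\pm\ast_{i,\omega}(f_1,\dots,f_k)$ with each $f_j$ being $\nu_{\ell_j}$-extreme. A $\nu_\ell$-extreme function corresponds to a subset of part $\ell$. With high probability every part has at most $2n/L$ elements (Chernoff bound, Theorem~\ref{thm:chernoff}), so there are at most $2^{2n/L}$ such functions per part. The total number of vertices is therefore at most
\[2|\Sigma|+2k|\Omega|L^{k}2^{2kn/L}\le\exp\big(n/C+3kn/L\big)\,.\]

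For a fixed vertex $\phi$, I split $\phi=\phi\ind(|\phi|\le M)+\phi\ind(|\phi|>M)$, choosing $M=12c^2/\eps$. The large part contributes, deterministically,
\[\big|\langle g-g^\ast,\phi\ind(|\phi|>M)\rangle\big|\le\langle\one,|\phi|\ind(|\phi|>M)\rangle\le\langle\one,\phi^2\rangle/M\le\eps/6\,,\]
using $|g-g^\ast|\le 1$. The small part $\tfrac1n\sum_x(g-g^\ast)(x)\phi(x)\ind(|\phi(x)|\le M)$ is a sum of independent mean-zero terms bounded by $M$. Bernstein's inequality (Lemma~\ref{lem:bernstein}) with $\lambda=\eps n/2$ bounds the probability that it exceeds $\eps/2$ by $2\exp(-c'\eps^2n/M^2)$, where $c'>0$ is an absolute constant. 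I then choose $L$ large in terms of $k,\eps,c$ (hence $M$), so that $3kn/L\le c'\eps^2n/(4M^2)$, and $C$ large so that $n/C$ is also smaller than that. The union bound over all vertices then gives failure probability $\exp(-\Omega(n))$, and on the complement $\|g-g^\ast\|_{\Phi''}\le\eps$.

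The main obstacle is the handling of large entries: the vertex count is only manageable after splitting, and Bernstein needs bounded entries. Both are resolved by the moment bound from Lemma~\ref{cor:momentboundcor}, so the delicate point is to make sure that this lemma is applied to the split polytope $\Phi''$ (which contains all the vertices we union over), rather than to $\Phi(\one)$. I also need to check that its hypotheses, the $(c,C,1,n)$-boundedness conditions, are exactly those assumed here, and that the constants are chosen in the right order: first $\eps$, then $M$, then $L$, then $C$.
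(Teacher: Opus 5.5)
Your proposal is correct and takes essentially the same route as the paper. You use a random $\one$-split with moment bounds from Lemma~\ref{cor:momentboundcor} at $p=1$, then Bernstein plus a union bound over the vertices of $\Phi(\nu_1,\dots,\nu_L)$ for the truncated part, a moment bound for the large entries, and finally $\Phi(\one)\subseteq\Phi(\nu_1,\dots,\nu_L)$. The only real difference is that you truncate at $M=12c^2/\eps$ and use only the second moment, whereas the paper truncates at $2c$ and uses a large even moment $d$ depending on $\eps$. This costs you a larger $L$ (of order $kc^4\eps^{-4}$), but the argument is equally valid.
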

 
Before proving this lemma, we show how it implies Theorem~\ref{thm:intmodel}.

\begin{proof}[Proof of Theorem~\ref{thm:intmodel}]
 Given $k,r\ge2$ and $c\ge1$, $\eps>0$, let $C$ be returned by Lemma~\ref{lem:randround}. Given $p\in(0,1]$, let us be in Setting~\ref{mainSetting} and suppose that the $(c,C,p,n)$-boundedness conditions hold, which implies that in particular the $(c,C,1,n)$-boundedness conditions hold.
 Given $g_1,\dots,g_r$ non-negative such that $g_1+\dots+g_r=\one$, we define $g_1^\ast,\dots,g_r^\ast$ as follows. For each $x\in[n]$ independently, choose $i\in[r]$ with probability $g_i(x)$. We set $g^\ast_i(x)=1$ for the chosen $i$, and the other functions at $x$ to $0$. This gives $g_1^\ast+\dots+g^\ast_r=\one$.

 Observe that for each $i$, the function $g^\ast_i$ is distributed as the randomised rounding of $g_i$, so by Lemma~\ref{lem:randround}, for each $i$ with high probability we have \(\|g_i-g_i^\ast\|_{\Phi(\mathbf{1})}\le\eps\). Taking the union bound over $r$, with high probability the norm statement holds for all $i\in[r]$ simultaneously. In particular the required functions exist, proving Theorem~\ref{thm:intmodel}.
\end{proof}

We now explain how we prove Lemma~\ref{lem:randround}. The first idea is (much as discussed for the proof of~\ref{lincor:i} in Section~\ref{sec:reduction-two}) to fix a vertex $\phi$ of $\Phi(\one)$, argue that $\langle g-g^\ast,\phi\rangle$ is small since the entries of $g-g^\ast$ are mean zero random variables, and use the union bound.
The (only) problem is that $\Phi(\one)$ has too many vertices for this union bound. We use random splitting (as in Definition~\ref{notationForSplitting}) to get around this problem. Because we have only the function $\one$ to split, we only need one random split $\chi_\one$, which we define as in Definition~\ref{notationForSplitting} with $p=1$. We repeat the definition here for convenience.

\begin{definition*}[random split]\label{def:random-split}
  Let \(L\) be a positive integer, and let \(\chi_\one:[n]\to[L]\) be chosen uniformly at random.
  For \(i\in [L]\) we then denote by \(\nu_i\) the function
  \[\nu_i(x)=\begin{cases} L\qquad&\text{if \(\chi_\one(x)=i\)}\,,\\0&\text{otherwise}\,.\end{cases}\]
  We have \(\mathbf{1}=\tfrac{1}{L}\sum_{i=1}^L\nu_i\), hence we call $\nu_1,\dots,\nu_L$ a \emph{random split} of \(\one\).
\end{definition*}

For a carefully chosen $d$, we apply Lemma~\ref{cor:momentboundcor} with $p=1$ to fix a $\one$-split $\chi_\one$ so that for each $\ell\in[d]$ we have $\langle\nu_i,\phi\rangle\le 2c^\ell$ for all $\phi\in\Phi(\nu_1,\dots,\nu_L)^\ell$. The polytope $\Phi(\nu_1,\dots,\nu_L)$ contains the polytope $\Phi(\one)$ we are interested in.
For $\phi\in\Phi(\nu_1,\dots,\nu_L)$, we define (for this section only)
\[\phi^\mathrm{small}(x)=\phi(x)\mathbbm{1}\big(|\phi(x)|\le 2c\big)\quad\text{and}\quad\phi^\mathrm{big}=\phi-\phi^\mathrm{small}\,.\]
Hence, we can write \[\langle g-g^\ast,\phi\rangle=\langle g-g^\ast,\phi^\mathrm{small}\rangle+\langle g-g^\ast,\phi^\mathrm{big}\rangle\,.\] 
Now $\langle g-g^\ast,\phi^\smll\rangle$ is a sum of mean zero random variables taking values in $[-2c,2c]$, so Bernstein's inequality tells us it is very likely to be small. We can then take a union bound over choices of vertices $\phi\in\Phi(\nu_1,\dots,\nu_L)$. For the second term in the sum above, we write
\[\big|\langle g-g^\ast,\phi^\bg\rangle\big|\le\big|\langle g,\phi^\bg\rangle\big|+\big|\langle g^\ast,\phi^\bg\rangle\big|\]
and then use moment bounds to argue that these two inner products are small.

\begin{proof}[Proof of Lemma~\ref{lem:randround}]
  Given $k\ge2$, $c\ge1$ and \(\eps>0\), let \(d\geq 4\) be an even integer such that
  \[2\cdot(2c)^{2-d}\cdot 2c^d\leq \frac{\eps}{2}\,.\]
  Let~$L_0$ be returned by Lemma~\ref{cor:momentboundcor} for input $k$, $d$ and $c$.
  Set
  \[L=\max\{L_0, 100(k-1)2\eps^{-2}c^2,2k\}\,,\]
  and let~$C'$ be returned by Lemma~\ref{cor:momentboundcor} for this~$L$.
  Set
  \[C=\max\{C',100\eps^{-2}c^2\}\,.\]
  Suppose $n$ is sufficiently large, that we are in Setting~\ref{mainSetting} and that the $(c,C,1,n)$-boundedness conditions hold.  
  Given $g:[n]\to[0,1]$, let \(g^\ast\) be the randomised rounding of \(g\).

By Lemma~\ref{cor:momentboundcor}, with high probability there exists a $\one$-split $\chi_\one$ corresponding to a random split \(\nu_1,\dots{},\nu_L\) of $\one$ such that
\begin{equation}\label{eq:intbounds:moment}
\langle \nu_j,\phi\rangle\le 2c^\ell
\end{equation}
for each $j\in[L]$, each $1\le\ell\le d$, and each $\phi\in\Phi(\nu_1,\dots,\nu_L)^\ell$. Fix $\nu_1,\dots,\nu_L$ such that this likely event occurs. In particular, since $\one\in\Sigma\subset\Phi(\nu_1,\dots,\nu_L)$, we have $\langle\nu_j,\one\rangle\le 2$, and thus
\[\nu_j \text{ has at most $2n/L$ non-zero entries}\]
for each $j\in[L]$.

For a vertex \(\phi\) of \(\Phi(\nu_1,\dots{},\nu_L)\), we write \(\phi^\mathrm{small}(x)=\phi(x)\ind\big(|\phi(x)|\le 2c\big)\) and \(\phi^\mathrm{big}=\phi-\phi^\mathrm{small}\). We first prove that with high probability, for each vertex \(\phi\) of \(\Phi(\nu_1,\dots{},\nu_L)\) we have \(\langle g-g^\ast,\phi^\mathrm{small}\rangle\leq \frac{\eps}{2}\). We then show that deterministically a similar statement holds for \(\phi^\mathrm{big}\).

For \(\phi^\mathrm{small}\), we do this by union-bounding, over the choice of \(\phi\), the probability of the event $\mathcal{E}$ that \[\big|\langle g-g^\ast,\phi^\mathrm{small}\rangle\big|\le\frac12\eps  \qquad\text{for all vertices $\phi$ of }\Phi(\nu_1,\dots{},\nu_L)\,.\]
To apply the union bound, we start by arguing that the number~$N$ of vertices of \(\Phi(\nu_1,\dots{},\nu_L)\) satisfies
 \[N\le|\Sigma|+k\cdot |\Omega|\cdot L^{k-1}2^{(k-1)2n/L}\,.\]
 Indeed, by Fact~\ref{Rmk:anti_uniform_functionals} every vertex of \(\Phi(\nu_1,\dots{},\nu_L)\) is either in \(\Sigma\) or a \(\nu_1,\dots{},\nu_L\)-extreme test function that is a convolution \(\ast_{i,\omega}(f_1,\dots{},f_{k})\). For the latter there are \(k\) choices for~\(i\); there are \(\vass{\Omega}\) choices for \(\omega\); and each of the \((k-1)\) functions \(f_j\) is determined by the selection of a function $\nu_i$ with $i=i(j)\in[L]$ as well as a subset of the at most \(2n/L\) non-zero entries of the selected~$\nu_i$.
 It follows from the $(c,C,1,n)$-boundedness conditions that the number of vertices of \(\Phi(\nu_1,\dots{},\nu_L)\) is
 \begin{equation*}\begin{split}
     N&\le \exp(C^{-1}n)+k\cdot\exp(C^{-1}n)\cdot L^{k-1}2^{(k-1)2n/L}\le 2k\cdot\exp(C^{-1}n)\cdot L^{k-1}2^{(k-1)2n/L} \\
     &\le L^k\cdot\exp\Big((100\eps^{-2}c^2)^{-1}n\Big)\cdot\exp\Big(\frac{\eps^2 n}{100c^2}\Big)
     \le\exp\Big(\frac{\eps^2 n}{40c^2}\Big)
     \,,
 \end{split}\end{equation*}
 where we use the definition of~$C$ and~$L$ in the penultimate inequality.
 
 We now observe that, considering \(g^\ast\) as a random variable with \(\mathbb{E}\big[g^\ast(x)\big]=g(x)\), we have that
 \[\langle g-g^\ast,\phi^\mathrm{small}\rangle
    = \sum_{x\in[n]} \frac1n \big(g(x)-g^\ast(x)\big)\phi(x)\ind\big(|\phi(x)|\le 2c\big)
 \]
 is a sum of \(n\) random variables with mean~$0$, each taking values in $[-2cn^{-1},2cn^{-1}]$, and so with variance at most \(4c^2n^{-2}\). Applying Bernstein's inequality (Lemma~\ref{lem:bernstein}) with $\lambda=\eps/2$, we have
 \[\mathbb{P}\Big[\langle g-g^\ast,\phi^\mathrm{small}\rangle\ge\frac\eps2\Big]\le \exp\Big(-\frac{\eps^2/4}{\tfrac{1}{3}\cdot2cn^{-1}\cdot\tfrac12\eps+n\cdot 4c^2n^{-2}}\Big)\le\exp\Big(-\frac{\eps^2n}{32c^2}\Big)\,.\]
 Taking the union bound over the~$N$ vertices of \(\Phi(\nu_1,\dots{},\nu_L)\), we obtain that with high probability the event $\mathcal{E}$ holds. Fix such a \(g^\ast\).

 Fix a \(\phi\) in \(\Phi(\nu_1,\dots{},\nu_L)\). We now prove that \(\langle g-g^\ast,\phi^\mathrm{big}\rangle\leq \frac{\eps}{2}\), deterministically. Indeed, using the triangle inequality, that \(g\) and \(g^\ast\) are non-negative,
 that all entries of \(|\phi^\mathrm{big}|\) are either zero or at least \(2c\), and that $\phi^d$ is non-negative because~$d$ is even,
 we obtain
 \begin{align*}
  \big|\langle g-g^\ast,\phi^\mathrm{big}\rangle\big|&\le\big|\langle g,\phi^\mathrm{big}\rangle\big|+\big|\langle g^\ast,\phi^\mathrm{big}\rangle\big|
  \le\big\langle g,|\phi^\mathrm{big}|\big\rangle+\big\langle g^\ast,|\phi^\mathrm{big}|\big\rangle\le 2\big\langle \mathbf{1},|\phi^\mathrm{big}|\big\rangle\\
  &\le2\cdot\big\langle\mathbf{1},(\phi^\mathrm{big})^2\big\rangle\le 2\cdot (2c)^{2-d}\big\langle\mathbf{1},(\phi^\mathrm{big})^{d}\big\rangle
  \le 2\cdot(2c)^{2-d}\langle \one,\phi^d\rangle\,.
  \end{align*}
Since $\one=\tfrac{1}{L}\sum_{j=1}^L\nu_j$, and since $\phi^d\in\Phi(\nu_1,\dots,\nu_L)^d$, by~\eqref{eq:intbounds:moment} we have $\langle\one,\phi^d\rangle\le 2c^d$, and so
  \[\big|\langle g-g^\ast,\phi^\mathrm{big}\rangle\big|\le 2\cdot(2c)^{2-d}\cdot 2c^d\leq \frac{\eps}{2}\,,\]
  where the final inequality is by choice of $d$.

 Putting these two estimates together, we have for every vertex \(\phi\) of \(\Phi(\nu_1,\dots{},\nu_L)\) the bound
 \[\langle g-g^\ast,\phi\rangle\le\frac12\eps+\frac12\eps=\eps\,.\]
 Since linear functions over a polytope are maximised at vertices, we conclude the same bound holds for every \(\phi\in\Phi(\nu_1,\dots{},\nu_L)\).

 From Lemma~\ref{lem:splitreduce} applied with $p=1$, $\tmu=\mu=\one$, and $\chi_\mu=\chi_\one$, we conclude that \(\Phi(\mathbf{1})\subset\Phi(\nu_1,\dots{},\nu_L)\). Hence $\langle g-g^\ast,\phi\rangle\le\eps$ for each $\phi\in\Phi(\mathbf{1})$, completing the proof of the lemma.
 \end{proof}
 
\section{Proof of the anti-correlation reduction lemma}\label{sec:correlate}

In this section we prove Lemma~\ref{lem:HBreduce}. We need the following classical theorem, which states that for any convex closed set containing zero and any point outside the set there is a hyperplane separating the set from the point.

\begin{theorem}[separating hyperplane theorem]\label{thm:Hahn-Banach}
Let \(K\) be a closed convex set in \(\mathbb{R}^{[n]}\) that contains the zero vector and let \(f\) be a vector that
does not belong to \(K\). Then there is  \(\psi\in\mathbb{R}^{[n]}\) such that \(\langle f,\psi\rangle>1\) and \(\langle g,\psi\rangle\leq 1\) for every \(g\in K\).
\end{theorem}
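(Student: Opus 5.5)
The plan is to prove the statement by the standard nearest-point (projection) argument in the finite-dimensional Euclidean space $\RR^{[n]}$. Throughout I use the inner product $\langle\cdot,\cdot\rangle$ from~\eqref{definitioninnerproduct}. It is a positive multiple of the usual dot product, so it induces a genuine Euclidean norm $\|h\|_2:=\langle h,h\rangle^{1/2}$, and nothing depends on the scaling $\tfrac1n$.

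First I find a closest point. Since $0\in K$, the set $K_0:=K\cap\{g:\|g-f\|_2\le\|f\|_2\}$ is non-empty. It is also closed and bounded, hence compact, so the continuous function $g\mapsto\|g-f\|_2$ attains its minimum on $K_0$ at some $g_0$. This $g_0$ then minimises the distance to $f$ over all of $K$. Since $f\notin K$, we have $v:=f-g_0\neq0$, so $\langle v,v\rangle>0$.

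Next comes the variational inequality. For any $g\in K$ and $t\in(0,1]$, convexity gives $g_0+t(g-g_0)\in K$. Expanding
\[
\|f-g_0-t(g-g_0)\|_2^2\ge\|f-g_0\|_2^2
\]
yields $-2t\langle v,g-g_0\rangle+t^2\|g-g_0\|_2^2\ge0$. Dividing by $t$ and letting $t\to0$ gives $\langle v,g\rangle\le\langle v,g_0\rangle=:a$ for all $g\in K$. Taking $g=0\in K$ shows $a\ge0$. Moreover, $\langle v,f\rangle=\langle v,g_0\rangle+\langle v,v\rangle=a+\langle v,v\rangle$.

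Finally I rescale. Set $b:=a+\tfrac12\langle v,v\rangle$, which is strictly positive because $a\ge0$ and $\langle v,v\rangle>0$. Define $\psi:=v/b$. Then for every $g\in K$ we get $\langle g,\psi\rangle\le a/b<1$, while
\[
\langle f,\psi\rangle=\frac{a+\langle v,v\rangle}{b}>1,
\]
as required. The only delicate points are the existence of the minimiser, which is handled by compactness of $K_0$, and the need for a strictly positive threshold $b$ when $a=0$. The latter is exactly where the hypothesis $0\in K$ enters, since it guarantees $a\ge0$ and so lets us normalise the separating level to $1$.
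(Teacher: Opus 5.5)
Your proof is correct and complete. Compactness of $K_0$ gives a nearest point $g_0$, and the variational inequality gives $\langle v,g\rangle\le a$ on $K$. The hypothesis $0\in K$ forces $a\ge 0$, so the threshold $b=a+\tfrac12\langle v,v\rangle$ is strictly positive and the rescaling $\psi=v/b$ works. You even get the strict inequality $\langle g,\psi\rangle<1$ on $K$.

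The paper does not prove this statement. It quotes it as a classical fact, noting that Conlon and Gowers call it the Hahn--Banach theorem. So your nearest-point argument is a self-contained proof where the paper has none, rather than a reconstruction of its proof. Compared with the functional-analytic route (separation via a Minkowski gauge and Hahn--Banach extension), your argument is more elementary. It uses exactly the two features present here: finite dimension, for compactness of $K_0$, and a positive-definite inner product, so the scaling $\tfrac1n$ is irrelevant. The Hahn--Banach route extends to infinite-dimensional locally convex spaces, but needs more machinery than this paper ever requires.
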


The concept of the
\emph{dual norm} $\|\cdot\|_{\Phi}^\ast$ of our norm $\|\cdot\|_{\Phi}$ also turns out useful for our proof: for $\psi\in\RR^{[n]}$, we define
\[\|\psi\|_{\Phi}^\ast:=\max_{\|h\|_\Phi\le 1}\langle h,\psi\rangle\,.\]
We shall use the following easy fact about the dual norm.

\begin{fact}\label{fact:dual}
  Let~$\Phi$ be a centrally symmetric $n$-dimensional polytope in $\RR^{[n]}$.
  If $\|\psi\|_\Phi^\ast\le1$, then $\psi\in\Phi$.
\end{fact}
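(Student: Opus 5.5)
The plan is to show that the unit ball of the dual norm is contained in $\Phi$, that is, $B^\ast:=\{\psi:\|\psi\|_\Phi^\ast\le1\}\subseteq\Phi$. We argue by contradiction and use the separating hyperplane theorem (Theorem~\ref{thm:Hahn-Banach}) with $K=\Phi$. This is legitimate because $\Phi$ is a polytope, so it is closed and convex, and it contains the zero vector by central symmetry, since $0=\tfrac12\phi+\tfrac12(-\phi)$.

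So suppose $\|\psi\|_\Phi^\ast\le1$ but $\psi\notin\Phi$. Theorem~\ref{thm:Hahn-Banach} then gives a vector $h\in\RR^{[n]}$ with $\langle\psi,h\rangle>1$ and $\langle\phi,h\rangle\le1$ for every $\phi\in\Phi$. The second condition says exactly that $\|h\|_\Phi=\max_{\phi\in\Phi}\langle h,\phi\rangle\le1$. The maximum here is attained because $\Phi$ is compact. Also note that $\|\cdot\|_\Phi$ is a genuine norm, and in particular finite and nonnegative, since $\Phi$ is centrally symmetric and $n$-dimensional, as recorded in Section~\ref{sec:functional}.

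Plugging $h$ into the definition of the dual norm gives
\[\|\psi\|_\Phi^\ast\ge\langle h,\psi\rangle>1,\]
which contradicts $\|\psi\|_\Phi^\ast\le1$. Hence $\psi\in\Phi$.

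The only point that needs care is that the dual norm is defined as a maximum over $\{h:\|h\|_\Phi\le1\}$. This set is compact because $\Phi$ is $n$-dimensional with $0$ in its interior, so the norm $\|\cdot\|_\Phi$ is equivalent to the Euclidean norm. In any case the argument only uses the inequality $\|\psi\|_\Phi^\ast\ge\langle h,\psi\rangle$ for this particular feasible $h$, so nothing here is really an obstacle; the proof is a direct application of Theorem~\ref{thm:Hahn-Banach}.
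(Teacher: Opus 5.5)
Your proof is correct and follows essentially the same route as the paper's: assume $\psi\notin\Phi$, apply the separating hyperplane theorem (Theorem~\ref{thm:Hahn-Banach}) with $K=\Phi$ to get $h$ with $\|h\|_\Phi\le 1$ and $\langle h,\psi\rangle>1$, and conclude $\|\psi\|_\Phi^\ast>1$. Your extra checks that $\Phi$ is closed, convex and contains $0$ are details the paper leaves implicit.
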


To see this is true, observe that if $\psi\not\in\Phi$ then by
Theorem~\ref{thm:Hahn-Banach} there is some $h'$ such that
$\langle\psi,h'\rangle>1$ but $\langle \phi,h'\rangle\le 1$ for all
$\phi\in\Phi$. By definition of $\|\cdot\|_{\Phi}$, this says $\|h'\|_\Phi\le 1$. But then, by
definition of $\|\cdot\|_{\Phi}^\ast$, we have $\|\psi\|_\Phi^\ast\ge\langle h',\psi\rangle>1$.

\begin{proof}[Proof of Lemma~\ref{lem:HBreduce}]
  Given $r\ge2$ and $c\ge1,\eps>0$, assume without loss of generality that $\eps\le 1/10$ and let
  \[\delta=\frac{\eps^2}{100cr^2}\,,  \qquad\text{and}\qquad \gamma=\frac{\eps}{5cr}
  \,.\]
Given $n$, suppose $\xi\colon[n]\to\RR_{\ge0}$ and $\Phi\subset\RR^{[n]}$ satisfy
\ref{itm:HB:small}, \ref{itm:HB:moment}, and \ref{itm:HB:max}.
Now let non-negative $f_1,\dots,f_r$ be given, with $f_1+\dots+f_{r}=\xi$.
We will first show that for the first $r-1$ of these functions, which clearly satisfy
\[f_1+\dots+f_{r-1}\le\xi\,,\]
the following claim is true. We will then argue that this implies our lemma.

\begin{claim}
  There are non-negative functions $g_1,\dots,g_{r-1}$
  with $g_1+\dots+g_{r-1}\le\one$ and functions $h_1,\dots,h_{r-1}$ with $\|h_i\|_{\Phi}\le\eps/(10r)$ for each $i\in[r-1]$, such that
  $f_i=(1+\gamma)(g_i+h_i)$ for each $i\in[r-1]$.
\end{claim}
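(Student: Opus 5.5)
We argue by separation in the product space $\RR^{[(r-1)n]}$. Let
\[K:=\Big\{(g_1+h_1,\dots,g_{r-1}+h_{r-1})\,:\,g_i\ge0,\ \textstyle\sum_{i<r}g_i\le\one,\ \|h_i\|_\Phi\le\eps/(10r)\Big\}\,.\]
This set is convex and closed, being the Minkowski sum of a compact polytope and a product of norm balls. It contains $0$. The claim says exactly that $(1+\gamma)^{-1}(f_1,\dots,f_{r-1})\in K$.

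Suppose this fails. The separating hyperplane theorem (Theorem~\ref{thm:Hahn-Banach}), applied in $\RR^{[(r-1)n]}$ with the inner product $\sum_i\langle\cdot,\cdot\rangle$, gives $\psi_1,\dots,\psi_{r-1}$ with
\[\sum_i\langle f_i,\psi_i\rangle>1+\gamma \qquad\text{and}\qquad \sum_i\langle g_i+h_i,\psi_i\rangle\le 1 \ \text{ for all elements of } K.\]
We extract two consequences from the second inequality.

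First, take $g=0$ and let each $h_j=0$ except one, ranging over the ball of radius $\eps/(10r)$. This yields $\|\psi_i\|^\ast_\Phi\le 10r/\eps$. By Fact~\ref{fact:dual}, $\phi_i:=\tfrac{\eps}{10r}\psi_i\in\Phi$.

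Second, take $h=0$ and, at each point $x$, put $g_i(x)=1$ for an index $i$ maximising $\psi_i(x)$ when that maximum is positive, and $g=0$ otherwise. This is feasible and gives $\langle\one,\max(0,\psi_1,\dots,\psi_{r-1})\rangle\le1$.

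On the other side, since $f_i\ge0$ and $\sum_i f_i\le\xi$, we have
\[\sum_i\langle f_i,\psi_i\rangle\le\langle\xi,\max(0,\psi_1,\dots,\psi_{r-1})\rangle=\tfrac{10r}{\eps}\,\langle\xi,\max(0,\phi_1,\dots,\phi_{r-1})\rangle\,.\]
By \ref{itm:HB:max} this is at most $\tfrac{10r}{\eps}\big(\langle\one,\max(0,\phi_i)\rangle+\delta\big)$. Undoing the scaling, the bound becomes
\[\langle\one,\max(0,\psi_i)\rangle+\tfrac{10r\delta}{\eps}\le 1+\tfrac{\eps}{10cr}<1+\gamma\,,\]
using $\delta=\eps^2/(100cr^2)$ and $\gamma=\eps/(5cr)$. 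This contradicts the first inequality, so the claim holds. The essential point is that the scale of $\psi$ is controlled through the dual norm, so the additive error $\delta$ costs only $O(\delta r/\eps)$. I expect checking this scaling and closedness to be routine.

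\emph{Deducing the lemma.} Set $g_i'$ for $i<r$ to be the $g_i$ from the claim, and set $g_r:=\one-\sum_{i<r}g_i\ge0$; then all values lie in $[0,1]$ and the $g$'s sum to $\one$. For $i<r$ we have $f_i-g_i=\gamma g_i+(1+\gamma)h_i$, so
\[\|f_i-g_i\|_\Phi\le\gamma\|g_i\|_\Phi+(1+\gamma)\tfrac{\eps}{10r}\,.\]

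The step I expect to need the most care is bounding $\|g_i\|_\Phi$, since $g_i$ is merely $\one$-bounded and no direct monotonicity is available. The plan is to use $g_i=(1+\gamma)^{-1}f_i-h_i$ together with the fact that every vertex of $\Phi$ is non-negative or non-positive. For a non-negative $\phi$, $0\le f_i\le\xi$ gives $\langle f_i,\phi\rangle\le\langle\xi,\phi\rangle\le2c$ by \ref{itm:HB:moment}, and the same bound holds for non-positive $\phi$ by symmetry. Hence $\|f_i\|_\Phi\le2c$ and $\|g_i\|_\Phi\le 2c+1$, so $\gamma\|g_i\|_\Phi\le(2c+1)\eps/(5cr)<\eps/2$ and $\|f_i-g_i\|_\Phi\le\eps$.

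For $i=r$, write
\[f_r-g_r=(\xi-\one)-\sum_{i<r}(f_i-g_i)\,.\]
Bounding each term as above with slightly tighter bookkeeping (for instance, applying the claim with $\eps/r$ in place of $\eps$ and choosing $\delta,\gamma$ correspondingly smaller) and using \ref{itm:HB:small}, the triangle inequality gives $\|f_r-g_r\|_\Phi\le\delta+(r-1)\eps/r\le\eps$. This completes the lemma.
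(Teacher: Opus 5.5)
Your proof of the claim is correct and is essentially the paper's argument: you separate $(1+\gamma)^{-1}(f_1,\dots,f_{r-1})$ from $K$, use the $h$-directions to bound the dual norm of each $\psi_i$ (so that a rescaling lies in $\Phi$ and \ref{itm:HB:max} applies), use argmax indicators in the $g$-directions to get $\langle\one,\max(0,\psi_1,\dots,\psi_{r-1})\rangle\le 1$, and combine these via $f_1+\dots+f_{r-1}\le\xi$. The only difference is that you put the unnormalised sum $\sum_i\langle\cdot,\cdot\rangle$ on $\RR^{[(r-1)n]}$ instead of the paper's $\frac{1}{r-1}$-averaged inner product, which gives you the scaling $\frac{\eps}{10r}\psi_i\in\Phi$ rather than $\frac{\eps}{10r^2}\psi_i$ and hence the slack $\eps/(10cr)<\gamma$ directly.
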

\begin{claimproof}
  Suppose for a contradiction that this is not true. The set \(K\subset\RR^{[(r-1)n]}\) of $(r-1)$-tuples of
  functions of the form \((g_1+h_1,\dots,g_{r-1}+h_{r-1})\), where each \(g_i\)
  is non-negative and satisfies $g_1+\dots+g_{r-1}\le\one$, and each $h_i$ satisfies
  \(\|h_i\|_\Phi\leq\eps/(10r)\), is a closed convex set
  containing the zero vector, since $g_i=h_i=0$ is feasible.  By Theorem~\ref{thm:Hahn-Banach}, because
  \(\tfrac{1}{1+\gamma}(f_1,\dots,f_{r-1})\) is not in \(K\) by assumption,
  there are
  \(\psi_1,\dots,\psi_{r-1}\in\mathbb{R}^{[n]}\) such that
  \begin{align}
     \label{eq:sep1} \Big\langle \frac{1}{1+\gamma}(f_1,\dots,f_{r-1}),(\psi_1,\dots,\psi_{r-1})\Big\rangle&>1\,,\quad\text{but} \\
     \label{eq:sep2} \big\langle (g_1+h_1,\dots,g_{r-1}+h_{r-1}),(\psi_1,\dots,\psi_{r-1})\big\rangle&\le 1
  \end{align}
  for all $(g_1+h_1,\dots,g_{r-1}+h_{r-1})\in K$.
  From~\eqref{eq:sep1}, we get
  \begin{equation}\label{eq:HB:sum}
    \frac{1}{r-1}\sum_{i=1}^{r-1}\langle f_i,\psi_i\rangle>1+\gamma\,.
  \end{equation}
  Further, for each $i\in[r-1]$ and each $h$ with $\|h\|_\Phi\le\eps/(10r)$,
  taking $h_1=h$ and $h_2=\dots=h_{r-1}=g_1=\dots=g_{r-1}=0$, we conclude
  from~\eqref{eq:sep2} that
  \[\big\langle(g_1+h_1,\dots,g_{r-1}+h_{r-1}),(\psi_1,\dots,\psi_{r-1})\big\rangle=\frac{1}{r-1}\langle
  h,\psi_i\rangle\le 1\,.\]
  It follows that for each $h'$ with $\|h'\|_\Phi\le 1$ we have $\langle
  h',\psi_i\rangle\le(r-1)(10r)/\eps\le 10r^2/\eps$, and thus
  the dual norm of $\psi_i$ satisfies $\|\psi_i\|_\Phi^\ast\le 10r^2/\eps$.
  Therefore, by Fact~\ref{fact:dual}, $\phi_i=\tfrac{\eps}{10r^2}\psi_i$ is in $\Phi$. Since we have
  $\langle\xi-\one,\max(0,\phi_1,\dots,\phi_{r-1})\rangle<\delta$ by~\ref{itm:HB:max}, we conclude
  that
  \begin{equation}\label{eq:HB:max}
    \langle\xi-\one,\max(0,\psi_1,\dots,\psi_{r-1})\rangle\le 10r^2\delta/\eps\,.
  \end{equation}  
  
  Now, we want to appeal to~\eqref{eq:sep2} again, this time with $h_1=\dots=h_{r-1}=0$.
  Observe that we can choose $g_i$ with $g_i(x)\in\{0,1\}$ and $\sum_{i=1}^{r-1}g_i(x)\le 1$ for each~$x$
  such that $\sum_{i=1}^{r-1}g_i\psi_i=\max(0,\psi_1,\dots,\psi_{r-1})$. From~\eqref{eq:sep2} we get
  \begin{equation}\label{eq:HB:one}
    \langle(g_1,\dots,g_{r-1}),(\psi_1,\dots,\psi_{r-1})\rangle=\tfrac{1}{r-1}\langle\one,\max(0,\psi_1,\dots,\psi_{r-1})\rangle\le 1\,.
  \end{equation}
  Letting $\psi_+=\max(0,\psi_1,\dots,\psi_{r-1})$ and putting these pieces
  together we can now write
  \begin{equation}\label{anticorrelationContradiction}\begin{split}
    1+\gamma&\lByRef{eq:HB:sum}\frac{1}{r-1}\sum_{i=1}^{r-1}\langle f_i,\psi_i\rangle\le \frac{1}{r-1}\sum_{i=1}^{r-1}\langle f_i,\psi_+\rangle
    = \frac{1}{r-1}\Big\langle \sum_{i=1}^{r-1}f_i,\psi_+\Big\rangle\le\frac{1}{r-1}\langle\xi,\psi_+\rangle
    \\&=\frac{1}{r-1}\langle \one,\psi_+\rangle+ \frac{1}{r-1}\langle\xi-\one,\psi_+\rangle
    \leBy{\eqref{eq:HB:max},\eqref{eq:HB:one}} 20r\delta\eps^{-1} + 1\,,
    \end{split}
  \end{equation}
  where the
  second inequality uses $\psi_i\le\psi_+$ and $f_i\ge0$ and the third uses
  \(f_1+\dots+f_{r-1}\le\xi\) and $\psi_+\ge0$. We infer from
  \eqref{anticorrelationContradiction} that
  \(\gamma <20r\delta\eps^{-1}=\eps/(5cr)\), contradicting the choice of~$\gamma$.
\end{claimproof}

Now, we use the functions $g_1,\dots,g_{r-1}$ and $h_1,\dots,h_{r-1}$ from this
claim, and let $g_r=\one-g_1-\dots-g_{r-1}$. We claim that these functions
satisfy the conclusion of our lemma. Indeed, clearly $g_1+\dots+g_r=\one$. It
remains to establish closeness in norm to the $f_i$. For this, we begin with
$f_r$ and $g_r$. Since $f_i=(1+\gamma)(g_i+h_i)$ for each $i\in[r-1]$, we have
\begin{align*}
    f_r-g_r&=\xi-f_1-\dots-f_{r-1}-\one+g_1+\dots+g_{r-1}\\
    &=\xi-\one-\gamma(g_1+\dots+g_{r-1})-(1+\gamma)(h_1+\dots+h_{r-1})=:h_r\,.
\end{align*}
By the triangle inequality, using that
$\|g_1+\dots+g_{r-1}\|_\Phi\le\|\one\|_\Phi\le 2c$ by~\ref{itm:HB:moment}, and
that $\|\xi-\one\|_\Phi\le\delta$ by~\ref{itm:HB:small}, we have
\begin{equation*}\begin{split}
  \|h_r\|_\Phi&\le
  \|\xi-\one\|_\Phi+\gamma\|g_1+\dots+g_{r-1}\|_\Phi+2\big(\|h_1\|_\Phi+\dots+\|h_{r-1}\|_\Phi\big) \\
  &\le
  \delta+\gamma\cdot 2c+2(r-1)\eps/(10r)\le\eps\,,
\end{split}\end{equation*}
where the final inequality is by choice of~$\gamma$ and~$\delta$. This establishes $\|f_r-g_r\|_\Phi\le\eps$ as required.
Similarly, for each $i\in[r-1]$ we have $f_i-g_i=\gamma g_i+(1+\gamma)h_i$, and applying the triangle inequality and the fact that $0\le g_i\le\one$ and hence $\|g_i\|_\Phi\le\|\one\|_\Phi\le 2c$ by~\ref{itm:HB:moment}, we get
\[\|f_i-g_i\|_\Phi\le
\gamma\|g_i\|_\Phi+(1+\gamma)\|h_i\|_\Phi
\le \gamma\cdot 2c+2\eps/(10r)\le\eps\,,\]
by choice of~$\gamma$.
\end{proof}

\section{Proof of the linear anti-correlation lemma}\label{sec:linear}

In this section we prove Lemma~\ref{lem:lincor}, which reduces the non-linear anti-correlation that we need to prove for Theorem~\ref{thm:correlate} to a linear anti-correlation statement over a product polytope and a moment bound. As discussed in Section~\ref{sec:tools}, we use the Stone--Weierstrass approximation theorem (see, e.g., \cite[Theorem~7.26]{Rudin}).

\begin{theorem}[Stone--Weierstrass approximation theorem]\label{Weierstrass}
Given $t\ge1$, let $f$ be a continuous real-valued function on $[a,b]^t$. For every \(\delta>0\) there exists a polynomial \(\pi\) with real coefficients such that for every \(\mathbf{x}\in [a,b]^t\) we have \(\vass{\pi(\mathbf{x})-f(\mathbf{x})}\leq \delta\).
\end{theorem}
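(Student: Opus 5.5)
This is the classical Weierstrass approximation theorem in several variables, and I would prove it directly with multivariate Bernstein polynomials rather than through the abstract Stone--Weierstrass machinery.

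\emph{Reduction to the unit cube.} By the affine change of variables $x_j=a+(b-a)y_j$, which maps $[0,1]^t$ onto $[a,b]^t$ and sends polynomials to polynomials, it suffices to treat $[a,b]=[0,1]$. If $a=b$ the statement is trivial with a constant polynomial.

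\emph{Construction.} For $m\in\nats$ define
\[B_m f(\mathbf{y}):=\sum_{\mathbf{k}\in\{0,\dots,m\}^t} f\big(\tfrac{\mathbf{k}}{m}\big)\prod_{j=1}^t\binom{m}{k_j}y_j^{k_j}(1-y_j)^{m-k_j}\,.\]
This is a polynomial in $\mathbf{y}$. It has a probabilistic reading: $B_mf(\mathbf{y})=\EE\big[f(\mathbf{K}/m)\big]$, where $K_1,\dots,K_t$ are independent with $K_j\sim\mathrm{Bin}(m,y_j)$. The weights are non-negative and sum to $1$.

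\emph{Estimate.} Since $f$ is continuous on the compact set $[0,1]^t$, it is bounded, say $|f|\le M$, and uniformly continuous. Choose $\eta>0$ such that $\|\mathbf{u}-\mathbf{v}\|_\infty\le\eta$ implies $|f(\mathbf{u})-f(\mathbf{v})|\le\delta/2$. Then
\[|B_mf(\mathbf{y})-f(\mathbf{y})|\le\EE\big|f(\mathbf{K}/m)-f(\mathbf{y})\big|\,,\]
and I split this expectation according to whether $\|\mathbf{K}/m-\mathbf{y}\|_\infty\le\eta$.
\begin{itemize}
\item On the event $\|\mathbf{K}/m-\mathbf{y}\|_\infty\le\eta$, the contribution is at most $\delta/2$ by the choice of $\eta$.
\item On the complement, the integrand is at most $2M$. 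By Chebyshev's inequality and a union bound over the $t$ coordinates, using $\mathrm{Var}(K_j/m)=y_j(1-y_j)/m\le 1/(4m)$, the complement has probability at most $t/(4m\eta^2)$. This bound is uniform in $\mathbf{y}\in[0,1]^t$.
\end{itemize}
Taking $m\ge 4Mt/(\delta\eta^2)$ makes the second contribution $2M\cdot t/(4m\eta^2)$ at most $\delta/2$. Hence $\pi:=B_mf$ satisfies the required uniform bound $|\pi(\mathbf{y})-f(\mathbf{y})|\le\delta$ on $[0,1]^t$.

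\emph{Where the care goes.} There is no serious obstacle in this argument. The only point needing attention is that the tail bound must hold uniformly in $\mathbf{y}$, which is exactly what the variance bound $y_j(1-y_j)\le\tfrac14$ provides. Uniform continuity, and not just continuity, is also essential, and it comes from compactness. Alternatively, one could cite the general Stone--Weierstrass theorem, since polynomials form a point-separating subalgebra of $C([a,b]^t)$ containing the constants; the Bernstein route is self-contained.
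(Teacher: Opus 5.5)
Your proof is correct. The multivariate Bernstein operator, together with the Chebyshev tail bound $t/(4m\eta^2)$, which holds uniformly in $\mathbf{y}$, gives the required uniform estimate. Your choice of $m$ in fact makes the tail term at most $\delta/8$, more than needed, and the affine rescaling handles a general $[a,b]$.

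The paper does not prove this statement. It quotes it as a classical result from Rudin, where the standard routes are different from yours:
\begin{itemize}
\item in one variable, convolution with the polynomial (Landau) kernel $c_n(1-x^2)^n$;
\item in several variables, the abstract Stone--Weierstrass theorem applied to the point-separating unital algebra of polynomials in $C([a,b]^t)$.
\end{itemize}
Your argument treats every $t\ge1$ at once and is elementary, self-contained and constructive. It yields an explicit polynomial of total degree at most $tm$, with $m$ controlled by $\sup|f|$, by $t$, and by the modulus of continuity of $f$. The abstract route is more general, since it works for any compact domain and any separating subalgebra, but it gives no information about the degree.

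For the paper only existence matters. In the proof of Lemma~\ref{lem:lincor}, the degree $d$ of $\pi$ is just some constant depending on $r$, $c$ and $\eps$. Your version would let one make $d$ explicit there, because $\max(0,x_1,\dots,x_{r-1})$ is $1$-Lipschitz in the $\ell_\infty$-norm on $[-2c,2c]^{r-1}$. Nothing in the argument requires this, but it costs nothing.
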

As discussed, we apply this with the domain $[-2c,2c]^{k-1}$, and we need to deal with exceptionally large values using moment bounds. 
To this end, in this section only, for any function \(\phi\) on \([n]\), write \(\phi^{\mathrm{big}}\) for the function which takes the value \(\phi(x)\) on \(x\in [n]\) whenever \(|\phi(x)|>2c\), and \(0\) otherwise, and \(\phi^{\mathrm{small}}=\phi - \phi^{\mathrm{big}}\). That is,

\[
\phi^{\mathrm{small}}(x)= 
\begin{cases}
    \phi(x) & \textup{if \(\phi(x) \in [-2c,2c]\)}\\
    0       & \textup{otherwise}
\end{cases}
\qquad
\phi^{\mathrm{big}}(x)= 
\begin{cases}
    0       & \textup{if \(\phi(x) \in [-2c,2c]\)}\\
    \phi(x) & \textup{otherwise}
\end{cases}\,.
\]

Note that \(\phi^{\mathrm{big}}\) and \(\phi^{\mathrm{small}}\) have disjoint support, so in particular $\phi^\bg\phi^\smll$ is the constant zero function and it follows that $\pi(\phi)=\pi(\phi^\bg)+\pi(\phi^\smll)-\pi(0)$, where $0$ is the constant zero function.

\begin{proof}[Proof of Lemma~\ref{lem:lincor}]
  Given $r\ge2$, $c\ge1$, and $\eps>0$, we first need to set the required constants $\eps_\lin,d,d'$. For this purpose, 
  consider the function \[(x_1,\dots,x_{r-1})\to\max(0,x_1,\dots,x_{r-1})\] on the domain $[-2c,2c]^{r-1}$. This is a continuous function, so by the Stone--Weierstrass approximation theorem (Theorem~\ref{Weierstrass}) applied with $\delta=\eps/16c$, there is a polynomial \(\pi(x_1,\dots,x_{r-1})\)
  such that for every \((x_1,\dots,x_{r-1})\in [-2c,2c]^{r-1}\) we have
  \[\big|\pi(x_1,\dots,x_{r-1})-\max(0,x_1,\dots,x_{r-1})\big|\le \frac{\eps}{16c}\,.\]
  Let~$d$ be the maximum of the degree of~$\pi$ and~$2$. Further, let~$M$ be the maximum among the absolute values of the coefficients of $\pi$ as well as the number of monomials in $\pi$. We set
  \[\eps_\lin:=\frac14M^{-2}\eps\qquad\text{and}\qquad M_S:=2^{r-1}M\]
  and choose~$d''$ sufficiently large so that
  \[M_S\cdot M\cdot (2c)^d\cdot 2^{2d+1}c^{2d}\cdot 2^{-2d''}\le\frac14\eps\,.\]
  Finally, we set $d':=2dd''$.

Our goal is to upper bound \[\big|\langle \xi - \mathbf{1} , \max(0,\phi_1,\dots,\phi_{r-1})\rangle\big|\] given arbitrary \(\phi_1,\dots,\phi_{r-1}\in\Phi\).
We split this inner product into several parts.
We claim that 
\begin{multline}\label{eq:lincor:split}
\big|\langle \xi - \mathbf{1} , \max(0,\phi_1,\dots,\phi_{r-1})\rangle\big|
\\ \le \big|\langle \xi - \mathbf{1} , \max(0,\phi^\smll_1,\dots,\phi^\smll_{r-1})\rangle\big|
+ \sum_{i=1}^{r-1}\big|\langle\xi,(\phi^\bg_i)^2\rangle\big|+\big|\langle\one,(\phi_i^\bg)^2\rangle\big|\,,
\end{multline}
where we call the first term the \emph{main term}. Indeed,
the sum upper bounds the contributions of those $x\in[n]$ that satisfy $\max(0,\phi_1,\dots,\phi_{r-1})(x)>2c$ and hence are in the support of at least one $\phi^\bg_i$. Using the triangle inequality and the fact that $2c>1$ and $\phi_i^2,\xi,\one\ge0$, we obtain the claimed inequality.

Let us first expand the main term of~\eqref{eq:lincor:split} further. Consider the polynomial
$\pi(\phi^\smll_1+\phi^\bg_1,\dots,\phi^\smll_{r-1}+\phi^\bg_{r-1})$. Expanding this out, we obtain $\pi(\phi^\smll_1,\dots,\phi^\smll_{r-1})+S$, where $\mathcal{S}$ is a collection of monomials, each of which contains at least one term $\phi^\bg_i$ for some $i$. The absolute value of the coefficient of any given monomial is at most $M$, as in $\pi$, and the total number of monomials is at most $M_\mathcal{S}=2^{r-1}M$.
because from each of the at most~$M$ monomials in $\pi(x_1,\dots,x_{r-1})$ we can choose to replace $x_i$ with either $\phi^\bg_i$ or $\phi^\smll_i$, but not both because the pointwise product $\phi^\bg_i\phi^\smll_i$ is the zero function and these terms therefore disappear. By the approximation property of $\pi$, and since the entries of each $\phi_i^\smll$ are in $[-2c,2c]$, we have
\[\max(0,\phi_1^\smll,\dots,\phi_{r-1}^\smll)=\pi(\phi_1^\smll,\dots,\phi_{r-1}^\smll)+h=\pi(\phi_1,\dots,\phi_{r-1})+h-\mathcal{S}\]
where the entries of $h$ have absolute value at most $\tfrac{1}{16c}\eps$.
We therefore get
\begin{multline}\label{eq:lincor:main}
  \big|\langle\xi-\one,\max(0,\phi^\smll_1,\dots,\phi^\smll_{r-1})\rangle\big| \\
  \le\big|\langle\xi-\one,\pi(\phi_1,\dots,\phi_{r-1})\rangle\big|+\big|\langle\xi,h\rangle\big|+\big|\langle\one,h\rangle\big|+\big|\langle\xi,\mathcal{S}\rangle\big|+\big|\langle\one,\mathcal{S}\rangle\big|\,.
\end{multline}
We now consider these terms separately. For bounding the first term, we split $\pi$ into its at most $M$ monomials $a_j\hat\phi_j$, where $a_j$ is the coefficient and $\hat\phi_j$ is a product of at most~$d$ functions~$\phi_i$. Since $\one\in\Phi$ we know that $\hat\phi_j$ is in~$\Phi^d$ for all~$j$.
Therefore, using assumption~\ref{lincor:i}, that there are~at most~$M$ monomials in~$\pi$, and that $|a|\le M$, we obtain
\[|\langle\xi-\one,\pi(\phi_1,\dots,\phi_{r-1})\rangle|\le M^2\eps_\lin\,.\]
For the two terms concerning inner products with~$h$ on the right hand side of~\eqref{eq:lincor:main}, observe that the worst case is $h=\tfrac1{16c}\eps\one$ and that since $\one\in\Phi$ we have $\big|\langle\xi,\one\rangle\big|\le 2c$ by~\ref{lincor:ii}. Hence,
\[\big|\langle\xi,h\rangle\big|,\big|\langle\one,h\rangle\big|\le \frac1{16c}\eps\cdot 2c=\frac18\eps\,.\]
For the other two terms of~\eqref{eq:lincor:main}, we split $\mathcal{S}$ into its at most~$M_\mathcal{S}$ monomials. Consider one such monomial $a\hat\phi_j=a\phi_{j,1}\dots\phi_{j,\ell}$, where $\ell=\ell(j)\in[d]$, each $\phi_{j,\ell}$ is either $\phi_i^\smll$ or $\phi_i^\bg$ for some $i$, and~$a$ is the coefficient, with $|a|\le M$. Without loss of generality, we can suppose the first $\ell'=\ell'(j)$ of these terms are the $\bg$ terms and the rest are the $\smll$ terms; by the definition of~$\mathcal{S}$ we have $\ell'\ge1$. Then
\[|\phi_{j,1}\dots\phi_{j,\ell}|\le (2c)^{\ell-\ell'}(\phi_{j,1}\dots \phi_{j,\ell'})^2\]
pointwise, since each entry of any $\phi_{j,i}$ with $i>\ell'$ has absolute value at most $2c$, while the entries of those~$\phi_{j,i}$ with $i\le\ell'$ are either $0$ or have absolute value at least $2c\ge1$.
The last observation also implies that each entry of $(\phi_{j,1}\dots \phi_{j,\ell'})^2$ is either zero or at least $(2c)^{2\ell'}$. Note further that, by the definition of the product polytope, $\psi=(\phi_{j,1}\dots \phi_{j,\ell'})^2$ is obtained from a non-negative $\phi\in\Phi^{2\ell'}$ by replacing all entries smaller than $(2c)^{2\ell'}$ by zero. Hence, we shall use the following claim to bound the contribution of such a term.

\begin{claim}\label{cl:lincor}
  If $\ell'\in[d]$ and $\psi$ is obtained from a non-negative $\phi\in\Phi^{2\ell'}$ by replacing all entries smaller than $(2c)^{2\ell'}$ by zero, then
  \[\langle\xi,\psi\rangle,\langle\one,\psi\rangle\le 2^{2d+1}c^{2d}\cdot 2^{-2d''}\,.\]
\end{claim}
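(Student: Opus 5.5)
The plan is the moment-bound trick described before Lemma~\ref{lem:lincor}: bound $\langle\xi,\psi\rangle$ by a much higher power of $\phi$ divided by a large threshold, and control that higher power with assumption~\ref{lincor:ii}. The same argument will work verbatim with $\one$ in place of $\xi$, since both are non-negative and both satisfy~\ref{lincor:ii}. Write $T=(2c)^{2\ell'}\ge 1$ for the threshold, so that $0\le\psi\le\phi$ pointwise and every non-zero entry of $\psi$ is at least $T$.

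First I will establish the pointwise bound
\[\psi\le T^{1-d''}\phi^{d''}.\]
This holds because wherever $\psi(x)\neq0$ we have $\phi(x)=\psi(x)\ge T$, so $\phi(x)^{d''}=\phi(x)\,\phi(x)^{d''-1}\ge\psi(x)T^{d''-1}$. At every other point the left-hand side is zero and the right-hand side is non-negative. Since $\xi\ge0$, this gives $\langle\xi,\psi\rangle\le T^{1-d''}\langle\xi,\phi^{d''}\rangle$.

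Next I need $\phi^{d''}$ to lie in a product polytope of degree at most $d'$. Here $\phi$ lies in $\Phi^{2\ell'}$, so by the product-polytope definition $\phi^{d''}$ lies in $\Phi^{2\ell'd''}$. Note that a convex combination raised to a power is not a convex combination of powers, so this needs care. The safest route is to observe that in the application $\phi$ is itself a pointwise product of $2\ell'$ members of $\Phi$, namely $\phi=(\phi_{j,1}\cdots\phi_{j,\ell'})^2$ with $\phi_{j,i}$ being the $\phi_i$. Then $\phi^{d''}$ is directly a product of $2\ell'd''$ members of $\Phi$. Because $\one\in\Phi$, it also lies in $\Phi^{2dd''}=\Phi^{d'}$, by the argument of Fact~\ref{fac:d-product}. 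Applying~\ref{lincor:ii} with exponent $2\ell'd''\le d'$ then gives $\langle\xi,\phi^{d''}\rangle\le 2c^{2\ell'd''}$.

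Combining the two steps gives
\[\langle\xi,\psi\rangle\le (2c)^{2\ell'}(2c)^{-2\ell'd''}\cdot 2c^{2\ell'd''}=2(2c)^{2\ell'}\,2^{-2\ell'd''}.\]
Using $\ell'\ge1$, $\ell'\le d$ and $c\ge1$, this is at most $2\cdot 2^{2d}c^{2d}\,2^{-2d''}$, which is the claimed bound.

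The only delicate point is justifying membership of $\phi^{d''}$ in $\Phi^{d'}$, i.e.\ staying within degree $d'$ in~\ref{lincor:ii}. I will handle it by using the explicit product structure of $\phi$ available in the application, rather than the general convex-hull definition.
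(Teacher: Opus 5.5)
Your argument is correct and is essentially the paper's: the pointwise bound $\psi\le(2c)^{2\ell'(1-d'')}\phi^{d''}$, then \ref{lincor:ii} at exponent $2\ell'd''\le d'$, with the same final arithmetic. Your restriction to the product-form $\phi$ arising in the two applications is unnecessary, and as written it proves only that special case: $\phi\in\Phi^{2\ell'}$ does imply $\phi^{d''}\in\Phi^{2\ell'd''}$ in general, because expanding $\bigl(\sum_a\alpha_a v_a\bigr)^{d''}$ pointwise gives the convex combination $\sum\alpha_{a_1}\cdots\alpha_{a_{d''}}v_{a_1}\cdots v_{a_{d''}}$ of products of $2\ell'd''$ members of $\Phi$ (exactly the computation in Lemma~\ref{lem:vtxprod}), so you can prove the claim as stated.
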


Before we prove this claim, observe that together with~\eqref{eq:lincor:main}, our arguments above, and our choice of constants it implies
\begin{equation*}\begin{split}
  \big|\langle\xi-\one,\max(0,\phi^\smll_1,\dots,\phi^\smll_{r-1})\rangle\big|
  &\le M^2\eps_\lin + 2\cdot\frac18\eps + M_\mathcal{S}\cdot M\cdot (2c)^d\cdot 2^{2d+1}c^{2d}\cdot 2^{-2d''} \\
  &\le \frac14\eps + \frac14\eps + \frac14\eps = \frac34\eps
  \,,
\end{split}\end{equation*}
and hence from~\eqref{eq:lincor:split} we get
\begin{equation}\label{eq:lincor:remainder}
  \big|\langle \xi - \mathbf{1} , \max(0,\phi_1,\dots,\phi_{r-1})\rangle\big|
  \le \frac34\eps
   + \sum_{i=1}^{r-1}\big|\langle\xi,(\phi^\bg_i)^2\rangle\big|+\big|\langle\one,(\phi_i^\bg)^2\rangle\big|\,.
\end{equation}

\begin{claimproof}[Proof of Claim~\ref{cl:lincor}]
  Since the entries of $\psi$ are either~$0$ or at least $(2c)^{2\ell'}$, we can write
\[\psi\le (2c)^{2\ell'(1-d'')}\psi^{d''}\le(2c)^{2\ell'(1-d'')}\phi^{d''}\,,\]
where the second inequality uses that $\psi$ has only non-negative entries.
Now, recall that $\phi\in\Phi^{2\ell'}$ implies $\phi^{d''}\in\Phi^{2\ell'd''}$
and $2\ell'd''\le 2dd''=d'$.
Hence, since $2dd''=d'$, we can use assumption~\ref{lincor:ii} to conclude
\[\langle\xi,\psi\rangle\le(2c)^{2\ell'(1-d'')}\langle\xi,\phi^{d''}\rangle
\le(2c)^{2\ell'(1-d'')}2c^{2\ell'd''}
\le 2^{2\ell'(1-d'')+1}c^{2\ell'}\le2^{2d+1}c^{2d}2^{-2d''}\,,\]
where the last inequality uses $1\le\ell'\le d$.
By the same logic, replacing $\xi$ with $\one$, we obtain the same bound for $\langle\one,\psi\rangle$.
\end{claimproof}
Thus, it remains to bound the final sum in~\eqref{eq:lincor:remainder}.
However, $\psi=(\phi_i^\bg)^2\in\Phi^{2\ell'}$ is also of the form required to apply Claim~\ref{cl:lincor}, and hence we can bound the sum by
$2(r-1)\cdot 2^{2d+1}c^{2d}\cdot 2^{-2d''}\le\frac14\eps$, where the final inequality is by choice of $d''$.
It follows that
\[|\langle\xi-\one,\max(0,\phi_1,\dots,\phi_{r-1})\rangle|\le \eps\,,\]
as desired.
\end{proof}

\section{Proof of the split polytope anti-correlation lemma}\label{sec:finest}

In this section, we prove Lemma~\ref{lem:anticor}. Recall that the main work of this is to show that it is likely that \(\big|\langle\tilde{\mu}-\mathbf{1},\phi\rangle\big|<\eps\) for all \(\phi\in\Phi'=\Phi(\tmu_1,\dots,\tmu_L,\nu_1,\dots,\nu_{\lceil Lp^{-1}\rceil})^d\), where $\Phi'$ is the split polytope for our random split.
The proof of this looks quite similar to the proof of the corresponding statement from Theorem~\ref{thm:intmodel} in  Section~\ref{sec:intmodels}. As before, it is enough to prove anti-correlation for vertices of \(\Phi'\). And as before, we split the anti-correlation into anti-correlation with \(\phi^\mathrm{small}\) and the remaining \(\phi^\mathrm{big}\). We stress, though, that this time the dividing line is that entries of $\phi^\smll$ are bounded by $2c^d$ not $2c$. Thus, in this section only, for \(\phi\in \Phi(\tilde{\mu},\mathbf{1})^d\), we write
\[\phi^\mathrm{small}(x):=\phi(x)\ind\big(|\phi(x)|\le 2c^d\big)\quad\text{and}\quad\phi^\mathrm{big}(x)=\phi(x)-\phi^\mathrm{small}(x)\,.\]

Much as in  Section~\ref{sec:intmodels}, we can show that \(|\langle\tilde{\mu}-\mathbf{1},\phi^\mathrm{big}\rangle|\) is small by applying some moment bounds.
However, proving \(|\langle\tilde{\mu}-\mathbf{1},\phi^\mathrm{small}\rangle|\) is small requires some new ideas. There are two reasons for this: first, the entries of \(\tilde{\mu}\) are not independent random variables, and second, in order to describe a vertex of \(\Phi'\) we first need to reveal some entries of \(\tilde{\mu}\). We are going to deal with these two difficulties as follows.
Since \(\tilde{X}\) is an \(\eps\)-deletion of \(X\), we have that \(\mu\) and \(\tilde{\mu}\) are equal in most components. We show (using moment bounds) that it suffices to prove \(\langle\mu-\mathbf{1},\phi^\mathrm{small}\rangle\) is small. Now the entries of $\mu-\one$ are independent mean zero random variables. Recalling the notion of the revealed part $Y(\phi)$ of $\phi$ from Definition~\ref{notationForSplitting}, we will argue that the value of $\mu-\one$ on $[n]\setminus Y(\phi)$ is independent of $\phi$. Since $Y(\phi)$ is a tiny part of $[n]$ by choice of the number of parts~$L$ in our random split, for bounding $\big\langle\mu-\one,\phi^\smll\ind(Y(\phi))\big\rangle$ we can again use moment bounds.
Accordingly, the next lemma, in which we use Bernstein's inequality and the union bound over choices of~$\phi$ to control the inner products $\big\langle\mu-\one,\phi^\smll\ind([n]\setminus Y(\phi))\big\rangle$, is the critical part of the problem.

\begin{lemma}\label{lem:finbound}
Given $k\ge2$, $d\in\nats_{>0}$, $c\ge1$  and $\delta>0$, there is $L_0$ such that for each $L\ge L_0$ the following holds for $C\ge L$. Given $n\in\nats_{>0}$ and $p\in(0,1]$, assume we are in Setting~\ref{mainSetting} and that the $(c,C,p,n)$-boundedness conditions hold. 

With probability at least \(1-\exp\big(-\tfrac1{100c^{2d}}\delta^2pn\big)\) over the choices of \(X=[n]_p\), the $\mu$-split  \(\chi_\mu:[n]\to \{1,\dots{},L\}\), and the $\one$-split \(\chi_\mathbf{1}:[n]\to \{1,\dots{},\lceil Lp^{-1}\rceil\}\), the following holds. Given any \(\tilde{X}\subseteq X\), using the notation of Definition~\ref{notationForSplitting}, for each vertex \(\phi\) of \(\Phi'\) we have
 \begin{equation}\label{eq:finanticorr}\big|\langle\mu-\mathbf{1},\phi^\mathrm{small}\cdot \mathbbm{1}\big([n]\setminus Y(\phi)\big)\rangle\big|<\delta\,.\end{equation}
\end{lemma}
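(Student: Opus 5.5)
The plan is a union bound over candidate vertices, with Bernstein's inequality applied for each fixed candidate. The key point is that we never union bound over the vertices of $\Phi'$ themselves, since these depend on $\tilde X$. Instead we union bound over \emph{combinatorial descriptions} that do not depend on $\mu$ on the unrevealed region.

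\textbf{Step 1: a small family of templates.} Write $\Phi^\ast=\Phi(\mu_1,\dots,\mu_L,\nu_1,\dots,\nu_{\lceil Lp^{-1}\rceil})^d$. Since $\tilde\mu_i\le\mu_i$ pointwise, every $\{\tilde\mu_i,\nu_j\}$-extreme function is also $\{\mu_i,\nu_j\}$-extreme. So by Fact~\ref{Rmk:anti_uniform_functionals} and Lemma~\ref{lem:vtxprod}, every vertex $\phi$ of $\Phi'$, for any $\tilde X\subseteq X$, is a product of at most $d$ functions, each of which is either in $\Sigma$ or a convolution $\ast_{i,\omega}(f_1,\dots,f_k)$. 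Here each $f_j$ is the restriction of some $\mu_q$ (with $q\in Q_\phi$) or some $\nu_j$ to a subset of its support. Such a $\phi$, together with $Q_\phi$, is determined by a \emph{template}, which records:
\begin{itemize}
\item the choices of elements of $\Sigma$ and $\Omega$;
\item the positions $i$;
\item the labels $q$ of the parts used;
\item the subsets of the supports of the $\nu_j$ and of $X_q$ for $q\in Q_\phi$.
\end{itemize}
By Fact~\ref{fact:split}, revealing $\mu$ on $Y(\phi)$ only (together with $\chi_\mu,\chi_\one$) determines the candidate $\phi$ and $Y(\phi)$. Conditional on this information, $\mu$ on $[n]\setminus Y(\phi)$ remains a product of independent variables $p^{-1}\mathrm{Bernoulli}(p)$.

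\textbf{Step 2: counting templates.} By Chernoff (Theorem~\ref{thm:chernoff}) and a union bound over the $L+\lceil Lp^{-1}\rceil$ parts, with probability $1-\exp(-\Omega(pn/L))$ every $X_q$ and every support of $\nu_j$ has size at most $2pn/L$. Strictly, I would count the templates conditionally: given the revealed parts, condition on the sizes of the at most $d(k-1)$ parts $X_q$ used. If some used part is oversized, the template is simply discarded; the event that any part is oversized is bounded separately, and is independent in the relevant sense. The number of templates is then at most
\[
\big(|\Sigma|+k|\Omega|L^{k-1}\lceil Lp^{-1}\rceil^{k-1}2^{2(k-1)pn/L}\big)^d
\le\exp\big(dC^{-1}pn+O(d\log n)+2d(k-1)pn/L\big).
\]
The term $\log\lceil Lp^{-1}\rceil=O(\log n)$ is absorbed because $p\ge\log^C n/n$ by the boundedness conditions.

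\textbf{Step 3: Bernstein for a fixed template.} Fix a template, reveal the data it depends on, and write $\psi=\phi^\smll\ind([n]\setminus Y(\phi))$. This is a fixed vector with entries in $[-2c^d,2c^d]$. Then
\[
\langle\mu-\one,\psi\rangle=\sum_x n^{-1}(\mu(x)-1)\psi(x)
\]
is a sum of independent mean-zero terms. Each term is bounded by $M=2c^dp^{-1}n^{-1}$, and the total variance is at most $4c^{2d}p^{-1}n^{-1}$. Bernstein's inequality (Lemma~\ref{lem:bernstein}) with $\lambda=\delta$ therefore gives a failure probability of at most
\[
2\exp\big(-\delta^2pn/(8c^{2d}+2c^d\delta)\big)\le 2\exp\big(-\delta^2pn/(20c^{2d})\big).
\]

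\textbf{Step 4: choosing $L_0$.} Take $L_0$ so large that $2d(k-1)/L_0\le\delta^2/(100c^{2d})$. Since $C\ge L$, the term $dC^{-1}pn$ is also of this order. The union bound then leaves a failure probability of at most $\exp(-\delta^2pn/(100c^{2d}))$, as claimed; the Chernoff failure on part sizes is also absorbed, since $L$ is a constant.

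\textbf{Main obstacle.} The delicate step is Step 1's independence claim. We must make sure the template is measurable with respect to $\mu$ restricted to $Y(\phi)$ together with the splits. In particular, the truncation defining $\phi^\smll$ and the set $Y(\phi)$ must depend only on that information, and not on $\tilde X$ or on $\mu$ outside $Y(\phi)$. Handling this requires the template formulation above, where the union bound runs over subsets of revealed supports rather than over vertices of $\Phi'$. It also requires care so that conditioning on part sizes does not bias $\mu$ off $Y(\phi)$; this works because $Y(\phi)$ is determined by $\chi_\mu$ and $Q_\phi$ alone.
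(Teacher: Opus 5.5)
Your route is the paper's: Chernoff control of the part sizes, a count of candidate vertices via subsets of the $O(pn/L)$-element supports, Bernstein for $\langle\mu-\one,\phi^\smll\ind([n]\setminus Y)\rangle$ conditionally on the data determining $\phi$, and a union bound. Two small points. First, a template that records ``subsets of $X_q$'' is not a fixed object before $X$ is revealed. The clean way to run your union bound is the paper's order: fix the label set $Q$, reveal the splits and $X\cap Y(Q)$, and then enumerate the now deterministic candidates, using that $X\setminus Y(Q)$ is still independent. Second, passing from $\tmu_i$-extreme to $\mu_i$-extreme needs $\tmu_i(x)\in\{0,\mu_i(x)\}$, not merely $\tmu_i\le\mu_i$.

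The genuine gap is the final clause of Step~4: the part-size failure is \emph{not} absorbed. Already $|X_1|\sim\mathrm{Bin}(n,p/L)$ exceeds $2pn/L$ with probability $e^{-\Theta(pn/L)}$, and similarly for the parts of the $\one$-split. Your own choice $L_0\ge 200d(k-1)c^{2d}\delta^{-2}$ forces $pn/L\le \delta^2pn/(200d(k-1)c^{2d})$. So this term is exponentially \emph{larger} than the target $\exp(-\delta^2pn/(100c^{2d}))$. That $L$ is a constant does not help, because the target exponent is fixed while $1/L$ is forced to be small. As written, you only obtain failure probability $e^{-\Theta(pn/L)}$. (The paper's write-up has the same slip: its bound $p_1\le dk\,e^{-pn/(20L)}$, and its estimate for the $\one$-split, are said to be absorbed ``by our choice of $L_0$'', but that comparison goes the wrong way.)

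The repair is to observe that large parts do not violate~\eqref{eq:finanticorr}; they only enlarge the count. So cut at a threshold independent of $L$. Take $t=\beta pn$ with $\beta=\delta^2/(40dkc^{2d})$. Then the number of candidates is at most $2^{d(k-1)t}e^{O(dpn/C+d\log n)}$, which your Bernstein bound still beats. Meanwhile
$\Prob[\mathrm{Bin}(n,p/L)\ge t]\le\binom{n}{t}(p/L)^t\le(e/(\beta L))^{\beta pn}\le\exp(-\delta^2pn/(50c^{2d}))$
once $\ln(\beta L/e)\ge dk$, i.e.\ for $L_0$ exponentially large in $dk$. The same bound holds for the $\one$-split parts, whose mean is at most $pn/L$. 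The union bound over the at most $3Ln$ parts is absorbed since $pn\ge\log^Cn$. Alternatively, weaken the stated failure probability to $\exp(-pn/O(L))$. That is all Lemma~\ref{lem:anticor} needs, because its $C$ is chosen after $L$.
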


Something we need to be a little careful about in the above statement is that in order to know any vertices of \(\Phi'\), we need to reveal all of \(\mu\) (because \(\Phi'\) depends on \(\tilde{\mu}\)). We actually show the above bound for all extreme functions in \(\Phi(\mu_1,\dots{},\mu_L,\nu_1,\dots{},\nu_{\lceil Lp^{-1}\rceil})^d\), and deduce the required statement for \(\Phi'\) using the fact that vertices of $\Phi'$ are extreme functions in that polytope.

\begin{proof}
  Given $k,d,c,\delta$, we set $L_0=1000c^{2d}\delta^{-2}dk$. Given $L\ge L_0$ let $C\ge L$, let $n$ be sufficiently large, and let $p\in(0,1]$. Let us be in Setting~\ref{mainSetting}, and suppose the $(c,C,p,n)$-boundedness conditions hold. We denote by $X$, $\chi_\one$ and $\chi_\mu$ the random set and the splits of the lemma statement, but do not reveal them just yet.

    By using the Chernoff bound from Theorem~\ref{thm:chernoff} and by doing a union bound, we get that with probability at least \(1-2Lp^{-1}\exp\big(-\tfrac{1}{300}pn\big)\) each part of the \(\mu\)-split of \([n]\) has size at least \(\frac{9n}{10L}\) and at most \(\frac{11n}{10L}\), and each part of the \(\mathbf{1}\)-split has size at most \(\tfrac{2pn}{L}\). Reveal the \(\mu\)- and \(\mathbf{1}\)-splits \(\chi_\mu\) and \(\chi_\mathbf{1}\) and suppose that this likely event occurs.
Without revealing \(X\), we know from Fact~\ref{fact:split} that every vertex \(\phi\) of \(\Phi'\) has a revealed part \(Y(\phi)\) which is the union of some at most \(d(k-1)\) parts of the \(\mu\)-split. We can therefore prove the lemma by a union bound over the possible choices of at most \(d(k-1)\) parts of the \(\mu\)-split of~\([n]\).

Hence, let \(Q\) be a set of at most \(d(k-1)\) parts in \([L]\), and let \(Y\) be the union of the parts of the \(\mu\)-split that are mapped to \(Q\).
For $q\in Q$, the expected number of elements of \(X\) with \(\mu\)-part \(q\) is between $p\cdot \frac{9n}{10L}$ and $p\cdot\frac{11n}{10L}$.
Thus, by the Chernoff bound from Theorem~\ref{thm:chernoff} and a union bound, we obtain that with probability
\begin{equation}\label{eq:finbound:p1}
  p_1\le |Q|\exp\Big(-\frac{1}{12}\cdot p\cdot \frac{9n}{10L}\Big)\le dk\exp\big(-(20L)^{-1}pn\big)
\end{equation}
there is a \(q\in Q\) such that the number of elements of \(X\) with \(\mu\)-part \(q\) is not at most \(\tfrac{2pn}{L}\).

We condition now on this unlikely event $\cB$ not occurring, and reveal \(X\cap Y\). Define \(P_Q:=\{\mu_j:j\in Q\}\cup\{\nu_1,\dots{},\nu_{\lceil Lp^{-1}\rceil}\}\) and let \(\Psi(Q)\) be the set of $P_Q$-extreme test functions.
We next want to bound the number~$N_{\Psi(Q)}$ of possible choices of functions $\phi$ which form a product of at most \(d\) elements of \(\Psi(Q)\).
For this, we start by bounding \(|\Psi(Q)|\) as follows. First consider that by definition every vertex in \(\Psi(Q)\) is either in \(\Sigma\), or of the form \(\ast_{i,\omega}(f_1,\dots{},f_k)\) where \(f_1,\dots{},f_k\) are \(P_Q\)-extreme. By definition, \(f_j\) is  \(P_Q\)-extreme if there is \(h_j\in P_Q\) such that for every \(x\in [n]\) we have either \(f_j(x)=0\) or \(f_j(x)=h_j(x)\). Therefore, to upper bound \(|\Psi(Q)|\) we observe that every \(P_Q\)-extreme test function is either in \(\Sigma\) or of the form \(\ast_{i,\omega}(f_1,\dots{},f_k)\) obtained as follows. We first select \(\omega\in\Omega\) and a sequence of \(k-1\) bounding functions \(h_1,\dots{},h_k\) from \(P_Q\); we then choose for each bounding function \(h_j\), from the at most \(\tfrac{2pn}{L}\) non-zero entries, the non-zero entries of \(f_j\) (which by definition of `extreme' are equal to the corresponding entries of \(h_j\)). Hence, assuming $\cB$ does not hold, we have
 \[|\Psi(Q)|\le|\Sigma|+|\Omega|(2Lp^{-1})^{k-1}\cdot 2^{2pn(k-1)/L}\le
 \exp(C^{-1}pn)+\exp(C^{-1}pn)\exp(3L^{-1}kpn)\le\exp(4L^{-1}kpn)
 \,,\]
 where we used $(c,C,p,n)$-boundedness and $L\le C$.
 The number of possible choices of functions $\phi$ that are a
 product of at most \(d\) elements of \(\Psi(Q)\) then is
 \[N_{\Psi(Q)}\le d\exp(4dL^{-1}kpn)\le\exp(5L^{-1}kpn)\,.\]

 We now would like to bound
 $\big\langle\mu-\one,\phi^\mathrm{small}\cdot\mathbbm{1}([n]\setminus Y)\big\rangle$
 by using Bernstein's inequality.
 By definition, the entries of \(\phi^{\mathrm{small}}\cdot\mathbbm{1}\big([n]\setminus Y\big)\) are in \([-2c^d,2c^d]\), and only the entries outside \(Y\) can be non-zero. Observe further that $\big\langle\mu-\one,\phi^\mathrm{small}\cdot\mathbbm{1}([n]\setminus Y)\big\rangle$
 is a sum of independent random variables $Z_x$ with $x\in[n]\setminus Y$ such that
 \[Z_x = \frac{1}{n}\big(\mathbbm{1}(x\in X)p^{-1}-1\big)c_x\,,\]
 where $c_x=\phi^{\mathrm{small}}(x)$ if $x\in[n]\setminus Y$ and $c_x=0$ otherwise, which implies \(c_x\in[-2c^d,2c^d]\).  Since the probability of \(x\in X\) is \(p\), all $Z_x$ have mean zero, and are bounded between \(-2c^dn^{-1}\) and \(2c^dn^{-1}p^{-1}\). We have
 \[\mathrm{Var}(\mathbbm{1}(x\in X)p^{-1}-1)=p(p^{-1}-1)^2+(1-p)(-1)^2=p^{-1}-1\le p^{-1}\,,\]
 so that the variance of each~$Z_x$ is at most \(4c^{2d}n^{-2}p^{-1}\).
 Hence, by Bernstein's inequality (Lemma~\ref{lem:bernstein}), the probability that when we reveal \(X\setminus Y\) we get
 $\big|\big\langle\mu-\one,\phi^\mathrm{small}\cdot \mathbbm{1}\big([n]\setminus Y\big)\big\rangle\big|\ge\delta$
 is
 \[p_2\le 2\exp\Big(-\frac{\delta^2/2}{\frac{1}{3}\cdot2p^{-1}n^{-1}\delta+n\cdot 4c^{2d}n^{-2}p^{-1}}\Big)\le \exp\Big(-\frac1{16c^{2d}}\delta^2pn\Big)\,.\]
   
 Taking a union bound over the choices of \(\phi\) and combining this with~\eqref{eq:finbound:p1}, we obtain that the probability that there exists any product \(\phi\) of at most \(d\) elements of \(\Psi(Q)\) with 
 $\big|\langle\mu-\one,\phi^\mathrm{small}\cdot \mathbbm{1}\big([n]\setminus Y\big)\rangle\big|\ge\delta$
 is at most
 \begin{equation*}\begin{split}
     p_1 + N_{\Psi(Q)}\cdot p_2 \le
     dk\exp\big(-(20L)^{-1}pn\big) + \exp(5L^{-1}kpn) \cdot \exp\Big(-\frac1{30c^{2d}}\delta^2pn\Big)
     \le\exp\Big(-\frac1{40c^{2d}}\delta^2pn\Big)
     \,,
 \end{split}\end{equation*}
 by our choice of~$L_0$.
 Finally, taking a union bound over the at most $2^L$ choices of \(Q\), the probability that there exists \(Q\) and a product \(\phi\) of at most \(d\) elements of \(\Psi(Q)\) such that $\big|\langle\mu-\one,\phi^\mathrm{small}\cdot \mathbbm{1}\big([n]\setminus Y\big)\rangle\big|\ge\delta$
 is at most
 \[2^L\cdot\exp\Big(-\frac1{40c^{2d}}\delta^2pn\Big) \le \exp\Big(-\frac1{50c^{2d}}\delta^2pn\Big)\,.\]

 Suppose now that \(X\) is such that this unlikely event~$\cE$ does not occur. Given any \(\tilde{X}\subset X\), the polytope \(\Phi'\) is then determined. Let \(\phi\) be a vertex of this polytope. From Lemma~\ref{lem:vtxprod} and Fact~\ref{Rmk:anti_uniform_functionals}\ref{itm:Phi:vertices} we know that~\(\phi\) is a product of at most \(d\) extreme $\{\tmu_1,\dots,\tmu_L,\nu_1,\dots,\nu_{\lceil Lp^{-1}\rceil}\}$-test functions. Let $Q_\phi\subset[L]$ be as in Definition~\ref{notationForSplitting}, that is, $Q_\phi$ is a set of size at most $d(k+1)$ such that $\phi$ is a product of~$d$ functions that are
 \(\{\tilde{\mu}_j:j\in Q_\phi\}\cup\{\nu_1,\dots{},\nu_{\lceil Lp^{-1}\rceil}\}\)-test functions. Hence, $\phi$ is a product of at most \(d\) members of \(\Psi(Q_\phi)\), because for each \(j\) the function \(\tilde{\mu}_j\) is pointwise either equal to \(\mu_j\) or equal to zero.
Since~$\cE$ does not occur,
the conclusion of the lemma follows.
\end{proof}

We now prove the split polytope anti-correlation lemma, Lemma~\ref{lem:anticor}. The idea here is straightforward: we assume the likely events of the moment bound lemma, Lemma~\ref{lem:deletionbounds}, and of Lemma~\ref{lem:finbound} hold. In addition we will assume $|X|$ is not too large: the Chernoff bound from Theorem~\ref{thm:chernoff} tells us this is likely. Our inner product $\langle \tmu-\one,\phi\rangle$ is maximised at a vertex of $\Phi'$, so we assume $\phi$ is a vertex. We split the inner product up into the part dealt with by the likely event of Lemma~\ref{lem:finbound} and all the other parts; for the other parts, we always use linearity and the triangle inequality to split further into inner products of the form $\langle f,\phi'\rangle$ where $f$ is non-negative and $\phi'$ is related (in some way) to $\phi$. This will put us in a position to use moment bounds to show all these inner products are small.

\begin{proof}[Proof of Lemma~\ref{lem:anticor}]
Given \(d,d',\eps>0\) and $c\ge 1$, we set \(\delta=\frac{\eps}{12c^d}\) and  \(d''=\max\big(d',d\big(1+2\lceil\log_2\frac{2c^d}{\delta}\rceil\big)\big)\). Let \(L_0\ge 1600c^{2d}dk\delta^{-2}\) be sufficiently large for Lemma~\ref{lem:deletionbounds} with input $k,c,\delta,d''$ and Lemma~\ref{lem:finbound} with input $k,d,c,\delta$. Given $L\ge L_0$, let $C\ge 1000c^{2d}d\delta^{-2}$ be sufficiently large for Lemmas~\ref{lem:finbound} and~\ref{lem:deletionbounds} with the given inputs.
Given sufficiently large $n$ and $p\in(0,1]$, let us be in Setting~\ref{mainSetting} and suppose that the $(c,C,p,n)$-boundedness conditions hold.

  The Chernoff bound from Theorem~\ref{thm:chernoff} tells us that with probability at least \(1-\exp\big(-\frac13pn\big)\) over the choice of \(X=[n]_p\)
  the event $\cE_1$ that~$X$
  has at most \(2pn\) elements occurs. Moreover, Lemma~\ref{lem:finbound} with input~\(\delta\) tells us that with probability at least \(1-\exp\big(-\frac1{100c^{2d}}\delta^2pn\big)\) over the choices of \([n]_p\) and the \(\mu\)- and \(\mathbf{1}\)-splits, for each vertex \(\phi\in\Phi'\) we have
\begin{equation}\label{eq:finbound}
  \big|\langle\mu-\mathbf{1},\phi^\mathrm{small}\cdot \mathbbm{1}\big([n]\setminus Y(\phi)\big)\rangle\big|<\delta\,.
\end{equation}
In particular, with probability at least \(1-\exp\big(-\frac{1}{200c^{2d}}\delta^2pn\big)\) over the choice of~\([n]_p\) the following event $\cE_2$ holds.
With probability at least $0.9$ over the choices of the \(\mu\)- and \(\mathbf{1}\)-splits~\eqref{eq:finbound} holds for each vertex \(\phi\in\Phi'\).
Finally,
Lemma~\ref{lem:deletionbounds}, with the given inputs, tells us that with probability at least \(1-\exp\big(-\frac1{12}\delta^2pn\big)\) over the choice of \(X=[n]_p\), the set \(X\) has the following property~$\cE_3$. There exists a \(\delta\)-deletion \(\tilde{X}\) of \(X\) such that with probability at least \(0.9\) over the choices of the \(\mu\)- and \(\mathbf{1}\)-splits
 for any \(1\le\ell\le d''\), any \(\phi\in\Phi(\{\tilde{\mu}_1,\dots{},\tilde{\mu}_L,\nu_1,\dots{},\nu_{\lceil Lp^{-1}\rceil}\})^\ell\), and any $j\in[L]$ and $j'\in[\lceil Lp^{-1}\rceil]$ we have
\begin{equation}\label{eq:finprobmombound} \langle\tilde{\mu}_j,\phi\rangle,\langle\nu_{j'},\phi\rangle\le 2 c^\ell\,.
\end{equation}

Now, reveal~$X$ and suppose that $\cE_1$, $\cE_2$, and $\cE_3$ hold, which by the union bound has probability at least
\[1-\exp\Big(-\frac13pn\Big)-\exp\Big(-\frac1{200c^{2d}}\delta^2pn\Big)-\exp\Big(-\frac{1}{12}\delta^2pn\Big)\ge 1-\exp\big(-pn/C\big)\,.\]
Next, fix a \(\delta\)-deletion~\(\tilde{X}\) witnessing that~$\cE_3$ holds. The probability that the \(\mu\)- and \(\mathbf{1}\)-splits have the properties specified in~$\cE_2$ and~$\cE_3$ is at least \(0.8\). Hence, we can fix \(\mu\)- and \(\mathbf{1}\)-splits \(\chi_\mu\) and \(\chi_\mathbf{1}\) such that this holds. In the remainder of this proof we will show that these splits satisfy the conclusions of our lemma.

Let \(\tilde{P}=\{\tilde{\mu}_1,\dots{},\tilde{\mu}_L,\nu_1,\dots{},\nu_{\lceil Lp^{-1}\rceil}\}\), and let $\Phi'=\Phi(\tilde P)^d$ be as in Definition~\ref{notationForSplitting}.
We first establish the desired anti-correlation for $\tmu-\one$. We use the following claim.

\begin{claim} For each \(\phi\in\Phi'\) and each \(j\in[L]\), we have
\begin{align}
\label{eq:tmupsi} \langle\tilde{\mu}_j,\phi\rangle&\le 2c^d\,,\\
\label{eq:tmupsibg} \langle\tilde{\mu},|\phi^\mathrm{big}|\rangle&\le \delta\,,\\
\label{eq:onepsibg} \langle\mathbf{1},|\phi^\mathrm{big}|\rangle&\le \delta\,.
\end{align}
\end{claim}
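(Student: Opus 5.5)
The plan is to derive all three bounds from the moment bound~\eqref{eq:finprobmombound}, which holds for our fixed $\tilde X$ and splits for every $\ell\le d''$. The other ingredient is the exact identities
\[\tmu=\frac1L\sum_{j\in[L]}\tmu_j\qquad\text{and}\qquad \one=\frac{1}{\lceil Lp^{-1}\rceil}\sum_{j'\in[\lceil Lp^{-1}\rceil]}\nu_{j'}\,,\]
both immediate from Definition~\ref{notationForSplitting}.

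For~\eqref{eq:tmupsi}, note that $\Phi'=\Phi(\tilde P)^d$ and $d\le d''$. So~\eqref{eq:finprobmombound} with $\ell=d$ applies directly to $\phi\in\Phi'$ and gives $\langle\tmu_j,\phi\rangle\le 2c^d$.

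For~\eqref{eq:tmupsibg} and~\eqref{eq:onepsibg}, set $t:=2\lceil\log_2\frac{2c^d}{\delta}\rceil$, which is even. The key step is the pointwise inequality
\[|\phi^\bg|\le (2c^d)^{1-t}\phi^t\,.\]
It holds because on the support of $\phi^\bg$ we have $|\phi|>2c^d$, so $|\phi|\le |\phi|^t(2c^d)^{1-t}$ there, while $\phi^t\ge0$ everywhere since $t$ is even. Next, $\phi\in\Phi(\tilde P)^d$ implies $\phi^t\in\Phi(\tilde P)^{dt}$. This follows because the set of pointwise products is closed under taking products, and the $t$-th power of a convex combination of products of $d$ elements expands into a convex combination of products of $dt$ elements, exactly as in the proof of Lemma~\ref{lem:vtxprod}. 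Since $dt\le d(1+2\lceil\log_2\frac{2c^d}{\delta}\rceil)\le d''$, the moment bound~\eqref{eq:finprobmombound} applies with $\ell=dt$. It yields, for every $j\in[L]$ and $j'\in[\lceil Lp^{-1}\rceil]$,
\[\langle\tmu_j,|\phi^\bg|\rangle,\ \langle\nu_{j'},|\phi^\bg|\rangle\le (2c^d)^{1-t}\cdot 2c^{dt}=2^{2-t}c^d\le\delta\,.\]
The last inequality holds because $2^t\ge(2c^d/\delta)^2\ge 4c^d/\delta$, using $2c^d/\delta\ge2$ (recall $\delta=\eps/(12c^d)$ and we may take $\eps\le 1$).

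Finally, we average over $j$ using the two identities above. Since both are exact averages of the split functions, this gives $\langle\tmu,|\phi^\bg|\rangle\le\delta$ and $\langle\one,|\phi^\bg|\rangle\le\delta$, which are~\eqref{eq:tmupsibg} and~\eqref{eq:onepsibg}.

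I do not expect a real obstacle. The only points to check are that $d''$ was chosen large enough that $dt\le d''$, and that $\phi^t$ lies in the correct product polytope, namely the one over the split family $\tilde P$ rather than over $\{\tmu,\one\}$; this is why the moment bounds are applied to the individual $\tmu_j$ and $\nu_{j'}$ before averaging.
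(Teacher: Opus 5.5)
Your proof is correct and follows the paper's argument. You get \eqref{eq:tmupsi} from \eqref{eq:finprobmombound} with $\ell=d$; for the other two bounds you use the pointwise bound $|\phi^{\mathrm{big}}|\le(2c^d)^{1-t}\phi^t$ for an even power $t$, apply the moment bound to each $\tmu_j$ and $\nu_{j'}$, and average over the split. Your exponent $t=2\lceil\log_2\frac{2c^d}{\delta}\rceil$ is actually better calibrated than the paper's $1+\ell=2\lceil\log_2\frac{2c^d}{\delta}\rceil+2$: $dt\le d(1+2\lceil\log_2\frac{2c^d}{\delta}\rceil)\le d''$ holds with $d''$ as defined, whereas $(1+\ell)d$ exceeds that lower bound on $d''$ by $d$.
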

\begin{claimproof}
    Equation~\eqref{eq:tmupsi} is immediate from~\eqref{eq:finprobmombound}, taking $\ell=d$.
    For the remaining two equations, let \(\ell=2\lceil\log_2\tfrac{2c^d}{\delta}\rceil+1\), and note \((1+\ell)d\le d''\). Given $\phi\in\Phi'$, by
    Fact~\ref{fac:d-product}
    we have $\phi^{1+\ell}\in\Phi(\tilde{P})^{d(1+\ell)}$. Note that $1+\ell$ is even, so that $\phi^{1+\ell}$ has non-negative entries. By definition, if for some $x$ we have $\phi^\bg(x)\neq0$, then $|\phi^\bg(x)|>2c^d$. In particular, we have pointwise
    \begin{equation*}
    |\phi^\bg|<(2c^d)^{-\ell}(\phi^\bg)^{1+\ell}\le (2c^d)^{-\ell}\phi^{1+\ell}\,.
    \end{equation*}
    Since $\tmu$ is non-negative, we conclude that
    \[\langle\tmu,|\phi^\bg|\rangle\le (2c^d)^{-\ell}\langle\tmu,\phi^{1+\ell}\rangle=(2c^d)^{-\ell}\frac{1}{L}\sum_{i=1}^L\langle\tmu_i,\phi^{1+\ell}\rangle\le (2c^d)^{-\ell}\cdot 2c^{d(1+\ell)}=2^{1-\ell}c^d\,,\]
    where the final inequality uses~\eqref{eq:finprobmombound} to bound each of the $L$ inner products by $2c^{d(1+\ell)}$. By choice of $\ell$, we have $2^{1-\ell}c^d\le\delta$ as required for~\eqref{eq:tmupsibg}.

    The proof of~\eqref{eq:onepsibg} is almost identical with $\one$ replacing $\tmu$, the only difference being that we split $\one=\frac{1}{\lceil Lp^{-1}\rceil}(\nu_1+\dots+\nu_{\lceil Lp^{-1}\rceil})$; we obtain the same bound.
\end{claimproof} 

Since \(\max_{\phi\in \Phi'}\big|\langle\tilde{\mu}-\mathbf{1},\phi\rangle\big|\) is attained at a vertex of \(\Phi'\), let us fix any vertex~$\phi$ of this polytope and let \(Y=Y(\phi)\), using the notation from Definition~\ref{notationForSplitting}. Our goal is to show that \(\vass{\langle\tilde{\mu}-\mathbf{1}, \phi\rangle}<\eps\). We use linearity of the inner product and the triangle inequality to split this up. Write \(\tilde{\mu}'=\tilde{\mu}\mathbbm{1}([n]\setminus Y)\) and \(\tilde{\mu}''=\tilde{\mu}\mathbbm{1}(Y)\); define \(\mu'\), \(\mu''\) and \(\mathbf{1}'\) and \(\mathbf{1}''\) analogously. We obtain
\[\big|\langle\tilde{\mu}-\mathbf{1}, \phi\rangle\big|\le\big|\langle\tilde{\mu}'-\mathbf{1}', \phi\rangle\big|+\big|\langle\tilde{\mu}''-\mathbf{1}'', \phi\rangle\big|\,.\]
We further split the first term
\[\big|\langle\tilde{\mu}'-\mathbf{1}', \phi\rangle\big|\le\big|\langle\mu'-\mathbf{1}', \phi^\mathrm{small}\rangle\big|+\big|\langle\mu'-\tilde{\mu}', \phi^\mathrm{small}\rangle\big|+\big|\langle\tilde{\mu}'-\mathbf{1}', \phi^\mathrm{big}\rangle\big|\,.\]
The first of these terms is bounded by \(\delta\) by~\eqref{eq:finbound}. Since \(\tilde{\mu}\) and \(\mu\) differ in at most \(\delta p n\) places, \(\mu'-\tilde{\mu}'\) is equal to \(p^{-1}\) in at most \(\delta pn\) places and otherwise equal to zero, while \(|\phi^\mathrm{small}|\) is bounded by \(2c^d\), so the second term is at most \(\tfrac{1}{n}\cdot p^{-1}\cdot\delta p n\cdot 2c^d= 2c^d\delta\). Splitting the third term, we get
\[\big|\langle\tilde{\mu}'-\mathbf{1}', \phi^\mathrm{big}\rangle\big|\le\langle\tilde{\mu}', |\phi^\mathrm{big}|\rangle+\langle\mathbf{1}', |\phi^\mathrm{big}|\rangle\le\langle\tilde{\mu}, |\phi^\mathrm{big}|\rangle+\langle\mathbf{1}, |\phi^\mathrm{big}|\rangle\le 2\delta\,,\]
where we used non-negativity of $\tmu,\tmu'$ and $\one,\one'$, and~\eqref{eq:tmupsibg} and~\eqref{eq:onepsibg}, to justify the inequalities. Putting the parts together we have
\[|\langle\tmu'-\one',\phi\rangle|\le \delta+2c^d\delta+2\delta\le 5c^d\delta\,.\]
We next split
\[\big|\langle\tilde{\mu}''-\mathbf{1}'', \phi\rangle\big|\le\langle\tilde{\mu}'', |\phi|\rangle+\langle\mathbf{1}'', |\phi|\rangle\,.\]
Since $\Phi'$ is centrally symmetric and vertices of $\Phi'$ are either non-negative or non-positive, $|\phi|$ is also in $\Phi'$.
By definition we have $\tmu''=\tfrac{1}{L}\sum_{j\in Q_\phi}\tmu_j$. Applying~\eqref{eq:tmupsi}, this gives
\[\langle\tmu'',|\phi|\rangle=\tfrac{1}{L}\sum_{j\in Q_\phi}\langle\tmu_j,|\phi|\rangle\le \frac{d(k-1)}{L}\cdot 2c^d\,.\]
Finally, we turn to \(\langle\mathbf{1}'',|\phi|\rangle\). Here we split \(\phi=\phi^\mathrm{small}+\phi^\mathrm{big}\), and write
\[\langle\mathbf{1}'',|\phi|\rangle=\langle\mathbf{1}'',|\phi^\mathrm{small}|\rangle+\langle\mathbf{1}'',|\phi^\mathrm{big}|\rangle\le\langle\mathbf{1}'',|\phi^\mathrm{small}|\rangle+\langle\mathbf{1},|\phi^\mathrm{big}|\rangle\,.\]
To deal with the first term of this, observe that \(\mathbf{1}''\) takes the value \(1\) in at most \(\tfrac{2d(k-1)n}{L}\) places, and zero elsewhere, while \(\phi^\mathrm{small}\) is bounded by \(2c^d\), so that the first inner product is at most \(\frac{1}{n}\cdot\frac{2d(k-1)n}{L}\cdot 2c^d=\frac{4c^dd(k-1)}{L}\). The second inner product is by~\eqref{eq:onepsibg} at most \(\delta\).
Putting the pieces together, we have
\[\big|\langle\tilde{\mu}''-\mathbf{1}'', \phi\rangle\big|\le  \frac{d(k-1)}{L}\cdot 2c^d + \frac{4c^dd(k-1)}{L} +\delta=\frac{6c^dd(k-1)}{L} +\delta\,, \]
and hence
\[\big|\langle\tilde{\mu}-\mathbf{1},\phi\rangle\big|\le 5c^d\delta+\frac{6c^dd(k-1)}{L} +\delta\le\eps\,,\]
as required for the anti-correlation statement.

\medskip

We now turn to proving the desired moment bounds. By Lemma~\ref{lem:splitreduce} (applied with~$\ell$ instead of~$d$) we have
\(\Phi(\tilde{\mu},\mathbf{1})^\ell\subseteq\Phi(\tilde{P})^\ell\) for each $1\le\ell\le d'$, so it suffices to prove the required moment bounds hold for all \(\phi\) in the latter polytope. Given $\phi\in\Phi(\tilde{P})^\ell$, since $d''\ge\ell$, by~\eqref{eq:finprobmombound} we have for each $j\in[L]$ and $j'\in[\lceil Lp^{-1}\rceil]$ that
\[\langle\tmu_j,\phi\rangle,\langle\nu_{j'},\phi\rangle\le 2c^\ell\,.\]
Since $\tmu=\tfrac{1}{L}\sum_{j\in[L]}\tmu_j$, we get
\[\langle\tmu,\phi\rangle=\tfrac{1}{L}\sum_{j\in[L]}\langle\tmu_j,\phi\rangle\le 2c^\ell\,.\]
Since the same statement also holds for $-\phi$ instead of $\phi$, this gives the required moment bound for $\tmu$. The same argument, with $\one=\tfrac{1}{\lceil Lp^{-1}\rceil}\sum_{j\in[\lceil Lp^{-1}\rceil]}\nu_j$, gives the required moment bound for $\one$.
\end{proof}

\section{Proof of the moment bounds lemma without deletion}\label{sec:momentbounds}

In this section, we prove the `weaker' moment bound estimate, Lemma~\ref{cor:momentboundcor}.
We shall use the Kim--Vu inequality~\cite{KimVu}, which is a concentration result for a multi-variable polynomial over independent Bernoulli random variables. We state a slightly less general form of the result from~\cite{KimVu}, which is precisely what we need. Let \(F\) be a hypergraph with \(V(F)=\{1,\dots{},n\}\) and edge set \(E(F)\). We say that~$F$ is a \emph{sub-$d$-uniform $w$-weighted} hypergraph if each edge \(e\) of~$F$ has associated weight \(w(e)>0\) and contains at most \(d\) vertices. Moreover, for any \(A\subseteq V(F)\), let \(F_A\) denote the \emph{\(A\)-truncated sub-hypergraph} of \(F\), which is the hypergraph with vertex set \(V(F)\setminus A\) and edge set \(E(F_A)=\{ e'\subseteq V(F_A): e'\cup A\in E(F) \}\). Note that \(w\) extends in a unique way from \(E(F)\) to \(E(F_A)\), therefore we abuse notation and use \(w\) to denote the weight function of either hypergraph.

Suppose now that \(t_1,\dots{},t_n\) are independent random variables, such that for each \(i\in [n]\) there is \(p_i\in [0,1]\) such that \(t_i\) is a \(\{0,1\}\) random variable with \(\mathbb{E}(t_i) = p_i\). We consider the following random variable, which is a polynomial:
\[Y_F = \sum_{e\in E(F)}w(e)\prod_{t_i\in e}t_i\,.
\]
where by convention \(\prod_{t_i\in \emptyset}t_i=1\). Observe that any
polynomial in independent Bernoulli random variables can be written in this
form, and this implicitly defines the weighted hypergraph~$F$. In our
application we shall not make this~$F$ explicit, but instead argue merely using
the polynomial. In this language, $d$ is the maximum number of random variables
that appear in a monomial of the polynomial, and~$n$ is the total number of
random variables used.

For \(i\in \{0, \dots{}, d\}\) let
\[  Y_{F_A} = \sum_{e\in E(F_A)}w(e)\prod_{t_j\in e}t_j\,, \qquad\text{and}\qquad
    \mathbb{E}_i(Y_F)=\max_{A\subset V(F)\colon |A|=i}\mathbb{E}(Y_{F_A})\,.\]
We call these random variables~$Y_{F_A}$ \emph{truncated variables}.
Note that \(\mathbb{E}_0(Y_F)= \mathbb{E}(Y_F)\). Let
\[\mathbb{E}'(Y_F)= \max_{i\in\{0,\dots,d\}}\mathbb{E}_i(Y_F) \qquad\text{and}\qquad \mathbb{E}''(Y_F)= \max_{i\in[d]}\mathbb{E}_i(Y_F)\,.\]
The following version of the Kim--Vu concentration inequality~\cite{KimVu} says that if
$\EE''(Y_F)$ is much smaller than $\EE(Y_F)$, then $Y_F$ is concentrated about
its expectation.

\begin{theorem}[Kim--Vu inequality]\label{thm:Kim-Vu}
  Let \(F\) be a sub-$d$-uniform $w$-weighted hypergraph, and \(\{t_1,\dots,t_n\}\) be independent $\{0,1\}$-random variables.
  For \(\lambda > 1\) and \(a_d:=8^dd!^{1/2}\), we have
  \[P\big[|Y_F-\mathbb{E}(Y_F)|> a_d(\mathbb{E}'(Y_F)\mathbb{E}''(Y_F))^{1/2}\lambda^d\big]= O\Big(\exp\big(-\lambda+(d-1) \log n\big)\Big)\, .\]
\end{theorem}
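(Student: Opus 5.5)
The plan is to prove the inequality by induction on the degree $d$, using a martingale (exposure) argument in which the one-step differences are controlled by truncated polynomials of degree at most $d-1$. Those truncated polynomials are concentrated by the induction hypothesis, up to a failure probability that carries an extra factor of $n$. This is essentially the original argument of Kim and Vu~\cite{KimVu}, so the plan follows their structure.

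\emph{Base case $d=1$.} Here $Y_F=w(\emptyset)+\sum_i w(\{i\})t_i$ is an affine combination of independent Bernoulli variables. Each weight satisfies $w(\{i\})\le\mathbb{E}_1(Y_F)\le\mathbb{E}''(Y_F)$, and the variance is at most $\sum_i w(\{i\})^2p_i\le\mathbb{E}''(Y_F)\,\mathbb{E}(Y_F)$. Bernstein's inequality (Lemma~\ref{lem:bernstein}) with deviation $a_1(\mathbb{E}'\mathbb{E}'')^{1/2}\lambda$ then gives failure probability $O(e^{-\lambda})$, as required.

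\emph{Inductive step.} Expose $t_1,\dots,t_n$ one at a time and consider the Doob martingale $Z_i=\mathbb{E}[Y_F\mid t_1,\dots,t_i]$. By multilinearity,
\[Z_i-Z_{i-1}=(t_i-p_i)\,D_i,\qquad D_i:=\mathbb{E}\big[\partial_iY_F\mid t_1,\dots,t_{i-1}\big].\]
Here $\partial_iY_F$ is exactly the truncated polynomial $Y_{F_{\{i\}}}$. It has degree at most $d-1$, its expectation is at most $\mathbb{E}_1(Y_F)$, and its own truncations are truncations of $Y_F$ of size at least $2$, hence bounded by $\mathbb{E}''(Y_F)$. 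The conditional expectation $D_i$ is itself a polynomial of degree at most $d-1$ in $t_1,\dots,t_{i-1}$ (the later variables are replaced by their means), and its expectation parameters are still bounded by those of $Y_{F_{\{i\}}}$. Applying the induction hypothesis to each $D_i$ and taking a union bound over the $n$ coordinates, with probability $1-O(\exp(-\lambda+(d-1)\log n))$ we get for every $i$
\[D_i\le\mathbb{E}''(Y_F)+a_{d-1}\mathbb{E}''(Y_F)\lambda^{d-1}\le 2a_{d-1}\mathbb{E}''(Y_F)\lambda^{d-1}=:M.\]
This union bound is exactly where the extra $\log n$ in the exponent comes from.

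On this good event the martingale has two controlled quantities:
\begin{itemize}
\item increments bounded by $M$;
\item predictable quadratic variation $\sum_i p_iD_i^2\le M\sum_ip_iD_i$.
\end{itemize}
Moreover, $\sum_ip_iD_i$ has expectation $\sum_ip_i\mathbb{E}[\partial_iY_F]\le d\,\mathbb{E}(Y_F)$ and can itself be controlled by induction. Freedman's martingale form of Bernstein's inequality, applied to the martingale stopped at the first time the good event fails, then gives a deviation of order
\[\big(M\,d\,\mathbb{E}'\big)^{1/2}\lambda^{1/2}+M\lambda\;\lesssim\;(\mathbb{E}'\mathbb{E}'')^{1/2}\lambda^{d}\]
with failure probability $O(e^{-\lambda})$. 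Adding the two failure probabilities closes the induction. The constant $a_d=8^dd!^{1/2}$ absorbs the factor $d$ and the factor $2a_{d-1}$ from the previous level.

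\emph{Main obstacle.} The delicate step is the use of the induction hypothesis on the conditional derivatives $D_i$. Their distribution depends on the already revealed variables, and the good event has to be defined in a way that makes the stopped martingale legitimate for Freedman's inequality. I would handle this by applying induction to the full polynomials $D_i$ in the variables $t_1,\dots,t_{i-1}$, which is unconditional, before running the martingale, and then stopping at the first bad time. The remaining work is bookkeeping: choosing constants so that the recursion $a_d\ge C\,d^{1/2}a_{d-1}^{1/2}+2a_{d-1}$ holds, which $8^dd!^{1/2}$ comfortably satisfies.
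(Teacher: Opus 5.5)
The paper does not prove this statement; it quotes it from Kim and Vu~\cite{KimVu}. Your outline is essentially their argument and is sound: the Doob martingale with increments $(t_i-p_i)D_i$, where $D_i=\mathbb{E}[\partial_iY_F\mid t_1,\dots,t_{i-1}]$; induction on the degree applied to the polynomials $D_i$, with the union bound over $i$ producing the extra $(d-1)\log n$; and a Freedman-type martingale bound on the good event.

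The one step to tighten is the quadratic-variation bound. Applying the induction to $\sum_ip_iD_i$ is valid, since its truncations have expectation at most $d\,\mathbb{E}_{|A|}(Y_F)$ because $\partial_A$ commutes with the conditioning. But it gives $\sum_ip_iD_i\le d\,\mathbb{E}'+a_{d-1}d(\mathbb{E}'\mathbb{E}'')^{1/2}\lambda^{d-1}$, and you must keep that second term in the variance. It is harmless, but it changes the required recursion to $a_d\gtrsim d^{1/2}a_{d-1}$ instead of the one you wrote, and $a_d=8d^{1/2}a_{d-1}$ satisfies it.
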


In the proof of Lemma~\ref{cor:momentboundcor} our goal is to bound the inner products $\langle\mu_i,\phi\rangle$ and $\langle\nu_j,\phi\rangle$ for $\phi\in\Phi(\mu_1,\dots,\mu_L,\nu_1,\dots,\nu_{\lceil Lp^{-1}\rceil})$. We shall reduce this to estimating inner products with functions of the following special type, which we call special products. Recall that $\ast_{i_j}(\cdot)$ is a shorthand for $\ast_{i_j,\one}(\cdot)$.

\begin{definition}[\((q,d)\)-special product]
\label{def:special}
Given $k\ge2$, integers $n$ and $d$, and $q\in(0,1]$, let us be in Setting~\ref{mainSetting}. Let $V_0,V_1,\dots,V_{d(k-1)}$ be independent random samples from $[n]_q$, and define for each $i\in\{0,\dots,d(k-1)\}$ the corresponding scaled indicator function $\rho_i(x)=q^{-1}\ind(x\in V_i)$.
A \emph{\((q,d)\)-special product} is any function of the form $\prod_{j=1}^{d}\ast_{i_j}(f^{(j)}_{1},\dots,f^{(j)}_{k})$ with  $i_j\in[k]$ and $f^{(j)}_{\kappa}\in\{\rho_1,\dots,\rho_{d(k-1)}\}$ for each $j\in[d]$ and $\kappa\in[k]$.
\end{definition}

Observe that in this definition, we use $d(k-1)+1$ independent samples, but the special products are not allowed to use the first of these.
This is because we want to use these special products~$\psi$ in inner products $\langle f,\psi \rangle$, where~$f$ is also a random variable that may (or may not) use this first independent sample.

We will see that for any $(q,d)$-special product $\psi$, the inner product $\langle f,\psi\rangle $ for $f\in\{\rho_0,\dots,\rho_{d(k-1)}\}$ is a polynomial $Y$ in Bernoulli random variables whose monomials correspond to books as in the following definition. Recall that an edge~$s$ of an ordered hypergraph~$S$ is a sequence, which we denote by $s=s_1,\dots,s_k$.

\begin{definition}[book]\label{def:config}
Let~$S$ be a $k$-uniform ordered hypergraph. Let \(d\) be a positive integer, \(x\in [n]\) and \(i_1,\dots{},i_d\in [k]\). A \emph{book} with spine \(x\) and index tuple \(\bfi=(i_1,\dots{},i_d)\) is a  tuple \((s^{(1)},\dots{},s^{(d)})\) of edges of \(S\) such that \(s^{(j)}_{i_j}=x\) for each $j\in[d]$. We call \(t=|\bigcup_i s^{(i)}\setminus\{x\}|\) the \emph{size} of the book.
%
For a positive integer \(t\), and a tuple \(\mathbf{i}=(i_1,\dots{},i_d)\) we
denote by \(\beta(x,\bfi,t)\) the number of books \((s^{(1)},\dots{},s^{(d)})\)
in~$S$ with spine \(x\), index tuple~$\bfi$ and size~$t$.  If we are
given a set $W\subset[n]$, then we write $\beta(W,t)$ for the number of books
in~$S$ (with any spine) which use all vertices of $W$ and contain $t\ge0$ vertices
outside $W$.
\end{definition}

The following book-counting lemma will be used to (accurately) estimate the expectation of a $(q,d)$-special product and its truncated versions.

\begin{lemma}\label{countingBooks}
  Given a $k$-uniform ordered hypergraph $S$ on $[n]$, suppose $c\ge1$ and
  $C,q>0$ are such that $\Delta_\kappa(S)\le cC^{1-\kappa}q^{\kappa-1}e(S)n^{-1}$
  holds for all $\kappa\in[k]$. Let \(x\in [n]\) and let \(d\) be an integer.
  \begin{enumerate}[label=\abc]
  \item\label{lem:books:a} If \(\mathbf{i}=(i_1,\dots{},i_d)\) is any tuple, given any integer $k-1\le t<d(k-1)$ we have
    \[\beta(x,\bfi,t)\le
    (2k)^{3kd} C^{-1}c^d q^{d(k-1)-t}e(S)^dn^{-d}\,, \qquad\text{and}\qquad
    \beta\big(x,\bfi,d(k-1)\big)\leq c^de(S)^dn^{-d}\,.
    \]
  \item\label{lem:books:b} If $\emptyset\neq W\subset[n]$, for any integer $0\le t\le d(k-1)-|W|$ we have
    \[\beta(W,t)\le (2dk)^{3kd^2}C^{-1}c^d q^{d(k-1)-t} e(S)^d n^{-d}\,.\]
  \end{enumerate}
\end{lemma}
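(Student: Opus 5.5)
We count books by building them one page at a time and charging each page to a codegree bound. Order the pages $s^{(1)},\dots,s^{(d)}$. When choosing $s^{(j)}$, suppose it meets the set $U_{j-1}$ of vertices already used (which includes the spine $x$) in exactly $\kappa_j\ge1$ positions; in the case of part~\ref{lem:books:b}, $U_0$ is instead the relevant set of forced vertices. We first choose which positions of $s^{(j)}$ are fixed and which vertices of $U_{j-1}$ sit there. There are at most $(2dk)^{2k}$ such choices, a constant. Having made this choice, the number of possible $s^{(j)}$ is at most $\Delta_{\kappa_j}(S)\le cC^{1-\kappa_j}q^{\kappa_j-1}e(S)n^{-1}$. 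The page $s^{(j)}$ then contributes $k-\kappa_j$ new vertices, so the size is $t=\sum_j(k-\kappa_j)$, or $t=\sum_j(k-\kappa_j)-|W|+1$ when we account for $W$ in part~\ref{lem:books:b}. Multiplying over $j$ and summing over the at most $k^d$ vectors $(\kappa_1,\dots,\kappa_d)$ gives
\[\beta\le \text{const}\cdot c^d C^{-\sum_j(\kappa_j-1)}q^{\sum_j(\kappa_j-1)}e(S)^dn^{-d}.\]
Since $\sum_j(\kappa_j-1)=d(k-1)-t$, this has exactly the required form.

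For part~\ref{lem:books:a}, observe that if $t<d(k-1)$ then some $\kappa_j\ge2$. Hence $\sum_j(\kappa_j-1)\ge1$, and since $C\ge1$ we get $C^{-\sum(\kappa_j-1)}\le C^{-1}$. Together with the constant $(2k)^{3kd}$, which absorbs the position choices, this gives the first bound. If $t=d(k-1)$ then every $\kappa_j=1$, the only fixed vertex is the spine in the prescribed position $i_j$, and each page has exactly $\deg_S$ choices with a single fixed entry, which is at most $\Delta_1(S)\le ce(S)n^{-1}$. This gives $c^de(S)^dn^{-d}$ with no constant factor, as claimed.

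For part~\ref{lem:books:b}, the spine and the index tuple are not prescribed, and every vertex of $W$ must be used. We handle this with a counting variant: first choose which vertex of $W$ (or which new vertex) is the spine, together with the index tuple and the pattern of coincidences of the pages with $W$. These are at most $(2dk)^{3kd^2}$ combinatorial patterns in total. Then count pages sequentially as before, treating vertices of $W$ that a page uses as already fixed. The main obstacle is the exponent bookkeeping: we must check that each page which lands on an as-yet-unused vertex of $W$ is correctly charged. We charge it by regarding that vertex as a fixed entry, which raises $\kappa_j$ by one and so contributes a factor $C^{-1}q$, while the vertex does not count toward $t$. With this accounting, $d(k-1)-t=\sum_j(\kappa_j-1)+(|W|-1)$. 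In the worst case this still leaves one factor of $C^{-1}$. If $|W|\ge2$, the sum is at least $1$ because $|W|-1\ge1$. If $|W|=1$ and all $\kappa_j=1$, we are in the top case, which requires $t=d(k-1)-1$ and is therefore excluded by the hypothesis $t\le d(k-1)-|W|$.

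Where several vertices of $W$ occur in one page, we bound that page using the codegree with that many fixed entries, and the same identity holds. The steps are therefore: set up the sequential encoding, bound the number of patterns, apply the codegree condition page by page, and verify the exponent identity.
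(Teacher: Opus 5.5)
Your strategy is the paper's: build the book page by page, bound the choices for each page by $\Delta_{\kappa_j}(S)$ with $\kappa_j\ge1$ its number of already-fixed entries, and use $\sum_j(\kappa_j-1)=d(k-1)-t$, charging first-time vertices of $W$ as extra fixed entries in (b). The top case of (a) is handled exactly as in the paper.

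The one false step is in (a): $(2k)^{3kd}$ does not absorb the position choices. Your bound of $(2dk)^{2k}$ choices per page gives $(2dk)^{2kd}$ in total. This exceeds $(2k)^{3kd}$ once $d^2>2k$, already for $k=2$, $d=3$.

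The paper gets a constant free of $d$ in the base only because it chooses the overlap set $Y^{(i)}$ inside $s^{(i)}$ ($2^k$ ways). That is, it records overlaps with the preceding page only. This assigns the wrong size to books such as $((x,a),(x,b),(x,a))$ for $k=2$. Your bookkeeping against all of $U_{j-1}$ is the correct one.

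With correct bookkeeping, no bound of the form $a_k^{\,d}$ (such as $(2k)^{3kd}$) can hold for every admissible $C$. Take $k=2$, $S$ all ordered pairs, $c=C=1$ and $q=1/(n-1)$. Then $\beta(x,\bfi,t)\approx S(d,t)(n-1)^t$, where $S(d,t)$ is a Stirling number of the second kind, and $\max_{t<d}S(d,t)>4^{6d}$ for large $d$.

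So you have two options. Either state the constant your count actually gives, e.g.\ $(2dk)^{2kd}$. Or keep the surplus factors $C^{-1}$, one per extra coincidence, to pay for its at most $2dk^2$ placements; this recovers $(2k)^{3kd}$ once $C\ge 2dk^2$. Both suffice in Lemma~\ref{lem:momentboundlemma}, where $C$ is chosen after $d$. You, like the paper, also use $C\ge1$, which is not among the stated hypotheses.

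In (b) the spine is not prescribed. Choosing which new vertex is the spine up front costs a factor $n$, unless the spine is read off from a page already counted. Moreover, if the spine lies outside $W$ while $s^{(1)}\cap W=\emptyset$, your first page has no fixed entry to charge.

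Say explicitly, as the paper does, that you:
\begin{itemize}
\item reorder the pages (a factor $d$) so that $s^{(1)}$ meets $W$;
\item count $s^{(1)}$ by $\Delta_{|s^{(1)}\cap W|}(S)$;
\item only then pick the spine among the $k$ entries of $s^{(1)}$.
\end{itemize}
This anchoring is what makes the $|W|-1$ in your identity correct when the spine is not in $W$.

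Finally, your closing case analysis is unnecessary and has a slip. The exponent is $d(k-1)-t\ge|W|\ge1$ directly by hypothesis. Also, the configuration with $|W|=1$ and no further coincidences has $t=d(k-1)$, not $d(k-1)-1$.
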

\begin{proof}
  We first prove~\ref{lem:books:a}.
  Let us fix \(x\), \(t\), \(d\) and \(\mathbf{i}\). We now describe a process
  that can generate any book with spine \(x\), index tuple
  \((i_1,\dots{},i_d)\), and covering \(t\) vertices besides \(x\). By counting
  the number of choices we make in this process, we can upper bound
  \(\beta(x,\mathbf{i}, t)\).  We start by choosing an edge \(s^{(1)}\) of
  \(S\) whose \(i_1\)-th element is \(x\) and set $m_1=k-1$. We then pick a
  subset~$Y^{(1)}$ including~$x$ among the \(k\) elements of \(s^{(1)}\). Let
  $m_2=k-|Y^{(1)}|$. The elements of~$Y^{(1)}$ will also be contained in
  \(s^{(2)}\): we fix a position of these elements by selecting an injection
  from $Y^{(1)}$ to \([k]\), making sure that \(x\) is assigned position
  \(i_2\), and then select an element \(s^{(2)}\) of \(S\) that contains the
  elements of~$Y^{(1)}$ in these positions. We repeat a similar procedure,
  fixing a set~$Y^{(2)}$ of \(k-m_3\) elements of \(s^{(2)}\) to generate
  \(s^{(3)}\) with \(x\) in position \(i_3\), and so on, until we get
  \(s^{(d)}\). We further assume that the total number $\sum_{i\in[d]}m_i$ of
  vertices these edges cover apart from~$x$ is~$t$.

  In this procedure, we have $\Delta_1(S)$ ways of choosing $s^{(1)}$ containing~$x$, at most $2^{k}$
  ways to choose each set $Y^{(i)}$ once $s^{(i)}$ is chosen, at most $k!$ ways
  to choose each injection from $Y^{(i)}$ to \([k]\), and at most \(\Delta_{k-m_i}(S)\)
  ways to choose each edge $s^{(i+1)}$ once $Y^{(i)}$ is chosen.
  No matter which numbers for $m_2,\dots,m_d$ we happen to have generated, we have
  \begin{equation}\begin{split}\label{eq:books}
    \Delta_1(S)\cdot\prod_{2\le i\le d}\Delta_{k-m_i}(S)
    &\le ce(S)n^{-1} \cdot \prod_{2\le i\le d} \big(cC^{1-k+m_i}q^{k-m_i-1}e(S)n^{-1}\big) \\
    &=c^dC^{d(1-k)+t}q^{d(k-1)-t}e(S)^dn^{-d}\,,
  \end{split}\end{equation}
  because $\sum_{2\le i\le d}m_i=t-(k-1)$. Further, there are at most $k^d$ ways to generate numbers $m_2,\dots,m_d$.
  We conclude that for $t<d(k-1)$ we have
  \begin{equation*}\begin{split}
      \beta(x,\mathbf{i}, t)
      &\le 2^{kd}\cdot (k!)^d\cdot c^dC^{d(1-k)+t}q^{d(k-1)-t}e(S)^dn^{-d}\cdot k^d
      \le (2k)^{3kd} c^d C^{-1} q^{d(k-1)-t}e(S)^dn^{-d}
  \,.
  \end{split}
  \end{equation*}    
  For \(t=d(k-1)\) we necessarily have $Y^{(i)}=\{x\}$ for all~$i$ and hence
  \(m_1=\dots{}=m_d=k-1\). Therefore, using~\eqref{eq:books} we conclude
  \(\beta(x,\mathbf{i}, t)\le c^de(S)^dn^{-d}\). 

  \medskip
  
  We now prove~\ref{lem:books:b}.  To estimate $\beta(W,t)$, we describe how
  to construct a book which uses the vertices $W$ and some $t$ additional
  vertices, and bound the total number of choices we have in this construction.
  We shall pick the edges $s^{(1)},\dots,s^{(d)}$ one by one. But before doing
  so, we pick sets $Q_1,\dots,Q_d\subset W$ and then insist in this process that
  for each $j\in[d]$, the set $Q_j$ is exactly $W\cap
  s^{(j)}\setminus\bigcup_{i\in[j-1]}s^{(i)}$.  If $q_j$ denotes $|Q_j|$, then
  we need to have $q_1+\dots+q_d=|W|$. Since $W\neq\emptyset$, we also know that
  at least one of the values $q_j$ is positive. By possibly permuting the edges
  in the book we are constructing, at the expense of at most an extra factor
  of~$d$ in our count, we can assume that $q_1\ge 1$.
  Also, for each $j\ge2$, since~$Q_j$ will be exactly
  $W\cap s^{(j)}\setminus\bigcup_{i\in[j-1]}s^{(i)}$, the set~$Q_j$ does not
  contain the spine (denoted by $x$) of the book we are constructing. The number of choices for
  $Q_1,\dots,Q_d$ is at most $d^{|W|}$. In addition we choose non-negative
  integers $r_2,\dots,r_d$ such that $d(k-1)+1-|W|-r_2-\dots-r_d=t$. We shall
  insist in our process that for $j\ge 2$ the edge $s^{(j)}$ uses exactly~$r_j$
  vertices from $\bigcup_{i\in[j-1]}s^{(i)}\setminus (W\cup \{x\})$.  There are at most
  $d^{d(k-1)+1-|W|}$ ways to choose the $r_2,\dots,r_d$.

  We can pick any edge of $S$ which contains $Q_1$ as $s^{(1)}$. There are at
  most $k!$ ways to choose in which entries of $s^{(1)}$ the vertices of
  $Q_1$ will be contained in which order. Hence, there are at most $k!\Delta_{q_1}(S)$ choices for
  $s^{(1)}$. We pick from $s^{(1)}$ a spine $x$ for our book, giving a further $k$
  choices.
  Now for each $2\le\ell\le d$ in order, we pick $s^{(\ell)}$ as follows. We
  must use the elements $Q_\ell$ and the spine $x$ (which is not in $Q_\ell$), and
  we pick a further $r_\ell$ elements in $\bigcup_{i\in[j-1]}s^{(i)}\setminus W$, which we
  can do in at most $2^{d(k-1)}$ ways. We choose which entries of $s^{(\ell)}$
  these picked elements will be in at most $k!$ ways. The remaining elements
  must be chosen outside $\bigcup_{i\in[j-1]}s^{(i)}\cup W$, which we can do in at
  most $\Delta_{q_\ell+1+r_\ell}(S)$ ways.
  In this construction, the number of elements of $[n]$ we choose in total outside $W$ is
  \[k-q_1+\sum_{j=2}^d k-q_j-1-r_j=t\,,\]
  as required. Counting choices, we get
  \[\beta(W,t)\le d\cdot d^{|W|}d^{d(k-1)+1-|W|}\cdot k!\Delta_{q_1}(S)\cdot k\cdot \prod_{\ell=2}^d \Big(2^{d(k-1)}k!\Delta_{q_\ell+1+r_\ell}(S)\Big)\,.\]
  Since the $(c,C,q,n)$-boundedness conditions hold, we get
  \begin{align*}
    \beta(W,t)&\le d^{|W|+1}d^{d(k-1)+1-|W|}k!c(qC^{-1})^{q_1-1}e(S)n^{-1}k\prod_{\ell=2}^d\Big( 2^{d(k-1)}k!c(qC^{-1})^{q_\ell+r_\ell}e(S)n^{-1} \Big)\\
    &=d^{d(k-1)+2}k!^dc^d(e(S)n^{-1})^dk2^{d^2(k-1)}(qC^{-1})^{q_1+\dots+q_d+r_2+\dots+r_d-1}\\
    &=d^{d(k-1)+2}k!^dc^d(e(S)n^{-1})^dk2^{d^2(k-1)}(qC^{-1})^{d(k-1)-t} \\
    &\le (2dk)^{3kd^2}C^{-1}c^d q^{d(k-1)-t} e(S)^d n^{-d}\,. \qedhere
  \end{align*}
\end{proof}

We now use this lemma to prove concentration for inner products with $(q,d)$-special products.

\begin{lemma}\label{lem:momentboundlemma}
Given $k\ge2$ and $c\ge 1$, for any $d\ge1$ there exists $C$ such that the following holds for all $\alpha>0$, all sufficiently large $n$ and any $q\in(0,1]$ such that $q\ge n^{-1}\log^{2dk+1}n$. Let $S$ be a $k$-uniform ordered hypergraph on $[n]$, and suppose that $\Delta_\kappa(S)\le cC^{1-\kappa}q^{\kappa-1}e(S)/n$ for each $1\le\kappa\le k$.

We denote by $V_0,V_1,\dots,V_{d(k-1)}$ independent random samples from $[n]_q$. Using the notation from Definition~\ref{def:special}, define the random variables $f\in\{\rho_0,\dots,\rho_{d(k-1)}\}$ and~$\psi$, where~$\psi$ is a $(q,d)$-special product. With probability at least $1-n^{-\alpha}$, we have $\langle f , \psi \rangle \le \tfrac32c^{d}$.
\end{lemma}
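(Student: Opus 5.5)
The plan is to write $\langle f,\psi\rangle$ as a polynomial in the independent Bernoulli variables $t_{(i,y)}=\ind(y\in V_i)$ and apply the Kim--Vu inequality (Theorem~\ref{thm:Kim-Vu}), with Lemma~\ref{countingBooks} controlling the expectation and all truncated expectations. Write $\psi=\prod_{j=1}^d\ast_{i_j}(f^{(j)}_1,\dots,f^{(j)}_k)$ and expand:
\[\langle f,\psi\rangle=\frac1n\sum_{x}f(x)\Big(\frac{n}{e(S)}\Big)^d\sum_{(s^{(1)},\dots,s^{(d)})}\prod_{j}\prod_{\kappa\ne i_j}f^{(j)}_\kappa(s^{(j)}_\kappa)\,,\]
where the inner sum runs over books with spine $x$ and index tuple $\bfi=(i_1,\dots,i_d)$. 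Each monomial is a product of variables $t_{(i,y)}$ with weight $q^{-(\text{number of factors})}n^{d-1}e(S)^{-d}$. Identical factors collapse, since $t^2=t$; the same element $y$ may appear in different samples $V_i$, but that only creates distinct variables. Each monomial thus has degree at most $d(k-1)+1$, and its weight is $q^{-m}$ times a constant, where $m$ is the number of distinct factor-occurrences before collapsing.

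\textbf{Expectation.} A monomial whose factors are all distinct has expectation $q^m\cdot q^{-m}=1$ times the constant. Collapsing only raises the expectation, by a factor $q^{-(\text{collapses})}$. I would group books by their size $t$ (the number of distinct non-spine vertices). Books of maximal size $d(k-1)$, all with distinct vertices, contribute at most $n^{d-1}e(S)^{-d}\cdot n\cdot c^de(S)^dn^{-d}=c^d$ by Lemma~\ref{countingBooks}\ref{lem:books:a}. A book of size $t<d(k-1)$ loses at most $d(k-1)-t$ factors of $q$, plus possibly one more if $f$ shares a sample with the spine. By Lemma~\ref{countingBooks}\ref{lem:books:a}, the count carries a factor $q^{d(k-1)-t}C^{-1}$ that compensates for the $q^{-(d(k-1)-t)}$. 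The spine coincidence $f=f^{(j)}_\kappa$ with $x$ repeated needs separate handling: $x$ never appears in a non-spine position of a book edge, but $f$ and some $f^{(j)}_\kappa$ may use the same sample on different vertices, which causes no collapse. Choosing $C$ large then gives $\EE\langle f,\psi\rangle\le c^d+O(C^{-1})\le\frac54c^d$.

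\textbf{Truncated expectations.} Fixing a nonempty set $A$ of variables, i.e.\ pairs (sample, vertex) with vertex set $W$, leaves as surviving monomials the books using all of $W$. Their count with $t$ further vertices is $\beta(W,t)$, bounded by Lemma~\ref{countingBooks}\ref{lem:books:b} as $(2dk)^{3kd^2}C^{-1}c^dq^{d(k-1)-t}e(S)^dn^{-d}$. The weight is $q^{-(d(k-1)+1)}n^{d-1}e(S)^{-d}$ (up to collapses), and the remaining expectation is $q^{t}$ or $q^{t+1}$ depending on whether the spine is fixed. This yields $\EE_i\lesssim C^{-1}(2dk)^{3kd^2}c^d\,(nq)^{-1}$ times possibly $q^{-1}\cdot q$, so roughly $\EE''\le O(1/(nq))$, or at worst $O(C^{-1})$ when $W$ is only the spine. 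The spine case deserves care because fixing $x$ removes the factor $n$; the factor $n^{-1}$ from $\beta(W,t)$ compensates. I expect that careful bookkeeping of powers of $q$ versus fixed variables gives $\EE''\le K(nq)^{-1}+KC^{-1}$ for a constant $K=K(d,k,c)$.

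\textbf{Concluding, and the main obstacle.} Apply Theorem~\ref{thm:Kim-Vu} with degree $D=d(k-1)+1$ and $\lambda=(\alpha+D)\log n$. The deviation is $a_D(\EE'\EE'')^{1/2}\lambda^D$. With $\EE'\le 2c^d$ and $\EE''$ small, this is at most $\frac14c^d$ provided $\EE''\lambda^{2D}$ is small. The hypothesis $q\ge n^{-1}\log^{2dk+1}n$ makes $(nq)^{-1}\log^{2D}n\to0$. I anticipate the main obstacle here: the $C^{-1}$ part of $\EE''$ (from the spine truncation) does not decay with $n$, so it cannot absorb $\log^{2D}n$. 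I would resolve this by showing that truncating at the spine actually gives $\EE_1\le O(1/(nq))$ as well. Fixing $t_{(0,x)}$ removes the $q^{-1}$ from $f(x)$, leaving weight $n^{d-1}e(S)^{-d}\cdot\beta(x,\bfi,\cdot)\lesssim c^dn^{-1}$, and this uses $q\le1$. Only $\EE_0$ retains the factor $n$ from summing over spines, so all truncations should carry a factor $(nq)^{-1}$. The resulting failure probability is $O(n^{-\alpha-1})\le n^{-\alpha}$ for large $n$.
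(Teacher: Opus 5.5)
Your proposal is correct and is essentially the paper's proof. You write $\langle f,\psi\rangle$ as a multilinear polynomial in the $(d(k-1)+1)n$ Bernoulli variables, get $\EE\le\frac54c^d$ from Lemma~\ref{countingBooks}\ref{lem:books:a}, bound every truncated expectation by $O((nq)^{-1})$ via the book counts $\beta(W,t)$, and apply Kim--Vu with $\lambda$ of order $(\alpha+D)\log n$ using $nq\ge\log^{2dk+1}n$. There is one slip: truncating at $t_{(0,x)}$ keeps the coefficient $q^{-1}$, so the spine truncation is $\lesssim c^d(nq)^{-1}$ rather than $c^dn^{-1}$. That is still exactly the rate you need. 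It is also what your own uniform computation (weight times $\beta(W,t)$ times $q^t$) already gives for every nonempty $W$, so the feared $O(C^{-1})$ term never appears.
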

\begin{proof}
  Given $k,c,d$, let $C=10 dk\cdot(2k)^{3kd}c^d$ and let~$\alpha$ be given.
  Given $n$ sufficiently large and $q\in(0,1]$ with $q\ge n^{-1}\log^{2dk+1}n$, let $S$ be as in the lemma statement. 
We let $V_0,\dots,V_{d(k-1)}$ be independent samples from $[n]_q$, and $\rho_0,\dots,\rho_{d(k-1)}$ be the corresponding scaled indicator functions.

Let $\psi=\prod_{j=1}^d\ast_{i_j}(f_1^{(j)},\dots{},f_k^{(j)})$ for some \(i_1,\dots{},i_d\in [k]\) and some \(f^{(j)}_{\kappa}\in\{\rho_1,\dots,\rho_{d(k-1)}\}\) for all $j\in[d]$ and $\kappa\in[k]$. Writing the convolutions out explicitly, we have
    \[
    \psi(x)=\Big(\frac{n}{e(S)}\Big)^d\prod_{j=1}^d\,\,\sum_{s^{(j)}\in S_{i_j}(x)}\,\,\prod_{\kappa\neq i_j} f^{(j)}_{\kappa}(s^{(j)}_\kappa)\,,
    \]
    where we recall that $S_i(x)$ is the set of hyperedges of~$S$ whose $i$-th entry is~$x$.
    Letting \[\cS=\big\{(s^{(1)},\dots,s^{(d)})\colon s^{(j)}\in S_{i_j}(x) \text{ for each $j\in[d]$} \big\}\,,\]
    we can rewrite this as
    \[
    \psi(x)=\Big(\frac{n}{e(S)}\Big)^d
    \sum_{(s^{(1)},\dots,s^{(d)})\in\cS}\,\,
    \prod_{j=1}^d\,\,\prod_{\kappa\neq i_j} f^{(j)}_{\kappa}(s^{(j)}_\kappa)\,.
    \]    
    It is helpful to refer to each of the terms in this sum individually. To this end we define functions $g^{(j)}_\kappa:=qf^{(j)}_\kappa$ (for later convenience), and
    \[
    \hat{\psi}(x; s^{(1)},\dots{},s^{(d)}):= \prod_{j=1}^d\,\,\prod_{\kappa\neq i_j} g^{(j)}_{\kappa}(s^{(j)}_\kappa)\,,
    \]
    with which we have
    \[\psi(x) = \Big(\frac{n}{e(S)}\Big)^dq^{-d(k-1)} \sum_{(s^{(1)},\dots,s^{(d)})\in\cS}\,\,\hat{\psi}(x; s^{(1)},\dots{},s^{(d)})\,. \]

    Given $f\in\{\rho_0,\dots,\rho_{d(k-1)}\}$, our goal is to show that with high probability \(\langle f , \psi \rangle \le \frac{3}{2}c^{d}\).
    Again, we define $g:=qf$, which allows us to write
    \[ Y:= \langle f , \psi \rangle = \frac{1}{n}\bigg(\frac{n}{e(S)}\bigg)^dq^{-d(k-1)-1}\sum_{x\in [n]}\,\,\sum_{(s^{(1)},\dots,s^{(d)})\in\cS}g(x)\hat{\psi}(x; s^{(1)},\dots,s^{(d)})\,.\]
    This is a random variable which is a polynomial in the variables
    $q\rho_\ell(y)$, where $0\le\ell\le d(k-1)$ and $y\in[n]$. These are
    \[\tilde n:=n(d(k-1)+1)\]
    independent Bernoulli random variables which take value $1$
    with probability $q$. In each term of this polynomial, at most
    \[\tilde d=d(k-1)+1\]
    different random variables appear.  It is convenient to drop the scaling
    factor and write
    \begin{equation}\label{eq:moment:Y}
    \hat{Y}:= \sum_{x\in [n]}\,\,\sum_{(s^{(1)},\dots,s^{(d)})\in\cS}g(x)\hat{\psi}(x; s^{(1)},\cdots{},s^{(d)})\,,
    \quad\text{giving}\quad Y=\frac{1}{n}\bigg(\frac{n}{e(S)}\bigg)^dq^{-d(k-1)-1}\hat Y    
    \,.
    \end{equation}

    Our plan now is to first calculate the expectation of~$Y$ and then use
    the Kim--Vu inequality (Theorem~\ref{thm:Kim-Vu}) to prove concentration.
    Observe that each term $g(x)\hat{\psi}(x; s^{(1)},\dots,s^{(d)})$ of
    \(\hat{Y}\) corresponds to a tuple \((s^{(1)},\dots{},s^{(d)})\) of \(d\)
    hyperedges of \(S\) such that for each $j\in[d]$ the $i_j$-th vertex
    of~$s^{(j)}$ is~$x$.  In other words, $(s^{(1)},\dots{},s^{(d)})$ is a book
    with spine~$x$ and index tuple $\bfi=(i_1,\dots,i_d)$. Observe further
    that for such a book with size $t=|\bigcup_i s^{(i)}\setminus\{x\}|$, we
    have $k-1\le t\le d(k-1)$, because apart from~$x$ each hyperedge $s^{(j)}$
    contains $k-1$ distinct vertices. In addition, the number of distinct random
    variables $q\rho_\ell(y)$ used in the term $g(x)\hat{\psi}(x;
    s^{(1)},\dots,s^{(d)})$ is at least $t+1$.  With the help of
    Lemma~\ref{countingBooks}\ref{lem:books:a} we can now bound the number $\beta(x,\bfi,t)$ of
    books with spine~$x$, index tuple~$\bfi$, and size~$t$ (recalling that the
    case $t=d(k-1)$ has a different bound to the other cases) to conclude that
    \begin{equation*}\begin{split}
      \EE(\hat{Y}) &\le\sum_{x\in[n]}\sum_{t=k-1}^{d(k-1)}q^{t+1}\beta(x,\bfi,t) \\
      &\le n\bigg(q^{d(k-1)+1}c^de(S)^dn^{-d}+\sum_{t=k-1}^{d(k-1)-1}
      (2k)^{3kd} C^{-1}c^d q^{d(k-1)-t}e(S)^dn^{-d}\cdot q^{t+1}\bigg)\,.
    \end{split}\end{equation*}
    Plugging this into~\eqref{eq:moment:Y}, we obtain
    \begin{equation}\label{eq:momentbound:expY}
      \EE(Y)\le c^d+\sum_{t=k-1}^{d(k-1)-1} (2k)^{3kd} C^{-1}c^d
      \le c^d+ dk\cdot(2k)^{3kd} C^{-1}c^d
      \le \frac54c^d\,,
    \end{equation}
    where the final inequality is by choice of $C$.

    Our next aim, following the notation of Theorem~\ref{thm:Kim-Vu}, is to
    calculate $\EE_i(Y)$ for each $i\ge 1$, where
    $\EE_i(Y)=\max_{|A|=i}\EE(Y_A)$ such that $A$ is a set of our Bernoulli
    random variables and the random variable~$Y_A$ is obtained from $Y$ by
    removing all terms which do not contain all variables from $A$, and removing
    the variables in~$A$ from the product of Bernoulli random variables in the
    remaining terms, with the usual convention that an empty product evaluates
    to $1$.  Again, dropping the scaling factor, we write
    $\hat{Y}_A:=\big(\frac{1}{n}(\frac{n}{e(S)})^dq^{-d(k-1)-1}\big)^{-1}Y_A$.
    
    Consider any term of $\hat{Y}_A$ and let $g(x)\hat{\psi}(x;
    s^{(1)},\dots,s^{(d)})$ be the corresponding term of $\hat Y$.  The term of
    $\hat{Y}_A$ contains at most $d(k-1)+1-|A|$ distinct random variables.  The
    corresponding term of $\hat Y$ must use all random variables
    $q\rho_\ell(y)\in A$. Hence $W\subset\bigcup_i s^{(i)}$ for
    \[W:=\big\{y\in[n]\colon q\rho_\ell(y)\in A\text{ for some }0\le\ell\le
    d(k-1)\big\}\,.\] The elements of $\bigcup_i s^{(i)}\setminus W$, on the other
    hand, correspond to distinct random variables not in $A$. Therefore, the
    term $g(x)\hat{\psi}(x; s^{(1)},\dots,s^{(d)})$ of $\hat Y$ corresponds to a
    book with spine~$x$, containing all elements in~$W$, and~$t$ vertices
    outside~$W$ with $t\le d(k-1)+1-|A|$. Since a term with at least~$t$
    distinct random variables has expectation at most $q^t$, writing
    $\beta(W,t)$ for the number of books on $[n]$ (with any spine)  which use all
    vertices of $W$ and contain $t$ vertices outside $W$, we can bound
    \begin{equation*}
      \EE(\hat{Y}_A)\le\sum_{t=0}^{d(k-1)+1-|A|}\beta(W,t)q^t\,.
    \end{equation*}
    Using Lemma~\ref{countingBooks}\ref{lem:books:b}, we get
    \begin{align*}
      \EE(\hat{Y}_A)&
      \le \sum_{t=0}^{d(k-1)+1-|A|} (2dk)^{3kd^2}C^{-1}c^d q^{d(k-1)-t} e(S)^d n^{-d}\cdot q^t \\
      &\le (2dk)^{4kd^2} c^d e(S)^d n^{-d} q^{d(k-1)}
    \end{align*}
    where the final inequality uses $C\ge1$. It follows that
    \[\EE(Y_A)\le \frac{1}{n}\Big(\frac{n}{e(S)}\Big)^dq^{-d(k-1)-1} \cdot (2dk)^{4kd^2} c^d e(S)^d n^{-d} q^{d(k-1)}
    = (2dk)^{4kd^2} c^d (nq)^{-1}
    \,.\]
    
    Observe that this quantity does not depend on $A$ (we only used that $A$ is
    non-empty and hence $W$ is non-empty), and by our assumption on $q$, it
    tends to zero as $n$ tends to infinity. In particular, using the notation of the Kim--Vu inequality (Theorem~\ref{thm:Kim-Vu}), for all sufficiently
    large $n$, since $c\ge1$ and by~\eqref{eq:momentbound:expY} we have
    \[\EE(Y)=\EE'(Y)\le \frac54c^d \qquad\text{and}\qquad 
    \EE''(Y)\le (2dk)^{4kd^2} c^d (nq)^{-1}\,.\]
    By Theorem~\ref{thm:Kim-Vu},
    applied with $\tilde n=\big(d(k-1)+1\big)n$ instead of~$n$ and $\tilde d=d(k-1)+1$ instead of~$d$, we obtain for
    $\lambda=(\tilde d-1+2\alpha)\log n$ that    
    \[P\Big[|Y-\mathbb{E}(Y)|> 8^{\tilde d}\tilde d!^{1/2}(\mathbb{E}'(Y)\mathbb{E}''(Y))^{1/2}\lambda^{\tilde d}\Big]= O\Big(\exp\big(-\lambda+(\tilde d-1)\log(\tilde n)\big)\Big)=O\big(n^{-2\alpha}\big)\le n^{-\alpha}\,,\]
    where the last inequality is for sufficiently large~$n$.
    To complete the proof, we observe
    \begin{multline*}
    \EE(Y)+8^{\tilde d}(\tilde d)!^{1/2}\big(\EE'(Y)\EE''(Y)\big)^{1/2}\lambda^{\tilde d} \\
    \le 
    \frac54c^d + (8dk)^{dk}\Big(\frac54c^d\cdot (2dk)^{4kd^2} c^d (nq)^{-1}\Big)^{1/2}\big((dk+2\alpha)\log n\big)^{dk}
    \le\frac32c^d
    \end{multline*}
    since $n$ is sufficiently large and since $nq\ge\log^{2dk+1} n$ by our assumption on $q$.
\end{proof}

We are now ready to prove our moment bound lemma, Lemma~\ref{cor:momentboundcor}. We need to prove that with high probability we have $\langle\mu_j,\phi\rangle,\langle\nu_{j'},\phi\rangle\le 2c^\ell$ for all $\phi\in\Phi(\mu_1,\dots,\mu_L,\nu_1,\dots,\nu_{\lceil Lp^{-1}\rceil})^\ell$. The idea for this is as follows. As discussed in Section~\ref{sec:moment-overview}, it is enough to prove the statement only for $\phi$ a largest vertex. There are only polynomially many such, and Lemma~\ref{lem:momentboundlemma} almost gives us the required probability to take a union bound over largest vertices. The gap in the above idea is that the \(\mu_j\) are not independent; and similarly the \(\nu_{j'}\) are not independent, so we cannot apply Lemma~\ref{lem:momentboundlemma} directly. However, any given largest vertex $\phi$ depends on only at most $d(k-1)$ of the functions $\mu_1,\dots,\mu_L,\nu_1,\dots,\nu_{\lceil Lp^{-1}\rceil}$, and importantly this is much less than $L$. We use a coupling with slightly larger independent functions, to which we can apply Lemma~\ref{lem:momentboundlemma}; the factor $\tfrac32$ in the conclusion of Lemma~\ref{lem:momentboundlemma} leaves room for the slight loss that this coupling produces.

\begin{proof}[Proof of Lemma~\ref{cor:momentboundcor}]
  Given $k,c,d$, let \(\alpha=10dk\) and $L_0=100(dk)^4$. Given $L\ge L_0$,
  choose $C$ such that $C/(2L)$ is sufficiently large for
  Lemma~\ref{lem:momentboundlemma} with input $k,c,d$, and suppose in addition
  $C\ge10dk$. Suppose $n$ is sufficiently large and $p\in(0,1]$. Let us be in
  Setting~\ref{mainSetting} and suppose the $(c,C,p,n)$-boundedness conditions
  hold.
  Choose \(0<q<1\) such that
  \begin{equation}\label{eq:moments:q}
    (1-q)^{d(k-1)+1}=1-\big(d(k-1)+1\big)\tfrac{p}{L}\,,
  \end{equation}    
  and \(0<q'<1\) such that
  \[(1-q')^{d(k-1)+1}=1-\big(d(k-1)+1\big)\tfrac{1}{\lceil Lp^{-1}\rceil}\,.\]
  Since $\tfrac{1}{\lceil Lp^{-1}\rceil}\le\tfrac{p}{L}$ we have
  \begin{equation}\label{eq:moments:smaller}
    q'\le q\,. 
  \end{equation}
  Further, note that $\frac{p}{2L}\le q\le \frac{p}{L}$.
  Moreover, by the choice of~$L_0$,
  \begin{equation}\label{eq:moments:two}
    \frac32(\lceil Lp^{-1}\rceil q)^{d(k-1)+1}\le 2\,.
  \end{equation}
  We conclude from $\frac{p}{2L}\le q$, the choice of~$C$ and since the $(c,C,p,n)$-boundedness conditions hold that
  for~$n$ sufficiently large we have $q\ge(\log^{2dk+1}n)n^{-1}$  and that for each $1\le\kappa\le k$
  \[\Delta_\kappa(S)\le cC^{1-\kappa}p^{\kappa-1}\tfrac{e(S)}{n}\le c(C/2L)^{1-\kappa}q^{\kappa-1}\tfrac{e(S)}{n}\,.\]
  This verifies that the conditions of Lemma~\ref{lem:momentboundlemma} are satisfied for~$q$.

     Let \(P=\{\mu_1,\dots{},\mu_L,\nu_1,\dots{},\nu_{\lceil Lp^{-1}\rceil}\}\). These functions are random variables, whose value depends on $X=[n]_p$ and the uniform random choices of $\chi_\mu$ and $\chi_\one$.
Now, choose $\ell\in[d]$ and $f\in P$, and consider the optimisation problem $\max_{\phi\in\Phi(P)^\ell}\langle f,\phi\rangle$.
By Lemma~\ref{lem:max-product}, the maximum is achieved at a function~$\psi$ that is a product of $P$-largest test functions. It follows that either $\psi=\one$ or
$\psi$ is of the form $\psi=\prod_{t=1}^\ell\ast_{i_t}(f_{1,t},\dots,f_{k,t})$, where $i_1,\dots{},i_\ell\in[k]$ and $f_{j,j'}\in P$ for all $j\in[k]\setminus\{i_{j'}\}$ and $j'\in[\ell]$. Hence, in the former case, which we call Case~1, we are interested in a bound on $\langle\one,\mu_i\rangle$ and $\langle\one,\nu_i\rangle$, while in the latter case, which we call Case~2, we are interested in bounding $\langle \prod_{t=1}^\ell\ast_{i_t}(f_{1,t},\dots,f_{k,t}),\mu_i \rangle$ and $\langle \prod_{t=1}^\ell\ast_{i_t}(f_{1,t},\dots,f_{k,t}),\nu_i \rangle$.
Observe that there are at most
\[N^*=d\cdot k^d\cdot(3Lp^{-1})^{kd+1}\]
ways to choose~$\ell$, $f$, and such $\psi$.

To deal with Case~1, we use the Chernoff bound from Theorem~\ref{thm:chernoff} and a union bound to get that with probability at least $1-n^{-1}$ the following event~$\cE_1$ holds: for each $i\in[L]$ the set $X_i$ has size at most $\tfrac32pn/L$, and for each $j\in[\lceil Lp^{-1}\rceil]$ the set $\chi_\one^{-1}(j)$ has size at most $\tfrac32pn/L$. If this event occurs, we get $\langle\one,\mu_i\rangle,\langle\one,\nu_{j}\rangle\le 2$.

To deal with Case~2, more work is needed. Let us fix one choice for $\psi=\prod_{t=1}^\ell\ast_{i_t}(f_{1,t},\dots,f_{k,t})$ and $f\in P$ as described above.
This~$\psi$ and~$f$ are composed of various functions $\mu_t$ and $\nu_{t'}$. Let $T_0\subset[L]$ be the set of indices~$t$ of the functions~$\mu_t$ we are using, and $T'_0\subset[\lceil Lp^{-1}\rceil]$ be the set of indices of the functions~$\nu_{t'}$ we are using. We have $|T_0|+|T_0'|\le \ell(k-1)+1\le d(k-1)+1$. We now choose arbitrary supersets $T_1,T$ of~$T_0$ with $T_0\subset T_1\subset T\subset[L]$, and
arbitrary supersets $T'_1,T'$ of~$T'_0$ with $T'_0\subset T'_1\subset T'\subset[\lceil Lp^{-1}\rceil]$, such that
$|T|= d(k-1)+1$, $|T'|= d(k-1)+1$, and $|T_1|+|T'_1|=\ell(k-1)+1$.
We will use the supersets~$T$ and~$T'$ for the coupling we describe shortly, and the supersets~$T_1$ and~$T'_1$ for the application of Lemma~\ref{lem:momentboundlemma}.

Consider the following random experiment. For each \(t\in T\), we first generate an independent binomial random set \(Z_t=[n]_q\).
We then obtain disjoint sets \(Z'_t\) for \(t\in T\) from this as follows. For each \(x\in\bigcup_{t\in T}Z_t\) independently, pick \(t\) uniformly at random among all the indices~$i$ such that $Z_i$ contains~$x$, and add~$x$ to $Z'_t$.
By definition of \(q\) in~\eqref{eq:moments:q}, for a given \(x\in[n]\) the probability that \(x\in \bigcup_{t\in T}Z_t\) is \(\tfrac{|T|p}{L}\), and the events \(x\in Z'_t\) are disjoint over \(t\in T\). Since \(x\) is equally likely to appear in any given \(Z'_t\) for \(t\in T\), we conclude that the probability of \(x\in Z'_t\) is \(\tfrac{p}{L}\). Observe that the events that \(x\in X=[n]_p\) and \(\chi_\mu(x)=t\), which are also disjoint events over \(t\in T\), also  have probability \(\tfrac{p}{L}\). It follows that the distribution of \((Z'_t)_{t\in T}\) is the same as the distribution of $(X_t)_{t\in T}$, where we recall that $X_t=\{x\in X\colon\chi_\mu(x)=t\}$.
So we can consider the coupling in which the sets $(X_t)_{t\in T}$ are generated according to the random experiment we performed to obtain $(Z'_t)_{t\in T}$.
Let \(\hat{\mu}_t(x)=q^{-1}\mathbbm{1}(x\in Z_t)\), where it is important (for applying Lemma~\ref{lem:momentboundlemma} below) that we use~$Z_t$ instead of~$Z'_t$. In this coupling, because $\mu_t(x)=Lp^{-1}\mathbbm{1}(x\in X_t)$, and $x\in Z'_t$ only if $x\in Z_t$, we have
\begin{equation}\label{eq:momentcor:mu}
  0\le\mu_t(x)\le Lp^{-1}q\hat{\mu}_t(x)\le\lceil Lp^{-1}\rceil q\hat{\mu}_t(x)\,.
\end{equation}

We now perform a similar, independent, random experiment. For each \(t'\in T'\), we generate independently \(W_{t'}=[n]_q\) where \(q\) is as defined above.
We generate \(W''_{t'}\) by sampling the elements of \(W_{t'}\) independently with probability \(\frac{q'}{q}\), which is at most~$1$ by~\eqref{eq:moments:smaller}.
Note that the \(W''_{t'}\) are independent copies of \([n]_{q'}\). Finally,
we again generate disjoint sets \(W'_{t'}\) with \(t'\in T'\) from this as follows. For each \(x\in\bigcup_{t'\in T'}W''_{t'}\) independently, pick \(t'\) uniformly at random among all the indices~$i$ such that $W''_i$ contains~$x$, and add~$x$ to $W'_{t'}$.
As before, the distribution of \((W'_{t'})_{t'\in T'}\) is identical to the distribution of \((\chi^{-1}_\one(t'))_{t'\in T'}\) and we consider the coupling in which the sets \((\chi^{-1}_\one(t'))_{t'\in T'}\) are generated by the random experiment described to generate \((W'_{t'})_{t'\in T'}\). Let \(\hat{\nu}_{t'}(x)=q^{-1}\mathbbm{1}(x\in W_{t'})\). In this coupling, because $\nu_{t'}(x)=\lceil Lp^{-1}\rceil \mathbbm{1}\big(x\in \chi^{-1}_\one(t')\big)$, and $x\in\chi^{-1}_\one(t')$ only if $x\in W_{t'}$, we have
\begin{equation}\label{eq:momentcor:nu}
  0\le\nu_{t'}(x)\le \lceil Lp^{-1}\rceil q\hat{\nu}_{t'}(x)\,.
\end{equation}

Let \(\hat{\psi}\) denote the function obtained from~$\psi$ by replacing each \(\mu_t\) with \(\hat{\mu}_t\) for \(t\in T\), and each \(\nu_t\) with \(\hat{\nu}_t\) for \(t\in T'\). Then~\eqref{eq:momentcor:mu} and~\eqref{eq:momentcor:nu} imply
\[\langle\mu_i,\psi\rangle\le\big(\lceil Lp^{-1}\rceil q\big)^{d(k-1)+1}\langle\hat{\mu}_i,\hat{\psi}\rangle\qquad\text{and}\qquad
\langle\nu_j,\psi\rangle\le\big(\lceil Lp^{-1}\rceil q\big)^{d(k-1)+1}\langle\hat{\nu}_j,\hat{\psi}\rangle\,.\]
Now, \(\hat{\psi}\) is a \((q,\ell)\)-special product defined over the $\ell(k-1)+1$ independent samples of $[n]_q$ given by $(Z_t)_{t\in T_1}$ and $(W_{t'})_{t'\in T'_1}$. So by Lemma~\ref{lem:momentboundlemma} with input $k,c,\ell,\alpha,q$
we get that for fixed $i\in T_1$ and $j\in T'_1$ the events
\[\langle\hat{\mu}_i,\hat{\psi}\rangle >\tfrac32c^{\ell}\qquad \text{and}\qquad\langle\hat{\nu}_j,\hat{\psi}\rangle > \tfrac32c^\ell\]
each have probability at most \(\tfrac{1}{n^{\alpha}}\). Since \(\tfrac32(\lceil Lp^{-1}\rceil q)^{d(k-1)+1}\le 2\) by~\eqref{eq:moments:two}, the same probability bounds hold on the events
\[\langle{\mu}_i,{\psi}\rangle>2c^{\ell}\qquad\text{and}\qquad\langle{\nu}_j,{\psi}\rangle>2c^\ell\,.\]
Let~$\cE_2$ be the event that these bad events do not hold for any~$i$, $j$, $\psi$, and $\ell$.
Taking the union bound over the~$N^*$ possible choices for~$i$, $j$, $\psi$, and $\ell$, we see that the probability of~$\cE_2$ not holding is at most
\[n^{-\alpha}\cdot d\cdot k^d\cdot(3Lp^{-1})^{kd+1}\,,\]
which tends to~$0$ as~$n$ tends to infinity as $p^{-1}\le n$ and by choice of~$\alpha$.

We conclude that the probability that~$\cE_1$ and~$\cE_2$ hold is $1-o(1)$. In this case, the conclusion of our lemma holds.
\end{proof}

\section{Proof of the moment bounds lemma with deletion}\label{sec:delete}

In this section we prove Lemma~\ref{lem:deletionbounds}, which states that a deletion \(\tilde{X}\) satisfying the stated moment bounds exists with exponentially small failure probability. This follows from the Harris inequality~\cite{Harris1960}.  Recall that a subset \(\mathcal{D}\) of the power set \(\mathcal{P}\big([n]\big)\) is called \emph{decreasing} if whenever \(S'\subseteq S\in \mathcal{D}\) we have \(S'\in\mathcal{D}\), and \emph{increasing} if the same statement holds with \(\subseteq\) replaced by \(\supseteq\).

\begin{theorem}[Harris inequality]\label{thm:harris} For any \(p\in[0,1]\) and \(n\), let \(\mathcal{A}\) and \(\mathcal{B}\) be two subsets of \(\mathcal{P}\big([n]\big)\) which are both decreasing. Then
\[\mathbb{P}\big([n]_p\in \mathcal{A}\cap\mathcal{B}\big)\ge\mathbb{P}\big([n]_p\in \mathcal{A}\big)\mathbb{P}\big([n]_p\in \mathcal{B}\big)\,.\]
\end{theorem}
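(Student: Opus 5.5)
We prove the Harris inequality by induction on $n$, conditioning on whether the last element $n$ lies in the random set. The base case $n=0$ is immediate: then $\mathcal{P}([0])=\{\emptyset\}$, each of $\mathcal{A},\mathcal{B}$ is either empty or everything, and the inequality holds trivially. The case $n=1$ can also be checked by hand if preferred.

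For the induction step, given decreasing families $\mathcal{A},\mathcal{B}\subset\mathcal{P}([n])$, we define the sections in $\mathcal{P}([n-1])$:
\[\mathcal{A}_0=\{T\subset[n-1]:T\in\mathcal{A}\}\qquad\text{and}\qquad\mathcal{A}_1=\{T\subset[n-1]:T\cup\{n\}\in\mathcal{A}\}\,,\]
and $\mathcal{B}_0,\mathcal{B}_1$ analogously. These sections have two properties. First, each section is again decreasing in $\mathcal{P}([n-1])$, directly from the definition. Second, since $\mathcal{A}$ is decreasing we have $\mathcal{A}_1\subset\mathcal{A}_0$, and likewise $\mathcal{B}_1\subset\mathcal{B}_0$. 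Write $a_i=\mathbb{P}([n-1]_p\in\mathcal{A}_i)$ and $b_i=\mathbb{P}([n-1]_p\in\mathcal{B}_i)$. The inclusions give $a_1\le a_0$ and $b_1\le b_0$.

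By independence of the coordinates, the event that $[n]_p$ contains $n$ is independent of $[n]_p\cap[n-1]$, which is distributed as $[n-1]_p$. Hence
\[\mathbb{P}([n]_p\in\mathcal{A}\cap\mathcal{B})=p\,\mathbb{P}\big([n-1]_p\in\mathcal{A}_1\cap\mathcal{B}_1\big)+(1-p)\,\mathbb{P}\big([n-1]_p\in\mathcal{A}_0\cap\mathcal{B}_0\big)\,.\]
Applying the induction hypothesis to each pair $(\mathcal{A}_i,\mathcal{B}_i)$ bounds this from below by $pa_1b_1+(1-p)a_0b_0$. The same conditioning shows $\mathbb{P}([n]_p\in\mathcal{A})=pa_1+(1-p)a_0$, and similarly for $\mathcal{B}$.

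It remains to establish the elementary inequality
\[pa_1b_1+(1-p)a_0b_0\ge\big(pa_1+(1-p)a_0\big)\big(pb_1+(1-p)b_0\big)\,.\]
Expanding both sides, the difference of the left side minus the right side equals $p(1-p)(a_1-a_0)(b_1-b_0)$. This is non-negative because $a_1-a_0$ and $b_1-b_0$ are both non-positive. This is a two-point Chebyshev sum inequality, and it is the only place where monotonicity is genuinely used. The algebraic identity is the one step to check carefully, but it is routine, so the induction closes and the theorem follows.
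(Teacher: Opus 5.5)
Your proof is correct. The paper gives no proof of this statement to compare against: it quotes the Harris inequality as a classical result, citing~\cite{Harris1960}, and uses it only as a black box in the proof of Theorem~\ref{thm:delete}.

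Your argument is the standard induction proof of the Harris--Kleitman inequality, and each step checks out:
\begin{itemize}
\item the sections $\mathcal{A}_0\supseteq\mathcal{A}_1$ are again decreasing;
\item conditioning on whether $n\in[n]_p$ gives the stated decomposition, using $(\mathcal{A}\cap\mathcal{B})_i=\mathcal{A}_i\cap\mathcal{B}_i$;
\item the identity $pa_1b_1+(1-p)a_0b_0-\bigl(pa_1+(1-p)a_0\bigr)\bigl(pb_1+(1-p)b_0\bigr)=p(1-p)(a_1-a_0)(b_1-b_0)$ holds.
\end{itemize}
The degenerate cases $p\in\{0,1\}$ need no separate treatment. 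Compared with the paper's bare citation, your version makes the argument self-contained in a few lines.
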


Sp\"ohel, Steger and Warnke~\cite{SSW} deduced the following theorem from the Harris inequality. They state their result for the specific case \([n]=\binom{[N]}{2}\) (i.e.\ for the random graph), but their proof works verbatim in the more general situation. For completeness, we give the details below.

\begin{theorem}[{\cite[Theorem 4]{SSW}}]\label{thm:delete}
 Let \(\mathcal{D}\) be a decreasing subset of \(\mathcal{P}\big([n]\big)\). Given \(\alpha,\delta\in(0,1]\), let \(p\in(0,1]\) be such that \(\mathbb{P}\big([n]_p\in\mathcal{D}\big)\ge\delta\). Then with probability at least \(1-\delta^{-1}\exp\big(-\tfrac13\alpha^2pn\big)\) the random set $X=[n]_p$ has a subset with at least \((1-\alpha)pn\) elements which is in \(\mathcal{D}\).
\end{theorem}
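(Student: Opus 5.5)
We reduce the statement to the Harris inequality (Theorem~\ref{thm:harris}) via the standard Sp\"ohel--Steger--Warnke argument. Let \(\mathcal{A}\) be the family of sets \(Z\subseteq[n]\) such that \emph{no} subset of \(Z\) with at least \((1-\alpha)pn\) elements lies in \(\mathcal{D}\); the bad event is exactly \(X\in\mathcal{A}\). The plan is to show \(\mathbb{P}(X\in\mathcal{A})\cdot\delta\le\exp(-\tfrac13\alpha^2pn)\), which after dividing by \(\delta\) is the claimed bound.

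The key observation is that \(\mathcal{A}\) is not decreasing, so we instead work with the auxiliary family
\[\mathcal{B}:=\big\{Z\subseteq[n]\colon \text{there is } Z'\in\mathcal{D} \text{ with } |Z\setminus Z'|\le\alpha pn \text{ and } Z'\subseteq Z\big\}\ldots\]
Rather than this, it is cleaner to argue by symmetry with the increasing family. Let \(\mathcal{C}\) be the family of sets \(Z\) such that every \(D\in\mathcal{D}\) satisfies \(|Z\setminus D|>\alpha pn\). This family is increasing, since enlarging \(Z\) only enlarges \(Z\setminus D\). Its complement \(\mathcal{C}^c\) is therefore decreasing, and \(\mathcal{D}\) is decreasing.

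The main step is to show that \(X\in\mathcal{A}\) together with \(|X|\ge pn\) implies \(X\in\mathcal{C}\). Indeed, suppose some \(D\in\mathcal{D}\) had \(|X\setminus D|\le\alpha pn\). Then \(X\cap D\subseteq D\) lies in \(\mathcal{D}\) by monotonicity, and it has at least \(pn-\alpha pn\) elements, which contradicts \(X\in\mathcal{A}\).

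Next we handle the event \(|X|<pn\). Here we cannot use this argument directly. Instead, we apply the same reasoning to a coupled \(X\) of slightly higher density, or simply note that we need \(X\cap D\) large. The cleaner route is to let \(\mathcal{C}\) be the family of \(Z\) with \(|Z\cap D|<(1-\alpha)pn\) for all \(D\in\mathcal{D}\). This family is \emph{decreasing} in \(Z\), and \(\mathcal{A}\) coincides with \(\mathcal{C}\) exactly. Then Harris gives
\[\mathbb{P}(X\in\mathcal{C}\cap\mathcal{D})\ge\mathbb{P}(X\in\mathcal{C})\,\mathbb{P}(X\in\mathcal{D})\ge\delta\,\mathbb{P}(X\in\mathcal{A}).\]
Now \(X\in\mathcal{C}\cap\mathcal{D}\) forces \(D=X\) to be admissible, so \(|X|=|X\cap X|<(1-\alpha)pn\). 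By the Chernoff bound (Theorem~\ref{thm:chernoff}) this has probability at most \(\exp(-\tfrac13\alpha^2pn)\). For \(\alpha=1\) this is trivially true, since the event is empty.

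Rearranging gives \(\mathbb{P}(X\in\mathcal{A})\le\delta^{-1}\exp(-\tfrac13\alpha^2pn)\), as required. The only delicate point is choosing the right decreasing family so that \(\mathcal{A}\) is exactly that family and its intersection with \(\mathcal{D}\) is a small-\(|X|\) event. The choice of \(\mathcal{C}\) above does this, and the rest is routine.
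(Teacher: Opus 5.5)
Your final argument is correct and is essentially the paper's proof: your family \(\mathcal{A}=\mathcal{C}\) is exactly the complement \(\overline{\mathcal{S}}\) of the paper's increasing family \(\mathcal{S}\) of sets containing a subset in \(\mathcal{I}\cap\mathcal{D}\). The paper likewise applies Harris to this family and \(\mathcal{D}\), observes \(\overline{\mathcal{S}}\cap\mathcal{D}\subseteq\overline{\mathcal{I}}\) (your ``take \(D=X\)'' step), and finishes with Chernoff. Your opening claim that \(\mathcal{A}\) is not decreasing is false: it is trivially decreasing, as your own identity \(\mathcal{A}=\mathcal{C}\) shows. The abandoned detours through \(\mathcal{B}\) and the increasing version of \(\mathcal{C}\) should therefore simply be deleted.
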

\begin{proof}
 Let \(\mathcal{I}\subset\mathcal{P}([n])\) consist of all sets with at least \((1-\alpha)pn\) elements. Let \(\mathcal{S}\subset\mathcal{P}([n])\) consist of all sets $S$ which have a subset in \(\mathcal{I}\cap\mathcal{D}\). Clearly, $\mathcal S$ is increasing, so \(\overline{\mathcal{S}}=\mathcal{P}([n])\setminus\mathcal{S}\) is decreasing.  By  Theorem~\ref{thm:harris}, we have \(\mathbb{P}\big([n]_p\in\overline{\mathcal{S}}\big)\mathbb{P}\big([n]_p\in\mathcal{D}\big)\le\mathbb{P}\big([n]_p\in\overline{\mathcal{S}}\cap\mathcal{D}\big)\). Rearranging, and observing \(\overline{\mathcal{S}}\cap\mathcal{D}\subseteq\overline{\mathcal{I}}\), we get
 \[\mathbb{P}\big([n]_p\in\overline{\mathcal{S}}\big)\le\frac{\mathbb{P}\big([n]_p\in\overline{\mathcal{S}}\cap\mathcal{D}\big)}{\mathbb{P}\big([n]_p\in\mathcal{D}\big)}\le\delta^{-1}\mathbb{P}\big([n]_p\in\overline{\mathcal{I}}\big)\,.\]
The Chernoff bound from Theorem~\ref{thm:chernoff} applied with~$\alpha$ instead of~$\delta$ now gives \[\mathbb{P}\big([n]_p\in\overline{\mathcal{I}}\big)\le\exp\big(-\tfrac13\alpha^2pn\big)\,,\] which gives the required probability bound.
\end{proof}

We now have the tools we need to prove  Lemma~\ref{lem:deletionbounds}.

\begin{proof}[Proof of Lemma~\ref{lem:deletionbounds}]
Given $k\ge2$, $\delta>0$ and $c\ge1$, and $d''$ a positive integer, let $L_0$ be returned by Lemma~\ref{cor:momentboundcor} for input $k,d'',c$. Given $L\ge L_0$, let $C$ be sufficiently large for Lemma~\ref{cor:momentboundcor}.
Let $n$ be a sufficiently large integer, and $p\in(0,1]$. Let us be in Setting~\ref{mainSetting}, and suppose the $(c,C,p,n)$-boundedness conditions hold.
 
  Let \(\mathcal{D}\subset\mathcal{P}([n])\) be the family of (non-random) sets~$Y$ with the following property. Let \(\mu(y)=p^{-1}\mathbbm{1}(y\in Y)\) be a scaled indicator function of~$Y$. Then uniform random choices of \(\chi_\mu\) and \(\chi_\mathbf{1}\) satisfy with probability at least \(0.9\) that for all \(1\le\ell\le d''\), all $i\in[L]$, all $j\in[\lceil Lp^{-1}\rceil]$, and all $\phi\in \Phi(\mu_1,\dots{},\mu_L,\nu_1,\dots{},\nu_{\lceil Lp^{-1}\rceil})^\ell$ we have
 \[ \langle\mu_i,\phi\rangle\le 2 c^\ell\qquad\text{and}\qquad\langle\nu_{j},\phi\rangle\le 2 c^\ell\,.\]
 Observe that all the left hand sides of these conditions are (for any fixed $\chi_\one,\chi_\mu$) increasing in \(Y\), which means that the left hand sides are not smaller for supersets of~$Y$ than for~$Y$.
This implies that the family \(\mathcal{D}\) is decreasing. Furthermore, Lemma~\ref{cor:momentboundcor} states that \(\mathbb{P}([n]_p\in\mathcal{D})=1-o(1)\ge\tfrac12\).

We now apply  Theorem~\ref{thm:delete} with this \(\mathcal{D}\), with \(\alpha=\tfrac12\delta\), and with \(\mathbb{P}(\mathcal{D})\ge\tfrac12\), to deduce that with probability at least \(1-2\exp\big(-\tfrac1{12}\delta^2pn\big)\) the set $X=[n]_p$ has a subset \(\tilde{X}\) which is in \(\mathcal{D}\) and which has at least \(\big(1-\tfrac12\delta\big)pn\) elements. Additionally, the probability that \([n]_p\) has more than \(\big(1+\tfrac12\delta\big)pn\) elements is by the Chernoff bound from Theorem~\ref{thm:chernoff} at most \(\exp\big(-\tfrac{1}{12}\delta^2pn\big)\). Suppose that \(X\) satisfies both conditions, which occurs with probability at least \(1-3\exp\big(-\tfrac{1}{12}\delta^2pn\big)\) by the union bound. Then \(|X\setminus\tilde{X}|\le\delta p n\) and $\tilde X\in\mathcal{D}$ as required.
\end{proof}

\section{Transference for multiple counts}\label{sec:multitransfer}

The following theorem is an extension of our transference principle (Theorem~\ref{thm:GenSimp}) to a scenario that allows us to count multiple structures. 

\begin{theorem}[transference principle for multiple hypergraphs]\label{thm:mainmulti}
 For every $k,r,t\ge2$, $c\ge1$, and \(\eps,\rho>0\) there exists \(C>0\) such that the following holds. Given a sufficiently large $n$, and $S_1,\dots,S_t$ ordered hypergraphs of uniformities $2\le k_1,\dots,k_t\le k$ on $[n]$, a set $\Sigma$ of structure functions, and for each $i\in[t]$ a set $\Omega_i$ of subcounts on $S_i$, suppose $0<p\le 1$ is such that the following hold for each $i\in[t]$. 
 \begin{enumerate}[label=\rom]
 \item $p\ge n^{-1+\rho}$,
 \item $|\Sigma|,|\Omega_i|\le \exp(C^{-1}pn)$,
 \item\label{itm:mainmulti:Delta} $\Delta_\kappa(S_i)\le cC^{1-\kappa}p^{\kappa-1}e(S_i)n^{-1}$ for each $1\le\kappa\le k_i$, and
\item\label{itm:mainmulti:e} $e(S_i)\ge n(\log n)^{k+k\lceil\rho^{-1}\rceil+2}$.
\end{enumerate}
 Then with probability at least
 \[1-\sum_{i=1}^t\Prob\big(\big|S_i\cap [n]_p^{k_i}\big|\ge(1+\tfrac13\eps)p^{k_i}e(S_i)\big)-\exp\big(-\tfrac{pn}{C}\big)\,,\]
 the binomial random set \(X=[n]_p\) is such that for any partition $Y_1\cup\dots\cup Y_r= X$, there exists a partition $Z_1\cup\dots\cup Z_r$ of $[n]$ which is, for each $i\in[t]$, an $\eps$-good dense model for $Y_1,\dots,Y_r$ with respect to $(S_i,\Sigma,\Omega_i)$.
\end{theorem}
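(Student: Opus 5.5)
The plan is to run the whole machinery of Sections~\ref{sec:tools}--\ref{sec:delete} with a single combined test polytope that sees all $t$ hypergraphs at once, and then repeat the deduction of Theorem~\ref{thm:GenSimp} from Theorem~\ref{thm:main}. The difficulty is that the $S_i$ have different uniformities and edge counts. So I would not merge them into one hypergraph. Instead I would define $\Phi^{\mathrm{multi}}(P)$ as the convex hull of $\pm\Sigma$ together with $\pm\ast_{j,\omega}^{S_i}(f_1,\dots,f_{k_i})$ for every $i\in[t]$, $j\in[k_i]$, $\omega\in\Omega_i$ and $P$-bounded $f$'s. Each convolution carries its own normalisation $n/e(S_i)$. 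The key observation is that every argument in the proof of Theorem~\ref{thm:maintechint} uses only the following ingredients:
\begin{itemize}
\item the list of vertex types in Fact~\ref{Rmk:anti_uniform_functionals};
\item the multilinearity of each convolution separately;
\item the counting of extreme and largest test functions, which now acquires an extra factor $t$;
\item the moment bounds, which are proved one largest test function at a time.
\end{itemize}
None of these interacts across different $i$. So I would state and prove a multi-hypergraph version of Theorem~\ref{thm:maintechint} with the norm $\|\cdot\|_{\Phi^{\mathrm{multi}}(\tmu,\one)}$.

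In order, I would check the following.

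First, Lemmas~\ref{lem:vtxprod}, \ref{lem:max-product}, \ref{lem:splitreduce} and~\ref{lem:upgrade} go through verbatim. In Lemma~\ref{lem:upgrade} the telescoping uses only convolutions with respect to one $S_i$ at a time.

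Second, Lemma~\ref{lem:HBreduce} and Lemma~\ref{lem:lincor} are abstract statements about a centrally symmetric polytope, so they apply unchanged.

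Third, in Lemma~\ref{lem:finbound} and Lemma~\ref{lem:randround} the vertex count picks up a factor $t\max_i|\Omega_i|$ and $k_i\le k$. This only changes constants.

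Fourth, the main change is in Lemma~\ref{lem:momentboundlemma} and Lemma~\ref{cor:momentboundcor}. Products in $\Phi^{\mathrm{multi}}(P)^\ell$ now mix convolutions from different $S_i$. A ``special product'' is then $\prod_j \ast^{S_{a_j}}_{i_j}(\cdots)$, and the books become tuples $(s^{(1)},\dots,s^{(d)})$ with $s^{(j)}\in S_{a_j}$. I would redo Lemma~\ref{countingBooks} in this mixed setting, using the bound on $\Delta_\kappa(S_{a_j})$ for the $j$-th edge and the normalisation $\prod_j n/e(S_{a_j})$. The expectation computation then gives $c^d(1+O(C^{-1}))$ exactly as before, since each factor contributes $e(S_{a_j})/n$ at the leading order.

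I expect the hardest step to be the Kim--Vu application in the mixed setting, which is where hypotheses (i) and~\ref{itm:mainmulti:e} enter. In Theorem~\ref{thm:main} this step is controlled through $p\ge\log^C n/n$. Here, for a truncation $A$, the bound on $\EE(Y_A)$ was $O((nq)^{-1})$, and that came from $\Delta_{|W|}$-bounds relative to a single $e(S)$. With mixed uniformities, a truncation may hit a factor from an $S_i$ with small $k_i$, and the bound on $\mathbb{E}''$ must be rechecked.

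My first attempt would be to rerun the proof of Lemma~\ref{countingBooks}\ref{lem:books:b} edge by edge. I expect a bound of the form $\mathbb{E}''\lesssim (nq)^{-1}$ times constants, but with $q$-exponents that may be negative when a whole edge of a sparse $S_i$ lies inside $W$. That is exactly where $e(S_i)\ge n(\log n)^{k+k\lceil\rho^{-1}\rceil+2}$ should compensate: a fully fixed edge costs a factor $n/e(S_i)$ rather than a power of $q$.

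The polylog room needed by Kim--Vu, namely $\lambda^{\tilde d}$ with $\tilde d\le dk+1$ and $d$ up to roughly $\lceil\rho^{-1}\rceil$-dependent, must then be absorbed. For this I would use $p\ge n^{-1+\rho}$, so that $pn\ge n^\rho$ beats any polylog. This is presumably why the hypothesis is stated as $p\ge n^{-1+\rho}$ rather than $\log^C n/n$.

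Finally, the same Harris-inequality deletion argument (Theorem~\ref{thm:delete}) upgrades the probability to $1-\exp(-pn/C)$. The deduction of the non-deleted statement proceeds as in the proof of Theorem~\ref{thm:GenSimp}, now intersecting the $t$ events $|S_i\cap[n]_p^{k_i}|\le(1+\tfrac13\eps)p^{k_i}e(S_i)$. This yields the stated failure probability with the sum over $i$.
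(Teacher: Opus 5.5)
Your plan is sound, but it is not the paper's route. The paper never reopens the machinery. It proves the functional version (Theorem~\ref{thm:mainmultitech}) by reducing to the single-hypergraph Theorem~\ref{thm:maintechint} as a black box:
\begin{itemize}
\item each $S_i$ is randomly ``puffed'' to the common uniformity $\ell=k+k\lceil\rho^{-1}\rceil+1$, with $m_i$ chosen so that the edge counts $m_i^{\ell_i}e(S_i)$ agree up to a factor $2$ (Lemma~\ref{lem:puff} shows the codegree bounds are inherited);
\item the puffs are merged into one $\ell$-uniform $S$, with subcounts extended by zero;
\item norm bounds are pulled back using $e(S'_i)\ge\frac1{2t}e(S)$ and \eqref{eq:multimod}.
\end{itemize}
Hypotheses (i) and (iv) exist only to serve this construction. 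Hypothesis (i) guarantees admissible $m_i\in[Cp^{-1},\frac14n]$, and (iv) feeds the Chernoff union bound in Lemma~\ref{lem:puff}.

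Your combined polytope, in which each convolution keeps its own normalisation $n/e(S_i)$, instead forces you to re-verify the whole chain. As you say, everything there is routine except the mixed books, but your prediction about that step is off. Run the proof of Lemma~\ref{countingBooks}\ref{lem:books:b} page by page: page $j$ is chosen among at most $\Delta_\kappa(S_{a_j})\le cC^{1-\kappa}q^{\kappa-1}e(S_{a_j})/n$ edges, where $\kappa\ge1$ is its number of fixed coordinates. The factors $e(S_{a_j})/n$ cancel against the normalisation $\prod_j n/e(S_{a_j})$, so $\mathbb{E}''(Y)=O((nq)^{-1})$ exactly as before, with no negative exponents.

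A whole edge of $S_i$ lying inside $W$ is already covered by (iii) at $\kappa=k_i$, which reads $1\le cC^{1-k_i}p^{k_i-1}e(S_i)/n$. So there is no ``sparse $S_i$'' case. In any event, (iv) gives only polylogarithmic room and could not compensate for losing a factor of order $q^{k_i-1}$. Likewise, the moment degrees $d,d',d''$ are fixed by $k,r,c,\eps$ and do not depend on $\rho$, and $nq\ge\log^{2dk+1}n$ already suffices for Kim--Vu.

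So your route, carried out, gives the theorem under the weaker hypothesis $p\ge(\log^C n)n^{-1}$ and without (iv), at the price of rechecking every lemma. The paper's reduction is much shorter but pays for it with (i) and (iv).
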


The conditions of this theorem differ slightly from those of Theorem~\ref{thm:GenSimp} in that~$p$ must be polynomially separated from $n^{-1}$, not just polylogarithmically separated, and  that the $S_i$ cannot be too sparse. However, for applications to counting graphs or hypergraphs, these conditions are typically satisfied.

For the proof of Theorem~\ref{thm:mainmulti} we proceed similarly as for the
proof of Theorem~\ref{thm:GenSimp}: The main work here is to prove the
functional deletion version of this statement, which is as follows.

\begin{theorem}[transference for multiple functions, deletion version]\label{thm:mainmultitech}
 For every $k,r,t\ge2$, $c\ge1$, and \(\eps,\rho>0\) there exists \(C>0\) such that the following holds. Given a sufficiently large $n$, and $S_1,\dots,S_t$ ordered hypergraphs of uniformity $k_1,\dots,k_t\le k$ on $[n]$, a set $\Sigma$ of structure functions, and for each $i\in[t]$ a set $\Omega_i$ of subcounts on $S_i$, suppose $0<p\le 1$ is such that each $(S_i,\Sigma,\Omega_i)$ satisfies the $(c,C,p,n)$-boundedness conditions and $p\ge n^{-1+\rho}$. Suppose that $e(S_i)\ge n(\log n)^{k+k\lceil\rho^{-1}\rceil+2}$.
 
 Then with probability at least \(1-\exp\big(-\tfrac{pn}{C}\big)\), the binomial random set \(X=[n]_p\) admits an \(\eps\)-deletion \(\tilde{X}\) such that for any non-negative $f_1,\dots,f_r$ such that $f_1+\dots+f_r=\tmu$, there exist $g_1,\dots,g_r:[n]\to\{0,1\}$ such that $g_1+\dots+g_r=\one$, and such that for each $i\in[t]$ and $j\in[r]$ we have \[\|f_j-g_j\|_{\Phi(\{\tmu,\one\},S_i,\Sigma,\Omega_i)}\le\eps k^{-1}\,.\]
\end{theorem}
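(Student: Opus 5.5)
The plan is to run the proof of Theorem~\ref{thm:maintechint} for all $t$ triples $(S_i,\Sigma,\Omega_i)$ at once, replacing the single polytope by the convex hull of their union. Set $\Phi^\cup:=\mathrm{conv}\big(\bigcup_{i\in[t]}\Phi(\{\tmu,\one\},S_i,\Sigma,\Omega_i)\big)$. This is again a centrally symmetric $n$-dimensional polytope, and
\[\|\cdot\|_{\Phi^\cup}=\max_i\|\cdot\|_{\Phi(\{\tmu,\one\},S_i,\Sigma,\Omega_i)}.\]
So it suffices to find $\{0,1\}$-valued $g_j$ with $\|f_j-g_j\|_{\Phi^\cup}\le\eps k^{-1}$. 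None of the deterministic reductions of Section~\ref{sec:tools} uses the specific form of the polytope beyond central symmetry and vertices being (single-triple) extreme test functions. Lemma~\ref{lem:HBreduce} and Lemma~\ref{lem:lincor} apply verbatim to $\Phi^\cup$. The product polytope $(\Phi^\cup)^d$ has vertices that are products $\phi_1\cdots\phi_d$, where each factor is a vertex of some $\Phi(\cdot,S_{i_j},\ldots)$, possibly with different $i_j$; this follows from Lemma~\ref{lem:vtxprod}. Lemma~\ref{lem:splitreduce} gives $(\Phi^\cup)^d$ inside the analogous mixed split polytope, and Lemma~\ref{lem:upgrade} goes through factor by factor. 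The $[0,1]\to\{0,1\}$ rounding (Theorem~\ref{thm:intmodel}) is also fine: for each $i$ the rounding succeeds with high probability, and a union bound over $t$ gives a single rounding good for all $i$.

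The probabilistic inputs then need to be re-proved for mixed products, where the factors come from different hypergraphs $S_i$ with possibly different uniformities. In Lemma~\ref{lem:finbound} only counting matters. Each vertex uses at most $d(k-1)$ parts of the $\mu$-split, and the number of extreme test functions is at most a factor $t^d$ larger, so Bernstein plus the union bound work unchanged. The real work is the moment bound, Lemma~\ref{cor:momentboundcor}, via Lemma~\ref{lem:momentboundlemma}, for mixed special products $\prod_j\ast_{i_j}^{S_{a_j}}(\cdots)$. I would define mixed books, tuples $(s^{(1)},\dots,s^{(d)})$ with $s^{(j)}\in S_{a_j}$ sharing a spine, and redo Lemma~\ref{countingBooks}. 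The same greedy count gives
\[\beta\le C^{-1}c^d q^{\sum_j(k_{a_j}-1)-t}\prod_j e(S_{a_j})n^{-1}\]
for non-generic books, with generic books giving the main term $c^d$. The normalisations $n/e(S_{a_j})$ then cancel exactly as before.

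The main obstacle is the $W$-truncated expectations $\EE''$ in the Kim--Vu step. The argument there used $nq\ge\log^{2dk+1}n$ to make $\EE''(Y)\cdot\mathrm{polylog}\,n=o(1)$. In the mixed setting, the bound from Lemma~\ref{countingBooks}\ref{lem:books:b} loses a factor coming from the first edge containing $W$, namely $\Delta_{|Q_1|}(S_{a_1})$ relative to $e(S_{a_1})/n$. When some $S_i$ is very sparse relative to $n$, the $\kappa$-degree conditions alone no longer dominate cross terms. Here I would use hypothesis~\ref{itm:mainmulti:e}, $e(S_i)\ge n(\log n)^{k+k\lceil\rho^{-1}\rceil+2}$, together with $p\ge n^{-1+\rho}$. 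The idea is to bound $\EE''(Y)\le(\log n)^{-(dk+1)}$, which is small enough against the $\lambda^{\tilde d}$ loss in Theorem~\ref{thm:Kim-Vu}, with $d$ bounded in terms of $\rho$, since the degree $d$ produced by Stone--Weierstrass is a constant. The exponent $k\lceil\rho^{-1}\rceil$ suggests a further step: iterating the truncation bound $\lceil\rho^{-1}\rceil$ times, using $q\ge n^{-1+\rho}/L$ to absorb each factor $n^{-\rho}$ against a factor $(\log n)^k$ from $e(S_i)/n$. I expect this calculation to be delicate.

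With the mixed moment bounds in hand, Lemma~\ref{lem:deletionbounds} follows as before from the Harris inequality, since the relevant family is still decreasing. Lemma~\ref{lem:anticor} and Theorem~\ref{thm:correlate} then hold for $\Phi^\cup$, and the deduction of Theorem~\ref{thm:maintechint} yields the claimed statement.
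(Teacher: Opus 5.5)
Your route is genuinely different from the paper's, and its skeleton is sound. The norm of $\mathrm{conv}\bigcup_i\Phi_i$ is $\max_i\|\cdot\|_{\Phi_i}$. Its vertices are vertices of single $\Phi_i$, each non-negative or non-positive. The deterministic reductions (Lemmas~\ref{lem:upgrade}, \ref{lem:HBreduce}, \ref{lem:lincor}, \ref{lem:splitreduce}, \ref{lem:max-product}) carry over, and the union bounds (rounding, Lemma~\ref{lem:finbound}, Lemma~\ref{cor:momentboundcor}) change only by factors depending on $t$.

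However, your paragraph on the ``main obstacle'' is a misdiagnosis. The remedy you sketch there is not an argument: iterating truncations $\lceil\rho^{-1}\rceil$ times is never made precise, and $d$ does not depend on $\rho$ but only on $r,c,\eps$. There is no cross-term problem. In the proof of Lemma~\ref{countingBooks}\ref{lem:books:b}, each page is charged with the codegree bound of its own hypergraph:
\[\Delta_{q_1}(S_{a_1})\le c(q/C)^{q_1-1}e(S_{a_1})/n,\qquad \Delta_{q_j+1+r_j}(S_{a_j})\le c(q/C)^{q_j+r_j}e(S_{a_j})/n.\]
This gives $\beta(W,t)=O\big(q^{\sum_j(k_{a_j}-1)-t}\prod_j e(S_{a_j})n^{-1}\big)$. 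The normalisation $\frac1n\prod_j\frac{n}{e(S_{a_j})}\,q^{-\sum_j(k_{a_j}-1)-1}$ cancels the product of edge counts exactly, so $\EE(Y_A)=O\big((nq)^{-1}\big)$, just as for a single hypergraph. The factor $q^{q_1-1}$ you call a loss is precisely what yields this bound. Hence Kim--Vu applies verbatim under $nq\ge\log^{2dk+1}n$, and neither $p\ge n^{-1+\rho}$ nor the lower bound on $e(S_i)$ enters. Replace that paragraph with this page-by-page count.

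The paper instead keeps Theorem~\ref{thm:maintechint} as a black box. It does the following:
\begin{enumerate}
\item It puffs each $S_i$ to $S_i^{(\ell_i,m_i)}$, of common uniformity $\ell=k+k\lceil\rho^{-1}\rceil+1$ and comparable edge counts.
\item It checks via Lemma~\ref{lem:puff} that the codegree bounds survive.
\item It takes the union, with subcounts extended by zero.
\item It transfers norms back via $\tfrac{e(S'_i)}{e(S)}\Phi(S'_i)\subset\Phi(S)$ and \eqref{eq:puff}.
\end{enumerate}
The hypotheses $p\ge n^{-1+\rho}$ and $e(S_i)\ge n(\log n)^{k+k\lceil\rho^{-1}\rceil+2}$ are needed only to choose $Cp^{-1}\le m_i\le n/4$ and for the Chernoff--union bound in Lemma~\ref{lem:puff}. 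So the paper pays with these extra hypotheses and a $\rho$-dependent uniformity, but never reopens the moment-bound machinery. Your approach, with the correct book count, pays by re-verifying the whole chain for mixed products. In return it appears to need only the boundedness conditions.
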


The proof of Theorem~\ref{thm:mainmulti} from Theorem~\ref{thm:mainmultitech} is essentially the same as the proof of Theorem~\ref{thm:GenSimp} from Theorem~\ref{thm:maintechint}, and we omit the details.

The first idea for the proof of Theorem~\ref{thm:mainmultitech} is to deduce it from Theorem~\ref{thm:maintechint} by taking $S$ to be the union of the $S_i$ and defining a new collection $\Omega$ of structure functions appropriately. However, this only works if each $S_i$ has the same uniformity and approximately the same number of edges. To begin with, we argue that we can reduce to this setting. Specifically, given $S$ we show how to construct an ordered hypergraph of higher uniformity and a tunable number of edges such that appropriate norm bounds for these higher uniformity hypergraphs imply the desired norm bounds for $S$.

Given $\ell\ge1$ and $1\le m\le n/2$, we define the \emph{$(\ell,m)$-puff} $S^{(\ell,m)}$ of $S$ to be the (random) ordered $(k+\ell)$-uniform hypergraph obtained as follows. For $\ell=1$, for each $s\in S$ independently we pick a uniform random $m$-set $Q(s)$ in $[n]\setminus s$, and let the edges of $S^{(1,m)}$ be of the form $(s,q)$ where $s\in S$ and $q\in Q(s)$. For larger $\ell$, we let recursively $S^{(\ell,m)}=\big(S^{(\ell-1,m)}\big)^{(1,m)}$.

Observe that each $s\in S$ corresponds to exactly $m^\ell$ distinct elements of $S^{(\ell,m)}$. It follows from the definition that, defining $\Omega'$ on $S^{(\ell,m)}$ by including all the functions $\omega'(s')=\omega(s)$ for $\omega\in\Omega$ and each $s'$ obtained from $s$, we have for any $f_1,\dots,f_{k}$ and any $i\in[k]$
\begin{equation}\label{eq:puff}
\ast_{i,S^{(\ell,m)},\omega'}(f_1,\dots,f_k,\one,\dots,\one)=\ast_{i,S,\omega}(f_1,\dots,f_k)\,,
\end{equation}
where $\omega'$ is the subcount obtained from $\omega\in\Omega$.
In particular, this tells us that the following implication holds:
\begin{equation}\label{eq:multimod}
\|f-g\|_{\Phi(\{\tmu,\one\},S^{(\ell,m)},\Sigma,\Omega')}\le\eps\quad\implies\quad\|f-g\|_{\Phi(\{\tmu,\one\},S,\Sigma,\Omega)}\le\eps\,.
\end{equation}

The following lemma states that $S^{(\ell,m)}$ inherits the codegree conditions of~$S$ with high probability.

\begin{lemma}\label{lem:puff}
 Suppose that for some $k\ge2$, $c\ge1$, $C>0$ and $p\in(0,1]$, the $k$-uniform ordered hypergraph $S$ on $[n]$ satisfies $\Delta_i(S)\le cC^{1-i}p^{i-1}e(S)/n$ for each $i\in[k]$. Suppose $e(S)\ge (12(k+\ell)\log n)^{k+\ell} n$. Given $Cp^{-1}\le m\le \tfrac14n$, and $\ell\ge1$, with high probability $S^{(\ell,m)}$ satisfies $\Delta_i\big(S^{(\ell,m)}\big)\le cC^{1-i}p^{i-1}e\big(S^{(\ell,m)}\big)/n$ for each $i\in[k+\ell]$.
\end{lemma}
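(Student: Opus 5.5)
Since $S^{(\ell,m)}=\big(S^{(\ell-1,m)}\big)^{(1,m)}$, I will first prove the case $\ell=1$ and then iterate $\ell$ times. At the $j$-th step the hypergraph has uniformity $k+j-1$ and at least as many edges as $S$. The hypotheses $e(S)\ge(12(k+\ell)\log n)^{k+\ell}n$ and $m\ge Cp^{-1}$ are stated uniformly in $\ell$, so they apply at every step. The probabilistic losses will have to be absorbed, and I return to this at the end.

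For $\ell=1$ we have $e(S')=m\,e(S)$ exactly, where $S'=S^{(1,m)}$. Take a sequence $\mathbf{x}$ of length $k+1$ with $i$ non-$\ast$ entries. There are two cases.

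\emph{Case 1: the last coordinate is $\ast$.} Then $\deg_{S'}(\mathbf{x})=m\deg_S(\mathbf{x}')$, where $\mathbf{x}'$ is $\mathbf{x}$ restricted to the first $k$ coordinates. This bound is deterministic, and it gives $\deg_{S'}(\mathbf{x})\le cC^{1-i}p^{i-1}e(S')/n$ immediately.

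\emph{Case 2: the last coordinate is some fixed $q\in[n]$.} Let $\mathbf{x}'$ be as before, with $i-1$ fixed entries. Then $\deg_{S'}(\mathbf{x})$ is the number of edges $s$ counted in $\deg_S(\mathbf{x}')$ with $q\in Q(s)$. This is a sum of independent indicators, each with probability at most $m/(n-k)$. Its expectation is therefore at most about $\tfrac{m}{n}\deg_S(\mathbf{x}')$, which is at most
\[\tfrac{m}{n}\,cC^{2-i}p^{i-2}e(S)/n
= cC^{1-i}p^{i-1}\,\tfrac{e(S')}{n}\cdot\frac{C}{pn}\,.\]
The target bound is $cC^{1-i}p^{i-1}e(S')/n$, so the expectation falls short of it by the factor $C/(pn)$, which is tiny. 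For $i=1$ we instead have $\deg_{S'}(\mathbf{x})=|\{s:q\in Q(s)\}|$, with expectation essentially $me(S)/n=e(S')/n$. Here the constant is tight, so I need the target with slack; the factor $c$ gives some room only if $c>1$. Writing the exact expectation as $me(S)/(n-k)$ (edges with $q\in s$ contribute nothing), I expect to need either a slightly weaker constant or a Chernoff bound with additive error that uses the slack in $c$.

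For concentration I will use Chernoff (Theorem~\ref{thm:chernoff}), or Bernstein if the variables are not identically distributed. The tail bound must beat a union bound over at most $n^{k+1}$ choices of $\mathbf{x}$. This needs the mean, or the target threshold, to be at least of order $\log n$, which is where the hypothesis $e(S)\ge(12(k+\ell)\log n)^{k+\ell}n$ enters. When the expectation is small compared with the threshold $T=cC^{1-i}p^{i-1}e(S')/n$, I will use the bound $\Prob(Z\ge T)\le(e\EE Z/T)^T$. This is at most $n^{-(k+2)}$ as soon as $T\ge 12(k+\ell)\log n$, using $\EE Z\le T/e^2$. So I need $T\gtrsim\log n$ for every $i$. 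Using $p^{i-1}C^{1-i}\ge n^{-(i-1)}$ (roughly), $e(S')=me(S)$ and the edge-count hypothesis, I expect to get $T\ge(\log n)^{\,k+\ell-i+1}\cdot(\text{const})$. Checking this inequality is the step I expect to be the main obstacle.

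If the threshold is too small for some $i$, I will instead use that $\deg_{S'}(\mathbf{x})\le\deg_S(\mathbf{x}')$ trivially, combined with $\Delta$-bounds. For example, when $\Delta_{i-1}(S)$ is small, the degree is bounded by an integer-valued count and I can use the $(eEZ/T)^T$ bound directly. Finally, I take a union bound over the $\ell$ iterations and all $\mathbf{x}$, which gives failure probability $O(\ell n^{-1})=o(1)$.
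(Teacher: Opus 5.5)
Your skeleton is the paper's: reduce to $\ell=1$, use the exact identity $\deg_{S'}(\mathbf{x})=m\deg_S(\mathbf{x}')$ when the new coordinate is $\ast$, and use a tail bound plus a union bound when it is fixed. However, the step you flag as the main obstacle is a real gap, and the route you propose for it does not work.

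Write $S'=S^{(1,m)}$ and $T_i:=cC^{1-i}p^{i-1}e(S')/n$. You plan to show $T_i\gg\log n$ using only $p/C\ge 1/n$. Since $m\le n/4$, this gives at best $T_3\gtrsim e(S)/n^2$, which is $o(1)$ when $e(S)$ is $n$ times a polylogarithm. It gives nothing at all for the fully fixed case $i=k+1$. There you need $T_{k+1}\ge 1$, and no tail bound can help otherwise, since $S'\neq\emptyset$.

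The missing idea is that the hypothesis at $\kappa=k$ ties $p/C$ to $e(S)/n$. As $S\neq\emptyset$, we have $1\le\Delta_k(S)\le cC^{1-k}p^{k-1}e(S)/n$. Multiplying by $pm/C\ge 1$ gives $T_{k+1}\ge 1$. This settles $i=k+1$, because that degree is at most $1$. Moreover, $\log T_t$ is affine in $t$, so for $t\le k$ we get $T_t=T_1^{(k+1-t)/k}T_{k+1}^{(t-1)/k}\ge T_1^{1/k}\ge (e(S)/n)^{1/k}\ge 12(k+\ell)\log n$. This is exactly how the paper makes Chernoff and the union bound work.

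A smaller correction: $C/(pn)$ is not tiny. You only know it is at most $\tfrac14$, so $\EE Z\le(\tfrac14+o(1))T$ rather than $T/e^2$. Your $(e\EE Z/T)^T$ bound still gives $e^{-\Omega(T)}$.

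Alternatively, the fallback you gesture at already works for every $i\ge 2$, once you add the inequality $pm\ge C$:
\[
\deg_{S'}(\mathbf{x})\le\deg_S(\mathbf{x}')\le\Delta_{i-1}(S)\le cC^{2-i}p^{i-2}e(S)/n=\tfrac{C}{pm}T_i\le T_i .
\]
This holds deterministically, so no concentration is needed for $i\ge 2$, which is shorter than the paper's argument. The only random case is then $\mathbf{x}=(\ast,\dots,\ast,y)$, where $T_1=ce(S')/n\ge e(S)/n$ is trivially polylogarithmically large.

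Here your worry is justified. These $n$ degrees sum to exactly $e(S')$. So for $c=1$, which the hypotheses allow, the conclusion would force all of them to equal $e(S')/n$, and that happens with probability $o(1)$. The statement therefore genuinely needs slack in $c$ at the new coordinate. The paper's proof does not address this: its bound $|W|\le cC^{2-t}p^{t-2}e(S)/n$ is not among the hypotheses when $t=1$, where $W=S$.

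For fixed $c>1$, the case is finished by a Chernoff bound around the mean $|\{s\in S:y\notin s\}|\,\tfrac{m}{n-k}\le(1+O(\tfrac kn))\,e(S')/n$, with relative deviation of order $\min(c-1,1)$, together with a union bound over $y$. A constant-factor loss in $c$ only changes constants in the application to Theorem~\ref{thm:mainmultitech}.
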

\begin{proof}
 It suffices to prove the statement for $\ell=1$ and apply this repeatedly for larger $\ell$.
 Given $1\le t\le k+1$, fix a sequence $\mathbf{x}$ with $t$ entries not equal to $\ast$. If $x_{k+1}=\ast$, then $\deg_{S^{(\ell,m)}}(\mathbf{x})=m\cdot\deg_S(\mathbf{x}')$, where $\mathbf{x}'$ is obtained from $\mathbf{x}$ by removing the last entry, which is therefore at most $cC^{1-t}p^{t-1}\tfrac{e(S^{(1,m)})}{n}$ as required.

 Now suppose $x_{k+1}=y\in[n]$. Consider the set $W$ of $s\in S$ which agree with $\mathbf{x}'$ at all positions which do not equal $\ast$. We have $|W|\le cC^{2-t}p^{t-2}\tfrac{e(S)}{n}$ by assumption.

 We first deal with the case $t=k+1$. Since $S$ is non-empty, we have $cC^{1-k}p^{k-1}e(S)/n\ge1$ (otherwise the degree condition would be violated). Since $\mathbf{x}$ contains no $\ast$ entries, at most one element of $S^{(1,m)}$ agrees with $\mathbf{x}$. Since $C^{-1}pm\ge1$ and $e(S^{(1,m)})=m\cdot e(S)$, we have
 \[1\le cC^{1-k}p^{k-1}e(S)/n\le C^{-1}pm\cdot cC^{1-k}p^{k-1}e(S)/n=cC^{-k}p^ke\big(S^{(1,m)}\big)/n\,,\]
 as required. This is the only place where we need the lower bound on $m$.
 
 Suppose now $t\le k$. Each member $s$ of $W$, independently, chooses $y\in Q(s)$ with some probability at most $\tfrac{2m}{n}$. Thus the expected number of $s\in W$ such that $y\in Q(s)$ is at most
 \[\tfrac{2m}{n}|W|\le\tfrac{2m}{n}\cdot cC^{2-t}p^{t-2}\tfrac{e(S)}{n}\le \tfrac12cC^{1-t}p^{t-1}\tfrac{e(S^{(1,m)})}{n}\,,\]
where we used $n\ge 4Cp^{-1}$ for the final inequality. By the Chernoff bound from Theorem~\ref{thm:chernoff}, the probability that more than $cC^{1-t}p^{t-1}\tfrac{e(S^{(1,m)})}{n}$ members $s$ of $W$ have $y\in Q(s)$ is at most $\exp\big(-\tfrac1{6}cC^{1-t}p^{t-1}\tfrac{e(S^{(1,m)})}{n}\big)$. Since $cC^{-k}p^k\tfrac{e(S^{(1,m)})}{n}\ge1$, this exponent is larger than $\tfrac16\big(\tfrac{e(S)}{n}\big)^{1/k}$. Taking the union bound over the at most $n^{k+1}$ choices of $\mathbf{x}$, we see that with high probability $\Delta_t\big(S^{(1,m)}\big)\le cC^{1-t}p^{t-1}e\big(S^{(1,m)}\big)/n$.
\end{proof}

With this lemma in place, we can prove Theorem~\ref{thm:mainmultitech}. In this proof we puff up each $S_i$, assuming that each time the good event of Lemma~\ref{lem:puff} occurs, to obtain ordered hypergraphs $S'_1,\dots,S'_t$ of the same uniformity and approximately the same number of edges, and then let $S$ be their union.

\begin{proof}[Proof of Theorem~\ref{thm:mainmultitech}]
Given $k,r,t,\eps,c,\rho$ as in the theorem statement, set $c'=ct$ and $\eps'=\tfrac1{2t}\eps$. Let $C$ be such that $\tfrac12C$ is sufficiently large for Theorem~\ref{thm:maintechint} with input $c'$ and $\eps'$, and for uniformity $k+k\lceil\rho^{-1}\rceil+1$.

Given $n$ sufficiently large and $S_1,\dots,S_t$ ordered hypergraphs, each of uniformity at most $k$, on $[n]$ with at least $n(\log n)^{k+k\lceil\rho^{-1}\rceil+2}$ members, let $\Sigma$ be a set of structure functions and $\Omega_i$ a set of subcounts on $S_i$ for each $i\in[t]$. Suppose that for some $p\in(0,1]$ each $(S_i,\Sigma,\Omega_i)$ satisfies the $(c,C,p,n)$-boundedness conditions separately, and that $p\ge n^{-1+\rho}$.
  As before, we can safely add $\one$  and $x\to\ind(x=j)$ for each $j\in[n]$ to $\Sigma$,  and $\one$ to $\Omega_i$ for each $i\in[t]$.

Recall that $k_i$ is the uniformity of $S_i$, and by assumption $k_i\le k$. Observe that each $S_i$ has between $n$ (by assumption) and $n^{k_i}$ members. We let, for each $i\in[t]$, the number $\ell_i$ be such that $k_i+\ell_i=\ell:=k+k\lceil\rho^{-1}\rceil+1$. We claim that we can choose $Cp^{-1}\le m_i\le\tfrac14n$ for each $i\in[t]$ such that the $(\ell_i,m_i)$-puffs $S_i^{(\ell_i,m_i)}$ all have the same number of edges up to a factor of at most $2$. To see that this is true, observe that for each $i$ we have
\[e\big(S_i^{(\ell_i,m_i)}\big)=m_i^{\ell_i}e(S_i)\,,\]
and we argue that it is possible to choose each $m_i$ such that this number is within a factor $1.1$ of $n\cdot (\frac18n)^{\ell-k}$. The factor $1.1$ part is obtained by simply solving the equation $x^{\ell_i}e(S_i)=n\cdot (\frac18n)^{\ell-k}$ and rounding the result to an integer to obtain $m_i$, which (since $x\ge Cn^{1-\rho}$ is sufficiently large) alters the value by a factor smaller than $1.1$. So we simply need to argue that the solution to this equation satisfies $Cp^{-1}\le x\le \frac14n$. We do this by checking that, whatever the value of $e(S_i)$, plugging in $x=Cp^{-1}$ gives $x^{\ell_i}e(S_i)\le n\cdot (\frac18n)^{\ell-k}$ and plugging in $x=\frac14n$ gives $x^{\ell_i}e(S_i)\ge n\cdot (\frac18n)^{\ell-k}$. Clearly, the worst case is when $e(S_i)\le n^{k_i}$ is as large as possible for the first inequality, and when $e(S_i)\ge n$ is as small as possible for the second. Checking these values, we have
\begin{align*}
  (Cp^{-1})^{\ell_i} n^{k_i}&= (Cp^{-1})^{\ell-k_i}n^{k_i}\le C^\ell n^{(1-\rho)(\ell-k_i)+k_i}=C^\ell n^{\ell-\rho(k+k\lceil\rho^{-1}\rceil+1-k_i)}\le n\cdot \Big(\frac18n\Big)^{\ell-k}\,, \\
  \Big(\frac14n\Big)^{\ell_i} n^{k_i} &\ge \Big(\frac14n\Big)^{\ell-k_i}n \ge n\cdot \Big(\frac18n\Big)^{\ell-k}\,,
\end{align*}
as desired.
The first inequality uses $p^{-1}\le n^{1-\rho}$ and $C\ge1$, the second that $k_i\le k$ and $n$ is sufficiently large, and the final one that $k_i\le k$. Thus the claimed values for the $m_i$ exist.

With high probability, by Lemma~\ref{lem:puff}, for each $i$ we have $\Delta_j\big(S_i^{(\ell_i,m_i)}\big)\le cC^{1-j}p^{j-1}e\big(S_i^{(\ell_i,m_i)}\big)/n$ for each $j\in[\ell]$. Let for each $i$ the hypergraph $S'_i$ be a realisation of $S_i^{(\ell_i,m_i)}$ such that this likely event holds.

Given $\omega\in\Omega_i$, we let $\omega'(s')=\omega(s)$ for each $s'\in S'_i$ whose first terms are $s$, where $s\in S_i$, and let $\Omega'_i$ be the set of all such $\omega'$. Then $\Omega'_i$ is a set of subcounts on $S'_i$ of size $|\Omega_i|$, so $\big(S_i^{(\ell_i,m_i)},\Sigma,\Omega'_i\big)$ satisfies the $(c,C,p,n)$-boundedness conditions.
Finally, we let $S=S_1\cup\dots\cup S_t$, and define $\Omega=\Omega'_1\cup\dots\cup\Omega'_t$, where we extend the domain of each $\omega\in\Omega'_i$ from $S'_i$ to $S$ by letting it take value zero on each element of $S$ which is not in $S'_i$. We now have an $\ell$-uniform ordered hypergraph on $[n]$ and a set of subcounts defined on it.

For a given $\mathbf{x}$, observe that the number of edges of $S$ which agree with $\mathbf{x}$ at all non-$\ast$ places is at most the sum of the corresponding numbers for the $S'_i$, while the total number of edges of $S$ is at least the maximum of the $e(S'_i)$, which is at least a $\tfrac{1}{t}$-fraction of the sum. The set $\Sigma$ has size at most $\exp(pn/C)$, and $\Omega$ has size at most $t\exp(pn/C)\le\exp(2pn/C)$ since $n$ is sufficiently large and by choice of $p$. Thus $(S,\Sigma,\Omega)$ satisfies the $(tc,\tfrac12C,p,n)$-boundedness conditions.

Suppose the likely event of Theorem~\ref{thm:maintechint}, with parameters $c',\eps',\ell$ as above, occurs for a given $X=[n]_p$. For each $i\in[t]$ and $\phi\in\Phi(\{\tmu,\one\},S_i^{(\ell_i,m_i)},\Sigma,\Omega'_i)$, we have the containment $\tfrac{e(S'_i)}{e(S)}\phi\in\Phi(\{\tmu,\one\},S,\Sigma,\Omega)$. To see that this occurs, it is enough to check the statement holds for the vertices. For elements of $\Sigma$, the statement holds since the constant zero function is in the polytope and since $e(S'_i)\le e(S)$. For the remaining vertices, this holds since the only difference between $\ast_{j,S'_i,\omega}(f_1,\dots,f_\ell)$ and $\ast_{j,S,\omega}(f_1,\dots,f_\ell)$, where $\omega$ in the second convolution is considered to take value zero on elements of $S$ not in $S'_i$, is that the scaling factor is changed by the given fraction. Since $e(S'_i)\ge\tfrac{1}{2t}e(S)$ by construction, we have
\[\|f-g\|_{\Phi(\{\tmu,\one\},S,\Sigma,\Omega)}\le\tfrac{\eps'}{k}\implies\|f-g\|_{\Phi(\{\tmu,\one\},S'_i,\Sigma,\Omega'_i)}\le\tfrac{\eps}{k}\implies \|f-g\|_{\Phi(\{\tmu,\one\},S_i,\Sigma,\Omega_i)}\le\tfrac{\eps}{k}\]
where the second implication is~\eqref{eq:multimod}.
\end{proof}

\section{Proofs of the applications}\label{sec:applproofs}

The remainder of the paper is dedicated to applications of our transference
principle. In this section, we start with the proofs of the Tur\'an-type,
Szemer\'edi-type and Ramsey-type applications formulated in the introduction. In
the next section we turn to counting lemmas.

\subsection{Quick applications}\label{sec:quick}

In this subsection we collect some proofs of applications that are straightforward. We start with the proof of the sparse random $H$-density theorem.

\begin{proof}[Proof of Theorem~\ref{thm:Hdensity}]
We first explain why $g_H(\rho)$ is uniformly continuous. Indeed, for any $\eps>0$, take a graph which witnesses the value of $g_H(\rho,N)$, and arbitrarily add edges such that the result has $\lfloor(\rho+\eps)\binom{N}{k}\rfloor$ edges. Since each added edge lies in at most $N^{v(H)-k}$ copies of $H$, we have $g_H(\rho+\eps,N)\le g_H(\rho,N)+\eps N^{v(H)}$, which immediately implies uniform continuity of $g_H(\rho)$. A similar argument (removing edges) applies to $g'_H(\rho)$.

Given a $k$-uniform hypergraph $H$ with at least $2$ edges and $m_k(H)>\tfrac{1}{k}$, and given $\eps>0$, let $C$ be returned by Theorem~\ref{thm:colHGtrans} for input $e(H)$, $H$, $r=2$ and $\tfrac1{3k!}\eps$.
 Given $N$, suppose $p\ge CN^{-1/m_k(H)}$. Let $\eta_{N,p}$ be the probability that $G^{(k)}(N,p)$ contains more than $(1+\tfrac1{9k!}\eps)p^{e(H)}N^{v(H)}$ copies of $H$. By Theorem~\ref{thm:colHGtrans}, with probability at least $1-\eta_{N,p}-\exp(-pN^k/C)$, the random hypergraph $\Gamma=G^{(k)}(N,p)$ has the dense model property. Suppose this likely event occurs.

 Given $\rho>0$, let $G\subset\Gamma$ contain $\lfloor\rho p\binom{N}{k}\rfloor$ edges. Let $G'$ be a corresponding dense model, obtained by applying the likely event of Theorem~\ref{thm:colHGtrans} with $G_1=G$ and $G_2=\Gamma-G$. In particular, we have $e(G')=\big(\rho\pm\tfrac13\eps\big)\binom{N}{k}$. By definition, the number of copies of $H$ in $G'$ is between 
 \[g_H\Big(\rho-\frac13\eps,N\Big)\ge g_H(\rho,N)-\frac13\eps N^{v(H)}\ge \Big(g_H(\rho)-\frac23\eps\Big)N^{v(H)}\]
 and
 \[g'_H\Big(\rho+\frac13\eps,N\Big)\le g'_H(\rho,N)+\frac13\eps N^{v(H)}\le \Big(g'_H(\rho)+\frac23\eps\Big)N^{v(H)}\,,\]
 where we presume $N$ is sufficiently large for the limit statement in the definitions of~$g_H(\cdot)$ and~$g'_H(\cdot)$.
 Since $G'$ is a dense model of $G$, the number of copies of $H$ in $G$ is therefore between $\big(g_H(\rho)-\eps\big)p^{e(H)}N^{v(H)}$ and $\big(g'_H(\rho)+\eps\big)p^{e(H)} N^{v(H)}$, as required.
\end{proof}

Next, we prove the sparse Ramsey multiplicity theorem. We use the following lower dense model version of Theorem~\ref{thm:colHGtrans} with an improved failure probability.

\begin{theorem}[lower dense model version of simplified transference for hypergraphs with colours]\label{thm:lowercolHGtrans}
  \mbox{} \\
  Let $k\ge 2$ and let \( H \) be a $k$-uniform hypergraph with
  $m_k(H)>1/k$. Let $r\ge2$, and \( \eps > 0 \).  Let \( \eta_{N,p} \) be the
  probability that the number of $H$-copies in \(G^{(k)}(N,p) \) exceeds \(
  (1+\tfrac\eps3) p^{e(H)} N^{v(H)} \).  There exists a constant \( C > 0 \)
  such that for \( p\ge C N^{-1/m_k(H)} \) the random hypergraph $\Gamma =
  G^{(k)}(N,p)$ satisfies the following with probability at least
  $1-\exp(-pN^k/C)$.  For every $r$-colouring $E(\Gamma)=G_1\dcup\dots \dcup
  G_r$ of $\Gamma$, there exists an $r$-colouring $E(K^{(k)}_N)=G'_1\dcup\dots\dcup
  G'_r$ of $K^{(k)}_N$ such that for each $i\in[r]$
  \[
    e(G_i) p^{-1} = e(G'_i) \pm \eps N^k
    \qquad \text{and} \qquad
    c(H,G_i)p^{-e(H)}\ge c(H,G'_i)-\eps N^{v(H)}\,.
    \]
\end{theorem}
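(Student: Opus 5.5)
The plan is to copy the proof of Theorem~\ref{thm:colHGtrans} almost word for word, but to invoke Theorem~\ref{thm:lowerGenSimp} in place of Theorem~\ref{thm:GenSimp}. Given $k,r,H,\eps$, set $c=2v(H)!$ and let $C'$ be returned by Theorem~\ref{thm:lowerGenSimp} for input $e(H),r,c,\eps$. Put $C=10\cdot 2^kk!C'$. As before, let $n=\binom{N}{k}$ and identify $[n]$ with $E(K^{(k)}_N)$. Let $S$ be the ordered $e(H)$-uniform hypergraph of all edge-orderings of copies of $H$, and take $\Sigma=\Omega=\{\one\}$.

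The hypotheses of Theorem~\ref{thm:lowerGenSimp} coincide with those of Theorem~\ref{thm:GenSimp}, so we verify them as in the proof of Theorem~\ref{thm:colHGtrans}.
\begin{itemize}
\item Condition~\ref{itm:lGS:i}: since $m_k(H)>1/k$, we have $1/m_k(H)\le k-1$, and hence $pn\ge\tfrac{1}{2k!}CN\gg\log^{C'}n$.
\item Condition~\ref{itm:lGS:ii}: this is trivial, as $|\Sigma|=|\Omega|=1$.
\item Condition~\ref{itm:lGS:iv}: we reuse the codegree computation. For $\kappa$ fixed entries spanning a vertex set $W$ with $|W|\ge(\kappa-1)/m_k(H)+k$, this gives $\deg_S(\mathbf{x})\le 2v(H)!\,e(S)n^{-1}(p/C)^{\kappa-1}$.
\end{itemize}
Note that $pn\ge pN^k/(2k!)$, so the failure probability $\exp(-pn/C')$ is at most $\exp(-pN^k/C)$ by the choice of $C$. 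The term $\eta_{N,p}$ disappears entirely, because Theorem~\ref{thm:lowerGenSimp} carries no term involving the upper tail of $|S\cap[n]_p^k|$.

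On the likely event, take an $r$-colouring $G_1\dcup\dots\dcup G_r$ of $\Gamma$, viewed as a partition of $X=[n]_p$. Theorem~\ref{thm:lowerGenSimp} then supplies an $\eps$-good lower dense model $G'_1\dcup\dots\dcup G'_r$ of $[n]=E(K^{(k)}_N)$. From~\ref{main:itm:sim} with $\sigma=\one$ we get $p^{-1}e(G_i)=e(G'_i)\pm\eps n$, and $\eps n\le\eps N^k$.

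From~\ref{main:itm:sub'} with $\omega=\one$ and $i_1=\dots=i_{e(H)}=i$ we get
\[
p^{-e(H)}e(H)!\,c(H,G_i)\ge e(H)!\,c(H,G'_i)-\eps e(S).
\]
Each copy of $H$ corresponds to exactly $e(H)!$ edges of $S$, and $e(S)\le e(H)!N^{v(H)}$. Dividing through by $e(H)!$ therefore gives $c(H,G_i)p^{-e(H)}\ge c(H,G'_i)-\eps N^{v(H)}$.

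I expect no step to be a genuine obstacle. The only thing to check is that the constants work out, namely that the exponent $pn/C'$ dominates $pN^k/C$. This holds because $n\ge N^k/(2k!)$ for large $N$ and $C\ge 2k!\,C'$.
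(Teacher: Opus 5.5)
Your proposal is correct and matches the paper's argument exactly. The paper likewise proves this by rerunning the proof of Theorem~\ref{thm:colHGtrans} with Theorem~\ref{thm:lowerGenSimp} in place of Theorem~\ref{thm:GenSimp}, so the $\eta_{N,p}$ term disappears and~\ref{main:itm:sub} becomes the one-sided~\ref{main:itm:sub'}.
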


To see that this theorem holds, one simply follows the proof of
Theorem~\ref{thm:colHGtrans}, replacing Theorem~\ref{thm:GenSimp} with
Theorem~\ref{thm:lowerGenSimp}. We now show how this theorem implies Theorem~\ref{thm:multiplicity}.

\begin{proof}[Proof of Theorem~\ref{thm:multiplicity}]
 Given a $k$-uniform hypergraph $H$ with at least $2$ edges and $m_k(H)>\tfrac{1}{k}$, and $\eps>0$, let $C$ be returned by Theorem~\ref{thm:lowercolHGtrans} for input $e(H)$, $H$, $r$ and $\tfrac1{2r}\eps$.

 Given $N$, suppose $p\ge CN^{-1/m_k(H)}$. By Theorem~\ref{thm:lowercolHGtrans}, with probability at least $1-\exp(-pN^k/C)$, the random hypergraph $\Gamma=G^{(k)}(N,p)$ has the lower dense model property. Suppose this likely event occurs.

 Given an $r$-colouring of $E(\Gamma)$ with colour classes $G_1,\dots,G_r$, let $G'_1,\dots,G'_r$ be obtained from the likely event of Theorem~\ref{thm:lowercolHGtrans}. By definition, $G'_1,\dots,G'_r$ form an $r$-colouring of $E(K_N)$, so for $N$ sufficiently large for the limit statement there are at least $\big(c_{H,r}-\tfrac12\eps\big)$ monochromatic copies of $H$ in this $r$-colouring of $E(K_N)$. Applying the lower dense model property with each of $G_1,\dots,G_r$ separately, there are at least
 \[\big(c_{H,r}-\tfrac12\eps-r\cdot\tfrac{1}{2r}\eps\big)p^{e(H)}N^{v(H)}=\big(c_{H,r}-\eps\big)p^{e(H)}N^{v(H)}\]
 monochromatic copies of $H$ in the $r$-colouring of $E(\Gamma)$, as required.
\end{proof}

Finally, we provide the proof of the sparse Szemer\'edi multiplicity theorem.

\begin{proof}[Proof of Theorem~\ref{thm:sparse-szem}]
  We begin by observing a Lipschitz continuity property of $f(k,\delta)$. Recall that by definition of $f(k,\delta)$ for any $\tilde\eps>0$ and~$n$ sufficiently large any subset of $[n]$ of size at least $\delta n$ contains at least $\big(f(k,\delta)-\tilde\eps\big)n^2$ arithmetic progressions of length~$k$.
  Any given element of $[n]$ is contained in at most $kn$ arithmetic progressions of length $k$ in $[n]$ ($k$ choices for the term in the progression and $n$ for the common difference). It follows that $f(k,\delta+\eps)\le f(k,\delta)+k\eps$, since adding $\eps n$ elements to an optimal set with $\delta n$ elements can add at most $k\eps n^2$ progressions. Equivalently,
\[f(k,\delta-\eps)\ge f(k,\delta)-k\eps\,.\]

Given $k\ge3$ and $\eps>0$, let $C$ be returned by Theorem~\ref{thm:lowerGenSimp} for input $k$, $r=2$, $c=3$ and $\tfrac1{2k}\eps$.
Let $S$ be the ordered $k$-uniform hypergraph on $[n]$ whose edges are the $k$-term arithmetic progressions with positive common difference, in increasing order, and let $p\ge Cn^{-1/(k-1)}\ge (\log^C n)n^{-1}$. Let $\Omega=\{\one\}$ and $\Sigma=\{\one\}$. 
Observe that $\Delta_1(S)\le 3e(S) n^{-1}$, and for each $2\le\kappa\le k$ we have $\Delta_\kappa(S)=1\le cC^{1-\kappa}p^{\kappa-1}e(S)n^{-1}$ by choice of $p$.

We apply Theorem~\ref{thm:lowerGenSimp}, whose conditions we just verified, which states that with probability $1-\exp(-pn/C)$, the random binomial set $X=[n]_p$ has the $\tfrac1{2k}\eps$-good lower dense model property.
Given $Y\subset X$ of size at least $\delta p n$, let $Z$ be the corresponding $\tfrac1{2k}\eps$-good lower dense model obtained by using $Y_1=Y$ and $Y_2=X\setminus Y$ in the likely event of Theorem~\ref{thm:lowerGenSimp}. By~\ref{main:itm:sim} with $\sigma=\one$, we have $|Z|\ge(\delta-\tfrac1{2k}\eps)n$.
Assuming $n$ to be sufficiently large, $Z$ therefore contains at least $f\big(k,\delta-\tfrac{1}{2k}\eps\big)n^2-\tfrac{1}{4}\eps n^2$ arithmetic progressions of length $k$. As observed above, we have $f(k,\delta-\tfrac1{2k}\eps)\ge f(k,\delta)-\tfrac12\eps$, so $Z$ contains at least $f(k,\delta)n^2-\tfrac{3}{4}\eps n^2$ arithmetic progressions of length $k$.
Applying~\ref{main:itm:sub'} with $\omega=\one$, the set $Y$ contains at least
\[f(k,\delta)p^kn^2-\frac{3}{4}\eps p^kn^2-\frac{1}{2k}\eps p^kn^2\ge\big(f(k,\delta)-\eps\big)p^kn^2\]
$k$-term arithmetic progressions, as required.
\end{proof}

\subsection{Proof of the sparse pentagon theorem}
\label{sec:pentagon}

As written, Theorem~\ref{thm:dense-pentagon} provides an upper bound on the number of $C_5$ copies in a triangle-free graph, and not one with very few triangles. The proof method of~\cite{Grzesik,HHKNR} uses flag algebras and actually does provide such a bound, but for completeness we use the triangle removal lemma, originally proved by Ruzsa and Szemer\'edi~\cite{RuzSze}, to deduce it from Theorem~\ref{thm:dense-pentagon}.

\begin{lemma}[triangle removal lemma]
  There exists $\eps'>0$ such that for every~$N$ any $N$-vertex graph with at
  most $\eps'N^3$ copies of $K_3$ can be made $K_3$-free by removing at most
  $\delta N^2$ edges.
\end{lemma}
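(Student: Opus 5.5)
The statement is missing its first quantifier over $\delta$; we read it in its standard form: for every $\delta>0$ there exists $\eps'>0$. The plan is the classical Ruzsa--Szemer\'edi argument through the dense regularity method.

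Given $\delta>0$, set $\eps_0=\delta/8$ and $d=\delta/4$. Apply Szemer\'edi's regularity lemma~\cite{SzeReg} with parameter $\eps_0$ and a lower bound $m_0\ge 8/\delta$ on the number of parts. This yields an upper bound $M=M(\eps_0,m_0)$ on the number of parts. We then take
\[
\eps'=\tfrac12\big(d/2\big)^3\big(1/(2M)\big)^3 .
\]
We also take $N$ large enough that $N^2\ge N\cdot\tfrac{8}{\delta}$, say $N\ge N_0(\delta)$. For $N<N_0$ the hypothesis $c(K_3,G)\le\eps'N^3<1$ already forces $G$ to be triangle-free (shrinking $\eps'$ if necessary), so nothing needs to be removed.

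For large $N$, take an $\eps_0$-regular partition $V_0\cup V_1\cup\dots\cup V_m$ of $G$ with exceptional set $|V_0|\le\eps_0 N$ and $m_0\le m\le M$. We delete the following edges:
\begin{itemize}
\item all edges touching $V_0$, at most $\eps_0N^2$ edges;
\item all edges inside the parts $V_i$, at most $N^2/m_0\le\delta N^2/8$ edges;
\item all edges in irregular pairs, at most $\eps_0 N^2$ edges;
\item all edges in pairs of density below $d$, at most $dN^2/2$ edges.
\end{itemize}
Altogether this removes at most $\delta N^2$ edges.

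It remains to show the resulting graph $G'$ is triangle-free. Suppose $G'$ contains a triangle. By construction, its three vertices lie in three distinct parts $V_i,V_j,V_l$. Each of the three pairs among these parts is then $\eps_0$-regular with density at least $d$. The dense triangle counting lemma (the $p=1$, $H=K_3$ case of the counting statement discussed in the introduction, or the elementary direct argument) then gives at least
\[
(1-2\eps_0)\,\eps_0^{-0}\,(d-\eps_0)^3\,|V_i||V_j||V_l|-O(\eps_0)|V_i||V_j||V_l|
\]
triangles across these parts, which is at least $\tfrac12(d/2)^3(N/2M)^3$. We need $\eps_0\ll d$ for this, so $\eps_0$ should in fact be chosen somewhat smaller than $d$ rather than equal to $\delta/8$; we adjust it accordingly. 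Since $G$ is a supergraph of $G'$, this count exceeds $\eps'N^3$, contradicting the hypothesis on $G$.

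The only step needing care is the counting lemma with explicit constants. The standard argument runs as follows. All but $2\eps_0|V_i|$ vertices $v\in V_i$ have at least $(d-\eps_0)|V_j|$ neighbours in $V_j$ and at least $(d-\eps_0)|V_l|$ neighbours in $V_l$. Since $d-\eps_0\ge\eps_0$, these two neighbourhoods are large enough for the regularity of the pair $(V_j,V_l)$ to apply. Hence they span at least $(d-\eps_0)^3|V_j||V_l|$ edges, each of which completes a triangle with $v$. This requires only $\eps_0\le d/2$, which our choice of constants guarantees. Everything else is routine bookkeeping of the deleted edges.
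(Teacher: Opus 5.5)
Your argument is correct: it is the standard regularity-lemma proof of the Ruzsa--Szemer\'edi triangle removal lemma, which the paper does not reprove but simply cites~\cite{RuzSze} and uses as a black box in the proof of Theorem~\ref{thm:pentagon}. Only cosmetic repairs are needed. Replace the garbled display (the factor $\eps_0^{-0}$ and the subtracted $O(\eps_0)$ term, which would in fact swamp $(d/2)^3$) by the clean bound $(1-2\eps_0)(d-\eps_0)^3|V_i||V_j||V_l|$ that your last paragraph actually proves. Drop the remark that $\eps_0$ must be readjusted, since $\eps_0=\delta/8=d/2$ already suffices. Assume $\delta<1/2$ (otherwise delete all edges) so that $1-2\eps_0\ge\tfrac12$. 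Finally, halve $\eps'$ so that the final comparison with $\eps'N^3$ is strict.
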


\begin{proof}[Proof of Theorem~\ref{thm:pentagon}]
Given $\eps>0$, let $\delta=\tfrac1{100}\eps$. The triangle removal lemma guarantees that there exists $\eps'>0$ such that any $N$-vertex graph with at most $\eps'N^3$ copies of $K_3$ can be made $K_3$-free by removing at most $\delta N^2$ edges. Without loss of generality, we assume $\eps'\le\tfrac1{100}\eps$.
We apply Theorem~\ref{thm:mainmulti} with input $k=5$, $r=t=2$, $c=2\cdot 5!$, $\eps'$, and $\rho=\tfrac12$, to obtain a constant $C^\ast>0$. We let $C=10\cdot 2^55!C^\ast$.

Given $N$ sufficiently large, let $S_1$ be the $3$-uniform ordered hypergraph on $[n]=\binom{[N]}{2}$ whose edges are all $3$-tuples of edges of~$K_N$ that form triangles, and let $S_2$ be the $5$-uniform ordered hypergraph on $\binom{[N]}{2}$ whose edges are all $5$-tuples of edges of~$K_N$ that in a cyclic order form copies of $C_5$ in $K_N$. We let $\Sigma=\emptyset$, $\Omega_1=\{\one\}$, and $\Omega_2=\{\one\}$.
Let $q\ge C N^{-1/2}$. Since $n=O(N^2)$ and $e(S_i)=\Omega(N^3)$ for each $i$, condition~\ref{itm:mainmulti:e} of Theorem~\ref{thm:mainmulti} is satisfied.
To see that also condition~\ref{itm:mainmulti:Delta} of this theorem is satisfied, we use the same argument as when we showed in the proof of Theorem~\ref{thm:colHGtrans} that condition~\ref{itm:GS:iv} of Theorem~\ref{thm:GenSimp} holds. We conclude that with high probability, the random set $X=[n]_q$, which corresponds to the random graph $\Gamma=G(N,q)$, satisfies the $\eps'$-good dense model property with respect to both $S_1$ and $S_2$. Suppose this likely event occurs.

Let now $G\subset\Gamma$ be a triangle-free subgraph of $\Gamma$. By the dense model property (applied with $Y_1=G$ and $Y_2=X\setminus G$) we obtain an $\eps'$-good dense model $G'$ of $G$. In particular, $G'$ contains at most $\eps'N^3$ copies of $K_3$. By the triangle removal lemma, there is a $K_3$-free subgraph $G''$ of $G'$ obtained by removing at most $\delta N^2$ edges of $G'$.
By Theorem~\ref{thm:dense-pentagon}, the dense graph $G''$ contains at most $\tfrac{2}{625}N^5$ copies of $C_5$. Adding back the $\delta N^2$ edges, $G'$ contains at most $\big(\tfrac{2}{625}+10\delta\big)N^5$ copies of $C_5$, and by the dense model property it follows that $G$ contains at most
\[\Big(\frac{2}{625}+10\delta+\eps'\Big)q^5N^5\le\Big(\frac{2}{625}+\eps\Big)q^5N^5\]
copies of $C_5$, as required.
\end{proof}

In this proof, we were not specific about the failure probability. We obviously obtain at least as good a bound as given in Theorem~\ref{thm:mainmulti}. However, we only need an upper bound for copies of $C_5$ (and this bound only has to be good enough for the final inequality in the above proof, not with an error term on the scale of $\eps'$), so that following this proof we get a failure probability bound $\eta_{N,q}+\exp\big(-qN^2/C\big)$, where $\eta_{N,q}$ is the probability that $G(N,q)$ contains more than $\big(1+\tfrac13\eps\big)$ times the expected number of copies of $C_5$.

\subsection{Proof of the sparse canonical van der Waerden theorem}\label{sec:vdw}

In this subsection we prove Theorem~\ref{thm:VdW}. Our proof (much as that of
Alvarado et al.~\cite{AKMMO}) uses the following idea. If there is a single
colour which appears $\Omega(pn)$ times in the colouring, then the dense model
of this colour will contain many monochromatic $k$-term progressions and we are
done. If not, we will find a rainbow $k$-term progression. In the latter case,
we can safely group colours together and assume that the total number of colours
is a large constant.

We use the following supersaturation version of Szemer\'edi's theorem, which can
be obtained by a simple averaging argument as first observed by
Varnavides~\cite{Varnavides} (see for example \cite[Lemma~4.2]{Balogh2015} for a
statement in the form below).

\begin{lemma}[supersaturation Szemer\'edi's theorem]
\label{lem:super}
 For every $\rho>0$ and integer~$k$, there exists $\delta>0$ such that for~$n$
 sufficiently large the following holds. Every subset of $[n]$ of size at least
 $\rho n$ contains at least $\delta n^2$ arithmetic progressions with~$k$
 terms.
\end{lemma}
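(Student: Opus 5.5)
This is the classical Varnavides averaging argument. By Szemer\'edi's theorem~\cite{szemeredi1975sets}, applied with density $\rho/2$, there is an integer $m=m(\rho,k)$ such that every subset of $[m]$ of size at least $\tfrac{\rho}{2}m$ contains a $k$-term arithmetic progression. Fix this $m$. Let $A\subset[n]$ with $|A|\ge\rho n$. The plan is to average over the family $\mathcal{P}$ of all $m$-term arithmetic progressions $P=\{a,a+b,\dots,a+(m-1)b\}\subset[n]$ with $b\ge1$ and $b\le n/(2m)$, say. Each such $P$ is an affine image of $[m]$, so it contains a $k$-term progression of $A$ whenever $|A\cap P|\ge\tfrac{\rho}{2}m$.

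The first step is to show that many $P\in\mathcal{P}$ are dense in $A$. We have $|\mathcal{P}|=\Theta_m(n^2)$. Every $x\in[n]$ outside a window of width $O(n/2)$ near the top end lies in roughly the same number of members of $\mathcal{P}$. Cleaner is to double count: for each position $j\in\{0,\dots,m-1\}$ and each admissible $b$, the map $P\mapsto a+jb$ hits each $x$ at most once. It hits all $x$ in a range of length at least $n-(m-1)n/(2m)\ge n/2$. Summing $|A\cap P|$ over $P$ then gives at least $(\rho n-n/2)$ per $(j,b)$, which is not positive when $\rho<1/2$. To repair this, I would restrict $b\le \rho n/(4m)$, so that the excluded boundary region has size at most $\rho n/4$. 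This gives $\sum_P|A\cap P|\ge m\cdot\tfrac{3}{4}\rho n\cdot B$, where $B$ is the number of admissible $b$. Since $|\mathcal{P}|\le nB$, the average of $|A\cap P|$ is at least $\tfrac34\rho m$. As $|A\cap P|\le m$ always, a Markov-type argument shows that at least $\tfrac{\rho}{4}|\mathcal{P}|$ of the $P$ satisfy $|A\cap P|\ge\tfrac{\rho}{2}m$.

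The second step is counting with multiplicity. Each good $P$ yields a $k$-term progression in $A$, which is a progression in $[n]$ contained in $P$. A fixed $k$-term progression $\{a',a'+b',\dots\}$ is contained in at most $m^2\cdot m$ members of $\mathcal{P}$: the positions of its first two terms inside $P$ (at most $m^2$ choices) determine $b=b'/t$ and the starting point. So the number of distinct $k$-term progressions in $A$ is at least $\tfrac{\rho}{4}|\mathcal{P}|/m^3$. Finally, $|\mathcal{P}|\ge \tfrac12 n\cdot\lfloor\rho n/(4m)\rfloor\ge c_{\rho,m}n^2$ for $n$ large, since each admissible $b$ allows at least $n/2$ starting points. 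This gives $\delta=\rho^2/(64m^4)$ or similar.

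The only delicate point is the boundary bookkeeping in the first step: one has to pick the range of $b$ so that the positions near the ends of $[n]$, which are covered less often, cannot hide a constant fraction of $A$. Everything else is routine, and the constants depend only on $\rho$ and $k$ through $m$.
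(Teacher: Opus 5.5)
Your argument is correct, and it is exactly the Varnavides averaging argument; the paper gives no proof of its own here and simply cites Varnavides and Balogh, Morris and Samotij for this deduction from Szemer\'edi's theorem. Your repaired range $b\le\rho n/(4m)$ handles the boundary loss correctly, and the multiplicity bound can even be tightened from $m^3$ to $\binom{m}{2}$, though that only affects $\delta$.
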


\begin{proof}[Proof of Theorem~\ref{thm:VdW}]
  Given $k$, let $\rho=\tfrac{1}{16}k^{-4}$. Let $r=\lceil 2\rho^{-1}+1\rceil$,
  and let $\delta>0$ be returned by Lemma~\ref{lem:super} for input
  $\tfrac12\rho$ and~$k$. Let
  $\eps=\min(\tfrac12\delta,\tfrac{1}{8}k^{-2}r^{-k})$ and $c=k^2$. Let $C$ be
  returned by Theorem~\ref{thm:GenSimp}, and let $C^\ast=C$ be the constant promised by our theorem.

  Given $n$, let $S$ be the ordered $k$-uniform hypergraph on $[n]$ whose
  elements are the $k$-term progressions (in order) on $[n]$; it is easy to see
  we have $e(S)\ge n^2k^{-2}$. We take $\Sigma=\{\one\}$ and
  $\Omega=\{\one\}$. Finally, given $p\ge C^\ast n^{-1/(k-1)}$, we now argue
  that the conditions of Theorem~\ref{thm:GenSimp} are satisfied. The bounds on
  $|\Sigma|$ and $|\Omega|$ of~\ref{itm:GS:ii} hold trivially.
  For~\ref{itm:GS:iv}, consider $1\le\kappa\le k$ and a $k$-tuple
  $\mathbf{x}$ with exactly $\kappa$ entries not equal to $\ast$. If
  $\kappa\ge2$, there is at most one $k$-term progression in $[n]$ passing
  through the given integers in the given positions, so $\Delta_\kappa(S)\le
  1\le cC^{1-\kappa}p^{\kappa-1}\tfrac{e(S)}{n}$ follows from $cC^{1-k}p^{k-1}e(S)\ge n$. For
  $\kappa=1$, there are at most $n$ choices for a given $\ast$ entry of
  $\mathbf{x}$ and this then determines the element of $S$ uniquely, so
  $\Delta_1(S)\le n\le ce(S)/n$ by choice of $c$.
Suppose that the likely event of Theorem~\ref{thm:GenSimp} holds for $X=[n]_p$. Since $\one\in\Sigma$ and $\eps<1$, this implies that $|X|\le 2pn$. Let $\chi':X\to\mathbb{N}$ be any colouring of $X$.

If there is a colour class $Y$ of $\chi'$ of size at least $\rho pn$, then consider the dense model $Z\subset[n]$ of $Y$ as guaranteed by Theorem~\ref{thm:GenSimp}. Since $\one\in\Sigma$, we have $|Z|\ge\tfrac12\rho n$. By Lemma~\ref{lem:super} there are at least $\delta n^2$ arithmetic progressions with~$k$ terms in $Z$, and by choice of $\eps$ there are therefore at least $\tfrac12\delta p^kn^2>0$ arithmetic progressions with $k$ terms in $Y$; in particular there is a monochromatic $k$-term progression in $\chi'$.

Suppose now no colour class of $\chi'$ has size at least $\rho pn$. We iteratively identify pairs of colour classes each of size less than $\rho p n$ until no such pairs remain. In the resulting colouring, all but at most one colour class has size at least $\rho p n$ and so the number of colour classes is at most $\tfrac{2pn}{\rho pn}+1= 2\rho^{-1}+1\le r$. Relabelling the colours, this gives a colouring $\chi:X\to[r]$. By construction, the colour classes $Y_1,\dots,Y_r$ of $\chi$ have size at most $2\rho p n$. Importantly, observe that if a given $k$-term progression is rainbow according to $\chi$, it is also rainbow according to $\chi'$.
Let $Z_1,\dots,Z_r$ be the dense model of $Y_1,\dots,Y_r$ as guaranteed by Theorem~\ref{thm:GenSimp} and
let $\chi^*:[n]\to[r]$ be the corresponding colouring of $[n]$. Since $\one\in\Sigma$, we have $|Z_i|\le 4\rho n$ for each $i\in[r]$.

Consider the following random experiment: we choose $a$ and $d$ independently and uniformly at random in $[\lfloor n/k\rfloor]$. We obtain the (random) $k$-term progression $(a,a+d,\dots,a+(k-1)d)$ contained in $[n]$. Now, fix $0\le i<j\le k-1$ and consider the two elements $a+id,a+jd$ of this progression. These two elements form a pair in $\binom{[n]}{2}$, chosen uniformly at random from a set of possible pairs of size $\lfloor n/k\rfloor^2\ge n^2/(2k^2)$.
The total number of $\chi^*$-monochromatic pairs in $\binom{[n]}{2}$ is at most $n\cdot 4\rho n$: having chosen a first element of the pair in $[n]$, this element will be in some $Z_q$ and there are at most $|Z_q|$ choices of a second element which creates a monochromatic pair. Thus the probability that $(a+id,a+jd)$ is monochromatic is at most $\frac{4\rho n^2}{n^2/(2k^2)}\le 8k^2\rho$.

Taking a union bound over choices of $i$ and $j$, the probability that the random progression contains a monochromatic pair is at most $8k^4\rho=\tfrac12$. Thus $\chi^*$ contains at least $\tfrac12\lfloor\tfrac{n}{k}\rfloor^2\ge \tfrac14n^2k^{-2}$ rainbow $k$-term progressions. Fixing a colour pattern from the less than $r^k$ total colour patterns which occurs most often as a rainbow $k$-term progression, there is a specific rainbow colour pattern such that the number of $k$-term progressions with the given colour pattern occurs at least $\tfrac14n^2k^{-2}r^{-k}$ times in $\chi^*$. Because $\chi^*$ is a dense model of $\chi$, by choice of $\eps$ the same colour pattern occurs as a $k$-term progression at least $\tfrac{1}{8}p^kn^2k^{-2}r^{-k}>0$ times in $\chi$. In particular, there is a rainbow $k$-term progression in $\chi$ and so in $\chi'$, as required.
\end{proof}

Our proof above gives $o(1)$ bounds on the failure of the canonical van der Waerden property. However, it is trivial to enhance this to the optimal $e^{-O(pn)}$ bounds by replacing $X$ with the $\eps$-deletion $\tilde{X}$, matching~\cite{AKMMO}: obviously if every colouring of the latter set contains either a monochromatic or rainbow $k$-term progression, the same is true for the former set. We also note that our argument above showing $\chi^*$ contains many rainbow $k$-term progressions is essentially the same as~\cite[Lemma~3.5]{AKMMO}.

\section{A sparse counting lemma for hypergraphs}\label{sec:hypercount}

In this section we prove a sparse counting lemma for hypergraphs, Theorem~\ref{thm:hypercount}, which immediately implies Theorem~\ref{thm:graphcount}. This turns out to be a fairly easy application of Theorem~\ref{thm:GenSimp}. Most of what follows is simply dealing with the somewhat complicated hypergraph regularity setup and handling constant choices.

Let \(k\) be a positive integer. A \emph{\(k\)-complex} is a down-closed hypergraph in which all edges have size at most \(k\). Given a \(k\)-complex \(H\) with at least \(k+1\) vertices, we define its \emph{\(k\)-density} as \(d_k(H) := \frac{e_k(H) - 1}{v(H) - k}\), where \(e_k(H)\) is the number of edges of size \(k\) in \(H\), and \(v(H)\) denotes the number of vertices of \(H\). We also define \(m_k(H) := \max_{H' \subseteq H} d_k(H')\), where the maximum is taken over all sub-\(k\)-complexes \(H'\) of \(H\) with at least \(k+1\) vertices.

Given a vertex set \([N]\), a \emph{\(k\)-partition} \(\mathcal{V}\) with \(\ell\) \emph{clusters}  consists of a family of disjoint subsets \(V_{\{1\}}, \dots{}, V_{\{\ell\}} \subseteq [N]\) called \emph{clusters}, together with, for each integer \(2 \le i \le k\) and each subset \(E \subset [\ell]\) of size \(i\), a collection \(V_E\subset\binom{[N]}{i}\), called \emph{\(i\)-edges}. These \(V_E\) must satisfy the following compatibility condition: for every \(e \in V_E\) and every \(j \in E\), the set \(e\) intersects the cluster \(V_{\{j\}}\) in exactly one element, and the remaining \(i-1\) elements of \(e\) form an \((i-1)\)-edge in \(V_{E \setminus \{j\}}\). The \emph{supporting \((i-1)\)-graph} of \(V_E\) is the \((i-1)\)-uniform hypergraph consisting of all \((i-1)\)-sets that arise in this way from some edge of \(V_E\).

Let \(E \subset [\ell]\) be given, with \(|E| = i \ge 2\), and suppose \(V_E\) is given along with its supporting \((i-1)\)-graphs \(W_1, \dots{}, W_i\). 
For any subsets \(Q_1 \subset W_1, \dots{}, Q_i \subset W_i\), define \(R(Q_1, \dots{}, Q_i)\) to be the collection of \(i\)-element subsets of \([N]\) that contain one element from each \(Q_j\). 
In particular, \(R(W_1, \dots{}, W_i)\) contains \(V_E\).
If \(R(W_1, \dots{}, W_i)\) is nonempty, and given \(p \in (0,1]\), we define respectively the \emph{relative density} of \(V_E\) and the \emph{relative \(p\)-density} of \(V_E\) as
\[d^*(V_E) := \frac{|V_E|}{|R(W_1, \dots{}, W_i)|}\hspace{1cm}\text{and}\hspace{1cm}d^*_p(V_E) := \frac{|V_E|}{p \cdot |R(W_1, \dots{}, W_i)|}\,.\]
Finally, for singleton sets, we define  \(d^*(V_{\{i\}}) := |V_{\{i\}}| N^{-1}.\)

Let \(E \subset [\ell]\) be a set of size \(i\), with \(2 \le i \le k\), and let \(p \in (0,1]\). Consider \(V_E\) and let \(W_1, \dots{}, W_i\) denote the supporting \((i{-}1)\)-graphs of \(V_E\). 
We say that \(V_E\) is \emph{\((\varepsilon, r, p)\)-regular with respect to its supporting \((i{-}1)\)-graphs} if the following holds. For any set \(R^*\) of the form
\(
R^* = \bigcup_{j=1}^r R(Q_1^{(j)}, \dots{}, Q_i^{(j)})
\)
where \(Q_i^{(j)}\subseteq W_i\) with \(|R^*| \ge \varepsilon |R(W_1, \dots{}, W_i)|\) we have
\[
\frac{|V_E \cap R^*|}{p |R^*|} = d^*_p(V_E) \pm \varepsilon.
\]
If either of the parameters \(r\), \(p\), or both are omitted, they are understood to be equal to \(1\).

So far we gave the standard setup for defining the so-called strong regularity. At this point we generalise slightly and consider what we shall call $s$-regularity, for some $1\le s\le k-1$. Here $s=k-1$ will be the usual strong regularity, while $s=1$ is normally called weak regularity (and in this context it is normal to take $r=1$). Given a $k$-partition $\mathcal{V}$ with $\ell$ clusters, and a $k$-set $E\subset[\ell]$, we say the \emph{supporting $s$-graphs} of $V_E$ are the collection of $s$-graphs $V_{E'}$ for $E'\subset E$ with $|E'|=s$. For each $E'\subset E$ with $|E'|=s$, choose $Q_{E'}\subset V_{E'}$. Then we define $R\big(\{Q_{E'}\colon E'\subset E,|E'|=s\}\big)$ to be the collection of $k$-element subsets $e$ of $[N]$ with one member in each cluster of $V_E$ such that the intersection of $e$ with the clusters of $V_{E'}$ is in $Q_{E'}$ for each $E'\subset E$ with $|E'|=s$. We define the \emph{relative $(p;s)$-density} of $V_E$ to be
\[d^*_{p;s}(V_E):=\frac{|V_E|}{p\cdot\big|R\big(\{V_{E'}\colon E'\subset E, |E'|=s\}\big)\big|}\,,\]
and we say $V_E$ is \emph{$(\eps,r,p;s)$-regular with respect to $\{V_{E'}\colon E'\subset E, |E'|=s\}$} if the following holds. For any set $R^*$ of the form $R^*=\bigcup_{j=1}^r R\big(\{Q^{j}_{E'}\colon E'\subset E, |E'|=s\}\big)$ with $|R^*|\ge\eps \big|R\big(\{V_{E'}\colon E'\subset E,|E'|=s\}\big)\big|$ we have
\[\frac{|V_E\cap R^*|}{p|R^*|}=d^*_{p;s}(V_E)\pm\eps\,.\]
Observe that for $s=k-1$, this is simply repeating the definition of $(\eps,r,p)$-regularity, while for $s=r=1$ we obtain weak regularity: we are counting edges supported by given vertex subsets of the clusters. 
A \(k\)-partition $\mathcal{V}$ with $\ell$ clusters is said to be \emph{\((\varepsilon_{k}, \varepsilon, d_1, \dots{},d_s, d_{k}, r, p;s)\)-regular} if the following hold.
\begin{enumerate}[label=\rom]
    \item For each \(i \in [\ell]\), we have \(|V_{\{i\}}|=d_1 N\).
    \item For every \(E \subset [\ell]\) with \(2 \le |E| \le s\), the set \(V_E\) is \(\varepsilon\)-regular with respect to its supporting \((|E|-1)\)-graphs, and its relative density satisfies \(d^*(V_E)= d_{|E|}\pm\eps\).
    \item For every $E\subset[\ell]$ with $s+1\le|E|\le k-1$, we have $V_E=R(W_1,\dots,W_{|E|})$, where $W_1,\dots,W_{|E|}$ are the supporting ($|E|-1$)-graphs of $V_E$.
    \item For every \(E \subset [\ell]\) with \(|E| = k\), the set \(V_E\) is \((\varepsilon_{k}, r, p;s)\)-regular with respect to its supporting \(s\)-graphs, and its relative \((p;s)\)-density satisfies \(d^*_{p;s}(V_E) \ge d_{k}\).
\end{enumerate}
Note that the third condition enforces $d^*(V_E)=1$ for any $E$ with $s+1\le|E|\le k-1$.

Let \(H\) be a \(k\)-complex. An injective map \(\phi\colon V(H) \to [\ell]\) is called a \emph{\(k\)-complex homomorphism} if for every edge \(e \in E(H)\), the image \(\phi(e)\) has size \(|e|\). That is, \(\phi\) maps the vertices of each edge to distinct cluster indices.
Given a \(k\)-partition \(\mathcal{V}\) with \(\ell\) clusters over the vertex set \([N]\), a map \(\psi\colon V(H) \to [N]\) is said to be a \emph{\(\phi\)-partite copy of \(H\) in \(\mathcal{V}\)} if \(\psi\) is injective and for every edge \(e \in E(H)\), the image \(\psi(e)\) is an element of \(V_{\phi(e)}\). 

We now set up all the notation we need to formulate the hypergraph counting statements we are interested in. We begin with a dense counting assumption, that $H$-counting in the dense case follows from $s$-regularity, which is captured in the following definition. We shall then transfer this dense statement to the sparse setting.

\begin{definition}[$s$-regularity counting]\label{def:count}
  Given \(k\ge 2\), $1\le s\le k-1$, and a fixed \(k\)-complex \(H\), we say
  that $H$ has \emph{$s$-regularity counting} if the following holds.  For every
  \(\delta,d_k>0\), there exists \(\eps_k>0\) such that for every
  \(d_2,\dots{},d_s>0\) with $d_2^{-1},\dots,d^{-1}_s\in\mathbb{N}$ there exist
  \(\eps>0\) and \(r\in\mathbb{N}\) such that for any \(d_1>0\) the following
  holds for all sufficiently large $N$.  Given any \(k\le \ell\le v(H)\) and
  \((\eps_k,\eps,d_1,\dots{},d_s,d_k,r,1;s)\)-regular \(k\)-partition
  \(\mathcal{V}\) with \(\ell\) clusters on \([N]\), and given any \(k\)-complex
  homomorphism \(\phi\colon V(H)\to [\ell]\), the number of \(\phi\)-partite copies of
  \(H\) in \(\mathcal{V}\) is
 \[(1\pm\delta)N^{v(H)}\prod_{e\in E(H)}d^*\big(V_{\phi(e)}\big)\,.\]
\end{definition}

Observe that if for some $E\subset[\ell]$ with $|E|=k$ the set~$E$ is not in the image of $\phi$, then the edges of $V_E$ do not play a r\^ole in the above assumption, nor is the density of $V_E$ relevant. It follows that we do not need the assumption of regularity or $d^*(V_E)\ge d_k$ for such $E$. To see this formally, consider modifying the $k$-partition by replacing $V_E$ with all the $k$-edges supported by the supporting $(k-1)$-graphs of $V_E$. The resulting $k$-partition satisfies the required density and regularity conditions, without changing the partite $H$-count or the formula.

A result of Kohayakawa, Nagle, R\"odl and Schacht~\cite[Lemma~10]{KNRS} tells us that $H$ has $1$-regularity counting for any \emph{linear} $H$. That is, for any two maximal edges of $H$, the intersection of the two edges contains at most one vertex, and this implies that $H$ has $1$-regularity counting whenever $H$ is a linear $k$-complex. In fact, they show that in this setting we can even take $r=1$.

At the other end of the spectrum, it was proved that $H$ has $(k-1)$-regularity counting by Cooley, Fountoulakis, K\"uhn and Osthus~\cite[Lemma~4]{Cooley2009}, building on the seminal work of R\"odl and Schacht~\cite{RodSchCounting}, which is in turn related to earlier results of Gowers~\cite{GowCounting} and Nagle, R\"odl and Schacht~\cite{NagRodSch}. Formally,  Cooley, K\"uhn, Fountoulakis and Osthus only proved a special case, but a standard reduction gives the more general statement; we sketch the reduction in Appendix~\ref{app:count}.

Other values of~$s$ have to our knowledge not been considered.
In general, it seems reasonable to believe that $H$ has $s$-regularity counting if and only if any two $k$-edges of $H$ intersect in at most $s$ vertices. We give the standard example showing necessity in Appendix~\ref{app:count}, and also argue there that for sufficiency we only need to prove the special case that $H$ is the down-closure of a $k$-uniform hypergraph. For counting statements this setting of the down-closure of $k$-uniform hypergraphs is what the literature so far seems to have concentrated on.

\medskip

We now turn to the sparse setting.
In the following theorem, when $H$ is a $k$-complex, by `the number of copies of $H$ in $G^{(k)}(N,p)$' we mean the number of copies in the $k$-complex obtained by adding to $G^{(k)}(N,p)$ all possible edges of uniformity strictly smaller than $k$.

\begin{theorem}[Counting lemma for sparse hypergraphs]\label{thm:hypercount}
  Let \(k\ge 2\) and $1\le s\le k-1$, and let \(H\) be a \(k\)-complex that has
  $s$-regularity counting.  For any \(\delta,d_k>0\), there exists
  \(\eps_k>0\) such that for any \(d_2,\dots{},d_s>0\) with
  $d_2^{-1},\dots,d_s^{-1}\in\mathbb{N}$, there exist \(\eps>0\) and
  \(r\in\mathbb{N}\) such that for any \(d_1>0\) there exist $\eps^*>0$ and
  $C^\ast$ such that the following holds.  Suppose that \(N\) is sufficiently large,
  and \(p\ge\max\big(C^\ast N^{s-k},C^\ast N^{-1/m_k(H)}\big)\). Let $\eta_{N,p}$ be the
  probability that the number of copies of $H$ in $G^{(k)}(N,p)$ exceeds
  $\big(1+\tfrac13\eps^*\big)p^{e(H)}N^{v(H)}$.  With probability at least
  \[1-\eta_{N,p}-\exp\Big(-\frac{pN^k}{C^\ast k!}\Big)\,,\]
  the random \(k\)-uniform hypergraph \(\Gamma=G^{(k)}(N,p)\) has the following property. 

  Given any \(k\le \ell\le v(H)\), any
  \((\eps_k,\eps,d_1,\dots{},d_s,d_k,r,p;s)\)-regular \(k\)-partition
  \(\mathcal{V}\) with \(\ell\) clusters on \([N]\) such that for each
  \(E\subset [\ell]\) with \(|E|=k\) we have \(V_E\subset\Gamma\), and given any
  \(k\)-complex homomorphism \(\phi\colon V(H)\to [\ell]\), the number of
  \(\phi\)-partite copies of \(H\) in \(\mathcal{V}\) is
 \[(1\pm\delta)N^{v(H)}\prod_{e\in E(H)}d^*\big(V_{\phi(e)}\big)\,.\]
 \end{theorem}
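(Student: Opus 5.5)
We plan to apply Theorem~\ref{thm:GenSimp} with $n=\binom{N}{k}$, identifying $[n]$ with the $k$-sets of $[N]$, and with $r=2$ colours, to transfer the sparse $k$-partition to a dense one. The ordered hypergraph $S$ has $e(H)$-uniformity (writing $e(H)$ for the number of $k$-edges), and its edges are the ordered tuples of $k$-sets forming a copy of (the $k$-edges of) $H$ in $K^{(k)}_N$, exactly as in the proof of Theorem~\ref{thm:colHGtrans}. The codegree condition~\ref{itm:GS:iv} is verified verbatim as there, using $p\ge C^\ast N^{-1/m_k(H)}$. Condition~\ref{itm:GS:i} follows from $p\ge C^\ast N^{s-k}$, since then $pn\gtrsim C^\ast N^s/k!$; this is also why the failure probability is $\exp(-pN^k/(C^\ast k!))$. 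The sets $\Sigma$ and $\Omega$ are chosen so that the dense model both inherits the regularity (hence becomes a dense $(\eps_k',\eps,d_1,\dots,d_s,d_k,r,1;s)$-regular partition) and preserves the partite $H$-count. For $\Sigma$ we take, for each $E\subset[\ell]$ with $|E|=k$ and each family $\{Q^j_{E'}\}_{j\le r}$ of subsets of the supporting $s$-graphs, the indicator $\sigma$ of $R^*=\bigcup_j R(\{Q^j_{E'}\})$ restricted to $R(\{V_{E'}\})$. For $\Omega$ we take, for each $\phi$, the indicator of those ordered $H$-copies whose lower-order edges lie in the appropriate $V_{\phi(e)}$ and whose $k$-edges lie in the right cluster positions.

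\textbf{Counting.} Since the lower levels of $\mathcal V$ are arbitrary, these families must range over all possible lower-order structures. Their number is at most $\ell^{v(H)}\cdot 2^{O(rN^s)}$, because each $s$-graph is a subset of $\binom{[N]}{s}$ and there are at most $v(H)$ clusters, giving $2^{O(N)}$ choices for cluster sets. This is at most $\exp(pn/C)$ precisely when $p\ge C^\ast N^{s-k}$ with $C^\ast$ large relative to $r$ and $C$. Here $\eps^*$ is the accuracy fed to Theorem~\ref{thm:GenSimp}, chosen small in terms of $\eps_k,\eps,d_1,\dots,d_s,d_k,r,\delta$. Since $\eps^*\ll d_1^{v(H)}\prod d_i\,d_k^{e(H)}$, this guarantees that additive errors $\eps^* N^k$ and $\eps^* e(S)$ are negligible relative to the main terms.

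\textbf{Applying the transference.} Given a regular sparse partition $\mathcal V$, set $Y_1=\bigcup_{|E|=k}V_E$ and $Y_2=\Gamma\setminus Y_1$, and let $Y'_1$ be the dense model; replace each $V_E$ by $V'_E:=Y'_1\cap R(\{V_{E'}\})$, keeping the lower levels. We then check the following.
\begin{enumerate}
\item By~\ref{main:itm:sim} with $\sigma$ ranging over the $R^*$-indicators, $|V'_E\cap R^*|=p^{-1}|V_E\cap R^*|\pm\eps^*N^k$. This is within $\tfrac12\eps_k p$-relative accuracy once $|R^*|\ge\eps_k|R(\{V_{E'}\})|\gtrsim \eps_k d_1^k\prod d_i N^k$. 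Hence $V'_E$ is $(2\eps_k,r,1;s)$-regular with $d^*(V'_E)\approx d^*_{p;s}(V_E)\ge d_k-\eps_k$.
\item To apply $s$-regularity counting (Definition~\ref{def:count}), we take $\eps_k$ there half of our $\eps_k$ and lower $d_k$ slightly; lower edges are untouched. This gives that the $\phi$-partite count in the dense partition is $(1\pm\delta/2)N^{v(H)}\prod d^*(V'_{\phi(e)})$.
\item By~\ref{main:itm:sub} with the $\omega$ corresponding to $\phi$ and $i_1=\dots=i_{e(H)}=1$, the sparse $\phi$-partite count equals $p^{e(H)}$ times the dense one, up to $\eps^* p^{e(H)} e(S)$. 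Converting $d^*(V'_E)$ back to $p^{-1}d^*(V_E)$ with relative error $O(\eps^*)$ gives the claim.
\end{enumerate}
(The statement's formula should be read with $d^*(V_{\phi(e)})$ for $k$-edges equal to $p\,d^*_{p;s}$, matching Theorem~\ref{thm:graphcount}.)

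The main obstacle is the bookkeeping in choosing $\Sigma$ and $\Omega$. They must be independent of $\Gamma$, so they must range over all possible lower-level partitions, while staying within $\exp(pn/C)$ in number; this is where the $N^{s-k}$ bound on $p$ enters. A further subtlety is that regularity is only tested on sets $R^*$ of relative size at least $\eps_k$, so the additive error $\eps^* n$ must be compared against $\eps_k d_1^k\prod_i d_i^{\binom{k}{i}} N^k$. This is the reason $\eps^*$ is chosen only after $d_1$.
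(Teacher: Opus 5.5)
Your treatment of the main case is the paper's proof. It has the same $e_k(H)$-uniform $S$ and codegree check, and the same $\Sigma$: indicators of unions of at most $r$ sets $R(\{Q_{E'}\})$ over all possible $s$-level structures, which is where $p\ge C^\ast N^{s-k}$ enters. It also uses $Y_1=\bigcup_{|E|=k}V_E$ and $V'_E=Y'_1\cap R(\{V_{E'}\})$, gets regularity of $\mathcal V'$ from \ref{main:itm:sim}, applies $s$-regularity counting in $\mathcal V'$, and transfers back via \ref{main:itm:sub}. One wording slip: your $\eps_k$ must be half of the one supplied by Definition~\ref{def:count}, not the reverse.

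There are two gaps. The first is that Theorem~\ref{thm:GenSimp} needs $S$ to have uniformity at least $2$, so your argument does not cover $e_k(H)\le 1$, which the statement includes. The paper handles $e_k(H)=0$ directly by $s$-regularity counting, since the top level plays no role. It handles $e_k(H)=1$ by a separate argument combining the extension lemma, Chernoff and a union bound over lower-level partitions. Within your own framework you could instead note that for $e_k(H)=1$ the partite count $\sum_{e\in V_E}w(e)$ is linear in $V_E$. It therefore transfers through \ref{main:itm:sim} with the $[0,1]$-valued structure function $w/N^{v(H)-k}$, of which there are only $2^{O(N^s)}$. You would use an auxiliary $2$-uniform $S$ (e.g.\ all ordered pairs) only to make Theorem~\ref{thm:GenSimp} applicable.

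The second gap is that your subcounts are $\{0,1\}$-valued indicators on $S$, but an element $\mathbf s\in S$ determines $\psi$ only on the vertices of $H$ lying in $k$-edges. The theorem explicitly allows $H$ to have vertices, with lower-order edges, outside every $k$-edge; let $q(H)$ be their number. Then the number of $\phi$-partite copies is $\sum_{\mathbf s\in S,\ \mathbf s\subset Y_1}\tilde w(\mathbf s)$, where $\tilde w(\mathbf s)$ counts extensions to those $q(H)$ vertices with all lower-order edges in the correct $V_{\phi(e)}$. This weight genuinely varies with $\mathbf s$ through the lower levels. Controlling the number of such $\mathbf s$ via \ref{main:itm:sub} does not control the weighted sum, so your step (3) fails whenever $q(H)>0$. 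The fix is the paper's choice $\omega(\mathbf s)=\tilde w(\mathbf s)/\big(v(H)!\,N^{q(H)}\big)\in[0,1]$. The additive error in the dense count is then $\eps^* e(S)\,v(H)!\,N^{q(H)}=O(\eps^* N^{v(H)})$, which your choice of $\eps^*$ still absorbs. When $q(H)=0$ your indicator is fine, because injectivity of $\phi$ and disjointness of the clusters make $\psi$ unique given $\mathbf s$.
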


We remark that our counting formula in this theorem does not contain~$p$ because the density $d^*\big(V_{\phi(e)}\big)$ for $|e|=k$ is the actual density of the set $V_{\phi(e)}$ of $k$-edges and $V_{\phi(e)}\subset\Gamma$.
Further, in this theorem, we do allow for the possibility that some edges of \(H\) of uniformity smaller than \(k\) are not contained in any \(k\)-edges of \(H\); that is, \(H\) need not be just the down-closure of a \(k\)-uniform hypergraph. This turns out to be useful in some applications for \(k\ge3\).

\begin{proof}[Proof of  Theorem~\ref{thm:hypercount}]
  The case \(e_k(H)=0\) of  Theorem~\ref{thm:hypercount} is implied by the fact that $H$ has $s$-regularity counting, viewing $H$ as a $(k-1)$-complex.

 The case \(e_k(H)=1\) is standard and does not require  Theorem~\ref{thm:GenSimp}. We give only a sketch. Let \(H'\) be the \((k-1)\)-complex \(H\) with the one \(k\)-edge removed. An application of the extension lemma \cite[Lemma 5]{Cooley2009} shows that all but a tiny fraction of \(k\)-sets supported by any given $(\eps,\eps,d_{1},\dots,d_s,1,\dots,1,r;k-1)$-regular \((k-1)\)-partition \(\mathcal{V}'\) (where the number of densities equal to $1$ is $k-1-s$) are in roughly the same number of \(\phi\)-partite copies of \(H'\), and that the exceptional \(k\)-sets account for only a tiny fraction of all \(\phi\)-partite copies of \(H'\). A standard application of the Chernoff bound from Theorem~\ref{thm:chernoff} shows that with very high probability, when \(G^{(k)}(N,p)\) is revealed, there are very few edges on these exceptional \(k\)-sets and the number of \(\phi\)-partite \(H\)-copies they generate is tiny compared to those on typical \(k\)-sets. Critically, this `very high probability' is sufficient for a union bound over choices of \(\mathcal{V}'\) and \(\phi\). Supposing now this likely event occurs, given any regular \(\mathcal{V}\), letting \(\mathcal{V}'\) the \((k-1)\)-partition obtained by removing the \(k\) layer, we see that (using the fact that \(\eps_k\) is much smaller than \(d_k\)) most of the \(k\)-edges of \(\mathcal{V}\) are on typical \(k\)-sets and a short calculation gives the desired count of \(\phi\)-partite \(H\)-copies.

 We now assume $e_k(H)\ge 2$, which is the interesting case. Let \(2\eps_k>0\) be small enough for the $s$-regularity counting of~$H$ with input \(\tfrac12\delta\). Given \(d_2,\dots{},d_s,d_k>0\), let \(\eps\) and \(r\) be given according to the $s$-regularity counting of~$H$ for input \(d_2,\dots{},d_{s},\tfrac12d_k\). Let finally \(d_1>0\) be given.
 We set \(c=2v(H)!\), and apply Theorem~\ref{thm:GenSimp} with input uniformity \(e_k(H)\), \(c\) and 
 \[\eps^*=\frac{\delta d_k\eps_k^2}{10v(H)!}\prod_{e\in E(H)}d_{|e|}\,.\] 
 Let \(C\) be the constant returned by  Theorem~\ref{thm:GenSimp} and let \(C^\ast =10r2^kCk!\).

 Order the \(k\)-edges of \(H\). Let \(n=\binom{N}{k}\) enumerate the edges of \(K^{(k)}_N\), and let \(S\) consist of the ordered subsets of \([n]\) corresponding to \(e_k(H)\)-sets in \([N]\) which form isomorphic copies of the \(k\)-uniform edges of \(H\), in the chosen order.
  We now verify the codegree conditions of Theorem~\ref{thm:GenSimp} hold for \(S\) and \(p\ge C^\ast N^{-1/m_k(H)}\). To begin with, we estimate \(e(S)\). Let \(q(H)\) be the number of vertices of \(H\) which are not in any \(k\)-uniform edge of \(H\). There are \((1+o(1))N^{v(H)-q(H)}\) injective maps from the vertices of \(H\) which are in \(k\)-edges to \([N]\), each of which gives one element of \(S\), so \(e(S)=(1+o(1))N^{v(H)-q(H)}\).

 Given \(1\le \kappa\le e_k(H)\), let \(\mathbf{x}\) be a sequence of length \(e_k(H)\) from \([n]\cup\{\ast\}\) with exactly \(\kappa\) entries not equal to \(\ast\). For \(\kappa=1\), by symmetry we have \(\deg_S(\mathbf{x})=\tfrac{e(S)}{n}\), which is as required. We now assume \(\kappa\ge2\). 
 Let \(W\subset[N]\) be the vertices of \(K^{(k)}_N\) which are contained in some edge in \(\mathbf{x}\). By definition, if \(\mathbf{x}\) has two identical non-\(\ast\) entries, then \(\deg_S(\mathbf{x})=0\), so we can assume that \(\mathbf{x}\) has at least two distinct non-\(\ast\) entries, and hence \(|W|\ge k+1\). By definition of \(m_k(H)\), we have
 \[\frac{\kappa-1}{|W|-k}\le m_k(H)\,,\qquad\text{so}\qquad |W|\ge\frac{\kappa-1}{m_k(H)}+k\,.\]
 To obtain a member of \(S\) which agrees with \(\mathbf{x}\) at the non-\(\ast\) coordinates, we can at most pick a further \(v(H)-|W|-q(H)\) vertices in \(k\)-edges of \(H\) and one of the at most \(v(H)!\) maps from the vertices of \(H\) in \(k\)-edges to the picked vertices together with \(W\). Thus, we have
 \begin{align*}
  \deg_S(\mathbf{x})&\le  N^{v(H)-|W|-q(H)}v(H)!
  \le 2v(H)!e(S)N^{-|W|}
  \le 2v(H)!e(S)N^{-\frac{\kappa-1}{m_k(H)}-k}\\
  &= 2v(H)!\frac{e(S)}{N^k}\big(N^{-1/m_k(H)}\big)^{\kappa-1}
  \le 2v(H)!\frac{e(S)}{n}(p/C^\ast )^{\kappa-1}\,,
 \end{align*}
which is the required bound.

We next give the set $\Sigma$ of structure functions. These are the `obvious' functions which allow us to transfer $s$-regularity: that is, for any collection of $\binom{k}{s}$ hypergraphs $Q_1,\dots,Q_{\binom{k}{s}}$ on $[N]$ of uniformity $s$, we consider the set $R(Q_1,\dots,Q_{\binom{k}{s}})$ of $k$-sets in $[N]$ which contain exactly one edge of each $Q_i$. We let \(\Sigma\) consist of the indicator functions of the unions of any up to \(r\) sets of the form \(R(Q_1,\dots{},Q_{\binom{k}{s}})\). Then we have
 \[|\Sigma|\le 2\cdot 2^{r\binom{k}{s}N^{s}}\le \exp\Big(\frac{pn}{C}\Big)\,,\]
 where we use the simple bound that the number of $s$-uniform hypergraphs on $[N]$ is at most $2^{N^s}$, and the inequality uses \(p\ge C^\ast N^{s-k}\) and the choice of \(C^\ast \).

 Finally, we give the set $\Omega$ of subcounts. For each \(k\le\ell\le v(H)\), consider each choice of a \(k\)-partition \(\mathcal{V}\) with \(\ell\) clusters whose \(s+1,\dots,k\) levels are complete (i.e.\ each \(V_E\) with \(s+1\le |E|\le k\) is equal to \(R(W_1,\dots{},W_{|E|})\) where \(W_1,\dots{},W_{|E|}\) are the supporting \((|E|-1)\)-graphs), and each \(\phi\colon V(H)\to[\ell]\). For each such \((\ell,\mathcal{V},\phi)\) we construct a subcount \(\omega\) as follows. For each member \(\mathbf{s}\) of \(S\) (recall $\mathbf{s}$ is an ordered tuple of $e_k(H)\ge 2$ members of $[n]=\binom{[N]}{k}$), we count the number \(\tilde w(\mathbf{s})\) of \(\phi\)-partite copies \(\psi\) of \(H\) in \(\mathcal{V}\) such that the \(i\)-th edge of \(H\) is mapped to the \(i\)-th member of \(\mathbf{s}\), for each \(1\le i\le e_k(H)\). Observe that necessarily \(0\le \tilde w(\mathbf{s})\le (v(H))!N^{q(H)}\), where \(q(H)\) is the number of vertices of \(H\) not in any \(k\)-edge of \(H\). We define \(\omega(\mathbf{s})=\tfrac{1}{(v(H))!N^{q(H)}}\tilde w(\mathbf{s})\), which is therefore in \([0,1]\). We say this is the subcount corresponding to \((\ell,\mathcal{V}'',\phi)\) for any choice \(\mathcal{V}''\) of a \(k\)-partition which is identical to \(\mathcal{V}\) on any level except perhaps the \(k\)-th.
 We now upper bound the size \(|\Omega|\) of the set of all such subcounts. There are \(v(H)\) choices of \(\ell\), and at most \(v(H)^\ell\le v(H)^{v(H)}\) choices of \(\phi\). What remains is to estimate the number of choices of \(\mathcal{V}\). Observe that \(\mathcal{V}\) is defined by the choices of \(V_E\) for \(1\le |E|\le s\). There are at most \(N^{v(H)+1}\) ways to choose the clusters, since the clusters are disjoint. Again since the clusters are disjoint, to define \(V_E\) for each \(2\le |E|\le s\) it suffices to choose a subset of each of \(\binom{[N]}{2}\) through \(\binom{[N]}{s}\), which can be done in at most \(2^{N^2}\cdots 2^{N^{s}}\) ways. We conclude
 \[|\Omega|\le v(H)^{v(H)+1}N^{v(H)+1}2^{sN^{s}}\le \exp\Big(\frac{pn}{C}\Big)\,,\]
 where as before the inequality uses \(p\ge C^\ast N^{s-k}\) and the choice of \(C^\ast \), and this time also that \(N\) is sufficiently large. This completes the check that $(S,\Sigma,\Omega)$ satisfies the $(c,C,p,n)$-boundedness conditions.

 Suppose now that \(X=[n]_p\) satisfies the likely event of  Theorem~\ref{thm:GenSimp} for this \(\eps^*\), \(S\), \(\Sigma\) and \(\Omega\). This occurs with probability at least
 \[1-\Prob\Big(\big|S\cap [n]_p^{e_k(H)}\big|\ge\Big(1+\frac13\eps^*\Big)p^{e_k(H)}e(S)\Big)-\exp\Big(-\frac{pn}{C^\ast }\Big)\]
 which, using the choice of our constants and $n\le N^k/k!$, is as required by our sparse counting lemma. Let \(\Gamma\) be the corresponding instance of \(G^{(k)}(N,p)\).

 Given \(k\le\ell\le v(H)\) and an \((\eps_k,\eps,d_1,\dots{},d_s,d_k,r,p;s)\)-regular \(k\)-partition \(\mathcal{V}\) with \(\ell\) clusters on \([N]\), such that for each \(V_E\) with \(|E|=k\) we have \(V_E\subset\Gamma\), let \(Y\) be the subset of \(X\) consisting of elements in any \(V_E\) with \(|E|=k\). Let \(Z\) be the dense model of~$Y$ guaranteed by the likely event of  Theorem~\ref{thm:GenSimp}, and let \(\mathcal{V}'\) be the \(k\)-partition with \(\ell\) clusters on \([N]\) obtained by replacing each \(V_E\) where \(|E|=k\) with \(V'_E\) corresponding to the elements of \(Z\) that are supported on the \((k-1)\)-graphs supporting \(V_E\).

 We claim that \(\mathcal{V}'\) is \((2\eps_k,\eps,d_1,\dots{},d_s,\tfrac12d_k,r,1;s)\)-regular and that the relative densities of the top level are close to the relative \(p\)-densities of \(\mathcal{V}\). The regularity of the levels from \(1\) to \(k-1\) follows from the regularity of \(\mathcal{V}\), and what needs to be proved is that each \(V'_E\) with \(|E|=k\) is \((2\eps_k,r,1;s)\)-regular with density \(d^*(V'_E)=\big(1\pm\tfrac{\delta}{10e_k(H)}\big)d_p^*(V_E)\ge\tfrac12d_k\). 
 To see this holds, fix \(E\) and let \(W_1,\dots{},W_{\binom{k}{s}}\) be the supporting \(s\)-graphs of \(V_E\) (so also of \(V'_E\)). Let \(R^*\) be a union of at most \(r\) sets of the form \(R(Q_1,\dots{},Q_{\binom{k}{s}})\) (as defined where we described the set \(\Sigma\) of structure functions), with the extra condition \(Q_i\subseteq W_i\) for each \(1\le i\le \binom{k}{s}\). Abusing notation slightly, we think of \(R^*\) as both a subset of \(\binom{[N]}{k}\) and of \([n]\). Because \(Z\) is a dense model of \(Y\), using the structure function \(\sigma\in\Sigma\) which takes the value \(1\) precisely on \(R^*\), we have
 \[p^{-1}|R^*\cap Y|=|R^*\cap Z|\pm\eps^*n\,.\]
 Taking the particular case that \(R^*\) is all \(k\)-sets supported by \(W_1,\dots{},W_{\binom{k}{s}}\), this immediately says that
 \[d^*(V'_E)=d^*_p(V_E)\pm\eps^*nN^{-k}\prod_{i=1}^{k-1}d_i^{-\binom{k}{i}}=\Big(1\pm \frac{\delta}{10e_k(H)}\Big)d^*_p(V_E)\ge\frac{d_k}{2}\,,\]
 where the final equality is by choice of \(\eps^*\).
 Suppose now \(R^*\) contains at least an \(\eps_k\)-fraction of all \(k\)-edges supported by \(W_1,\dots,W_{\binom{k}{s}}\). Because \(\mathcal{V}\) is regular, we have
 \[|R^*\cap Y|=|R^*\cap V_E|=(1\pm\eps_k)d^*(V_E)|R^*|=(1\pm\eps_k)pd^*_p(V_E)|R^*|\,.\]
 Putting these bits together, we have
 \begin{align*}
   |R^*\cap Z|&=(1\pm\eps_k)d_p^*(V_E)|R^*|\pm\eps^*n
   =(1\pm\eps_k)\Big(1\pm\frac{\delta}{10e_k(H)}\Big)d^*(V'_E)|R^*|\pm\eps^*n \\
   &=(1\pm2\eps_k)d^*(V'_E)|R^*|  
 \end{align*}
 which verifies \((2\eps_k,r,1;s)\)-regularity of \(V'_E\). Here again the final inequality is by choice of \(\eps^*\).

 Using that $H$ has $s$-regularity counting in \(\mathcal{V}'\), we see that the number of \(\phi\)-partite copies of \(H\) in \(\mathcal{V}'\) is
 \[\Big(1\pm\frac12\delta\Big)N^{v(H)}\prod_{e\in E(H)}d^*\big(V'_{\phi(e)}\big)\,.\]
 Letting \(\omega\) be the subcount corresponding to \((\ell,\mathcal{V},\phi)\), we have by definition of \(\omega\) that
 \begin{align*}
  \sum_{\mathbf{s}\in S}\mathbbm{1}(\mathbf{s}\subset Z)\omega(\mathbf{s})(v(H))!N^{q(H)}&=\Big(1\pm\frac12\delta\Big)N^{v(H)}\prod_{e\in E(H)}d^*\big(V'_{\phi(e)}\big)\\
  &=\Big(1\pm\frac34\delta\Big)N^{v(H)}p^{-e_k(H)}\prod_{e\in E(H)}d^*\big(V_{\phi(e)}\big)\,.
 \end{align*}
 Here the final equality uses that \(d^*_p(V_E)=p^{-1}d^*(V_E)=\Big(1\pm\frac{\delta}{8e_k(H)}\Big)d^*(V'_E)\) whenever \(|E|=k\).

 Since \(Z\) is a dense model of \(Y\), we have
 \[p^{-e_k(H)}\sum_{\mathbf{s}\in S}\mathbbm{1}(\mathbf{s}\subset Y)\omega(\mathbf{s})=\sum_{\mathbf{s}\in S}\mathbbm{1}(\mathbf{s}\subset Z)\omega(\mathbf{s})\pm\eps^*e(S)\,.\]
 We therefore get
 \begin{align*}
 \Big(1\pm\frac34\delta\Big)&N^{v(H)}p^{-e_k(H)}\prod_{e\in E(H)}d^*\big(V_{\phi(e)}\big)\\
 &=p^{-e_k(H)}\sum_{\mathbf{s}\in S}\mathbbm{1}(\mathbf{s}\subset Y)\omega(\mathbf{s})(v(H))!N^{q(H)}\pm\eps^*e(S)(v(H))!N^{q(H)}\\
 &=p^{-e_k(H)}\sum_{\mathbf{s}\in S}\mathbbm{1}(\mathbf{s}\subset Y)\omega(\mathbf{s})(v(H))!N^{q(H)}\pm\eps^*N^{v(H)}(v(H))!\,,
 \end{align*}
 and so
 \begin{align*}
   \sum_{\mathbf{s}\in S}\mathbbm{1}(\mathbf{s}\subset Y)\omega(\mathbf{s})(v(H))!N^{q(H)}&=\Big(1\pm\frac34\delta\Big)N^{v(H)}\begin{aligned}[t]
   &\prod_{e\in E(H)}d^*\big(V_{\phi(e)}\big)\\
   &\pm\eps^*(v(H))!N^{v(H)}p^{e_k(H)}\\   
   \end{aligned}\\
   &=\big(1\pm\delta\big)N^{v(H)}\prod_{e\in E(H)}d^*\big(V_{\phi(e)}\big)
 \end{align*}
 by choice of \(\eps^*\). Since the left-hand side of this is, by definition of \(\omega\), the number of \(\phi\)-partite copies of \(H\) in \(\mathcal{V}\), this completes the proof.
\end{proof}

\section{Concluding remarks}\label{sec:concl}

As discussed, our main results provide a full transference principle when certain codegree bounds are satisfied, with optimal failure probability.
We have already compared our results in detail to those of Conlon and Gowers~\cite{ConGow}. In these concluding remarks we compare or relate them also to various other relevant bodies of work.
Before that, though, we briefly explain where exactly in our proof we make use of our codegree bounds on $S$ and the lower bound on~$p$ in our results.

\subsection{Use of codegree bounds and the lower bound on~\texorpdfstring{$p$}{p}}

While there are several places where we use some codegree bound, there is actually only one place where we really need the precise bound; in all other places, a bound worse by a polynomial in $pn$ would still suffice for our proofs. The place we require the precise bound is in the proof of Lemma~\ref{lem:momentboundlemma}, specifically to prove~\eqref{eq:momentbound:expY} which gives an upper bound on the expected number of books in the inner product. Here the precise bound is used to say that the main contribution to this expectation consists of books whose pages intersect only in the spine, which is needed for our moment bound approach to work. We use the codegree bounds again in this lemma to prove that the number of books in the inner product is likely to concentrate close to its expectation, but this statement would continue to hold for $p$ being (slightly) asymptotically smaller; what changes is that the expectation would no longer be the `right' value, with other kinds of books dominating.

In terms of our proof strategy, what happens is that when $p$ is above the codegree threshold, if we look at the sum of the absolute values of a typical $\phi\in\Phi(\tmu,\one)$ (i.e.\ at $\langle\one,|\phi|\rangle$), what we see is that most of this sum comes from entries which are not too big---we prove smaller than $2c$---and we use moment bounds to argue that the contribution of the exceptionally large values to our inner products is tiny.

When $p$ goes slightly below the codegree threshold, what changes is that most values of $\phi$ will be equal to zero; for any given $x\in[n]$, finding even one edge of $S$ which contains $x$ and whose members are in the random set is already atypical and corresponds to an exceptionally large value in $\phi$ (if $S$ is `balanced', for example if it comes from counting balanced graphs, then this is true as written; if it is not balanced, then one edge does not actually correspond to an exceptionally large value, but typically if there is one edge, then there will be far more than the expectation many edges and these together do give an exceptionally large value in $\phi$). Our argument using Bernstein's inequality that $\langle\mu-\one,\phi^\smll\rangle$ is likely to be tiny continues to work perfectly well even though $p$ is below the codegree threshold, but it becomes useless: $\phi^\smll$ will consist mainly of zero entries and most of the sum $\langle\mu-\one,\phi\rangle$ comes from $\phi^\bg$, which we can no longer control and which will in fact typically not be small.

\medskip

We now comment briefly on the $\log^C n$ term in our lower bound on $p$ in~\ref{itm:GS:i} of Theorem~\ref{thm:GenSimp}. We need the high power $C$ in order to apply the Kim--Vu inequality. We expect that this could be avoided by doing more work to find a sharper concentration inequality. However, we need at least a single $\log n$ term to accommodate our union bounds. We made no particular effort to optimise this term because we do not know of any interesting application where it matters.

\subsection{Comparison to hypergraph container results}

As explained, our transference principle works under the same codegree conditions as the hypergraph containers theorems of Balogh, Morris and Samotij~\cite{Balogh2015} and Saxton and Thomason~\cite{Saxton2015}. However, while the hypergraph container results establish a positive density of~$H$ copies in the setting of our hypergraph counting lemma, Theorem~\ref{thm:hypercount}, they do not give precise counting bounds.

Roughly, hypergraph container results give a structural statement about sets in (in our notation) $S$ which are close to independent, in that they contain only a tiny fraction of $e(S)$ edges. These structural statements are useful in many contexts beyond the transference setting we consider (see~\cite{Balogh2015,Saxton2015} for examples), but in the setting of transference, what this means is that the best one can hope for via hypergraph containers is something like the lower dense model Theorem~\ref{thm:lowerGenSimp}, but instead of guaranteeing that the scaled count of edges of $S$ in the sparse colouring is within $\eps e(S)$ of the count in the dense colouring, one can only expect a guarantee that the scaled count is at least $\eps e(S)$ when the count in the dense colouring exceeds some (much larger than $\eps$) threshold $\zeta e(S)$.

\subsection{Comparison to Schacht\texorpdfstring{~\cite{schacht2016extremal}}{ }}

As mentioned in the introduction, Schacht~\cite{schacht2016extremal} was the first to prove many of the straightforward extremal results that follow from this paper (and from the hypergraph containers papers) without a limitation to strictly balanced graphs and hypergraphs. However, much as with containers, his results do not allow for sharp counting statements.

As we explained, Schacht's work uses a hypergraph framework that is rather similar to the one we use here. However, he does not require codegree bounds on his hypergraph, but instead a rather different condition (see~\cite[Definition~3.2]{schacht2016extremal}). As Balogh, Morris and Samotij observe (see~\cite[Remark~5.3]{Balogh2015}), the codegree bounds we (and they) assume on $S$ imply Schacht's condition, while if Schacht's condition is true, then by deleting a tiny fraction of edges from $S$ we reach an ordered hypergraph $S'$ whose codegrees are bounded as we require.

It follows that our main results could be slightly generalised to require only Schacht's condition on $S$, at the cost of an extra term in the failure probability dealing with the probability that an exceptionally large number of edges of $S\setminus S'$ appear in the binomial random subset. Doing this formally would require only changing the proof of Theorem~\ref{thm:GenSimp} from Theorem~\ref{thm:main}. We do not formally state these slightly more general results because we do not know of an application which is not already covered by our results.

\subsection{Comparison to Conlon, Gowers, Samotij, and Schacht\texorpdfstring{~\cite{CGSS}}{ }}

We already explained that Conlon, Gowers, Samotij, and Schacht~\cite{CGSS} established the special case of our sparse graph counting lemma, Theorem~\ref{thm:graphcount}, that~$H$ is strictly $2$-balanced. Our Theorem~\ref{thm:hypercount} generalises Theorem~\ref{thm:graphcount} to hypergraphs.

In our proof of Theorem~\ref{thm:hypercount}, we used subcounts. It seems likely that with quite a bit of extra technical work one could avoid this, but subcounts certainly make this proof easier. The reader might reasonably ask how it is that Conlon, Gowers, Samotij, and Schacht~\cite{CGSS} manage to prove their sparse graph counting lemma without having subcounts available. Their approach is that instead of applying transference to an entire subgraph $G$ of $G(N,p)$, they apply it only to the regular pairs where they want to count: thus, to prove a counting lemma for triangles between vertex sets $V_1,V_2,V_3$, they will first remove any edges which do not go between these three sets. For triangles, this works perfectly well. However, to count copies of $C_4$ going round four vertex sets $V_1,\dots,V_4$, a bit more care is needed: there will be `extra' copies of $C_4$ which for instance go only between $V_1$ and $V_2$, and the Conlon--Gowers transference statement will count these copies as well as the intended ones. It is not hard to deal with this (one should inductively assume we can count copies of $C_4$ in all smaller systems of regular pairs, and this works for any $H$ replacing $C_4$).

\medskip

We remark that~\cite[Theorem~1.6]{CGSS} also provides a counting lemma for general graphs~$H$ with only a lower bound
on the number of $H$-copies that guarantees a $\xi$-fraction of the expected count for a small $\xi>0$ and
has the optimal failure probability $\exp(-pn/C)$. With the tools developed in this paper, we can also strengthen this statement, achieving the same optimal failure probability, but with $\xi$ arbitrarily close (depending on $\eps$) to $1$. For this, we follow the proof of Theorem~\ref{thm:hypercount} but use Theorem~\ref{thm:lowerGenSimp}, the $\eps$-good lower dense model version, instead of Theorem~\ref{thm:GenSimp}.

In addition, we can (replacing Theorem~\ref{thm:GenSimp} with Theorem~\ref{thm:main}) prove a deletion version of Theorem~\ref{thm:hypercount} with $\exp(-pn/C)$ failure probability. This gives the existence of an $\eps$-deletion of $G^{(k)}(N,p)$ within which we obtain the two-sided counting statement of Theorem~\ref{thm:hypercount}.

\subsection{Relation to Freschi, Hancock and Treglown\texorpdfstring{~\cite{FHT}}{ }}

Recently Freschi, Hancock and Treglown~\cite{FHT} used the containers method to prove a general statement about Ramsey properties of discrete structures. Their main technical theorem is a statement about Ramsey properties of ordered hypergraphs (their definition of ordered hypergraph is the same as ours), which is~\cite[Theorem~7.7]{FHT}. The $1$-statement in their result almost follows from Theorem~\ref{thm:lowerGenSimp}, with the difference being that Theorem~\ref{thm:lowerGenSimp} requires $p\ge(\log^C n)n^{-1}$ while~\cite[Theorem~7.7]{FHT} requires only $p=\omega(n^{-1})$. As discussed, however, in typical applications the requirement on $p$ of Theorem~\ref{thm:lowerGenSimp}\ref{itm:lGS:iv} makes this difference immaterial.

We now explain how to deduce (with the stronger assumption on $p$)~\cite[Theorem~7.7]{FHT} from Theorem~\ref{thm:lowerGenSimp}. The former theorem is stated in terms of a sequence of $k$-uniform ordered hypergraphs $\mathcal{H}_n$ with $v(\mathcal{H}_n)$ tending to infinity with $n$, with the unspecified failure probability tending to zero. We have an explicit failure probability which tends to zero as the number of vertices tends to infinity. Their $\mathcal{H}_n$ is our $S$ (and their $v(\mathcal{H}_n)$ is our $n$, but we stick to $v(\mathcal{H}_n)$ in what follows to avoid confusion).

They define a quantity $\hat{p}_n>0$ by setting
\[\hat{p}_n=\max_{\substack{W\subset[k]\\|W|\ge2}}\Big(\frac{e(\mathcal{H}_{n,W})}{v(\mathcal{H}_n)}\Big)^{-1/(|W|-1)}\,,\]
where $\mathcal{H}_{n,W}$ is the $|W|$-uniform ordered hypergraph obtained by projecting $\mathcal{H}_n$ to the coordinates $W$. That is, letting $W=\{i_1,\dots,i_{|W|}\}$ in increasing order, $(x_{1},\dots,x_{{|W|}})$ is an edge of $\mathcal{H}_{n,W}$ if and only if there is an edge $(y_1,\dots,y_k)$ of $\mathcal{H}_n$ such that $x_{j}=y_{i_j}$ for each $1\le j\le |W|$. Given $W\subset[k]$, with again $W=\{i_1,\dots,i_{|W|}\}$ in increasing order, they define $\Delta_W(\mathcal{H}_{n})$ to be the maximum number of edges in $\mathcal{H}_{n}$ which project to any given edge of $\mathcal{H}_{n,W}$. That is, we take the maximum, over $|W|$-uniform ordered edges $(x_1,\dots,x_{|W|})$ in $V(\mathcal{H}_n)$, of the number of edges $(y_1,\dots,y_k)$ in $\mathcal{H}_{n}$ such that $x_j=y_{i_j}$ for each $j\in W$.

An ordered hypergraph $\mathcal{H}$ is said to be \emph{$r$-Ramsey} if every $r$-colouring of $V(\mathcal{H})$ has a monochromatic edge of $\mathcal{H}$. Strengthening this, a sequence $(\mathcal{H}_n)_{n\in\mathbb{N}}$ is $r$-supersaturated if there exists $\rho>0$ such that for all sufficiently large $n$, every $r$-colouring of $V(\mathcal{H}_n)$ contains at least $\rho e(\mathcal{H}_n)$ monochromatic edges. The supremum $\rho^*$ of values $\rho$ which work in the above definition corresponds to the $r$-Ramsey multiplicity constant for $(\mathcal{H}_n)_{n\in\mathbb{N}}$ as in our Theorem~\ref{thm:multiplicity}.

Finally, they define the random ordered hypergraph $\mathcal{H}^v_{n,q_n}$ to be obtained by sampling the random vertex set $X=V(\mathcal{H}_n)_{q_n}$ and letting $\mathcal{H}^v_{n,q_n}=\mathcal{H}_n[X]$ be the induced subhypergraph. The superscript $v$ indicates that vertices are randomly sampled. This corresponds to our $S[X]$.

Having made these definitions, their $1$-statement is implied by the following (we removed some of their conditions, which are not needed for our proof: part of~\ref{itm:FHT:1} and we set $Y=[k]$ in~\ref{itm:FHT:3} and~\ref{itm:FHT:4}).

\begin{theorem}[{\cite[Theorem~7.7]{FHT}}]\label{thm:FHT}
Let $r, k \ge 2$ be integers, and let $b > 0$. Let $(\mathcal{H}_n)_{n\in\mathbb{N}}$ be a sequence of $k$-uniform
ordered hypergraphs and let $\hat{p}_n$ be as defined above. Suppose that
\begin{enumerate}[label=\itmarab{P}]
 \item\label{itm:FHT:1} $\hat{p}_n v(\mathcal{H}_n)\to\infty$ as $n\to\infty$,
\item\label{itm:FHT:2} $(\mathcal{H}_n)_{n\in\mathbb{N}}$ is $r$-supersaturated,
\item\label{itm:FHT:3} for any $W\subset[k]$, and any sufficiently large $n\in\mathbb{N}$, we have $\Delta_W(\mathcal{H}_{n})\le b\frac{e(\mathcal{H}_{n})}{e(\mathcal{H}_{n,W})}$,
\item\label{itm:FHT:4} for any $W\subset [k]$ with $|W|=1$, and any sufficiently large $n\in\mathbb{N}$, we
have $\Delta_W(\mathcal{H}_{n})\le b\frac{e(\mathcal{H}_{n})}{v(\mathcal{H}_n)}$.
\end{enumerate}
Then there exists a constant $C > 0$ such that if $q_n \ge C \hat{p}_n$ then $\lim_{n\to\infty}\Prob[\mathcal{H}^v_{n,q_n}\text{ is $r$-Ramsey}\,]=1$.
\end{theorem}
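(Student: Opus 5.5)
The plan is to derive the theorem from Theorem~\ref{thm:lowerGenSimp}. Take $S=\mathcal{H}_n$, $[n]=V(\mathcal{H}_n)$ (write $v=v(\mathcal{H}_n)$), $\Sigma=\{\one\}$, $\Omega=\{\one\}$, and $p=q_n$. We may assume $q_n\ge C\hat p_n$. The argument has three steps, and splits into two cases at the third step.

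\emph{Step 1: codegrees.} Fix a sequence $\mathbf{x}$ with $\kappa\ge2$ non-$\ast$ entries in positions $W\subset[k]$. Then $\deg_S(\mathbf{x})\le\Delta_W(\mathcal{H}_n)$, so \ref{itm:FHT:3} gives $\deg_S(\mathbf{x})\le b\,e(\mathcal{H}_n)/e(\mathcal{H}_{n,W})$. By the definition of $\hat p_n$ we have $e(\mathcal{H}_{n,W})\ge v\,\hat p_n^{-(|W|-1)}$. Hence
\[
\deg_S(\mathbf{x})\le b\,\hat p_n^{\kappa-1}e(\mathcal{H}_n)/v\le b\,C^{1-\kappa}q_n^{\kappa-1}e(\mathcal{H}_n)/v .
\]
For $\kappa=1$ the same bound follows from \ref{itm:FHT:4}. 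So \ref{itm:lGS:iv} holds with $c=b$, and \ref{itm:lGS:ii} is trivial.

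\emph{Step 2: transfer.} Suppose Theorem~\ref{thm:lowerGenSimp} applies, with $r$ colours and with $\eps$ small compared to the supersaturation constant $\rho$ from \ref{itm:FHT:2}. Take any $r$-colouring $Y_1\dcup\dots\dcup Y_r$ of $X$ and its lower dense model $Y'_1\dcup\dots\dcup Y'_r$, which is an $r$-colouring of $[n]$. By \ref{itm:FHT:2}, some colour $i$ has at least $\rho e(S)/r$ edges inside $Y'_i$. Then \ref{main:itm:sub'} with $i_1=\dots=i_k=i$ and $\omega=\one$ gives at least $(\rho/r-\eps)q_n^ke(S)>0$ monochromatic edges in $Y_i$. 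So $\mathcal{H}^v_{n,q_n}$ is $r$-Ramsey, with failure probability $\exp(-q_nv/C)\to0$ because $q_nv\ge C\hat p_nv\to\infty$ by \ref{itm:FHT:1}. This settles the case $q_n\ge(\log^{C}v)/v$.

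\emph{Step 3: the regime $q_n<(\log^Cv)/v$.} Here \ref{itm:lGS:i} fails. Since we are proving the statement under \ref{itm:FHT:1}--\ref{itm:FHT:4} alone, this regime must be handled too. I would use two-stage sampling. Write $X=V'\cap Z$, where $V'=[v]_{s}$ and $Z=[v]_{q'}$ are independent and $sq'=q_n$. Choose $q'$ so that $n':=|V'|\approx sv$ satisfies $q'\ge(\log^Cn')/n'$. This is possible because $q_nv\to\infty$ and $x\ge\log^Cx$ for large $x$; for example take $q'=(q_nv)^{-1/2}$.

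Conditional on $V'$, the set $X$ is a $q'$-random subset of $V'$, and $\mathcal{H}^v_{n,q_n}=S'[X]$ with $S'=\mathcal{H}_n[V']$. The codegree conditions for $S'$ at density $q'$ are consistent in expectation: $\mathbb{E}\,e(S')=s^ke(S)$ and codegrees scale by $s^{k-\kappa}$, and these factors cancel exactly against the bound of Step 1. So I would show by Chernoff bounds that, with high probability over $V'$, $S'$ satisfies \ref{itm:lGS:iv} with constant $2b$. Codegrees with tiny expectation can be handled by truncating at a polylogarithmic level, and the offending edges deleted in the style of the remark comparing to Schacht. Then Step 2 applies to $(S',q')$.

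The main obstacle is that Step 2 needs $r$-supersaturation of $S'$, not of $\mathcal{H}_n$. A colouring of $V'$ need not extend to a colouring of $[v]$ with many monochromatic edges inside $V'$, so this does not follow by averaging. I would first try to obtain it by applying the Step 1--2 transfer to $V'$ itself, viewed as an $s$-random subset of $[v]$ (there $s$ is much larger than $q_n$). This shows every colouring of $V'$ has a lower dense model colouring of $[v]$, and \ref{itm:FHT:2} then gives $\Omega(s^ke(S))=\Omega(e(S'))$ monochromatic edges. If $s$ itself is too small for \ref{itm:lGS:i}, I would iterate the two-stage split a bounded number of times, choosing the intermediate densities geometrically between $q_n$ and $1$. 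Whether such a bounded iteration suffices when $q_nv$ grows extremely slowly is the step I expect to be delicate.
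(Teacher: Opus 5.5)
Your Steps~1 and~2 are the paper's own argument: $S=\mathcal{H}_n$, $\Omega=\{\one\}$, $c=b$, $p=q_n$, codegree bounds from \ref{itm:FHT:3}, \ref{itm:FHT:4} and the definition of $\hat p_n$, and then supersaturation pushed through \ref{main:itm:sub'}. Two details in your version are the correct ones. The inequality should read $e(\mathcal{H}_{n,W})\ge v\,\hat p_n^{-(|W|-1)}$; the paper's write-up states it reversed. You also need $\eps<\rho/r$; the paper's choice $\eps=\tfrac12\rho$ does not survive the $r$ per-colour errors. The paper stops at this point. It proves the theorem only in the range $q_n\ge(\log^{C}v)/v$, where \ref{itm:lGS:i} holds, and says explicitly that it only treats this large-$\hat p_n$ case. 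So the only part of your proposal that goes beyond the paper is Step~3, and that step has a genuine gap.

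\emph{Step 3, single split.} Every application of Theorem~\ref{thm:lowerGenSimp} to a $t$-random subset of an $N$-element ground set needs $tN\ge\log^{C}N$. Your supersaturation of $S'$ comes from transferring $V'$ as an $s$-random subset of $[v]$, so it needs $|V'|\approx sv\ge\log^{C}v$. With $q'=(q_nv)^{-1/2}$ you get $|V'|\approx(q_nv)^{3/2}$, which falls below $\log^{C}v$ as soon as $q_nv<\log^{2C/3}v$.

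\emph{Bounded iteration.} A chain $[v]=V_0\supset V_1\supset\dots\supset V_m=X$ with $n_i=|V_i|$ needs $n_i\ge\log^{C}n_{i-1}$ at every level. So $m$ levels only handle $q_nv$ at least about $(\log\log\cdots\log v)^{C}$, with $m$ logarithms. Geometric spacing of the densities is much worse: with $m$ equal ratios the last step already needs $q_nv\ge(\tfrac1m\log v)^{C}$, which is only a constant-factor gain. Since \ref{itm:FHT:1} allows $q_nv\to\infty$ arbitrarily slowly while $C$ is fixed before $n$, no bounded number of levels suffices.

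\emph{Unbounded iteration.} With unboundedly many levels you would have to control the accumulated losses. Each level replaces the supersaturation constant $\rho_{i-1}$ by roughly $\rho_{i-1}-r\eps_i$, while the constant $C_i$ of Theorem~\ref{thm:lowerGenSimp} grows as $\eps_i\to0$. You would therefore need summable $\eps_i$ whose large $C_i$ are paid for by the extra codegree slack at the upper levels, where the cumulative density $n_i/v$ far exceeds $q_n$. You would also need simultaneous constant-factor concentration of all induced codegrees of $\mathcal{H}_n[V_i]$ at every level. For $\kappa\le k-2$ these are polynomials of degree at least two in the sampling indicators, and your truncate-and-delete remedy must not destroy the supersaturation needed at the next level.

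None of this is supplied. As it stands, the proposal proves the statement only for $q_n\ge(\log^{C}v)/v$, which is the same range the paper covers.
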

We should stress that~\cite[Theorem~7.7]{FHT} also contains a $0$-statement, roughly that under an extra condition, if $q_n\le C^{-1}\hat{p}_n$ then $\mathcal{H}^v_{n,q_n}$ is likely to be \emph{not} $r$-Ramsey. Our work does not address this part of the statement.

\begin{proof}[Proof of Theorem~\ref{thm:FHT} from Theorem~\ref{thm:lowerGenSimp}]
Given a sequence of hypergraphs satisfying the conditions of Theorem~\ref{thm:FHT} for some $k,r\ge2$ and $b>0$, let $\rho>0$ be a constant returned by the $r$-su\-per\-sa\-tu\-ra\-tion~\ref{itm:FHT:2}. We apply Theorem~\ref{thm:lowerGenSimp} with the same $k$ and $r$, with $c=b$, and with $\eps:=\frac12\rho$. Let $C$ be returned by Theorem~\ref{thm:lowerGenSimp}, and Theorem~\ref{thm:FHT} returns this same $C$. Given $q_n\ge C\hat{p}_n$, we let $p=q_n$. We set $\Sigma=\emptyset$ and $\Omega=\{\one\}$. 

We only prove the large $\hat{p}$ case of Theorem~\ref{thm:FHT}, which means assuming Theorem~\ref{thm:lowerGenSimp}\ref{itm:lGS:i} holds. Similarly,~\ref{itm:lGS:ii} holds trivially for all sufficiently large $n$ since $pv(\mathcal{H}_n)\to\infty$. So what we need to do is verify~\ref{itm:lGS:iv}. Recall that our $\Delta_\kappa(S)$ is (see Definition~\ref{def:degrees}) the maximum of $\deg_S(\mathbf{x})$ over $\mathbf{x}$ having exactly $\kappa$ entries not equal to $\ast$. By definition, this is identical to the maximum over $W\subset[k]$ with $|W|=\kappa$ of $\Delta_W(\mathcal{H}_n)$: the set $W$ specifies the places at which $\mathbf{x}$ is not $\ast$. Given $\kappa\in[k]$, let $W$ be a subset of $[k]$ of size $\kappa$.

If $\kappa\ge 2$, then~\ref{itm:FHT:3} says $\Delta_W(\mathcal{H}_n)\le b\frac{e(\mathcal{H}_n)}{e(\mathcal{H}_{n,W})}$, and the definition of $\hat{p}$ guarantees $e(\mathcal{H}_{n,W})\le\hat{p}_n^{-(\kappa-1)}v(\mathcal{H}_n)$. Substituting this in, we have
\[\Delta_W(\mathcal{H}_n)\le be(\mathcal{H}_n)\hat{p}_n^{\kappa-1}v(\mathcal{H}_n)^{-1}\le cC^{1-\kappa}q_n^{\kappa-1}e(S)v(\mathcal{H}_n)^{-1}\]
which gives the required bound on $\Delta_\kappa(S)$.

For $\kappa=1$, letting $W$ range over subsets of $[k]$ of size $1$,~\ref{itm:FHT:4} is precisely the required bound on $\Delta_1(S)$. Thus the conditions of Theorem~\ref{thm:lowerGenSimp} are met.

Now Theorem~\ref{thm:lowerGenSimp} gives us that, with probability at least $1-\exp(-q_nv(\mathcal{H}_n)/C)$, the random set $X=[V(\mathcal{H}_n)]_p$, which induces the random hypergraph $\mathcal{H}^v_{n,q_n}=\mathcal{H}_n[X]$, has the following property. Given any $r$-colouring $X=Y_1\dcup\dots\dcup Y_r$, there is an $\eps$-good lower dense model $V(\mathcal{H}_n)=Y'_1\dcup\dots\dcup Y'_r$. Suppose $\mathcal{H}^v_{n,q_n}$ has this likely property.

Given any such $r$-colouring of $X$, fix a corresponding $\eps$-good lower dense model. By the $r$-su\-per\-sa\-tu\-ra\-tion property~\ref{itm:FHT:2}, there are at least $\rho e(\mathcal{H}_n)$ monochromatic edges of $\mathcal{H}_n$ in the lower dense model $r$-colouring, so the $\eps$-good lower dense model property says that there are at least $\frac12\rho q_n^k e(\mathcal{H}_n)$ monochromatic edges in the given $r$-colouring of $\mathcal{H}^v_{n,q_n}$. In particular, there is at least one monochromatic edge, verifying that $\mathcal{H}^v_{n,q_n}$ is $r$-Ramsey.
\end{proof}
Our proof gives the optimal exponential in $q_nv(\mathcal{H}_n)$ failure probability for Theorem~\ref{thm:FHT}. In~\cite{FHT}, an explicit failure probability is not given, but we believe they also obtain the same shape of failure probability.
By letting $\eps$ tend to zero as $n\to\infty$, we obtain not just the $r$-Ramsey property but also the $r$-Ramsey multiplicity statement that the number of monochromatic edges in any $r$-colouring of $\mathcal{H}^v_{n,q_n}$ is at least $(1-o(1))\rho^* q_n^k e(\mathcal{H}_n)$, where $\rho^*$ is the Ramsey multiplicity constant of $(\mathcal{H}_n)_{n\in\mathbb{N}}$. We believe this stronger result is not available via the containers method of~\cite{FHT}.

\subsection{Relation to Gerke, Marciniszyn, and Steger\texorpdfstring{~\cite{GerMarSte}}{ }}

A conjecture stronger than our graph counting lemma, Theorem~\ref{thm:graphcount}, was formulated by Gerke, Marciniszyn, and Steger~\cite{GerMarSte}.

\begin{conjecture}[Gerke, Marciniszyn, and Steger~\cite{GerMarSte}]
\label{conj:stronger}
  Let~$H$ be a fixed graph. For each $\beta,\delta>0$ there exist constants $\eps,C,N_0>0$ such that for all $m\ge C N^{2-1/m_2(H)}$ and $N\ge N_0$ the following holds. Let~$\cG$ be the class of graphs obtained from~$H$ by replacing each vertex~$x$ of~$H$ with a vertex set~$V_x$ of size~$N$, and each edge $xy$ of~$H$ with an $\eps$-regular pair $(V_x,V_y)$ with~$m$ edges. Let~$\cG'$ be the class of graphs in~$\cG$ that contain less than
  \[(1-\delta)N^{v(H)}\Big(\frac{m}{N^2}\Big)^{e(H)}\]
  copies of~$H$ that embed~$x$ in~$V_x$ for all $x\in V(H)$. Then
  \[|\cG'|\le\beta^m\binom{N^2}{m}^{e(H)}\,.\]
\end{conjecture}

Our transference principle does not allow for a counting statement that strong. 
Conjecture~\ref{conj:stronger} has been shown for $H=K_3$ in~\cite{GerMarSte}, and very recently for $H=K_4$ by Veeranonchai~\cite{Veeranonchai}. It remains wide open otherwise.

\subsection{Extension to quasirandom structures}

It is very interesting to ask under which conditions in the results obtained in this paper the random set $[n]_p$ can be replaced by sets with suitably defined quasirandomness properties. Since in general the theory of Tur\'an and Ramsey type results in quasirandom structures is much less well understood than in the random setting, not much is known in this direction. In graphs, most is known for `jumbled' graphs, in particular general counting results of Conlon, Fox and Zhao~\cite{CFZ1} and (in some cases) sharper results of Allen, B\"ottcher, Skokan and Stein~\cite{ABSS}. For hypergraphs, a general result of Conlon, Fox and Zhao~\cite{CFZ2} gives bounds for counting relative to sets which themselves satisfy `typical counting' conditions. However, these results are generally not believed to be quantitatively sharp, and it would be very interesting to improve the quantitative bounds. The recent survey of Conlon~\cite{ConSurv} gives details and several further questions.

In addition, some results have been obtained recently for very specific properties, such as Ramsey type properties of matchings~\cite{KeeMich}.

\printbibliography

\appendix

\section{Omitted technicalities for \texorpdfstring{$s$}{s}-regularity counting}\label{app:count}

In this appendix we prove various claims we made concerning $s$-regularity counting in Section~\ref{sec:hypercount}.
Specifically, we stated there that we believe that $H$ has $s$-regularity counting only if any two $k$-edges of $H$ intersect in at most $s$ vertices. We argue here that this is a necessary condition and that if it is sufficient for all $H$ which are the down-closure of a $k$-uniform hypergraph, then it is sufficient in general. Moreover, we stated in Section~\ref{sec:hypercount} that $(k-1)$-regularity counting follows from~\cite[Lemma~4]{Cooley2009}. We also provide this reduction in this appendix.

\subsection*{Necessity of intersecting in at most \texorpdfstring{$s$}{s} vertices for \texorpdfstring{$s$}{s}-regularity counting}

Take $t\ge s+1$ vertex sets $V_1,\dots,V_{t}$ of size $N$, and put $t$-edges between them independently with probability $\tfrac12$. Let the resulting $t$-graph be $F$. We now extend this to a $(2k-t)$-cluster $k$-partition by adding $V_{t+1},\dots,V_k,V'_{t+1},\dots,V'_k$ of size $N$, and adding all crossing edges of all uniformities between $2$ and $k-1$. Finally, we add all $k$-edges crossing $V_1,\dots,V_k$ which contain an edge of $F$, and all $k$-edges crossing $V_1,\dots,V_{t},V'_{t+1},\dots,V'_k$ which do not contain an edge of $F$. It is easy to check, using the Chernoff bound from Theorem~\ref{thm:chernoff}, that with high probability (as $N\to\infty$) this construction is $(o(1),o(1),d_1,\dots,d_s,\tfrac14,r,1;s)$-regular. But by definition it does not contain any two $k$-edges which intersect on $V_1,\dots,V_{t}$. Embedding this construction in a larger $k$-partition, for any given $H$ which contains two $k$-edges whose intersection has exactly $t$ vertices, gives a counterexample to $s$-regularity counting for such $H$.

\subsection*{Sufficiency of intersecting in at most \texorpdfstring{$s$}{s} vertices for \texorpdfstring{$s$}{s}-regularity counting: reduction to down-closures}

We now argue that, for any given $s$, if $H$ has $s$-regularity counting for all $H$ such that no two $k$-edges of $H$ intersect in more than $s$ vertices, and such that $H$ is the down-closure of a $k$-uniform hypergraph, then $H$ has $s$-regularity counting for all $H$ such that no two $k$-edges of $H$ intersect in more than $s$ vertices.

To see this, given $H$ which has no two $k$-edges intersecting in more than $s$ vertices and is not the down-closure of a $k$-uniform hypergraph, let $H'$ be obtained by, separately for each maximal edge of $H$ of size $s$ or smaller, adding new vertices to increase the edge to size $k$ and then taking the down-closure of the resulting hypergraph. Observe that $H'$ continues to satisfy the condition that no two $k$-edges intersect in more than $s$ vertices.

Given a $k$-partition of $[N]$ for which we are considering $s$-regularity counting for~$H$, we construct a
new $k$-partition of $[N']$ with $N'\le v(H')N$ on which we are considering $s$-regularity counting for~$H'$ as follows.
We add one new cluster of size $d_1N$, disjoint from all others, for each vertex in $V(H')\setminus V(H)$. For each $V_E$ with $E\in E(H')\setminus E(H)$
and $2\le|E|\le s$ (in order of increasing $|E|$), we let $V_E$ consist of a $d_{|E|}$-random subset of the $|E|$-sets supported by the collection $V_{E'}$ for $E'\subset E$ of size $|E|-1$,
and for each $V_E$ with $E\in E(H')\setminus E(H)$
and $s+1\le |E|\le k$ (in order of increasing $|E|$), we let
$V_E$ consist of all $|E|$-sets supported by the collection $V_{E'}$ for $E'\subset E$ of size $|E|-1$. It is straightforward to check that the constructed $k$-partition is $(\eps_k,\eps,d_1,\dots,d_s,d_k,r;s)$-regular if and only if the original $k$-partition is. Letting $\phi'$ be the obvious extension of $\phi$ mapping the new vertices to their clusters, counting $\phi'$-partite copies of $H'$ in the constructed instance gives a good estimate for the number of $\phi$-partite copies of $H$ in the original instance, so that if $H'$ has $s$-regularity counting then so does~$H$.

\subsection*{\texorpdfstring{$(k-1)$}{(k-1)}-regularity counting}

Finally, we argue that all~$H$ have $(k-1)$-regularity counting. As described above, it is enough to prove this for $H$ which are the down-closure of a $k$-uniform hypergraph, so suppose this is the case. Cooley, Fountoulakis, K\"uhn and Osthus~\cite[Lemma~4]{Cooley2009} proved this under the further restriction that $d_k^{-1}\in\mathbb{N}$ and $d^*(V_E)=d_k\pm\eps_k$ holds for all $|E|=k$.

We now explain why this proves $(k-1)$-regularity counting with no further restriction.
We proceed by induction on~$k$, where the case $k=2$ is simply the graph counting lemma.
For general~$k$, given any $\delta,d_k>0$, we set $\delta'=\tfrac14\delta$ and $d'_k=1/(2\lceil d_k^{-1}\rceil)$, so that $(d'_k)^{-1}\in\mathbb{N}$. Let $\eps'_k$ be returned by~\cite[Lemma~4]{Cooley2009} for input $k,\delta',d'_k$. Let $\eps_k\le\tfrac14\eps'_k$ be such that $d'_k(d_k-\eps_k)^{-1}<1$.
For given $d_2,\dots,d_{k-1}$ we let $\eps,r$ be returned by~\cite[Lemma~4]{Cooley2009} for input $k,\delta',d'_k,d_2,\dots,d_{k-1}$.

Given $\mathcal{V}$ as required for $(k-1)$-regularity counting, we construct $\mathcal{V}'$ as follows: for each $E$ of size $k$ and $V_E$ in $\mathcal{V}$, we obtain $V'_E$ by selecting edges of $V_E$ independently with probability $d'_k(d^*(V_E))^{-1}$; by our assumption on $\eps_k$, this number is indeed in $[0,1]$. For $E$ of size smaller than $k$, we set $V'_E=V_E$.

We claim that with probability $1-o(1)$, the $k$-partition $\mathcal{V}'$ is $(\eps'_k,\eps,d_1,\dots,d_{k-1},d'_k,r,1;k-1)$-regular with $d^*(V_E)=d'_k\pm\eps'_k$ for each $E$ of size $k$.

Assuming this claim is true, we now calculate the number $M$ of $\phi$-partite copies of $H$ in $\mathcal{V}$ as follows. On the one hand, by linearity of expectation, the expected number of $\phi$-partite copies of $H$ in $\mathcal{V}'$ is
\[M\cdot\prod_{e\in E(H), |e|=k}\frac{d'_k}{d^*(V_{\phi(e)})}\,.\]
On the other hand, whenever $\mathcal{V}'$ is $(\eps'_k,\eps,d_1,\dots,d_{k-1},d'_k,r;k-1)$-regular with $d^*(V'_E)=d'_k\pm o(1)$ for each $E$ of size $k$, by~\cite[Lemma~4]{Cooley2009} the number of $\phi$-partite copies of $H$ in $\mathcal{V}'$ is
\[(1\pm\delta')N^{v(H)}\prod_{e\in E(H)}d^*(V'_{\phi(e)})\,.\]
Since the number of $\phi$-partite copies of $H$ in $\mathcal{V}'$ is always between $0$ and $N^{v(H)}$, the expected number of $\phi$-partite copies of $H$ in $\mathcal{V}'$ is
\[(1-o(1))\cdot (1\pm\delta')N^{v(H)}\prod_{e\in E(H)}d^*(V'_{\phi(e)})\pm o(1)N^{v(H)}=(1\pm2\delta')N^{v(H)}\prod_{e\in E(H)}d^*(V'_{\phi(e)})\,.\]
Equating the two estimates of the expectation and rearranging, and by choice of $\delta'$, we obtain the required estimate for $M$.

To establish the claim, consider some $V_E$ with $|E|=k$. Without loss of generality, we can assume $E=[k]$. Let $W=R(V_{E\setminus\{1\}},\dots,V_{E\setminus\{k\}})$. By
induction we know that for $(k-1)$-complexes in $(k-1)$-partitions we have $(k-2)$-regularity counting, which implies that $|W|=\Omega(N^k)$. By definition $\tfrac{|V_E|}{|W|}=d^*(V_E)$, so by the Chernoff bound from Theorem~\ref{thm:chernoff}, $\tfrac{|V'_E|}{|W|}\neq d'_k\pm o(1)$ with probability $o(1)$.

Suppose now $W'$, which is a union of at most $r$ sets of the form $R(Q^{(j)}_1,\dots,Q^{(j)}_k)$ where each $Q^{(j)}_i$ is a subset of $V_{E\setminus\{i\}}$, is a witness of $(\eps'_k,r,1)$-regularity with density $d'_k\pm\eps'_k$ failing. Then we have $|W'|\ge\eps'_k|W|$, so $|V_E\cap W'|=(d^*(V_E)\pm\eps_k)|W'|$. Again by Chernoff's inequality, the probability that
\[|V'_E\cap W'|\neq(1\pm\eps_k)(d^*(V_E)\pm\eps_k)\tfrac{d'_k}{d^*(V_E)}|W'|=(d'_k\pm\eps'_k)|W'|\]
is at most $\exp(-\Omega(N^k))$. Taking a union bound over the at most $\binom{\ell}{k}r2^{rk\cdot N^{k-1}}$ events, we see that the probability of any of these events occurring is, as claimed, $o(1)$.
\end{document}